\newcommand{\subjclass}[2][2010]{%
  \let\@oldtitle\@title%
  \gdef\@title{\@oldtitle\footnotetext{#1 \emph{Mathematics subject classification.} #2}}%
}
\newcommand{\ep}{\epsilon}
\newcommand{\eps}{\epsilon}
\newcommand{\leqc}{\lesssim}
\newcommand{\grad}{\nabla}
\newcommand{\MalD}{\cD}
\newcommand{\Wbf}{\mathbf{W}}
\newcommand{\Lbf}{\mathbf{L}}
\newcommand{\Vc}{{\mathcal V}}
\newcommand{\Dc}{{\mathcal D}}
\newcommand{\norm}[1]{\left|\left| #1 \right|\right|}
\newcommand{\abs}[1]{\left| #1 \right|}
\newcommand{\set}[1]{\left\{ #1 \right\}}
\newcommand{\brak}[1]{\left\langle #1 \right\rangle} 
\newcommand{\R}{\mathbb{R}}
\newcommand{\Real}{\R}
\newcommand{\Z}{\mathbb{Z}}
\newcommand{\T}{\mathbb{T}}
\renewcommand{\S}{\mathbb{S}}
\newcommand{\tensor}{\otimes}
\newcommand{\cM}{\mathcal{M}}
\newcommand{\cD}{\mathcal{D}}
\newcommand{\Hbf}{{\bf H}}
\newcommand{\dee}{\mathrm{d}}
\newcommand{\ds}{\dee s}
\newcommand{\dt}{\dee t}
\newcommand{\dx}{\dee x}
\newcommand{\dy}{\dee y}
\newcommand{\dr}{\dee r}
\DeclareMathOperator{\Div}{\mathrm{div}}
\DeclareMathOperator{\Id}{\mathrm{Id}}
\DeclareMathOperator{\curl}{\mathrm{curl}}
\newcommand{\1}{\mathbbm{1}}
\renewcommand{\P}{\mathbf{P}}
\newcommand{\E}{\mathbf{E}}
\newcommand{\EE}{\mathbf E}
\newcommand{\PP}{\mathbf P}
\newtheorem{theorem}{Theorem}[section]
\newtheorem{proposition}[theorem]{Proposition}
\newtheorem{corollary}[theorem]{Corollary}
\newtheorem{lemma}[theorem]{Lemma}
\newtheorem*{lemma*}{Lemma}
\newtheorem{assumption}{Assumption}
\newtheorem{system}{System}
\theoremstyle{definition}
\newtheorem{definition}[theorem]{Definition}
\newtheorem{remark}[theorem]{Remark}
\numberwithin{equation}{section}
\begin{document}

\title{Almost-sure enhanced dissipation and uniform-in-diffusivity exponential mixing  for advection-diffusion by stochastic Navier-Stokes}
\subjclass{Primary: 37A25, 76D06, 60H15, 37H15, 76F20}
\author{Jacob Bedrossian\thanks{\footnotesize Department of Mathematics, University of Maryland, College Park, MD 20742, USA \href{mailto:jacob@math.umd.edu}{\texttt{jacob@math.umd.edu}}. J.B. was supported by NSF CAREER grant DMS-1552826 and NSF RNMS \#1107444 (Ki-Net)} \and Alex Blumenthal\thanks{\footnotesize Department of Mathematics, University of Maryland, College Park, MD 20742, USA \href{mailto:alex123@math.umd.edu}{\texttt{alex123@math.umd.edu}}. This material is based upon work supported by the National Science Foundation under Award No. DMS-1604805.} \and Sam Punshon-Smith\thanks{\footnotesize Division of Applied Mathematics,  Brown University, Providence, RI 02906, USA \href{mailto:punshs@brown.edu}{\texttt{punshs@brown.edu}}. This material is based upon work supported by the National Science Foundation under Award No. DMS-1803481.}}

\maketitle

\begin{abstract}
We study the mixing and dissipation properties of the advection-diffusion equation with diffusivity $0 < \kappa \ll 1$ and advection by a class of random velocity fields on $\mathbb T^d$, $d=\{2,3\}$, including solutions of the 2D Navier-Stokes equations forced by sufficiently regular-in-space, non-degenerate white-in-time noise. We prove that the solution almost surely mixes exponentially fast uniformly in the diffusivity $\kappa$. Namely, that there is a deterministic, exponential rate (independent of $\kappa$) such that all mean-zero $H^1$ initial data decays exponentially fast in $H^{-1}$ at this rate with probability one. This implies almost-sure enhanced dissipation in $L^2$. Specifically that there is a deterministic, uniform-in-$\kappa$, exponential decay in $L^2$ after time $t \gtrsim \abs{\log \kappa}$. Both the $\mathcal{O}(\abs{\log \kappa})$ time-scale and the uniform-in-$\kappa$ exponential mixing are optimal for Lipschitz velocity fields and, to our knowledge, are the first rigorous examples of velocity fields satisfying these properties (deterministic or stochastic). This work is also a major step in our program on scalar mixing and Lagrangian chaos necessary for a rigorous proof of the Batchelor power spectrum of passive scalar turbulence. 
\end{abstract}

\setcounter{tocdepth}{2}
{\small\tableofcontents}

\section{Introduction}\label{sec:Intro}

The evolution of a passive scalar $g_t$ under an incompressible fluid motion $u_t$ is a fundamental problem in physics and engineering; see e.g. \cite{Provenzale1999,ShraimanSiggia00,W00,WF04,LDT11} and the references therein. In applications, the scalar $g_t$ is typically the temperature distribution or a chemical concentration that can be treated as a passive tracer. 
Here we study the advection-diffusion equation with diffusivity $0 < \kappa \ll 1$, 
\begin{align}
  & \partial_t g_t + u_t \cdot \grad g_t = \kappa \Delta g_t \label{def:AD} \\ 
  & g_0 = g,   
\end{align}
on the periodic box $\T^d = [0,\pi]^d$ where $g$ is a mean-zero $L^2$ function and $u_t$ is an incompressible velocity field evolving under any one of a variety of stochastic fluid models, for example, the stochastically-forced 2D Navier-Stokes equations. We set $u_0 = u$, the initial condition of the fluid evolution (assumed to be in a sufficiently regular Sobolev space).

Understanding the mixing and dissipation of $g_t$ under various fluid motions $(u_t)$ is a central question in both physics and engineering applications, and has recently received significant attention from the mathematics community, for example \cite{Bressan03,LDT11,LLNMD12,S13,IyerXu14,YZ17,MD18,TZ18,CZDE18,CRS19,GGM19,IF19,BBPS18,BBPS19,Thiffeault2012-xs} and the references therein (also see below for more discussion).
One case, crucial for many physical applications, not studied in the mathematics community (until \cite{BBPS19}) is that of velocity fields evolving under ergodic, nonlinear dynamics. In \cite{BBPS19}, we showed that if $(u_t)$ evolves according to the stochastically-forced Navier-Stokes equations, then in the absence of diffusivity (i.e., \eqref{def:AD} with $\kappa =0$), the passive scalar mixes exponentially fast almost surely with respect to the noise on the fluid equation. Specifically, we show exponential decay in any negative Sobolev norm
\begin{align}
	\|g_t\|_{H^{-s}} := \sup_{\|f\|_{H^s} =1 }\left|\int f \,g_t\, \dx\right| \leq D e^{-\gamma t}\|g\|_{H^s}, \label{eq:k=0-mixing}
\end{align}
where $D(s,u,\omega)$ is a random constant with finite moments (independent of $g$, but depending on $u$
and the noise sample $\omega$), and $\gamma>0$ is a \emph{deterministic} constant (independent of $g$, $u$ and $\omega$). The use of negative Sobolev norms to measure mixing is standard in the literature and their decay corresponds to mixing in the sense of ergodic theory (see discussions in \cite{Thiffeault2012-xs} and the references therein; see also \cite{Z18}). 
It is easy to check that Lipschitz velocity fields that satisfy standard moment estimates cannot mix scalars faster than \eqref{eq:k=0-mixing} (see \cite{BBPS18,BBPS19} and Remark \ref{rmk:ExpOpt}). 

The mixing in \eqref{def:AD} arises due to the chaotic nature of the Lagrangian trajectories, a phenomenon referred to as {\em chaotic mixing}.  
Chaos in the Lagrangian flow map is often referred to as \emph{Lagrangian chaos} (to distinguish it from the property of $u_t$ itself being chaotic; see discussions in \cite{BMOV05}). 
In our first work \cite{BBPS18}, we proved positivity of the top Lyapunov exponent (a hallmark of sensitivity with respect to initial conditions) for the Lagrangian flow.
This provides a local hyperbolicity to the flow, and this was subsequently upgraded to the \emph{global} almost-sure, exponential mixing statement in \eqref{eq:k=0-mixing} by our second work \cite{BBPS19} (the work \cite{BBPS19} uses \cite{BBPS18} as a lemma).  
We emphasize that the mixing mechanism here is not turbulence or small scales in the velocity field $u_t$-- indeed, the fields we work with are, at minimum, 
$C^2$ spatially regular and it is not directly relevant whether or not $u_t$ is chaotic. 
See e.g. \cite{YuanEtAl00,AntonsenEtAl96,GalluccioVulpiani94,AmonEtAl96,HV05,TAO05,NV11}, the reviews \cite{Ottino1990,CrisantiEtAl1991,ArefEtAl17}, and the references therein for more discussion in the physics literature on chaotic mixing and Lagrangian chaos. 

The primary goal of the current paper is to prove that the almost-sure exponential mixing estimate \eqref{eq:k=0-mixing} holds also for \eqref{def:AD} for $0 < \kappa \ll 1$ \emph{uniformly in $\kappa$}, that is, for $\gamma$ \emph{independent} of $\kappa$ and random constant $D$ that satisfies uniform estimates in $\kappa$ (see Theorem \ref{thm:UniMix} below). It is important to note that $\kappa > 0$ is a {\em singular perturbation} of $\kappa = 0$, and to our knowledge, there is \emph{no general method} in the literature by which one can deduce uniform exponential mixing from the knowledge that one has exponential mixing at $\kappa = 0$, for either deterministic or stochastic velocities.
Indeed, the only uniform-in-diffusivity mixing we are aware of are only at a polynomial rate and are all essentially shear flows: inviscid damping in the Navier-Stokes equations near Couette flow \cite{BMV14,BGM15I}; the recent work \cite{CZ19} on passive scalars in strictly monotone shear flows; and Landau damping in Vlasov-Poisson with weak collisions \cite{Tristani17,B17}. 
In fact, it is known that the introduction of diffusion can limit the mixing rate in certain contexts \cite{MD18}.

When $\kappa >0$, the scalar additionally dissipates in $L^2$ due to the diffusivity:
\begin{align}
\frac{1}{2}\frac{\dee}{\dt} \norm{g_t}_{L^2}^2 = - \kappa \norm{\grad g_t}_{L^2}^2. 
\end{align}
From this balance it is clear that the creation of small scales due to mixing could accelerate the $L^2$ dissipation rate. This effect is usually called \emph{relaxation enhancement} or \emph{enhanced dissipation}. The first general, mathematically rigorous study of this effect in deterministic, constant-in-time velocity fields was the foundational work \cite{CKRZ08} (see e.g. \cite{Bajer2001,LatiniBernoff01,BernoffLingevitch94,RhinesYoung83} for some of the earlier work in the physics literature). The effect is now being actively studied both for passive scalars \cite{Zlatos2010,BCZGH,BCZ15,CZDE18,IF19} and also in the context of hydrodynamic stability of shear flows and vortices (see e.g. \cite{BMV14,BGM15I,BGM15II,WZZ_2DK,Gallay18}  and the references therein). 
In \cite{CZDE18,IF19}, it was shown that if a deterministic flow is exponentially mixing for $\kappa = 0$, then one sees exponential $L^2$ dissipation after $t \gtrsim \abs{\log \kappa}^{2}$. The uniform-in-$\kappa$ exponential mixing we deduce for \eqref{def:AD} in Theorem \ref{thm:UniMix} allows to obtain the rapid exponential $L^2$ dissipation after $t \gtrsim \abs{\log \kappa}$ in Theorem \ref{thm:ED} (note that for stochastic velocities, this time scale is random).
This time-scale is easily seen to be optimal for Lipschitz fields that satisfy standard moment estimates (Theorem \ref{thm:optTimeScale}).
We emphasize here that if uniform-in-$\kappa$ mixing were available for deterministic fields, then corresponding optimal improvements of \cite{CZDE18,IF19} could be proved with simpler arguments than those in \cite{CZDE18,IF19} (similarly, some of the results of \cite{CKRZ08}). However, such mixing estimates are currently unavailable.

In addition to the intrinsic interest, the results herein are a crucial step in our program on Lagrangian chaos and scalar mixing required for our proof of Batchelor's Law for the 
power spectrum of passive scalar turbulence in the forthcoming article \cite{BBPS19_BL}.
First conjectured in 1959 \cite{Batchelor59}, Batchelor's Law predicts that the distribution of $\EE \abs{\hat{g}_t(k)}^2$ behaves like $|k|^{-d}$ for statistically stationary passive scalars subject to random sources in the $\kappa \to 0$ limit with the Reynolds number of the fluid held fixed (the so-called Batchelor regime of passive scalar turbulence). Batchelor's law is the analogue of Kolmogorov's prediction of the $-5/3$ power law spectrum in 3D Navier-Stokes \cite{Frisch1995}.
Theorem \ref{thm:UniMix} below provides the quantitative information on the low-to-high frequency cascade required to verify this power spectrum law. 
This will be, to the best of our knowledge, the first-ever proof of a  
power spectrum law for the turbulent regime of a fundamental physical model. 
See, e.g., \cite{GibsonSchwarz63,AO03,AK04,AntonsenEtAl96,DSY10}, our forthcoming preprint \cite{BBPS19_BL}, and the references therein for more information. In particular, note that neither the validity or scope of Batchelor's law is completely settled 
in the physics literature (see discussions in \cite{MD96,AO03,DSY10}) and our results will show that the Batchelor spectrum is universal for a variety of different settings.

\subsection{Stochastic Navier-Stokes}
For simplicity, we first state our main results for the most physically interesting and mathematically challenging cases: the stochastic 2D Navier-Stokes equations and the 3D hyperviscous Navier-Stokes equations (on $\mathbb T^d$, $d=2,3$ respectively).
In Section \ref{sec:Finite} we discuss the setting used to study finite dimensional models, which allow for smoother (in both space and time) velocity fields.  

We define the natural Hilbert space on velocity fields $u:\T^d \to \R^d$ by
\begin{align}
  \Lbf^2 := \set{u \in L^2(\mathbb T^d;\Real^d)\,: \,\int u\, \dx = 0,\quad \Div u = 0},   
\end{align}
with the natural $L^2$ inner product. Let $W_t$ be a cylindrical Wiener process on $\Lbf^2$ with respect to an associated canonical stochastic basis $(\Omega,\mathscr{F},(\mathscr{F}_t),\P)$ and $Q$ a positive Hilbert-Schmidt operator on $\Lbf^2$, diagonalizable with respect the Fourier basis on $\Lbf^2$. We will assume that $Q$ satisfies the following regularity and non-degeneracy assumption (see Section \ref{sec:Finite} for more discussion):
\begin{assumption} \label{a:Highs}
  There exists $\alpha$ satisfying $\alpha > \frac{5d}{2}$ and a constant $C$ such that
\[
	\frac{1}{C}\|(-\Delta)^{-\alpha/2}u\|_{\Lbf^2} \leq \|Qu\|_{\Lbf^2} \leq C\|(-\Delta)^{-\alpha/2}u\|_{\Lbf^2}.
\]
\end{assumption}
\noindent
We define our primary phase space of interest to be velocity fields with sufficient Sobolev regularity: 
\[
	\Hbf := \set{u \in H^\sigma(\T^d, \R^d)\, :\, \int u \, \dx = 0,\quad \Div u = 0}, \quad \text{where}\quad \,\sigma \in (\alpha-2(d-1), \alpha  - \tfrac{d}{2}). 
\]
Note we have chosen $\alpha$ sufficiently large to ensure that $\sigma > \frac{d}{2} + 3$ so that $\Hbf \hookrightarrow C^3$. 

We consider $(u_t)$ evolving in $\Hbf$, which we refer to as the {\em velocity process}, by one of the two following stochastic PDEs: 
\begin{system}[2D Navier-Stokes equations]\label{sys:NSE}
\begin{equation}
\begin{dcases}
\,\partial_t u_t + u_t \cdot \grad u_t =- \grad p_t + \nu \Delta u_t + Q \dot W_t \\ 
\,\Div u_t = 0 \, , 
\end{dcases}
\end{equation}
where $u_0 = u \in \Hbf$. Here, the viscosity $\nu >0$ is a fixed constant.
\end{system}
\begin{system}[3D hyper-viscous Navier-Stokes]\label{sys:3DNSE}
\begin{equation}
\begin{dcases}
\,\partial_t u_t + u_t \cdot \grad u_t =- \grad p_t + \nu' \Delta u_t - \nu \Delta^{2} u_t + Q \dot W_t \\ 
\,\Div u_t = 0,
\end{dcases}
\end{equation}
where $u_0 = u \in \Hbf$. Here, the viscosity $\nu' \geq 0$, and hyperviscosity $\nu>0$ are fixed constants.
\end{system}

Since we will need to take advantage of the ``energy estimates'' produced by the vorticity structure of the Navier-Stokes equations in $2D$, we find it notationally convenient to define the following dimension dependent norm
\begin{equation}\label{eq:vorticity-norm}
\|u\|_{\Wbf} := \begin{cases}
\|\curl u\|_{\Lbf^2} & d=2\\
\|u\|_{\Lbf^2} & d=3.
\end{cases}
\end{equation}

The following well-posedness theorem is classical (See Section \ref{sec:Prelim}).   

\begin{proposition} \label{prop:WP}
For both Systems \ref{sys:NSE}, \ref{sys:3DNSE} and all initial data $u\in \Hbf$,
there exists a $\P$-a.s. unique, global-in-time, $\mathscr{F}_t$-adapted mild solution $(u_t)$ satisfying $u_0 = u$. 
Moreover, $(u_t)$ defines a Feller Markov process on $\Hbf$ and the corresponding Markov semigroup has a unique stationary probability measure $\mu$ on $\Hbf$.
\end{proposition}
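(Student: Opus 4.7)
The plan is to treat this as a largely classical result, following the standard decomposition approach for semilinear SPDE. First I would introduce the stationary Ornstein--Uhlenbeck process $z_t$ solving the linear equation $\mathrm{d}z_t = \cL z_t\,\mathrm{d}t + Q\,\mathrm{d}W_t$, where $\cL = \nu\Delta$ for System \ref{sys:NSE} and $\cL = \nu'\Delta - \nu\Delta^2$ for System \ref{sys:3DNSE}. Assumption \ref{a:Highs} with $\alpha > \tfrac{5d}{2}$ is exactly what is needed to guarantee, via a direct computation of the covariance $\int_0^\infty e^{s\cL}QQ^* e^{s\cL^*}\,\mathrm{d}s$ and Kolmogorov's continuity criterion, that $z_t$ has $\P$-a.s.\ sample paths in $C([0,\infty);\Hbf)$ with sufficient spatial regularity ($\Hbf \hookrightarrow C^3$) and all polynomial moments finite. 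The remainder $v_t := u_t - z_t$ then solves a pathwise random PDE with no stochastic integrator, which can be analyzed by deterministic methods.

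Global well-posedness of $v_t$ then reduces to a priori estimates. For System \ref{sys:NSE} I would pass to the vorticity formulation, where the forcing from $z_t$ is controlled in every Sobolev norm by the moment bounds above, and the standard 2D vorticity energy estimate (exploiting the $\Wbf$-norm in \eqref{eq:vorticity-norm}) closes the argument. For System \ref{sys:3DNSE} the hyperviscous dissipation $-\nu\Delta^2$ dominates the nonlinearity in $\Hbf$ by Sobolev embedding and interpolation, yielding global $\Hbf$-bounds. Uniqueness comes from Gr\"onwall applied to the difference of two solutions. Continuous dependence on initial data in $\Hbf$ follows from the same estimates (noting that $z_t$ is independent of the initial condition), giving the Feller property of the associated Markov semigroup $(P_t)$.

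For the invariant measure, existence is a routine Krylov--Bogoliubov argument: uniform-in-time moment bounds on $\E\|u_t\|_\Hbf^p$, obtained from It\^o's formula and a balance between stochastic injection (controlled by $\tr(QQ^*)$) and dissipation, combined with compactness of the embedding $H^{\sigma'} \hookrightarrow \Hbf$ for $\sigma' > \sigma$, produce tightness of the time-averaged distributions $\tfrac{1}{T}\int_0^T P_t^*\delta_u\,\mathrm{d}t$ in $\Hbf$. Uniqueness is the most subtle point. Since Assumption \ref{a:Highs} forces $Q$ to be non-degenerate on every Fourier mode, I would invoke the Flandoli--Maslowski / Da Prato--Zabczyk framework, verifying the strong Feller property (via a Bismut--Elworthy--Li type identity exploiting the full-rank, highly regular noise) and irreducibility (via a controllability argument steering any initial condition to a neighborhood of $0$ using the Gaussian forcing) on $\Hbf$, and concluding through Doob's theorem. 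The main technical obstacle is the strong Feller verification, which requires a careful interplay between the noise regularization and the unbounded Navier--Stokes nonlinearity; however, the strong non-degeneracy and high regularity encoded in Assumption \ref{a:Highs} are precisely what is needed to make the classical SPDE arguments go through without appealing to the subtler asymptotic strong Feller machinery.
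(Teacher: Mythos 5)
Your proposal is correct and follows the standard classical route that the paper itself defers to: the paper does not give its own proof of Proposition~\ref{prop:WP} but simply labels it ``classical'' and points to the references [KS, DPZ96], and your sketch (OU decomposition to a pathwise random PDE, vorticity/hyperviscous a priori estimates, Krylov--Bogoliubov for existence, and strong Feller plus irreducibility plus Doob's theorem for uniqueness, all enabled by the fully non-degenerate spectrally-diagonal noise of Assumption~\ref{a:Highs}) is precisely the argument those references carry out. The one remark worth making is that the paper itself flags (in Section~\ref{sec:Finite}) that the non-degenerate noise assumption is used exactly to access strong Feller and avoid the asymptotic strong Feller framework of Hairer--Mattingly, which is the same observation you make at the end of your proposal, so your reasoning aligns with the authors' own stated rationale.
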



\subsection{Main results}

The first result is uniform-in-$\kappa$ exponential mixing for passive scalars.  
It is important to emphasize that the methods we employ in Theorem \ref{thm:UniMix} are inherently stochastic. This is not simply because they rely directly on the results of  \cite{BBPS19,BBPS18}, 
but also because the extension from $\kappa = 0$ to $\kappa > 0$ 
requires the use of the stochastic nature of Systems \ref{sys:NSE}--\ref{sys:3DNSE}. 
A general method for extending exponential mixing at $\kappa = 0$ to uniform-in-$\kappa$ mixing does not, to our knowledge, currently exist.
Here and for the remainder of the paper, implicit constants will \emph{never depend on $\omega$, $\kappa$, $t$, $(u_t)$, or $(g_t)$}. See Section \ref{sec:Note} for notation conventions. 

\begin{theorem}[Uniform mixing] \label{thm:UniMix}
For each of Systems \ref{sys:NSE} -- \ref{sys:3DNSE}, there exists a \emph{deterministic} $\kappa_0 > 0$ such that for all $s>0$, $p \geq 1$
there exists a \emph{deterministic} $\gamma = \gamma(s, p) > 0$ (depending only on $s,p$ and the parameters $Q$, $\nu$ etc) which satisfies the following properties.  
For all $\kappa \in [0,\kappa_0]$, and for all $u \in \Hbf$ 
there is a $\PP$-a.s. finite random constant $D_{\kappa}(\omega,u): \Omega \times \Hbf \to [1,\infty)$ (also depending on $p, s$) such that the solution to \eqref{def:AD} with $(u_t)$ given by the corresponding System \ref{sys:NSE} or \ref{sys:3DNSE} with initial data $u$, satisfies for all $g \in H^s$ (mean-zero), 
\begin{align}
\norm{g_t}_{H^{-s}} \leq D_\kappa(\omega,u) e^{-\gamma t} \norm{g}_{H^s} \, ,
\end{align}
where  $D_\kappa(\omega, u)$ satisfies the following $\kappa$-independent bound: there exists a $\beta \geq 2$ (independent of $u$, $p$, $s$) such that for all $\eta > 0$,  
\begin{align}
\EE D_\kappa^p(\cdot,u) \lesssim_{\eta,p} (1 + \norm{u}_{\Hbf})^{p \beta} \exp \left( \eta \norm{u}_{\Wbf}^2 \right) \, . \label{ineq:Dkap1}
\end{align}
\end{theorem}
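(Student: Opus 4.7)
The plan is to upgrade the $\kappa=0$ almost-sure exponential mixing of \cite{BBPS19} to a uniform-in-$\kappa$ statement by combining a deterministic PDE comparison estimate on fixed finite time windows with a probabilistic geometric ergodicity argument for an appropriate projective Markov chain. The central observation is that on any time window $[0,T]$, the $\kappa>0$ and $\kappa=0$ solutions of \eqref{def:AD} agree up to an error $\norm{g_T^\kappa - g_T^0}_{H^{-s}} \lesssim \kappa T \sup_{t\leq T}\norm{g_t^0}_{H^{s+2}}$; by choosing $T$ large enough that \cite{BBPS19} produces strong mixing on a high-probability event, and then $\kappa_0$ small enough that $\kappa_0 T$ is negligible against the mixing gain, one obtains a single-step strict contraction of the ``mixing ratio'' $\norm{g}_{H^s}/\norm{g}_{H^{-s}}$ uniformly in $\kappa\in[0,\kappa_0]$.

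Exploiting the linearity of \eqref{def:AD} in $g$, I would then recast this as geometric ergodicity for the discrete-time Markov chain
\[
\Xi_n := \bigl(u_n,\; g_n/\norm{g_n}_{H^{-s}}\bigr)
\]
on $\Hbf$ times the unit sphere in $H^{-s}$, via a Harris or Hairer--Mattingly style theorem. This reduces to verifying, uniformly in $\kappa\in[0,\kappa_0]$, a Foster--Lyapunov drift condition and a minorization on the sub-level sets of the Lyapunov function. A natural candidate Lyapunov function is
\[
V(u,g) \;=\; (1+\norm{u}_{\Hbf})^{\beta}\exp\!\bigl(\eta\norm{u}_{\Wbf}^2\bigr)\bigl(1+\log(\norm{g}_{H^s}/\norm{g}_{H^{-s}})\bigr),
\]
where the first two factors come from standard polynomial and exponential supermartingale bounds on the $H^\sigma$-norm and enstrophy of the Navier--Stokes flow (which is precisely where the Gaussian factor $\exp(\eta\norm{u}_\Wbf^2)$ in \eqref{ineq:Dkap1} comes from), and the last factor encodes the one-step contraction described above. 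The drift inequality for $V$ is verified by combining (a) the $\kappa=0$ mixing of \cite{BBPS19} on a sufficiently large deterministic window, (b) the Duhamel/Gr\"onwall comparison above, whose $H^{s+2}$ Sobolev estimates rely on the $C^3$ regularity of $u_t$ guaranteed by $\Hbf\hookrightarrow C^3$, and (c) standard parabolic $H^s$ estimates, which only improve in the diffusive regime.

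The main obstacle I anticipate is the uniform-in-$\kappa$ minorization: one must establish a common lower bound on the transition kernel of $\Xi_n$ on the sub-level sets $\{V\leq R\}$ that does not degenerate as $\kappa\to 0^+$. For $\kappa=0$ this is the kind of statement proved via the Malliavin/H\"ormander analysis of the two-point Lagrangian chain in \cite{BBPS18} combined with controllability of the stochastic Navier--Stokes dynamics, which produces topological irreducibility and strong-Feller smoothing on the projective chain. The delicate point is that the additional parabolic smoothing in $g$ must be shown not to destroy the reachable-set structure of that $\kappa=0$ argument; morally the Laplacian only spreads mass and therefore helps the minorization, but obtaining a lower bound that survives the singular limit $\kappa\to 0$ is where the stochasticity of the \emph{velocity} forcing is essential, as a deterministic velocity would offer no independent mechanism to wash out the loss of compactness in $g$.

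Once uniform geometric ergodicity for $\Xi_n$ is in hand, the pointwise almost-sure exponential decay with random constant $D_\kappa(\omega,u)$ follows by a Borel--Cantelli argument applied along geometric subsequences of times, using the drift inequality to sum the tails of the exceptional events. The moment bound \eqref{ineq:Dkap1}, with its polynomial dependence on $\norm{u}_\Hbf$ and Gaussian dependence on $\norm{u}_\Wbf$, is then read off directly from the structure of the Lyapunov function $V$ and the standard exponential-moment estimates for the enstrophy of the 2D stochastic Navier--Stokes equations (and their 3D hyperviscous analogue).
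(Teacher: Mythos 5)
Your proposal runs into several genuine obstructions, the first of which the paper explicitly flags in its introduction as the central difficulty.

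First, the perturbative comparison step is not uniform in $\kappa$ for a \emph{deterministic} $\kappa_0$. The error estimate you write, $\norm{g_T^\kappa - g_T^0}_{H^{-s}} \lesssim \kappa T \sup_{t\le T}\norm{g_t^0}_{H^{s+2}}$, contains on the right-hand side a factor $\sup_{t\le T}\norm{g_t^0}_{H^{s+2}}$ that grows like $\norm{g}_{H^{s+2}}\exp\bigl(C\int_0^T\norm{\nabla u_\tau}_{L^\infty}\,d\tau\bigr)$. This factor is a random variable with only exponential (not almost-sure) tail control; on any window $[0,T]$ there is positive probability that it is enormous, at which point no deterministic choice of $\kappa_0$ can make the perturbation small relative to the $\kappa=0$ mixing gain. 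So there is no ``single-step strict contraction of the mixing ratio uniformly in $\kappa\in[0,\kappa_0]$'' on a high-probability event: one would need $\kappa_0 = \kappa_0(\omega)$, which defeats the goal. This is exactly the sense in which $\kappa>0$ is a singular perturbation of $\kappa=0$ (the paper makes this point explicitly); the comparison-on-finite-windows strategy treats it as regular.

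Second, the candidate Lyapunov function $V(u,g) = V_{\beta,\eta}(u)\bigl(1+\log(\norm{g}_{H^s}/\norm{g}_{H^{-s}})\bigr)$ cannot satisfy a Harris drift condition, because under mixing the log-ratio \emph{grows}: $\norm{g_t}_{H^{-s}}$ decays exponentially while $\norm{g_t}_{H^s}$ grows exponentially, so $\log(\norm{g_t}_{H^s}/\norm{g_t}_{H^{-s}})$ increases roughly linearly in $t$. A linearly growing functional cannot satisfy $\mathcal P V \le \gamma V + K$ with $\gamma<1$ (this would force the growth rate to slow as $V$ grows, which it does not). The mixing rate is related to the \emph{mean increment} of the log-ratio, not to a geometric contraction of it, and Harris's theorem is not the right tool for that additive quantity.

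Third, and most structurally, your chain $\Xi_n=(u_n,g_n/\norm{g_n}_{H^{-s}})$ lives on $\Hbf$ times an infinite-dimensional sphere in $H^{-s}$, and at $\kappa=0$ the $g$-component evolves by pure transport, which provides no smoothing in $g$ whatsoever. The strong-Feller/H\"ormander analysis of \cite{BBPS18} that you invoke is for the finite-dimensional Lagrangian two-point/projective process on $\Hbf\times\T^d\times\T^d$ (or $\Hbf\times P\T^d$), not for a chain whose second coordinate is a normalized passive scalar. There is no mechanism in the proposal by which the $u$-noise can produce a minorization over the $g$-sphere. This is precisely why the paper does not work directly with $g$ at all: it passes to the two-point Lagrangian process $(u_t,x_t^\kappa,y_t^\kappa)$, proves $\kappa$-uniform geometric ergodicity for \emph{that} process (with a Lyapunov function built from the dominant eigenfunction $\psi_{p,\kappa}$ of a twisted projective semigroup, whose $\kappa$-dependence must be controlled by strong-operator limits and compactness rather than spectral perturbation), and only then recovers almost-sure $H^{-s}$ decay of $g_t$ via the Lagrangian representation $g_t=\widetilde{\EE}\,g\circ(\phi^t_\kappa)^{-1}$ and a Borel--Cantelli argument. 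The reduction to a finite-dimensional Lagrangian process is not an incidental detail; it is what makes the minorization tractable.
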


Theorem \ref{thm:UniMix} implies enhanced dissipation as well.
Indeed, the Sobolev interpolation $\norm{g}_{L^2} \leq \norm{g}_{H^1}^{1/2}\norm{g}_{H^{-1}}^{1/2}$ relates the dissipation rate directly to the ratio of the $L^2$ and $H^{-1}$ norms:  
\begin{align}
\frac{\dee}{\dt}\norm{g_t}_{L^2}^2 = - 2 \kappa \norm{\grad g_t}_{L^2}^2 \leq -2 \kappa \frac{\norm{g_t}_{L^2}^2}{\norm{g_t}_{H^{-1}}^2} \norm{g_t}_{L^2}^2. 
\end{align}
Theorem \ref{thm:UniMix} in turn provides a quantitative lower bound on the dissipation rate that is integrated and combined with parabolic regularity to deduce the following enhanced dissipation (See Section \ref{sec:ED} for more details). 
The recent quantitative works of \cite{CZDE18,IF19} and the earlier more qualitative works \cite{CKRZ08,Zlatos2010} required much more subtle arguments because there is not yet an analogue of Theorem \ref{thm:UniMix} for any deterministic velocity fields.
Theorem \ref{thm:ED} also provides stronger results than those of \cite{CZDE18,IF19} in terms of both the rate of decay and the characteristic time-scale of enhanced dissipation.

\begin{theorem}[Enhanced dissipation] \label{thm:ED}
In the setting of Theorem \ref{thm:UniMix}, for any $p \geq 2$, let $\gamma = \gamma(1,p)$ be as in Theorem \ref{thm:UniMix}.
For all $\kappa \in (0,\kappa_0]$, and for all $u \in \Hbf$ 
there is a $\PP$-a.s. finite random constant $D_{\kappa}^\prime(\omega,u): \Omega \times \Hbf \to [1,\infty)$ (also depending on $p$) such that the solution to \eqref{def:AD}, satisfies for all $g \in H^s$ (mean-zero) and $u\in \Hbf$,
\begin{align}
\norm{g_t}_{L^2} \leq D_{\kappa}^\prime(\omega,u) \kappa^{-1} e^{-\gamma t} \norm{g}_{L^2}, \label{ineq:ED}
\end{align}
where $D_\kappa^\prime$ also satisfies the following $\kappa$-independent bound for $\beta$ sufficiently large (independent of $u$, $p$, $\kappa$) and for all $\eta > 0$, 
\begin{align}
\EE \left(D_\kappa'(\cdot,u)\right)^p \lesssim_{\eta,p} (1 + \norm{u}_{\Hbf})^{p \beta} \exp \left( \eta \norm{u}_{\Wbf}^2 \right) \, . \label{ineq:Dpkapps}
\end{align}
\end{theorem}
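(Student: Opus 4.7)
The plan is to bootstrap from Theorem \ref{thm:UniMix} by first using parabolic regularization to convert an $L^2$ datum into an $H^1$ datum (paying a factor of $\kappa^{-1/2}$ and a random constant), then invoking the uniform mixing estimate, and finally closing a Bernoulli-type ODE using the standard interpolation $\|g_t\|_{L^2}^2 \leq \|g_t\|_{H^{-1}} \|g_t\|_{H^1}$.

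Concretely, fix a short deterministic time $\tau_0 > 0$ (say $\tau_0 = 1$). From the energy identity $\tfrac{\dee}{\dt}\|g_t\|_{L^2}^2 = -2\kappa\|\grad g_t\|_{L^2}^2$, integration on $[0,\tau_0/2]$ and the mean-value theorem yield some $s^\ast \in [0,\tau_0/2]$ with $\|\grad g_{s^\ast}\|_{L^2}^2 \leq \tfrac{2}{\kappa \tau_0}\|g\|_{L^2}^2$. Propagating $H^1$ on $[s^\ast,\tau_0]$ via the standard estimate $\tfrac{\dee}{\dt}\|\grad g_t\|_{L^2}^2 \leq C\|\grad u_t\|_{L^\infty}\|\grad g_t\|_{L^2}^2$ and Gr\"onwall gives
\begin{equation}
\|g_{\tau_0}\|_{H^1} \leq \kappa^{-1/2} R_1(\omega,u) \|g\|_{L^2}, \qquad R_1(\omega,u) = C\exp\Bigl(C\int_0^{\tau_0}\|\grad u_s\|_{L^\infty}\,\ds\Bigr).
\end{equation}
Since $\Hbf \hookrightarrow C^3$ and standard Navier--Stokes estimates give exponential moments for $\int_0^{\tau_0}\|u_s\|_{\Hbf}^2\,\ds$, the quantity $R_1$ has all polynomial moments with the tail bound compatible with \eqref{ineq:Dpkapps}.

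Now apply Theorem \ref{thm:UniMix} with $s=1$ and time origin shifted by $\tau_0$ (using the Markov property and Feller structure from Proposition \ref{prop:WP}): for $t \geq \tau_0$,
\begin{equation}
\|g_t\|_{H^{-1}} \leq D_\kappa(\theta_{\tau_0}\omega,u_{\tau_0})\, e^{-\gamma(t-\tau_0)} \|g_{\tau_0}\|_{H^1} \leq D_\kappa(\theta_{\tau_0}\omega,u_{\tau_0})\, R_1(\omega,u)\, \kappa^{-1/2}\, e^{-\gamma(t-\tau_0)}\|g\|_{L^2}.
\end{equation}
Since $g_t$ is mean-zero, $\|g_t\|_{H^1}^2 \gtrsim \|\grad g_t\|_{L^2}^2$, and interpolation gives $\|\grad g_t\|_{L^2}^2 \gtrsim \|g_t\|_{L^2}^4 / \|g_t\|_{H^{-1}}^2$. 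Substituting the upper bound on $\|g_t\|_{H^{-1}}$ into the energy identity yields, with $y(t):=\|g_t\|_{L^2}^2$ and $A := (D_\kappa R_1)^{-2}\kappa^{2}\|g\|_{L^2}^{-2}$,
\begin{equation}
y'(t) \leq -c\, A\, e^{2\gamma(t-\tau_0)}\, y(t)^2, \qquad t\geq \tau_0,
\end{equation}
which, upon dividing by $y^2$ and integrating, gives $y(t)^{-1} \geq y(\tau_0)^{-1} + \tfrac{cA}{2\gamma}(e^{2\gamma(t-\tau_0)}-1)$ and therefore
\begin{equation}
\|g_t\|_{L^2} \leq C\, \kappa^{-1}\, D_\kappa(\theta_{\tau_0}\omega,u_{\tau_0})\, R_1(\omega,u)\, e^{-\gamma(t-\tau_0)}\|g\|_{L^2}.
\end{equation}
For $t \in [0,\tau_0]$ we simply use $\|g_t\|_{L^2} \leq \|g\|_{L^2}$ and absorb $e^{\gamma\tau_0}\kappa^{-1}$ (valid for $\kappa \leq \kappa_0$) into the prefactor. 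Defining $D_\kappa'(\omega,u) := C e^{\gamma\tau_0} D_\kappa(\theta_{\tau_0}\omega,u_{\tau_0}) R_1(\omega,u)$ produces \eqref{ineq:ED}.

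The main obstacle is establishing the moment bound \eqref{ineq:Dpkapps}. To control $\E (D_\kappa')^p$ we apply the Markov property at time $\tau_0$: conditioning on $\mathscr F_{\tau_0}$, the factor $D_\kappa(\theta_{\tau_0}\omega,u_{\tau_0})$ gains the inductive moment bound \eqref{ineq:Dkap1} evaluated at $u_{\tau_0}$, which involves $(1+\|u_{\tau_0}\|_{\Hbf})^{p\beta}\exp(\eta\|u_{\tau_0}\|_{\Wbf}^2)$. Then we apply Cauchy--Schwarz to separate this factor from $R_1^p$, and use standard exponential moment estimates for the 2D and 3D hyperviscous stochastic Navier--Stokes equations (in particular, exponential moments of $\sup_{s\leq\tau_0}\|u_s\|_{\Wbf}^2$ and polynomial moments in the stronger $\Hbf$-norm, both known from the well-posedness theory underlying Proposition \ref{prop:WP}) to reduce the bound at time $\tau_0$ to one depending only on the initial datum $u$. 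Propagating the $\exp(\eta\|u_s\|_{\Wbf}^2)$ factor down to time zero with a possibly larger constant in front of $\|u\|_{\Wbf}^2$ — still arbitrarily small by choice of parameters — then produces \eqref{ineq:Dpkapps} with $\beta$ possibly larger than in Theorem \ref{thm:UniMix}. This is delicate because $R_1$ involves integrals of $\|\grad u_s\|_{L^\infty}$ appearing inside an exponential, so the relevant moment estimates for the velocity process must be sharp enough to ensure that the convolution $\E[D_\kappa^p R_1^p]$ does not blow up the form of the tail bound.
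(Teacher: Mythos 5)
Your proposal follows essentially the same three-step strategy as the paper's proof: (i) parabolic smoothing to upgrade $L^2$ data to $H^1$ at time $\tau_0$ at the cost of $\kappa^{-1/2}$ and a random constant with moments controlled by Lemma \ref{lem:TwistBd}, (ii) the interpolation/Bernoulli ODE driven by the $H^{-1}$ decay of Theorem \ref{thm:UniMix}, and (iii) the Markov property at time $\tau_0$ plus Cauchy--Schwarz to propagate the moment bound \eqref{ineq:Dpkapps}. The only differences are cosmetic — the paper states the Bernoulli lemma first for $H^1$ data and then plugs in $g_1$ with its parabolic smoothing bound \eqref{ineq:paraReg}, whereas you regularize first and then close the ODE on $[\tau_0,\infty)$, and your choice of $R_1$ (via mean-value plus Gr\"onwall) differs in inessential details from the paper's $\widetilde D$ — so the argument is correct.
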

\begin{remark}
Note that Theorem \ref{thm:ED} implies the following: 
\[
	\|g_t\|_{L^2} \lesssim D_{\kappa}' e^{-\delta |\log\kappa|^{-1}t}\|g\|_{L^2},
\]
where the implicit constant does not depend on $\kappa$ and $D_\kappa'$ satisfies \eqref{ineq:Dpkapps}.  
Both results give the same characteristic time-scale of decay ($\tau_{ED} \sim \abs{\log \kappa}$) but Theorem \ref{thm:ED} gives faster drop off past that time.
\end{remark}
The next estimate shows that the $\log \kappa$ dissipation time-scale is optimal for $H^1$ data. 
This estimate is a simple consequence of the regularity of the velocity field, which implies small scales in the passive scalar cannot be generated faster than exponential.
The estimate is basically trivial for bounded, deterministic velocity fields; for unbounded stochastic velocity fields that can make large deviations, the dissipation time scale is a stopping time, and the estimate is less trivial. Lower bounds on this time show that the $|\log{\kappa}|$ timescale is optimal. See Section \ref{sec:EDopt} for a proof.
\begin{theorem}[Optimality of the $\abs{\log \kappa}$ time-scale]\label{thm:optTimeScale}
In the setting of Theorem \ref{thm:UniMix}, let
\[
\tau_\ast = \inf\set{t: \norm{g_t}_{L^2} < \tfrac{1}{2}\|g\|_{L^2}} \, .
\]
Then, there exists a $\kappa_0 > 0$ a sufficiently small universal constant such that
for all $\kappa\in (0,\kappa_0]$, one has
\[
	\tau_\ast \geq \delta(g,u,\omega) |\log{\kappa}| \quad\text{with probability}\quad 1 \, , 
\]
where $\delta (g, u, \omega) \in (0,1)$ is a $\kappa$-independent random constant
with the property that for all $\beta \geq 1, p \geq 1$ and $\eta > 0$, 
\begin{align}
\E \delta^{-p} \leqc_{p,\eta, \beta} \frac{\|g\|_{L^2}^p}{\|g\|_{H^1}^p} \left(1 + \norm{u}_{\Hbf}\right)^{p \beta} \exp \left( \eta \norm{u}_{\Wbf}^2 \right) \, . \label{ineq:deltap}
\end{align}
\end{theorem}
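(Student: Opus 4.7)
The core observation is purely deterministic and holds pathwise for any Lipschitz divergence-free velocity field: small scales in $g_t$ cannot form faster than exponentially, so the diffusive term $\kappa\|\nabla g_t\|_{L^2}^2$ that drives the $L^2$-decay remains small on the $|\log\kappa|$ timescale unless the Lipschitz norm of $u$ integrates to something of order $|\log\kappa|$. The stochastic input enters only at the last step, through moment bounds on $\int_0^T\|u_s\|_{\Hbf}\,\ds$, which are standard for Systems \ref{sys:NSE}--\ref{sys:3DNSE}.

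\emph{Step 1: Gronwall on the gradient.} From $\partial_t g + u\cdot\grad g = \kappa\Delta g$ and the incompressibility of $u$, a routine energy identity for $\|\nabla g_t\|_{L^2}^2$ yields the pathwise bound
\[
\|\nabla g_t\|_{L^2}^2 \leq \|\nabla g\|_{L^2}^2\,\exp\!\Bigl(2\int_0^t \|\nabla u_s\|_{L^\infty}\,\ds\Bigr),
\]
the diffusion term $-2\kappa\|\Delta g_t\|_{L^2}^2$ being favorable and discarded. Set $A_t := \int_0^t \|\nabla u_s\|_{L^\infty}\,\ds$.

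\emph{Step 2: The pathwise lower bound on $\tau_\ast$.} Combining Step 1 with the $L^2$ balance $\|g_t\|_{L^2}^2 = \|g\|_{L^2}^2 - 2\kappa\int_0^t \|\nabla g_s\|_{L^2}^2\,\ds$ and evaluating at $t=\tau_\ast$, where $\|g_{\tau_\ast}\|_{L^2}^2 = \tfrac14\|g\|_{L^2}^2$, gives
\[
\tfrac{3}{4}\|g\|_{L^2}^2 \;\leq\; 2\kappa\,\|\nabla g\|_{L^2}^2\,\tau_\ast\, e^{2A_{\tau_\ast}}.
\]
Taking logarithms, on the event $\{\tau_\ast < T\}$ we have
\[
|\log\kappa| \;\leq\; \log T + 2A_T + 2\log\!\frac{\|g\|_{H^1}}{\|g\|_{L^2}} + O(1).
\]

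\emph{Step 3: Choosing $T$ and turning the inequality into a probability bound.} Pick $T = \delta_0\,|\log\kappa|$ with $\delta_0 \in (0,1)$ a small parameter to be optimized. For $\kappa \leq \kappa_0$ small enough (in a universal, $g$-independent way), the logarithmic term $\log T$ is negligible against $|\log\kappa|$; absorbing the $g$-factor by tracking it through $\delta$, we obtain on $\{\tau_\ast < T\}$ the cleaner inequality
\[
A_T \;\geq\; c\,|\log\kappa| + \log\!\frac{\|g\|_{L^2}}{\|g\|_{H^1}}
\]
for a fixed small $c>0$. Thus $\PP(\tau_\ast < T)$ is controlled by the probability that the Lipschitz norm of $u$ accumulates anomalously over $[0,T]$.

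\emph{Step 4: Moment bounds from stochastic Navier--Stokes.} Since $\Hbf \hookrightarrow C^3$, Sobolev embedding gives $\|\nabla u_s\|_{L^\infty} \lesssim \|u_s\|_{\Hbf}$, so $A_T \lesssim \int_0^T \|u_s\|_{\Hbf}\,\ds$. Standard Itô/energy estimates for Systems \ref{sys:NSE}--\ref{sys:3DNSE} (see Section \ref{sec:Prelim}) furnish, for every $p\geq 1$,
\[
\EE \Bigl(\int_0^T \|u_s\|_{\Hbf}\,\ds\Bigr)^p \;\lesssim_p\; T^p (1+\|u\|_{\Hbf})^{p\beta},
\]
together with the exponential moment bound $\EE\exp\!\bigl(\eta\!\int_0^T \|u_s\|_{\Wbf}^2\,\ds\bigr) \lesssim \exp(\eta\|u\|_{\Wbf}^2 + CT)$ for small $\eta>0$. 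A Markov inequality on the polynomial moment, together with the bound from Step 3, yields
\[
\PP\bigl(\tau_\ast < \delta_0|\log\kappa|\bigr) \;\lesssim_p\; \delta_0^{\,p}\,(1+\|u\|_{\Hbf})^{p\beta}\,\Bigl(\tfrac{\|g\|_{L^2}}{\|g\|_{H^1}}\Bigr)^{\!p}
\]
after incorporating the residual $\log(\|g\|_{H^1}/\|g\|_{L^2})$ term. Integrating the tail $\PP(\tau_\ast/|\log\kappa| < t)$ against $p t^{p-1}\,\dee t$ and splitting the integral at a threshold proportional to the right-hand side produces the claimed bound \eqref{ineq:deltap}; the exponential factor $\exp(\eta\|u\|_{\Wbf}^2)$ is inserted via the $\Wbf$-energy estimate to upgrade the polynomial bound if a more precise accounting of large $\|u\|_{\Hbf}$ events is required.

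\emph{Main obstacle.} The deterministic Steps 1--2 are essentially a one-line Gronwall argument. The delicate point is arranging Step 4 so that the factor $\|g\|_{L^2}^p/\|g\|_{H^1}^p$ appears in \eqref{ineq:deltap} with the correct matching of constants: initial data with large $\|g\|_{H^1}/\|g\|_{L^2}$ (already oscillatory) really can dissipate in shorter time, and this must be tracked carefully when defining $\delta$ so that the hypothesis $\kappa \leq \kappa_0$ (a universal constant) is not violated by the effective threshold $\kappa \lesssim (\|g\|_{L^2}/\|g\|_{H^1})^{c}$ demanded by the logarithmic rearrangement in Step 2. Once that bookkeeping is done correctly, moment bounds on $\int_0^T\|u_s\|_{\Hbf}\,\ds$ feed directly into the desired estimate.
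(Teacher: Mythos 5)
Your deterministic core (Steps 1--2) is essentially the paper's starting point \eqref{ineq:ExLwBd}, but the probabilistic half of your argument diverges from the paper's and, as written, does not deliver the statement. The paper does not use fixed-time moment bounds plus Markov's inequality at all. Instead it invokes an \emph{almost-sure, all-time} growth bound (Lemma \ref{lem:BVH1explo}, proved from the super-Lyapunov estimate of Lemma \ref{lem:TwistBd} via Borel--Cantelli or Birkhoff): there is a deterministic rate $\lambda>0$ and a single $\kappa$-independent random constant $\overline D(u,\omega)$ with good moments such that $\exp(\int_0^t\|\grad u_\tau\|_{L^\infty}\,\dee\tau)\leq \overline D e^{\lambda t}$ for \emph{all} $t$ simultaneously, $\PP$-a.s. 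Feeding this into the energy balance gives the pathwise bound $\|g_t\|_{L^2}^2\geq \|g\|_{L^2}^2-\kappa\|g\|_{H^1}^2\,\overline D\, t\, e^{\lambda t}$, and the choice $\delta=\min\{\|g\|_{L^2}^2/(\|g\|_{H^1}^2\overline D),\,1/(2\lambda)\}$ makes the loss term at $t=\delta|\log\kappa|$ at most $|\log\kappa|\,\kappa^{1/2}\,\|g\|_{L^2}^2$, which is $<\tfrac34\|g\|_{L^2}^2$ for all $\kappa\leq\kappa_0$ with $\kappa_0$ universal. This yields a probability-one conclusion with a $\kappa$-independent $\delta$, exactly as the theorem demands, and the moment bound on $\delta^{-p}$ is inherited directly from that on $\overline D^{\,p}$.

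Your Steps 3--4 have three concrete problems. First, the conclusion is only a tail bound $\PP(\tau_\ast<\delta_0|\log\kappa|)\leq\cdots$, so the $\delta$ you ultimately extract by integrating the tail of $\tau_\ast/|\log\kappa|$ is necessarily $\kappa$-dependent, whereas the theorem requires a $\kappa$-independent random constant (this matters downstream, e.g.\ for Remark \ref{rmk:ExpOpt}). Second, after taking logarithms the term $\log(\|g\|_{H^1}/\|g\|_{L^2})$ is not negligible: for oscillatory data it can exceed $c|\log\kappa|$, in which case your threshold $a=c|\log\kappa|-\log(\|g\|_{H^1}/\|g\|_{L^2})$ is negative and Markov's inequality gives nothing; this is exactly the regime your ``main obstacle'' paragraph worries about, and it cannot be repaired within the logarithmic rearrangement. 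The paper sidesteps it by never taking logarithms, keeping the multiplicative structure and letting $\delta$ itself carry the factor $\|g\|_{L^2}^2/(\|g\|_{H^1}^2\overline D)$. Third, even in the favorable regime your Markov computation gives $\PP(A_T\geq c|\log\kappa|)\lesssim (T/(c|\log\kappa|))^p=(\delta_0/c)^p$ with no factor of $\|g\|_{L^2}/\|g\|_{H^1}$; the $g$-ratio in \eqref{ineq:deltap} arises from the definition of $\delta$, not from the tail probability, so the displayed tail estimate in your Step 4 does not follow from the computation preceding it. The missing idea is the almost-sure linear-in-time control of $\int_0^t\|\grad u_s\|_{L^\infty}\,\ds$ uniformly over all $t$.
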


\begin{remark} \label{rmk:ExpOpt}
The proof of Theorem \ref{thm:optTimeScale} shows that the $H^{-1}$ exponential decay of Theorem \ref{thm:UniMix} is sharp even in the presence of diffusion\footnote{The case without diffusion follows almost immediately from the multiplicative ergodic theorem (see \cite{BBPS18}), however, it requires an additional check to ensure that the random constant $\overline{D}$ possesses good moment bounds (Lemma \ref{lem:BVH1explo}).}
That is, for all $p \geq 1$, there exists an almost-surely finite random constant $\overline{D}(\omega,u)$ (independent of $\kappa$) and a deterministic $\mu = \mu(p) > 0$ (independent of $u,\kappa$) such that for all $g \in H^1$, and $t < \tau_\ast$ (as in Theorem \ref{thm:optTimeScale}), 
\begin{align}
\norm{g_t}_{H^{-1}} \geq \overline{D}(\omega,u) e^{-\mu t} \frac{\norm{g}_{L^2}^2}{\norm{g}_{H^1}}. 
\end{align}
Moreover, the random constant satisfies $\EE(\overline{D})^{-p} \lesssim _{\eta,p} (1 + \norm{u}_{\Hbf})^{p \beta} \exp \left( \eta \norm{u}_{\Wbf}^2 \right)$ as in e.g. \eqref{ineq:Dkap1}. 
\end{remark}

\subsection{Finite dimensional models and $C_t^k C^\infty_x$ examples} \label{sec:Finite}
Assumption \ref{a:Highs} essentially says that the forcing is $QW_t$ has high spatial regularity, but cannot be $C^\infty$. The non-degeneracy requirement on $Q$ can be weakened to a more mild non-degeneracy at only high-frequencies (see \cite{BBPS18}), but fully non-degenerate noise simplifies some arguments.
As discussed in \cite{BBPS18,BBPS19}, non-degenerate noise is used to prove strong Feller for the infinite dimensional Furstenberg criterion [Theorem 4.7, \cite{BBPS18}] on which \cite{BBPS19}, and hence this work, depends critically. 
It is also used in \cite{BBPS19} and here to access geometric ergodicity in a wider variety of spaces than that currently available in asymptotically strong Feller frameworks of \cite{HM06,HM11} (see discussions in \cite{BBPS19} for more details). 
In \emph{all other places} in \cite{BBPS18,BBPS19} and here, non-degenerate noise is used only to reduce the length and complexity of the works. However, for velocity fields evolving according to finite dimensional models, degenerate noise is easily treated by H\"ormander's theorem. 
This provides a robust way to produce examples of $C_t^k C^\infty_x$ random fields satisfying Theorems \ref{thm:UniMix} and \ref{thm:ED}. 

To make this more precise: 
in all cases considered in this work, the additive noise term $Q\dot{W}_t$ can be represented in terms of a Fourier basis $\{e_m\}_{m\in \mathbb{K}}$ on $\Lbf^2$ by
\[
	Q\dot{W}_t = \sum_{m\in\mathbb{K}} q_m e_m \dot{W}^m_t 
\]
where $\mathbb{K} := \Z^d_0 \times \{1,\ldots,d-1\}$ and $\{W^m_t\}_{m\in \mathbb{K}}$ are a collection of iid one-dimensional Wiener processes with respect to $(\Omega,\mathscr{F},(\mathscr{F}_t),\P)$ see (Section \ref{sec:Prelim} for more details and the precise definition of the Fourier basis). 

In this notation, we can consider the following weaker non-degeneracy condition: 
\begin{assumption}[Low mode non-degeneracy] \label{a:lowms}
Define $\mathcal{K}_0 \subset \mathbb K$ to be the set of $m \in \mathbb K$ such that $q_m \neq 0$. Assume $m \in \mathcal{K}_0$ if $\abs{m}_{\infty} \leq 2$ (for $m = (k,i)$, $k = (k_i)_{i = 1}^d \in \Z^d$ we write $|m|_{\infty} = \max_{i} |k_i|$). 
\end{assumption}

We write $\Hbf_{\mathcal K_0} \subset \Hbf$ for the subspace spanned by the Fourier modes $m \in \mathcal K_0$ and $\Hbf_N \subset \Hbf$ for the subspace spanned by the Fourier modes satisfying $\abs{m}_\infty \leq N$.
Consider the Stokes system (with very degenerate forcing) and Galerkin-Navier-Stokes systems defined as follows. 

\begin{system} \label{sys:2DStokes} 
The \emph{Stokes system} in $\mathbb T^d$ ($d = 2,3$) is defined, for $u_0 = u \in \Hbf_{\mathcal K}$, by
\begin{equation} \label{eq:stokes-2d}
\begin{cases}
\,\partial_t u_t =- \grad p_t + \Delta u_t + Q \dot W_t \\ 
\,\Div u_t = 0 
\end{cases},
\end{equation}
where $Q$ satisfies Assumption \ref{a:lowms} and $\mathcal K_0$ is finite.
\end{system}

\begin{system} \label{sys:Galerkin}
The \emph{Galerkin-Navier-Stokes system} in $\mathbb T^d$ ($d = 2,3$) is defined, for $u_0 = u \in \Hbf_N$, by
\begin{equation}
\begin{cases}
\,\partial_t u_t + \Pi_{\leq N}\left(u_t\cdot \grad u_t + \grad p_t \right) = \nu\Delta u_t + Q \dot W_t \\ 
\,\Div u_t = 0
\end{cases} \, , 
\end{equation}
where $Q$ satisfies Assumption \ref{a:lowms}; $N \geq 3$ is an arbitrary integer; $\Pi_{\leq N}$ denotes the projection to 
Fourier modes with $| \cdot|_{\infty}$ norm $\leq N$; $\Hbf_N$ denotes the span of the first $N$ Fourier modes; and $\nu > 0$ is fixed and arbitrary.
\end{system}

Note that velocity fields $u_t$ evolving according to Systems \ref{sys:2DStokes} and \ref{sys:Galerkin} are
spatially $C_x^\infty$ and, at best, $\frac12$-H\"older in time. We are also 
able to treat a class of evolutions with non-white-in-time forcing, referred to as `OU tower noise' 
in \cite{BBPS19}.
This is basically an external forcing given by the projection of an Ornstein-Uhlenbeck process on $\Real^M$. 
\begin{system}\label{sys:Markov}
\begin{subequations} \label{eqn:finiteD}
The \emph{(generalized) Galerkin-Navier-Stokes system with OU tower noise} in $\mathbb T^d$ ($d = 2,3$) is defined, for
$u_0 \in \Hbf_N$, by the stochastic ODE
\begin{align}
  \partial_t u_t & + X(u,u) = \nu \Delta u_t + Q Z_t \\
  \partial_t Z_t & = -\mathcal{A}Z_t + \Gamma \dot{W}_t \, , 
\end{align} 
\end{subequations}
where $Z_t \in \Hbf_M$, the operator $\mathcal{A} : \Hbf_M \to \Hbf_M$ is diagonalizable and has a strictly positive spectrum, and the bilinear term $X(u,u):\Hbf_N \times \Hbf_N \to \Hbf_N$ satisfies $u \cdot X(u,u) = 0$ and $\forall j$,  $X(e_j,e_j) = 0$. 
Note that $(u_t)$ is not Markov, but $(u_t,Z_t)$ is Markov and one must also specify the initial condition for the $(Z_t)$ process, i.e. $Z_0 = Z$, when considering this setting. 
\end{system}

All of our results extend to each of Systems \ref{sys:2DStokes}, \ref{sys:Galerkin}, and \ref{sys:Markov}. 

\begin{theorem} \label{thm:FiniteDregs}
Consider any of Systems \ref{sys:2DStokes}--\ref{sys:Markov}.
Assume that $Q$ satisfies Assumption \ref{a:lowms} and that the parabolic H\"ormander condition is satisfied for $(u_t)$ or $(u_t,Z_t)$ (see e.g. \cite{Hairer11notes}).  
Then, Theorems \ref{thm:UniMix}, \ref{thm:ED}, and \ref{thm:optTimeScale} all hold (in the case of System \ref{sys:Markov}, the estimates on the random constants in \eqref{ineq:Dkap1}, \eqref{ineq:Dpkapps}, and \eqref{ineq:deltap} all contain an additional factor of $\exp\left(\eta \abs{Z}^2 \right)$, i.e. the initial condition for the $Z_t$ process). 
\end{theorem}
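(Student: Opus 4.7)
The plan is to show that the proofs of Theorems \ref{thm:UniMix}, \ref{thm:ED}, and \ref{thm:optTimeScale} use the non-degenerate Assumption \ref{a:Highs} only through a short list of black-box inputs (strong Feller, irreducibility, geometric ergodicity on weighted spaces, and a Furstenberg-type criterion for positivity of the Lyapunov exponent), each of which admits a replacement in the finite-dimensional setting based on the parabolic H\"ormander condition. Concretely, the first task is to isolate in \cite{BBPS18,BBPS19} the lemmas where full non-degeneracy is invoked, so that any proof in those papers and in the body of the present article can then be repeated with only the hypotheses of Theorem \ref{thm:FiniteDregs} available. This is feasible because, as already flagged in Section \ref{sec:Finite}, outside of the strong Feller and geometric ergodicity steps non-degenerate noise is only a convenience.

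Next, I would replace each such input using H\"ormander's theorem applied to the assumed hypoelliptic generator of $(u_t)$ (or $(u_t,Z_t)$ in System \ref{sys:Markov}). H\"ormander's theorem provides smoothness of transition densities \cite{Hairer11notes}, hence strong Feller; irreducibility on $\Hbf_N$ (resp.\ $\Hbf_N \times \Hbf_M$) follows from a Stroock--Varadhan support argument using Assumption \ref{a:lowms} together with the bracket-generating property. For the Lyapunov exponent step, one must verify that the parabolic H\"ormander condition lifts from the base process to the \emph{projective} process $(u_t, v_t/|v_t|)$ (or $(u_t,Z_t,v_t/|v_t|)$) driving the tangent equation for the Lagrangian flow; this is the input needed by the infinite-dimensional Furstenberg criterion [Theorem 4.7, \cite{BBPS18}], and it is the only place where one must do new work in the bracket calculation. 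With these tools in hand, the proof of exponential mixing at $\kappa = 0$ and the entire $\kappa > 0$ extension carrying from Theorems \ref{thm:UniMix}--\ref{thm:optTimeScale} go through with notation changes only, since the quantitative estimates there depend on the velocity process through moment bounds in $\Hbf$ and $\Wbf$, which are automatic in finite dimensions (the processes are finite-dimensional diffusions with dissipative drift and Gaussian forcing, and hence satisfy exponential moment bounds uniformly after an initial layer).

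For System \ref{sys:Markov}, the velocity process $(u_t)$ alone is not Markov, so every occurrence of conditioning on $u$ must be replaced by conditioning on $(u,Z)$, and every invocation of the Markov semigroup and its invariant measure is replaced by the corresponding statement for $(u_t,Z_t)$. The additional factor $\exp(\eta |Z|^2)$ in \eqref{ineq:Dkap1}, \eqref{ineq:Dpkapps}, \eqref{ineq:deltap} then arises from the standard exponential-moment estimate for the driving Ornstein--Uhlenbeck process: since $\mathcal A$ has strictly positive spectrum, for $\eta$ sufficiently small one has $\mathbf E \exp(\eta |Z_t|^2) \lesssim \exp(\eta' |Z|^2)$ uniformly in $t$, so the coupling of $u_t$ to $Z_t$ through the drift $Q Z_t$ propagates a single additional $\exp(\eta |Z|^2)$ factor through every moment computation.

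The main obstacle is verifying the H\"ormander-based strong Feller property for the projective lift, and more generally confirming that all the non-trivial functional-analytic replacements above do not degrade the \emph{quantitative} weighted-space geometric ergodicity used in \cite{BBPS19}; indeed, in the asymptotically strong Feller setting of \cite{HM06,HM11} such quantitative estimates can be delicate, but in finite dimensions H\"ormander's theorem yields genuine strong Feller and the Harris/Meyn--Tweedie machinery applies cleanly, so one obtains geometric ergodicity with essentially arbitrary Lyapunov weights. Once this is in place, the remainder of the argument is a transcription of the proofs of Theorems \ref{thm:UniMix}, \ref{thm:ED}, and \ref{thm:optTimeScale} with $\Hbf$ and $\Wbf$ reinterpreted as the appropriate finite-dimensional norms.
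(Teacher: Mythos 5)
Your proposal takes essentially the same approach as the paper. In fact, the paper does not supply a detailed proof of Theorem~\ref{thm:FiniteDregs}: it simply asserts at the start of Section~\ref{sec:outline} that the same argument as for Systems~\ref{sys:NSE}--\ref{sys:3DNSE} goes through, vastly simplified, and the discussion in Section~\ref{sec:Finite} identifies exactly the same bottlenecks you do — namely that full non-degeneracy of $Q$ is used only to obtain strong Feller/geometric ergodicity (for the velocity, projective, and two-point processes) and in the Furstenberg criterion of \cite{BBPS18}, each of which is replaced in the finite-dimensional setting by H\"ormander's theorem and a Stroock--Varadhan support argument, with the $\exp(\eta|Z|^2)$ factor for System~\ref{sys:Markov} coming from the exponential moment bound of the OU process. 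The one understatement in your sketch is the remark that the projective lift of H\"ormander is ``the only place'' requiring new bracket computations: you also need the hypoellipticity (or equivalent strong Feller/irreducibility inputs) for the \emph{two-point} process on $\Hbf_N\times\Dc^c$ and for the $\kappa$-regularized versions of both, as in Lemmas~\ref{lem:USF}--\ref{lem:getControlled} and Proposition~\ref{prop:unifMinorCondProjective}; but these are the same kind of check, already carried out for these finite-dimensional systems in \cite{BBPS18,BBPS19}, and the added $\sqrt{2\kappa}\,\widetilde W_t$ noise on the Lagrangian trajectories only helps.
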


\begin{remark}
Note that for all $k \geq 0$, one can arrange so that solutions to System \ref{sys:Markov} satisfy $(u_t) \in L^p (\Omega;C^k_{t,loc} C^\infty_x)$ for all $p < \infty$. See \cite{BBPS19} for more details.  
\end{remark}

\begin{remark}
We have chosen to include Theorem \ref{thm:FiniteDregs} to emphasize that our methods do not fundamentally require non-$C^\infty_x$ velocity fields,
nor do they require velocity fields that are directly subjected to white-in-time forcing.
The difficulty in treating infinite dimensional models with smooth-in-space, $C^k_t$ forcing of `OU tower' type is the lack of an adequate extension of H\"ormander's theorem to infinite dimensions (though, note that the theory of Hairer and Mattingly \cite{HM11} applies to OU tower forcing).
In addition, it would also be interesting to extend our works \cite{BBPS18,BBPS19} and this work to the non-white-in-time, uniformly bounded forcing studied in \cite{KNS18,KNS19,JNPS19}. 
\end{remark}

\section{Outline}\label{sec:outline}

We will henceforth only discuss the proof for the infinite dimensional stochastic Navier-Stokes Systems \ref{sys:NSE}--\ref{sys:3DNSE}.
Essentially the same proof applies to the systems in Section \ref{sec:Finite} but each step is vastly simplified by the finite dimensionality (see \cite{BBPS19} for a brief discussion about the small changes required to treat System \ref{sys:Markov}). 

The vast majority of the work in this paper is to prove Theorem \ref{thm:UniMix}, which we
outline here. The proofs of Theorems \ref{thm:ED} and \ref{thm:optTimeScale} are 
discussed in Section \ref{sec:ED}. 

\subsection{Uniform mixing by uniform geometric ergodicity of two-point Lagrangian process}

The proof is based on the representation of the advection-diffusion equation as a Kolmogorov equation of the corresponding stochastic Lagrangian process. 
To do this, let $\widetilde W_t$ denote a standard $d$-dimensional Wiener process with respect to a separate stochastic basis $(\widetilde{\Omega}, \widetilde{\mathscr{F}}, \widetilde{\mathscr{F}}_t, \widetilde{\mathbf{P}})$. This naturally gives rise to an augmented probability space $\Omega\times\widetilde{\Omega}$ with the associated product sigma-algebra $\mathscr{F}\otimes \widetilde{\mathscr{F}}$, and product measure $\P\times \widetilde{\P}$. In a slight abuse of notation, we will write $\widetilde{\E}$ for the expectation with respect to $\widetilde{P}$ alone, and write $\E$ denote expectation with respect to the full product measure $\P\times \widetilde{\P}$.

Define the stochastic Lagrangian flow $\phi^t_\kappa(x)$ to solve the SDE
\begin{align}
  \frac{\dee}{\dt} \phi^t_\kappa (x)  = u_t(\phi_\kappa^t(x)) + \sqrt{2\kappa}\dot{\widetilde W_t} \quad \phi^0_\kappa(x)  = x \, .
\end{align}
The fact that $u_t$ is incompressible implies that $x\mapsto \phi^t_\kappa$ is almost surely volume preserving. The solution $g_t$ to the advection diffusion equation \eqref{def:AD} is represented by this stochastic flow in the sense that
\begin{align}
g_t = \widetilde{\EE} g \circ (\phi^t_\kappa)^{-1}.
\end{align}
By incompressibiliy, it follows that for $f\in L^2$, $f : \T^d \to \R$, we have 
\begin{align}\label{eq:diffusivityRelnOutline}
\int g_t (x) f(x) \dx = \widetilde \EE \int g(x) f \big( \phi^t_\kappa(x) \big) \dx \, .
\end{align}

By choosing $f,g \in H^s$, the $H^{-s}$ decay of $g_t$ as in Theorem \ref{thm:UniMix}
follows once we deduce \eqref{eq:diffusivityRelnOutline} decays exponentially fast $\mathbb P$-a.e.. 
We will show this by obtaining $H^{-s}$ decay for observables advected by the Lagrangian flow
$\phi^t_\kappa$ for almost every $W_t, \widetilde W_t$-realization.
This, in turn, will be deduced using geometric ergodicity of the \emph{two-point process}
$(u_t, x_t^\kappa, y_t^\kappa)$ on $\Hbf \times \T^d \times \T^d$ defined
by $x_t^\kappa = \phi^t_\kappa(x), y_t^\kappa = \phi^t_\kappa(y)$ for $x,y \in \T^d$, $x\neq y$. 
Note that each of $x_t^\kappa, y_t^\kappa$ is driven by the \emph{same} noise paths $W_t$, $\widetilde{W}_t$.
Throughout, we write $x_t := x_t^0, y_t := y_t^0$ for two-point process when $\kappa = 0$.

The methodology of studying the two-point process 
follows our previous work \cite{BBPS19} on almost-sure $H^{-s}$ decay for Lagrangian 
flow in the absence of diffusivity (i.e., $\kappa = 0$), to which we
refer the reader for more detailed discussion and motivation
(see also \cite{dolgopyat2004sample, baxendale1988large}). 

Let us make these ideas more precise.
Let $P^{(2), \kappa}_t$ denote the Markov semigroup associated to the $\kappa$-two point process, that is, for measurable $\varphi:\Hbf \times \mathbb T^d \times \mathbb T^d \to \Real$, 
\begin{align}
P^{(2),\kappa}_t \varphi(u,x,y) = \EE_{(u,x,y)}\varphi(u_t,x_t,y_t) \, , 
\end{align}
whenever the RHS is defined. 
Define $\Dc = \{ (x,x) : x \in \T^d\} \subset \T^d \times \T^d$; in our setting, 
the complement $\Hbf \times \Dc^c$ is
the natural state space for the two-point process (see \cite{BBPS19} for a discussion of this point). 
Below, given a function $V : Z \to [1,\infty)$ on a metric space $Z$, we write $C_V$ the space of continuous observables $\phi:Z \to \Real$ such that
\[
\| \phi \|_{C_V} = \sup_{z \in Z} \frac{|\phi(z)|}{V(z)} \,  < \infty. 
\]
We will deduce Theorem \ref{thm:UniMix} from $\kappa$-uniform geometric ergodicity 
of the two-point process, stated precisely
below as Theorem \ref{thm:unifTwoPtMixing}. Its proof
occupies the majority of this paper, and is outlined in Sections \ref{sec:QHarris} -- \ref{sec:sktPsiPuni} below.
Note that this implies $\mu \times dx \times dx$ is the unique stationary measure for the two-point process on $\Hbf \times \Dc^c$. 
\begin{theorem}\label{thm:unifTwoPtMixing}
There exists $\kappa_0 > 0$ such that for all $\kappa \in [0,\kappa_0]$, there is a function $\mathcal V_\kappa : \Hbf \times \mathcal D^c \to [1,\infty)$
and $\kappa$-independent constants $C > 0, \gamma > 0$ such that for all $\psi \in C_{\mathcal{V}_\kappa}$ with $\int_{\Hbf \times \mathbb T^d \times \mathbb T^d} \psi(u,x,y) d\mu(u) dx dy = 0$, we have 
\[
| P^{(2), \kappa}_t \psi (u, x, y)| \leq C e^{- \gamma t} \mathcal V_\kappa(u, x, y) \norm{\psi}_{C_{\mathcal{V}^\kappa}}
\]
for all $t \geq 0, u \in \Hbf, (x,y) \in \Dc^c$. In general, the Lypaunov function $\Vc_\kappa$ depends on $\kappa$, but  
satisfies the following uniform-in-$\kappa$ estimate: for $\beta$ sufficiently large (independent of $\kappa$) 
and $\forall \eta > 0$, we have
\[
\iint \Vc_\kappa (u,x,y) \dx \dy \lesssim_\eta ( 1 + \| u \|_\Hbf^2)^\beta \exp\left(\eta\norm{u}_{\Wbf}^2\right)
\]
for all $u \in \Hbf$.
\end{theorem}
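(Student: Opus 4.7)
The plan is to prove Theorem \ref{thm:unifTwoPtMixing} via a uniform-in-$\kappa$ quantitative Harris-type theorem for the two-point Markov semigroup $P^{(2),\kappa}_t$ on $\Hbf \times \Dc^c$, extending the corresponding result of \cite{BBPS19} for $\kappa = 0$. The three ingredients one must establish with $\kappa$-independent constants are: (i) a Lyapunov drift condition $P^{(2),\kappa}_T \mathcal V_\kappa \leq \alpha \mathcal V_\kappa + K$ with $\alpha < 1$ at some fixed time $T > 0$; (ii) a generalized coupling / small-set minorization contracting a $\mathcal V_\kappa$-weighted distance on sublevel sets $\{\mathcal V_\kappa \leq R\}$; and (iii) the integrability bound $\iint \mathcal V_\kappa(u,x,y)\, \dx\, \dy \lesssim (1 + \|u\|_\Hbf^2)^\beta \exp(\eta \|u\|_\Wbf^2)$.

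For the Lyapunov function I would take $\mathcal V_\kappa(u,x,y) = \Phi(u)\, \Psi_\kappa(x,y)$, where $\Phi(u) = (1 + \|u\|_\Hbf^2)^\beta \exp(\eta \|u\|_\Wbf^2)$ handles the velocity variable via standard Navier--Stokes energy estimates (with $\eta$ small enough that exponential moments on the invariant measure $\mu$ are finite), and $\Psi_\kappa(x,y) = (|x-y|^2 + c\kappa)^{-p/2}$ is a $\kappa$-regularized version of the singular diagonal weight $|x-y|^{-p}$ used in \cite{BBPS19}, with exponent $p < d$ chosen small enough that both the drift and integrability conditions hold. The key feature is that $\Psi_\kappa$ agrees with the $\kappa = 0$ weight on separations $|x-y| \gtrsim \sqrt{\kappa}$ but is capped at $\kappa^{-p/2}$ below that scale. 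If needed, I would also incorporate projective-direction factors inherited from the twisted Feynman--Kac / Markov-semigroup framework of \cite{BBPS19}, which convert the almost-sure expansion of the linearized Lagrangian flow (positive top Lyapunov exponent from \cite{BBPS18}) into a drift contraction.

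The drift estimate (i) is the main obstacle, and must be handled on two distinct scales. At macroscopic separations $|x-y| \gg \sqrt{\kappa}$, the $\kappa$-process is a regular perturbation of the $\kappa = 0$ dynamics, and one can import the twisted-semigroup spectral gap of \cite{BBPS19} with small corrections controlled by Gronwall-type arguments on the linearized flow. At microscopic separations $|x-y| \lesssim \sqrt{\kappa}$, the $\sqrt{2\kappa}\dot{\widetilde W_t}$ term in the equation for $x_t^\kappa - y_t^\kappa$ is strongly elliptic on $\T^d$ and pushes $|x_t^\kappa - y_t^\kappa|$ back to macroscopic scales within $O(1)$ time; since $\Psi_\kappa$ is capped at $\kappa^{-p/2}$ in this regime, this diffusive motion creates no additional singular contribution. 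Patching the two regimes into a single drift inequality with $\kappa$-independent constants $\alpha < 1$ and $K < \infty$ is the technically delicate step, and I would expect it to require a careful multi-scale splitting of the expectation $P^{(2),\kappa}_T \mathcal V_\kappa$.

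For condition (ii), the added Brownian noise at $\kappa > 0$ only strengthens the non-degeneracy of the two-point process in the $(x,y)$ variables, so the $\kappa = 0$ coupling/minorization of \cite{BBPS19} (based on strong Feller for $(u_t)$ together with controllability of the Lagrangian flow) should carry through uniformly for $\kappa \in [0,\kappa_0]$, possibly via a Girsanov-type comparison whose Radon--Nikodym factor is bounded in $L^p$ uniformly in $\kappa$. Condition (iii) is then a direct computation: $\iint_{\T^d \times \T^d} \Psi_\kappa(x,y)\, \dx\, \dy \lesssim 1$ uniformly in $\kappa$ as long as $p < d$, and $\Phi(u)$ pulls out of the integral. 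Feeding (i)--(iii) into the abstract quantitative Harris framework outlined in Section \ref{sec:QHarris} then yields the $\kappa$-independent geometric ergodicity and the stated decay rate $\gamma > 0$.
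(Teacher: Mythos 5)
There is a genuine gap, and it sits at the heart of your drift argument. Your treatment of the microscopic regime $|x-y|\lesssim \sqrt{\kappa}$ rests on the claim that the $\sqrt{2\kappa}\dot{\widetilde W}_t$ term ``pushes $|x_t^\kappa-y_t^\kappa|$ back to macroscopic scales within $O(1)$ time.'' This is false in the setting of the theorem: both $x_t^\kappa=\phi^t_\kappa(x)$ and $y_t^\kappa=\phi^t_\kappa(y)$ are driven by the \emph{same} Wiener path $\widetilde W_t$ (this is forced by the Feynman--Kac representation $g_t=\widetilde\EE\, g\circ(\phi^t_\kappa)^{-1}$ of the advection--diffusion equation), so the additive noise cancels exactly in the difference $x_t^\kappa-y_t^\kappa$, and the diagonal $\Dc$ remains almost surely invariant for every $\kappa>0$. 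Consequently two points at separation $\ll\sqrt{\kappa}$ separate only through the Lyapunov exponent of the Lagrangian flow, on a time scale that diverges as the separation shrinks. Your capped weight $\Psi_\kappa(x,y)=(|x-y|^2+c\kappa)^{-p/2}$ is then essentially constant along trajectories over $O(1)$ times in this regime ($\Psi_\kappa\approx(c\kappa)^{-p/2}$ both before and after time $T$), so the inequality $P^{(2),\kappa}_T\mathcal V_\kappa\leq\alpha\mathcal V_\kappa+K$ with $\alpha<1$ forces $K\gtrsim(1-\alpha)\kappa^{-p/2}$, which is not $\kappa$-independent. This is precisely why the paper keeps the genuinely singular weight $d(x,y)^{-p}$ (cut off only at \emph{large} separations) and extracts the contraction from the moment Lyapunov exponent $\Lambda(p,\kappa)>0$ of the linearized flow rather than from any diffusive re-separation.

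The second, related gap is that you treat the projective eigenfunction correction as optional (``if needed'') and describe the macroscopic regime as a ``regular perturbation'' of the $\kappa=0$ twisted semigroup. Neither holds. The factor $\psi_{p,\kappa}(u,x,w/|w|)$ in $h_{p,\kappa}$ is not decorative: $d(x,y)^{-p}$ alone is not a Lyapunov function, since $\EE|D_x\phi^t w|^{-p}$ contracts at rate $e^{-\Lambda(p,\kappa)t}$ only after twisting by the dominant eigenfunction of $\hat P^{\kappa,p}_t$; without it the generator computation in Lemma \ref{lem:hpkEst} produces no negative term. Moreover $\kappa\mapsto\hat P^{\kappa,p}_t$ is a \emph{singular} perturbation --- it is not continuous in operator norm on any of the relevant observable spaces --- so one cannot ``import the twisted-semigroup spectral gap of \cite{BBPS19} with small corrections controlled by Gronwall.'' Obtaining $\kappa$-uniform upper and lower bounds on $\psi_{p,\kappa}$ and the convergence $\Lambda(p,\kappa)\to\Lambda(p)$ requires the strong-operator-topology convergence combined with the Arzel\`a--Ascoli compactness argument of Proposition \ref{prop:psiPUniform} and Section \ref{sec:prfUniform-spec}; this is the main technical content of the proof and is absent from your proposal. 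Your ingredients (ii) and (iii) (uniform minorization via controllability, and integrability of the weight for $p<d$) are in line with the paper's argument.
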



By repeating the Borel-Cantelli argument in Section 7 of \cite{BBPS19}, to which 
we refer the reader for details, Theorem \ref{thm:unifTwoPtMixing} implies the following $H^{-s}$ decay result uniformly in $\kappa$.
\begin{corollary} \label{cor:BC}
Let $\kappa \in [0,\kappa_0]$ and $\gamma, \beta, \eta > 0$ be as in Theorem \ref{thm:unifTwoPtMixing}. 
Fix $s, p > 0$. 
There exists a random constant $\tilde D_\kappa : \Omega \times \widetilde \Omega \times \Hbf \to [1,\infty)$ and $\gamma' \in (0,\gamma)$ (depending on $p$ and $s$, but not on $\kappa$)  such that for all $H^s$, mean zero scalars $f, g : \T^d \to \R$, we have
\[
\left | \int g (x)  f(\phi^t_\kappa(x)) \, \dx \right| \leq \tilde D_\kappa(\omega, \widetilde \omega, u) e^{- \gamma' t} \norm{f}_{H^s} \norm{g}_{H^s} \, 
\]
where the random constant $\tilde D_\kappa$ satisfies the moment estimate (uniformly in $\kappa$) for $\beta$ sufficiently large (independent of $u$, $p$, $\kappa$) and $\eta > 0$, 
\begin{align}\label{eq:momentEst}
\EE (\tilde D_\kappa(\cdot, \widetilde \cdot, u))^p \lesssim_{p,\eta} 
( 1 + \| u \|_\Hbf^2)^{\beta p} \exp\left(\eta \norm{u}_{\Wbf}^2\right) 
\end{align}
\end{corollary}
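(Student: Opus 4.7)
The plan is to repeat the argument in Section~7 of \cite{BBPS19}, replacing the $\kappa=0$ two-point mixing used there with the uniform-in-$\kappa$ bound of Theorem~\ref{thm:unifTwoPtMixing}. Write $\Psi^\kappa_t:=\int g(x)\,f(\phi^t_\kappa(x))\,dx$ and expand $f,g$ in Fourier series, so that $\Psi^\kappa_t=\sum_{k,\ell\in\Z^d_\ast}\hat f_k\hat g_\ell J^{k,\ell}_t$ with $J^{k,\ell}_t:=\int e^{ik\cdot\phi^t_\kappa(x)+i\ell\cdot x}\,dx$. The argument proceeds in three stages: (i) an $L^2(\Omega\times\widetilde\Omega)$ decay bound for $\Psi^\kappa_t$ via Theorem~\ref{thm:unifTwoPtMixing}; (ii) an upgrade to all even moments using the trivial bound $|J^{k,\ell}_t|\leq 1$; and (iii) passage to almost-sure decay via Chebyshev and Borel-Cantelli at integer times, interpolated by the uniform-in-$\kappa$ H\"older modulus of $t\mapsto\phi^t_\kappa$.

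For stage (i), the observable $\psi_{k,k'}(u,x,y):=e^{i(k\cdot x-k'\cdot y)}$ has zero mean against $\mu\otimes dx\otimes dy$ whenever $k,k'\in\Z^d_\ast$, and satisfies $\|\psi_{k,k'}\|_{C_{\Vc_\kappa}}\leq 1$ since $\Vc_\kappa\geq 1$. Fubini then gives
\[
\E\left[J^{k,\ell}_t\,\overline{J^{k',\ell'}_t}\,\middle|\,u_0=u\right]=\iint e^{i(\ell\cdot x-\ell'\cdot y)}\bigl(P^{(2),\kappa}_t\psi_{k,k'}\bigr)(u,x,y)\,dx\,dy,
\]
so Theorem~\ref{thm:unifTwoPtMixing} combined with the Lyapunov integral estimate yields, uniformly in $k,k',\ell,\ell'$ and $\kappa\in[0,\kappa_0]$,
\[
\Bigl|\E\left[J^{k,\ell}_t\,\overline{J^{k',\ell'}_t}\,\Big|\,u_0=u\right]\Bigr|\lesssim e^{-\gamma t}V(u),\quad V(u):=(1+\|u\|_\Hbf^2)^\beta\exp(\eta\|u\|_\Wbf^2).
\]
For $s>d/2$, Cauchy-Schwarz in $(k,\ell)$ using $\sum_k\langle k\rangle^{-2s}<\infty$ then gives $\E[|\Psi^\kappa_t|^2\mid u_0=u]\lesssim e^{-\gamma t}V(u)\|f\|_{H^s}^2\|g\|_{H^s}^2$, and general $s>0$ is recovered via a Littlewood-Paley decomposition combined with uniform-in-$\kappa$ a priori bounds on $\phi^t_\kappa$. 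Stage (ii) is immediate from $|J^{k,\ell}_t|^{2p}\leq|J^{k,\ell}_t|^2$ for $p\geq 1$: this yields $\E[|J^{k,\ell}_t|^{2p}\mid u_0=u]\lesssim e^{-\gamma t}V(u)$, and the same weighted Cauchy-Schwarz gives $\E[|\Psi^\kappa_t|^{2p}\mid u_0=u]\lesssim_p e^{-\gamma t}V(u)\|f\|_{H^s}^{2p}\|g\|_{H^s}^{2p}$.

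For stage (iii), fix $\gamma'\in(0,\gamma/(2p))$ and consider $t_n=n$. Chebyshev's inequality gives
\[
\PP\otimes\widetilde\PP\!\left(|\Psi^\kappa_n|>e^{-\gamma' n}\|f\|_{H^s}\|g\|_{H^s}M\,\Big|\,u_0=u\right)\lesssim M^{-2p}e^{(2p\gamma'-\gamma)n}V(u),
\]
which is summable in $n$ since $2p\gamma'<\gamma$, so by Borel-Cantelli the random constant
\[
\tilde D_\kappa(\omega,\widetilde\omega,u):=1+\sup_{n\geq 0}\frac{e^{\gamma' n}|\Psi^\kappa_n|}{\|f\|_{H^s}\|g\|_{H^s}}
\]
is almost-surely finite. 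Jensen's inequality and the geometric sum $\sum_n e^{(p\gamma'-\gamma/2)n}<\infty$ then give $\E[\tilde D_\kappa^p\mid u_0=u]\lesssim_p V(u)^{1/2}$, matching~\eqref{eq:momentEst} after relabelling $\beta,\eta$. Passage from integer to arbitrary times is handled by standard SDE estimates producing a uniform-in-$\kappa$ H\"older modulus for $\phi^t_\kappa$ on unit intervals (the $\sqrt{2\kappa}\widetilde W$ increment contributing only $O(\sqrt{\kappa})$), with the correction absorbed into $\tilde D_\kappa$ at no cost. The main technical point is simply to verify that every estimate above is genuinely uniform in $\kappa\in[0,\kappa_0]$; this is automatic given that Theorem~\ref{thm:unifTwoPtMixing} already supplies $\kappa$-independent decay rates, Lyapunov bounds, and integral estimates on $\Vc_\kappa$, so the proof reduces to bookkeeping on top of the $\kappa=0$ scheme of \S7 of \cite{BBPS19}.
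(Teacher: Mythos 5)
Your overall scheme is exactly the one the paper intends: the paper's ``proof'' of Corollary \ref{cor:BC} is simply the instruction to rerun the Borel--Cantelli argument of Section 7 of \cite{BBPS19} with Theorem \ref{thm:unifTwoPtMixing} supplying the $\kappa$-uniform two-point decay, and your stages (i)--(iii) reproduce that scheme correctly: the identity $\E[J^{k,\ell}_t\overline{J^{k',\ell'}_t}\mid u_0=u]=\iint e^{i(\ell\cdot x-\ell'\cdot y)}P^{(2),\kappa}_t\psi_{k,k'}(u,x,y)\,dx\,dy$ is right, $\psi_{k,k'}$ is mean-zero with $\|\psi_{k,k'}\|_{C_{\Vc_\kappa}}\le 1$, and the Lyapunov integral bound in Theorem \ref{thm:unifTwoPtMixing} converts the pointwise decay into the $V(u)$-weighted second-moment bound uniformly in $\kappa$.

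There is, however, one concrete defect in stage (iii): the random constant you define, $\tilde D_\kappa:=1+\sup_n e^{\gamma' n}|\Psi^\kappa_n|/(\|f\|_{H^s}\|g\|_{H^s})$, depends on the pair $(f,g)$, whereas the corollary requires a single $\tilde D_\kappa(\omega,\widetilde\omega,u)$ valid \emph{for all} mean-zero $f,g\in H^s$ simultaneously (this is essential downstream, since $D_\kappa:=\widetilde\E\tilde D_\kappa$ must be a constant in Theorem \ref{thm:UniMix} that is independent of the data). A density/union-bound argument over a countable set of $(f,g)$ does not rescue this, because almost-sure finiteness of each individual supremum does not give finiteness of the supremum over the family. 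The standard repair -- and what Section 7 of \cite{BBPS19} actually does -- is to run Chebyshev and Borel--Cantelli on the doubly-indexed family $\{J^{k,\ell}_n\}_{k,\ell,n}$ itself, e.g.\ bounding $\PP\otimes\widetilde\PP\big(|J^{k,\ell}_n|>e^{-\gamma' n}\langle k\rangle^{a}\langle \ell\rangle^{a}\big)$ and summing over $k,\ell,n$; the polynomial weights $\langle k\rangle^{a}\langle\ell\rangle^{a}$ needed to make the triple sum converge are then paid for by the $H^{s}$ norms of $f$ and $g$ (after first proving the statement for $s$ large and recovering small $s$ by the interpolation you already invoke). With $\tilde D_\kappa:=1+\sup_{k,\ell,n}e^{\gamma' n}\langle k\rangle^{-a}\langle\ell\rangle^{-a}|J^{k,\ell}_n|$ the constant is genuinely independent of $f,g$, and your moment computation goes through with the extra geometric sums in $k,\ell$. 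The remaining points (the trivial bound $|J^{k,\ell}_t|\lesssim 1$ to upgrade moments, and the passage from integer to continuous times by composing with the flow on $[n,n+1]$ and absorbing its $H^s$ operator norm -- controlled by Lemma \ref{lem:TwistBd} -- into $\tilde D_\kappa$) are fine as sketched.
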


\begin{proof}[\textbf{Proof of Theorem \ref{thm:UniMix}  assuming Corollary \ref{cor:BC}} ]
Theorem \ref{thm:UniMix}  follows with $D_\kappa (u,\omega) := \widetilde \EE \tilde D_\kappa(\omega, \widetilde \cdot, u)$, since by \eqref{eq:diffusivityRelnOutline},  
\begin{align}
\left |  \int g_t (x) f(x)  \, \dx \right| =  \left |  \widetilde{\EE} \int g (x) f \big( \phi^t_\kappa(x)\big) \, \dx \right|
  & \leq \widetilde{\EE} D_\kappa(\omega, \widetilde \cdot, u) e^{- \gamma' t} \norm{f}_{H^s} \norm{g}_{H^s} \\ 
& = D_\kappa(\omega,u) e^{- \gamma' t} \norm{f}_{H^s} \norm{g}_{H^s} \, .
\end{align}
For fixed $u \in \Hbf$, moment estimates in $\EE$ for $D^\kappa$ follow from \eqref{eq:momentEst}
and Jensen's inequality with respect to $\widetilde \EE$. This completes the proof of Theorem \ref{thm:UniMix}.
\end{proof}

The rest of the paper is now dedicated to proving Theorem \ref{thm:unifTwoPtMixing} (with the exception of Section \ref{sec:ED}). 

\subsection{Uniform geometric ergodicity: a `quantitative' Harris's Theorem} \label{sec:QHarris}

To prove Theorem \ref{thm:unifTwoPtMixing}, we will run $P^{(2),\kappa}_t$ through the following mildly `quantitative' version of Harris's Theorem (Theorem \ref{thm:Harris})
 on geometric ergodicity for Markov chains, which keeps track of dependence of the constants appearing in the geometric decay of observables
in terms of the `inputs'. 
Since we use this result at several points throughout this paper, we state it below at a high level of generality.

Let $Z$ be a complete, separable metric space and $(z_n)$ a discrete-time Markov chain on $Z$ generating a Markov semigroup $\mathcal P^n$.
Geometric ergodicity of $(z_n)$ is usually proved by combining two properties: 
a \emph{minorization condition} which allows to couple trajectories initiated from a controlled
subset of phase space (sometimes called a \emph{small set}), 
and a \emph{drift condition} ensuring that trajectories visit this controlled 
subset with a high relative frequency. 

The latter can be formulated as follows: 
\begin{definition}[Drift condition] \label{defn:drift}
We say that a function $V : Z \to [1,\infty)$ satisfies 
a \emph{drift condition} for the $(z_n)$ chain if there exist constants $\gamma \in (0,1), K > 0$ for which
\[
\mathcal P V(z) \leq \gamma V(z) + K \, .
\]
\end{definition}
\noindent Functions $V$ satisfying Definition \ref{defn:drift} are commonly referred to as \emph{Lyapunov functions}.

Minorization in our context will be checked using the following standard result, regarding
suitably chosen sublevel sets $\{ V \leq R\}$ as our `controlled' regions of phase space. Here we also need to check dependence on parameters. 
\begin{proposition}[Quantitative minorization] \label{prop:minorize}
Let $V : Z \to [1,\infty)$ satisfy the drift condition with $\gamma,K$ as in Definition \ref{defn:drift} for the chain $(z_n)$. 
Assume that the Markov operator $\mathcal P$ is given as $\mathcal P = \mathcal P_{1/2} \circ \mathcal P_{1/2}$ for some Markov operator $\mathcal P_{1/2}$ satisfying the following two properties: 
\begin{itemize}
\item[(a)]  $\exists z_\ast \in Z$ such that $\forall \zeta > 0$, $\exists \eps > 0$ such that the following holds for all bounded, measurable $\phi:Z \to \R$: 
\begin{align}
\sup_{z \in B_{\eps(z^\ast)} }\abs{\mathcal P_{1/2} \phi(z) - \mathcal P_{1/2} \phi(z^\ast)} < \zeta. 
\end{align}
\item[(b)] Let $\eps := \eps$ be as in part (a) with $\zeta = \frac12$.
Suppose that there exists $R > 2K/(1-\gamma)$ and $\eta = \eta(R) > 0$ such that
\begin{align}
\inf_{z \in \{ V \leq R\}} \mathcal P_{1/2}(z,B_{\eps}(z_\ast)) > \eta >0 \, .
\end{align}
\end{itemize}
Then, the following minorization condition holds: for any $z_1, z_2 \in \{ V \leq R\}$,
we have that
\begin{align}\label{eq:minorize}
\| \mathcal P(z_1, \cdot) - \mathcal P(z_2, \cdot) \|_{TV} < \alpha \,  ,
\end{align}
where $\alpha := 1 - \frac{\eta}{2} \in (0,1)$. 
\end{proposition}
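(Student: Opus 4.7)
The plan is to execute a Doeblin-type overlap argument: first, use (b) to extract from each of $\mathcal P_{1/2}(z_1, \cdot)$ and $\mathcal P_{1/2}(z_2, \cdot)$ a common sub-probability component supported in $B_\eps(z^\ast)$; then use (a) to show that after applying $\mathcal P_{1/2}$ one more time, these components remain close to the common reference measure $\mathcal P_{1/2}(z^\ast, \cdot)$, and hence close to each other in total variation. Throughout I adopt the sup-over-sets convention $\|\mu - \nu\|_{TV} = \sup_A |\mu(A) - \nu(A)| \in [0,1]$, and read the inequality in (a) as applied to test functions $\phi$ with $\|\phi\|_\infty \leq 1$ (so that the stated bound scales with $\|\phi\|_\infty$).

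Concretely, for each $z_i \in \{V \leq R\}$, (b) yields $\mathcal P_{1/2}(z_i, \cdot) \geq \eta\, \nu_{z_i}$, where $\nu_{z_i}$ is $\mathcal P_{1/2}(z_i, \cdot)$ restricted to $B_\eps(z^\ast)$ and renormalized, so that $\mathcal P(z_i, \cdot) \geq \eta\, \mathcal P_{1/2}\nu_{z_i}$ for $i = 1, 2$. Testing (a) with $\zeta = 1/2$ against $\phi = \1_A - \1_{A^c}$ (which has $\|\phi\|_\infty = 1$) gives
\[
\|\mathcal P_{1/2}(w, \cdot) - \mathcal P_{1/2}(z^\ast, \cdot)\|_{TV} \;\leq\; \tfrac{1}{4} \qquad \text{for all } w \in B_\eps(z^\ast),
\]
and integrating this against $\nu_{z_i}$ (which is supported in $B_\eps(z^\ast)$) together with the triangle inequality produces $\|\mathcal P_{1/2}\nu_{z_1} - \mathcal P_{1/2}\nu_{z_2}\|_{TV} \leq 1/2$. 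The standard identity $\|\mu_1 \wedge \mu_2\|(Z) = 1 - \|\mu_1 - \mu_2\|_{TV}$ then yields that $\mathcal P_{1/2}\nu_{z_1} \wedge \mathcal P_{1/2}\nu_{z_2}$ has total mass at least $1/2$.

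Combining these two steps gives
\[
\mathcal P(z_1, \cdot) \wedge \mathcal P(z_2, \cdot) \;\geq\; \eta\bigl(\mathcal P_{1/2}\nu_{z_1} \wedge \mathcal P_{1/2}\nu_{z_2}\bigr),
\]
which has total mass at least $\eta/2$, and the identity in reverse delivers $\|\mathcal P(z_1, \cdot) - \mathcal P(z_2, \cdot)\|_{TV} \leq 1 - \eta/2 = \alpha$, which is exactly \eqref{eq:minorize}. I do not anticipate any substantive obstacle here: the argument is the standard overlap lemma at the heart of Harris-type proofs, and the only real care needed is the bookkeeping of constants so that $\zeta = 1/2$ in (a) produces precisely $\alpha = 1 - \eta/2$. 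Note that the constraint $R > 2K/(1-\gamma)$ in (b) plays no role in the present lemma; it is invoked only downstream, when this minorization is coupled with the drift condition in the proof of Harris's theorem to ensure that trajectories return to $\{V \leq R\}$ with sufficient frequency.
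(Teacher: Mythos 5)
Your proposal is correct and implements the same Doeblin coupling argument as the paper's proof: both extract the component $\eta\,\hat\nu_{z_i}$ supported in $B_\eps(z^\ast)$ via hypothesis (b) and then use the strong Feller hypothesis (a) to couple after one further $\mathcal P_{1/2}$-step. You phrase the conclusion through the overlap identity $\|\mu_1 \wedge \mu_2\|(Z) = 1 - \|\mu_1 - \mu_2\|_{TV}$ while the paper bounds $|\mathcal P(z_1, A) - \mathcal P(z_2, A)|$ directly via the Chapman--Kolmogorov integral, but this is purely cosmetic; you also usefully make explicit the $\phi = \1_A - \1_{A^c}$ bookkeeping needed to turn $\zeta = 1/2$ into the final constant $1 - \eta/2$, a detail the paper leaves implicit.
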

\begin{remark}
Note that condition (a) is commonly called \emph{strong Feller} at $z_\ast$ and condition (b) is called \emph{topological irreducibility}.  
\end{remark}

Crucially, Proposition \ref{prop:minorize} guarantees that the constants appearing in the minorization
condition \eqref{eq:minorize} are controlled by `inputs' $\epsilon, \eta(R) > 0$.
Verifying that these constants can be chosen independently of the diffusivity $\kappa > 0$ is one of the steps in our proofs below. 

Proposition \ref{prop:minorize} follows from standard arguments-- see, e.g., the proof of [Theorem 4.1, \cite{FM95}]. 
However, since quantitative dependence on parameters is of central importance in the
proof of our main results, for the sake of completeness 
we sketch the proof of Proposition \ref{prop:minorize} in Section \ref{sec:Minor}.



The following version of Harris's theorem below now describes geometric ergodicity for Markov
chains satisfying Definition \ref{defn:drift} and \eqref{eq:minorize}. Its proof is evident from a careful 
reading of any of the several proofs of 
Harris's theorem now available; see, e.g., the book of Meyn \& Tweedie \cite{meyn2012markov} or the proof of Hairer \& Mattingly \cite{hairer2011yet}. 

\begin{theorem}[Quantitative Harris's Theorem] \label{thm:Harris}
Assume that the Markov chain $(z_n)$ satisfies a drift condition with Lyapunov function $V$ in the sense of Definition \ref{defn:drift}, as well
as the conditions of Proposition \ref{prop:minorize}. Then, 
the Markov chain $(z_n)$ admits a unique invariant measure $\mu$ on $Z$ such that the following holds: 
there exists constants $C_0 > 0, \gamma_0 \in (0,1)$, depending only on $\gamma, K, \alpha, R$ as above, with the
property that
\[
\left| \mathcal P^n \psi(z) - \int \psi d \mu \right| \leq C_0 \gamma_0^n V(z) \| \psi \|_V
\]
for all $z \in Z, n \geq 0$ and $\psi : Z \to \R$ with $\| \psi \|_V < \infty$.
\end{theorem}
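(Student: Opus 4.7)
The plan is to follow the semigroup proof of Harris's theorem due to Hairer and Mattingly. The key device is an equivalent weighted distance on probability measures, depending on a parameter $\beta > 0$ to be chosen. For observables $\phi : Z \to \R$, set $\|\phi\|_\beta := \sup_z |\phi(z)|/(1 + \beta V(z))$, and on finite signed measures $\mu$ take the dual norm
\[
\|\mu\|_\beta := \sup\left\{ \int \phi \, d\mu : \|\phi\|_\beta \leq 1 \right\}.
\]
The goal is to prove the one-step contraction $\|\mathcal{P}^*\mu_1 - \mathcal{P}^*\mu_2\|_\beta \leq \rho \,\|\mu_1 - \mu_2\|_\beta$ for all probability measures $\mu_1, \mu_2$ with finite $V$-moment, for some $\rho = \rho(\gamma, K, \alpha, R) \in (0,1)$.

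The contraction reduces by standard arguments (Hahn decomposition plus a coupling of the positive and negative parts) to bounding $\|\mathcal{P}^*\delta_{z_1} - \mathcal{P}^*\delta_{z_2}\|_\beta$ for arbitrary $z_1, z_2 \in Z$, and then splits into two regimes. Fix a threshold $R^* > 2K/(1-\gamma)$; taking $R \geq R^*$ in Proposition \ref{prop:minorize} makes the minorization \eqref{eq:minorize} available on $\{V \leq R^*\}$. In the \emph{far} regime $V(z_1) + V(z_2) > R^*$, the drift inequality $\mathcal{P}V \leq \gamma V + K$ already forces the $\beta V$-weighted part of the norm to shrink by a factor slightly larger than $\gamma$, while the trivial bound $\|\mathcal{P}^*\delta_{z_1} - \mathcal{P}^*\delta_{z_2}\|_{TV} \leq 2$ is absorbed into the $\beta V$-moment once $\beta$ is small enough relative to $R^*$. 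In the \emph{compact} regime $z_1, z_2 \in \{V \leq R^*\}$, Proposition \ref{prop:minorize} gives $\|\mathcal{P}(z_1,\cdot) - \mathcal{P}(z_2,\cdot)\|_{TV} \leq \alpha < 1$, and combining this total-variation bound with the drift control on the $V$-moments of $\mathcal{P}(z_i, \cdot)$ bounds the full weighted norm. Tuning $\beta$ small enough (depending on $R^*, \alpha, \gamma$) makes both estimates strictly smaller than $\|\delta_{z_1} - \delta_{z_2}\|_\beta$ and yields a common contraction factor $\rho \in (0,1)$.

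Once the contraction is established, existence and uniqueness of the invariant measure $\mu$ follow from Banach's fixed point theorem applied to the complete metric space of probability measures with finite $V$-moment equipped with $d_\beta(\mu_1, \mu_2) := \|\mu_1 - \mu_2\|_\beta$. Iterating the contraction gives $d_\beta((\mathcal{P}^*)^n \delta_z, \mu) \leq \rho^n\, d_\beta(\delta_z, \mu) \lesssim \rho^n V(z)$, and testing against $\psi$ with $\|\psi\|_V \leq 1$ (using $|\psi| \leq V \leq \beta^{-1}(1 + \beta V)$) delivers the stated geometric decay, with $C_0$ and $\gamma_0 = \rho$ depending only on $\gamma, K, \alpha, R$. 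The main obstacle, and the source of the explicit dependence of $C_0, \gamma_0$ on the inputs, is the simultaneous tuning of $\beta$ and $R^*$: both the far-regime and compact-regime contraction factors must be made strictly less than $1$, which constrains $\beta$ from above as a function of $\gamma, K, \alpha, R^*$ while forcing $R^*$ to be sufficiently large relative to $K/(1-\gamma)$.
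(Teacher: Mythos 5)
Your proposal is correct and is precisely the Hairer--Mattingly argument from \cite{hairer2011yet} that the paper itself cites in lieu of a proof: the weighted norm $\|\mu\|_\beta = \int (1 + \beta V)\,d|\mu|$ (equivalently, the Wasserstein-$1$ distance associated to the cost $\rho_\beta(z_1,z_2) = \1_{z_1\ne z_2}(2 + \beta V(z_1) + \beta V(z_2))$), the reduction to Dirac pairs, the split into the near-diagonal and drift regimes, and the simultaneous tuning of $\beta$ and the threshold $R$ against $\gamma, K, \alpha$. The paper deliberately omits the proof and points to exactly this reference, so no comparison is needed; one small bookkeeping note is that your auxiliary threshold $R^*$ should simply be the $R$ already furnished by Proposition \ref{prop:minorize} (which guarantees minorization on $\{V\le R\}$ with $R > 2K/(1-\gamma)$), so there is no freedom to enlarge it independently.
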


We note that there are many works studying quantitative dependence in Harris's
Theorem in a much more precise way; see, e.g., \cite{meyn1994computable, bakry2008rate, down1995exponential}. All we are using in this work is the comparatively simpler 
fact that the constants $C_0, \gamma_0$ can be uniformly controlled in terms of the drift and minorization parameters
$\gamma, K, \alpha, R$.

\subsection{Checking minorization for $P^{(2),\kappa}_T$} \label{sec:chkMin}
We intend to apply the quantitative Harris's Theorem (Theorem \ref{thm:Harris}) to $\mathcal P = P^{(2), \kappa}_T$ on $\Hbf \times \Dc^c$ for some fixed, $\kappa$-independent 
$T > 0$. This will imply Theorem \ref{thm:unifTwoPtMixing}. The most difficult step is the construction of the Lyapunov function $\Vc_\kappa$ satisfying Definition \ref{defn:drift} for $P^{(2), \kappa}_T$.
Before turning to this, however, let us indicate how the hypotheses of Proposition \ref{prop:minorize}
will be checked once a suitable $\Vc_\kappa$ has been constructed.

Generally speaking, Markov kernels may degenerate in some regions of state space, and so it is usually expected that minorization conditions such as \eqref{eq:minorize} only hold on certain subsets of state space bounded away from these degeneracies. Typically, then, the Lyapunov function $V$ is built so that suitable sublevel sets $\{ V \leq R\}$ avoid such degeneracies. 
In our setting, for the two point process on $\{(u,x,y) \in \Hbf \times \Dc^c\}$, Markov kernels degenerate in two places: where $\| u \|_\Hbf \gg 1$, and where $d(x, y) \ll 1$. The latter degeneracy is due to the fact that the set $\Dc = \{ (x, x) : x \in \T^d \} \subset \T^d \times \T^d$ is almost surely invariant for the two point process.
In view of these considerations, the following property is natural and ensures sublevel sets are bounded away from these degenerate regions of state space. 

 \begin{definition}\label{defn:coercive}
 We say that a $\kappa$-dependent family of functions 
 $\Vc_\kappa : \Hbf \times \T^d \times \T^d \to [1,\infty)$ is \emph{uniformly coercive} if $\forall  R > 0$, $\exists R' > 0$ (independent of $\kappa$) and $\exists \kappa_0 = \kappa_0(R) > 0$ such that $\forall \kappa \in (0,\kappa_0)$ the following holds 
 \[
 \{ \mathcal V_\kappa \leq R \} \subset \hat {\mathcal C}_{R'} := \{ \| u \|_{\Hbf} \leq R' \} \cap \{ d(x,y) \geq 1/R'\}. 
 \]
 \end{definition}
As long as the Lyapunov function $\Vc_\kappa$ in our drift condition is uniformly coercive, 
it suffices to check that the hypotheses of Proposition \ref{prop:minorize} (b) hold on a 'small' set of the form $\hat{\mathcal C}_R$ for a fixed $R$ sufficiently large relative only to the parameters $\gamma$, $K$ in Definition \ref{defn:drift} (both independent of $\kappa$). 
See Remark \ref{rmk:kapR} for more discussion.




We now turn to the task of verifying the hypotheses (a) and (b) of Proposition \ref{prop:minorize}. 
Item (a) is deduced from the following \emph{uniform strong Feller} regularity, 
which implies that minorization holds across balls of possibly small (yet $\kappa$-uniform) radius. 

\begin{lemma}[Uniform strong Feller] \label{lem:USF}
For all $T, R, \zeta > 0$, there exists $\epsilon = \epsilon(T,\zeta, R)$ (independent of $\kappa$) and there exists $\kappa_0 > 0$ such that the following holds for all $\kappa \in [0,\kappa_0]$. 
Let $\phi : \Hbf \times \mathcal D^c \to \R$ be an arbitrary bounded measurable function and let $z_\ast \in \hat{\mathcal C}_R$. Then, 
\begin{align}
 \sup_{z \in B_\eps(z_*)} \abs{P^{(2), \kappa}_T \phi(z) -  P^{(2), \kappa}_T \phi(z_*) } < \zeta. 
\end{align}
\end{lemma}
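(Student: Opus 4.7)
The plan is to establish Lemma \ref{lem:USF} via a Bismut--Elworthy--Li (BEL) gradient estimate
\begin{align}
\bigl| \nabla P^{(2),\kappa}_T \phi(u,x,y) \cdot (\xi, v, w) \bigr| \leq C(T, R)\, \|\phi\|_\infty \bigl(\|\xi\|_\Hbf + |v| + |w|\bigr),
\end{align}
uniform in $\kappa \in [0, \kappa_0]$, for $(u, x, y)$ in a slightly enlarged set $\hat{\mathcal{C}}_{2R}$ and arbitrary tangent directions $(\xi, v, w)$. Integrating along the straight line from $z$ to $z_\ast$ then produces the strong Feller estimate with $\epsilon$ proportional to $\zeta / C(T, R)$ (under the standard normalization $\|\phi\|_\infty \leq 1$). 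The strategy is the natural adaptation of the Malliavin / BEL approach developed for the $\kappa = 0$ two-point process in \cite{BBPS18, BBPS19}; the only new input is to track $\kappa$-uniformity through the control construction.

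Since the Lagrangian noise $\widetilde W$ enters with coefficient $\sqrt{2\kappa}$, perturbing $\widetilde W$ would yield BEL integrals with a blowing-up prefactor $\kappa^{-1/2}$, so one cannot rely on the Lagrangian noise at all. All perturbations must instead be driven through the velocity noise $W$, whose perturbations propagate to $(x^\kappa_t, y^\kappa_t)$ via the Lagrangian ODE. Given a target perturbation direction $(\xi, v, w)$, one constructs an adapted $\Lbf^2$-valued control $h_s$ in the $W$-noise space so that the Malliavin directional derivative of $(u_T, x^\kappa_T, y^\kappa_T)$ with respect to the Cameron--Martin shift $W \mapsto W + \int_0^\cdot h_s\, ds$ matches $(\xi, v, w)$. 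The algebraic input is the parabolic Hörmander-type spanning condition for the two-point process established in \cite{BBPS18, BBPS19}: the image of $W$-perturbations under the linearized two-point flow is dense in $\Hbf \times T_x \T^d \times T_y \T^d$ on any interval $[0, T]$, with a quantitative lower bound for the Malliavin covariance.

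Crucially, the linearization of $\phi^t_\kappa$ with respect to the initial condition does not see the additive noise $\sqrt{2\kappa}\,\dot{\widetilde W}_t$, which vanishes upon differentiation. Hence the $\kappa = 0$ control construction of \cite{BBPS18, BBPS19} extends essentially verbatim to $\kappa \geq 0$, with $(x_t, y_t)$ replaced by $(x^\kappa_t, y^\kappa_t)$. Uniformity in $\kappa$ of $\|h\|_{L^2_{[0,T]}}$ then reduces to: $\kappa$-independent a priori estimates on $u_t$ (trivial, since the velocity equation contains no $\kappa$); $C^3$ regularity from $\Hbf \hookrightarrow C^3$, which controls the Jacobians of $\phi^t_\kappa$ uniformly in $\kappa$; and a uniform-in-$\kappa$ lower bound on the separation $|x^\kappa_t - y^\kappa_t|$ on a set of high probability.

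The main obstacle will be this last ingredient: although $d(x, y) \geq 1/R'$ initially on $\hat{\mathcal{C}}_{2R}$, the trajectories $(x^\kappa_t, y^\kappa_t)$ may wander close to the diagonal $\mathcal{D}$ during $[0,T]$, degenerating the Malliavin matrix of the two-point process. One requires a $\kappa$-uniform lower tail bound on the hitting time of $(x^\kappa_t, y^\kappa_t)$ into small neighborhoods of $\mathcal{D}$, which then allows $h$ to be localized by stopping before this hitting time. Such an estimate can be obtained by an Itô-calculus Lyapunov argument on $\log |x^\kappa_t - y^\kappa_t|$: the drift is controlled by $\|\nabla u_t\|_\infty$ (uniform in $\kappa$), while the extra diffusive contribution from $\sqrt{2\kappa}\,(\dot{\widetilde W}^1_t - \dot{\widetilde W}^2_t)$ is manifestly bounded in $\kappa \in [0, \kappa_0]$ and in fact vanishes as $\kappa \to 0$. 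Once this hitting-time estimate is in hand, the BEL gradient bound follows by a standard variance/duality argument, and integrating along straight lines completes the proof of the lemma.
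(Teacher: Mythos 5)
Your high-level strategy (Malliavin calculus driving perturbations through the $W$-noise, since the $\sqrt{2\kappa}\widetilde W$ perturbations would blow up) is the same one the paper uses, and you are right that the Jacobian equations do not see the additive Lagrangian noise and so the control construction carries over from $\kappa = 0$. However, the estimate you propose to prove --- a genuine Lipschitz bound $|\nabla P_T^{(2),\kappa}\phi| \leq C(T,R)\,\|\phi\|_\infty$ uniformly on $\hat{\mathcal C}_{2R}$ --- is strictly stronger than what the paper actually establishes and is likely unreachable. The paper (Lemma \ref{lem:2ptQSF}, proved by adapting Lemma \ref{lem:ProjQSF}) only proves a gradient bound for a \emph{cut-off} process (following \cite{EH01}: the drift is regularized when $\|u\|_\Hbf$ exceeds $\rho$), then bounds the error between the cut-off and the true semigroup by $\PP(\sup_{s\leq t}\|u_s\|_\Hbf > \rho)\lesssim \rho^{-b}(1+\|u\|_\Hbf^b)$, and finally optimizes over $\rho$ to obtain a \emph{nonlinear} modulus of continuity $X(\cdot)$. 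Starting in $\hat{\mathcal C}_{2R}$ does not bound the trajectory: Navier--Stokes escapes to any $\|u\|_\Hbf$ level with positive probability in any time $T$, so the cut-off (or, equivalently, an optimized stopping) is essential, and a clean BEL constant is not available. Your plan gestures at "localization by stopping" but then asserts the Lipschitz conclusion without the error term that the stopping introduces.

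There is also a concrete misreading of the model: you write the diffusive contribution to $\log|x^\kappa_t - y^\kappa_t|$ as $\sqrt{2\kappa}(\dot{\widetilde W}^1_t - \dot{\widetilde W}^2_t)$, but the paper stresses that $x^\kappa_t$ and $y^\kappa_t$ are driven by the \emph{same} Wiener path $\widetilde W_t$ (this is what makes $\mu\times\dx\times\dy$ stationary and what forces the two-point analysis). Hence the displacement $w_t^\kappa = x_t^\kappa - y_t^\kappa$ satisfies a pure ODE with no noise, the quadratic variation and It\^o correction you invoke vanish identically, and the diagonal $\mathcal D$ is a.s.\ never reached --- there is no hitting-time event to bound. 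What does need to be controlled is the \emph{exponential contraction} $|w_t^\kappa|\geq|w_0|\exp(-\int_0^t\|\nabla u_s\|_\infty\,\ds)$ and the resulting $\kappa$-uniform degeneration of the Malliavin covariance in the $w$-direction near the diagonal. The paper handles this (together with the $\|u\|_\Hbf$-growth) through the weighted path metric $d_b(z^1,z^2)$ in Section \ref{sec:USFetc}, whose weight $d(x,y)^{-b}(1+\|u\|_\Hbf)^b$ absorbs exactly these two degeneracies and is uniformly comparable to the ambient metric on the sets $\hat{\mathcal C}_R$; this is cleaner and sharper than the stopping-time argument you sketch, and it reduces Lemma \ref{lem:USF} to Lemma \ref{lem:2ptQSF} by a one-line observation.
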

A straightforward adaptation of the methods in \cite{BBPS19} implies that for fixed $\kappa > 0$, the $\kappa$-two point process $P_T^{(2),\kappa}$ is
strong Feller, hence transition kernels vary continuously in the TV metric \cite{seidler2001note}.
Lemma \ref{lem:USF} is stronger, and is a kind of
 TV \emph{equicontinuity} for transition kernels, with uniform control on moduli 
of continuity in $\kappa \in [0,\kappa_0]$ and across the small sets $\hat {\mathcal C}_R, R > 0$.
The proof is essentially a careful re-examination of the methods in \cite{BBPS19} to keep track of 
dependence on the $\kappa$ parameter. A brief sketch is given in Section \ref{sec:USFetc}. 

Turning to hypothesis (b) in Proposition \ref{prop:minorize}: 
fix a reference point of the form
$z_\ast = (0,x_\ast, y_\ast) \in \Hbf \times \Dc^c$, where
$x_\ast, y_\ast \in \T^d$ are such that $d(x_\ast, y_\ast) > 1/10$. 
Fix $\eps = \eps (\zeta)$ for $\zeta = \frac12$ as in Lemma \ref{lem:USF}. 
Item (b) in Proposition \ref{prop:minorize} is checked at $z_\ast$ from the following.
\begin{lemma}[Uniform topological irreducibility]\label{lem:getControlled}
Let $T, R > 0$ be arbitrary, and let $\eps = \eps(T, \frac{1}{2}, R) > 0$ be as in Lemma \ref{lem:USF} with $\zeta = \frac{1}{2}$. 
Then, there exists $\kappa_0' = \kappa_0'(R, T), \eta = \eta(R,T)$ such that the following holds for all $\kappa \in [0,\kappa_0']$. 
For all $z = (u,x,y) \in \hat{\mathcal C}_R$, 
we have
\[
P_{T}^{(2), \kappa}(z, B_\epsilon(z_*)) \geq \eta \, .
\]
\end{lemma}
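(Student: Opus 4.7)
The plan is to reduce to the topological irreducibility of the diffusion-free ($\kappa=0$) two-point process established in \cite{BBPS19}, and transfer the conclusion to $\kappa > 0$ by a pathwise coupling that exploits the fact that the only difference between the $\kappa$-process and the $0$-process is the addition of $\sqrt{2\kappa}\,\dot{\widetilde W}_t$ to the Lagrangian SDE --- the $(u_t)$ marginal is identical under this coupling, so the velocity component of $z_\ast$ (which is $0 \in \Hbf$) is unaffected by $\kappa$.

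First, by the topological irreducibility for the $\kappa=0$ two-point process proved in \cite{BBPS19}, applied at the reference point $z_\ast = (0,x_\ast,y_\ast)$ with $d(x_\ast,y_\ast) > 1/10$ and with the \emph{smaller} radius $\epsilon/2$, there exists $\eta_0 = \eta_0(R,T) > 0$ such that
\[
P^{(2),0}_T(z, B_{\epsilon/2}(z_\ast)) \;\geq\; \eta_0 \qquad \text{for every } z \in \hat{\mathcal C}_R.
\]
Next, I couple the $\kappa$- and $0$-processes on the same probability space by driving them with the same $W_t$ and $\widetilde W_t$. Since $\Hbf \hookrightarrow C^3$ gives the pointwise bound $\|\nabla u_s\|_{L^\infty} \lesssim \|u_s\|_{\Hbf}$, a Gronwall argument applied to the Lagrangian SDE yields
\[
\sup_{t \in [0,T]} |x^\kappa_t - x_t| \;\lesssim\; \sqrt{\kappa}\,\sup_{t \leq T}|\widetilde W_t|\,\exp\!\Big(C \int_0^T \|u_s\|_{\Hbf}\,\dee s\Big),
\]
and similarly for $|y^\kappa_t - y_t|$.

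Standard moment estimates for $(u_t)$ (exponential moments of $\int_0^T \|u_s\|_{\Hbf}\,\dee s$, uniform over initial data with $\|u\|_{\Hbf} \leq R'$, classical for System \ref{sys:NSE} via \eqref{eq:vorticity-norm} and for System \ref{sys:3DNSE} via the fourth-order dissipation) together with Chebyshev provide $M = M(R,T,\eta_0)$ with
\[
\P\Big\{\exp\!\Big(C\int_0^T \|u_s\|_{\Hbf}\,\dee s\Big) > M\Big\} < \eta_0/4
\]
uniformly over $\hat{\mathcal C}_R$; since $\widetilde W$ is independent of $W$, taking $\kappa_0' = \kappa_0'(R,T,\epsilon,\eta_0)$ small enough ensures $\sqrt{\kappa}\,M\,\sup_{t \leq T}|\widetilde W_t| < \epsilon/2$ with probability $>1-\eta_0/4$. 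Intersecting this coupling-success event with the event from the first step (the $\kappa=0$ trajectory lying in $B_{\epsilon/2}(z_\ast)$ at time $T$) gives $(u_T,x^\kappa_T,y^\kappa_T) \in B_\epsilon(z_\ast)$ with probability at least $\eta := \eta_0/2$, uniformly over $z \in \hat{\mathcal C}_R$ and $\kappa \in [0,\kappa_0']$, which is the claim.

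The main obstacle is the first step, namely verifying that the $\kappa=0$ topological irreducibility from \cite{BBPS19} furnishes a lower bound $\eta_0$ that is \emph{uniform} over the set $\hat{\mathcal C}_R$ (which is unbounded in the weakest relevant topology only through $\|u\|_\Hbf \leq R'$). This should follow either directly from the construction in \cite{BBPS19} or by combining pointwise irreducibility with lower semicontinuity of $z \mapsto P^{(2),0}_T(z,B_{\epsilon/2}(z_\ast))$ (implied by strong Feller of $P^{(2),0}_T$) and the smoothing of the Navier--Stokes semigroup, which renders $\{\|u\|_\Hbf \leq R'\}$ precompact in a weaker norm after arbitrarily short time. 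Once this uniform-in-$z$ lower bound is in hand, the remainder --- Gronwall coupling plus an exponential moment bound --- is routine.
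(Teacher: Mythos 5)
Your proposal is correct, and it takes a genuinely different route from the paper's. The paper's proof (sketched in Section~\ref{sec:USFetc} via the analogous projective-process result, Lemma~\ref{lem:projIR}) re-runs the approximate-controllability argument from \cite{BBPS19} from scratch: one fixes a deterministic control driving the $\kappa=0$ dynamics toward $z_\ast$, shows the stochastic convolution $\Gamma_t$ is close to that control with probability $\eta$, bounds $\sqrt{2\kappa}\,\widetilde{W}_t$ uniformly small with probability $\to 1$ as $\kappa \to 0$, and then invokes a stability estimate to intersect the events. Your approach instead treats the $\kappa=0$ irreducibility from \cite{BBPS19} as a black box and transports it to $\kappa>0$ via a pathwise coupling (same $W_t$, hence identical $(u_t)$ marginals) followed by a Gr\"onwall comparison of the $\kappa$-flow with the $0$-flow. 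This is more modular and a bit cleaner: it avoids reconstructing the control and uses the $\kappa=0$ result in exactly the form it was proved.

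On your stated obstacle (uniformity of $\eta_0$ over $\hat{\mathcal C}_R$): this is not actually a gap. The topological irreducibility in \cite{BBPS19} (Proposition~2.7 there, and Lemma~7.3 of \cite{BBPS18}) is already proved with a lower bound uniform over sets of the form $\{\|u\|_{\Hbf} \leq R'\} \cap \{d(x,y) \geq 1/R'\}$, precisely because it is derived from an approximate-controllability argument, which delivers uniform bounds by construction; one does not need to go through strong Feller plus a precompactness argument in a weaker norm. Since both that lower bound and the exponential moment estimate $\E\exp(C\int_0^T\|u_s\|_{\Hbf}\,\dee s) \lesssim V(u)$ you invoke (this is Lemma~\ref{lem:TwistBd}) are uniform over $\hat{\mathcal C}_R$, and $\widetilde W$ is independent of $W$, your intersection count $\eta := \eta_0/2$ goes through. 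The one minor point to be explicit about is that your Gr\"onwall bound requires controlling $|x^\kappa_t - x_t|$ and $|y^\kappa_t - y_t|$ and also the consequent closeness of $(x^\kappa_T, y^\kappa_T)$ to $(x_T,y_T)$ in the $\mathcal D^c$ metric; since you land at $z_\ast$ with $d(x_\ast,y_\ast) > 1/10$, being $\eps/2$-close in the flat metric with $\eps$ small keeps you bounded away from $\mathcal D$, so this is harmless.
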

Note that in Lemma \ref{lem:getControlled}, the 
value of the upper bound $\kappa_0'$ depends  
on $\eps = \eps(T, 1/2, R)$, as well as $T$ and $R$. This is an artifact of the proof: 
since the primary case of interest is $\kappa \ll 1$, 
we treat the $\sqrt{\kappa} \widetilde W_t$ term as a perturbation and 
control trajectories exclusively with the $W_t$ noise applied to the velocity field process
(following the scheme set out for $\kappa = 0$ in [Proposition 2.7, \cite{BBPS19}]). 
A proof sketch in our setting is given in Section \ref{sec:USFetc}.


By Proposition \ref{prop:minorize}, Lemmata \ref{lem:USF} and \ref{lem:getControlled} imply the minorization condition as 
in Proposition \ref{prop:minorize} for $\mathcal P := P^{(2), \kappa}_{1}$ when we set $T = 1/2$.

\subsection{Drift condition for $P^{(2), \kappa}_T$} \label{sec:GeoP2kOut}
We now turn to the more significant task of deriving a drift condition with a Lyapunov function $\mathcal V_\kappa$ satisfying
the $\kappa$-uniform coercivity condition in Definition \ref{defn:coercive}.

The family of Lyapunov functions $\Vc_\kappa$ we construct for the two-point process 
will serve the role of bounding the dynamics away from the 'degnerate' regions $\| u \|_{\Hbf} \gg 1$ and
$d(x,y) \ll 1$. Control of the first is done entirely on the Navier-Stokes process $(u_t)$
as follows.


\begin{lemma}[Lemma 2.9, \cite{BBPS19}] \label{lem:Lyapu}
There exists $\mathcal Q > 0$, depending only on the noise coefficients $\{ q_m\}$ in the 
noise term $Q W_t$ and the dimension
$d$, with the following property. 
Let $0 < \eta < \eta^* = \nu / \mathcal Q$, $\beta \geq 0$, and define
\begin{align}
V_{\beta,\eta}(u) = (1 + \norm{u}_{\Hbf}^2)^{\beta}\exp\left(\eta \norm{u}_{\Wbf }^2 \right) \label{def:V}
\end{align}
where ${ \| \cdot \|_{\Wbf} }$ is as in \eqref{eq:vorticity-norm}. Then \eqref{def:V} satisfies the drift condition as in Definition \ref{defn:drift} for the $(u_t)$ process.
\end{lemma}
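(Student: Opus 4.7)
The plan is to apply Itô's formula to $V_{\beta,\eta}(u_t)$ at a fixed (large) time $T$ and show that $\mathbb E_u V_{\beta,\eta}(u_T) \leq \gamma V_{\beta,\eta}(u) + K$ for some $\gamma \in (0,1)$ and $K > 0$ depending only on $\beta$, $\eta$, and the parameters of the system. The natural decomposition is to handle the exponential factor $\exp(\eta \|u\|_{\Wbf}^2)$ and the polynomial factor $(1+\|u\|_{\Hbf}^2)^\beta$ as separate contributions and then combine via H\"older.

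The heart of the argument is the exponential factor, which is where the restriction $\eta < \eta^* = \nu/\mathcal Q$ arises. In 2D, write $\omega_t = \curl u_t$ and use the enstrophy identity (the nonlinearity $u\cdot\nabla\omega$ cancels against $\omega$ in $L^2$ thanks to incompressibility),
\begin{align}
\dee \|\omega_t\|_{\Lbf^2}^2 = \bigl(-2\nu \|\grad \omega_t\|_{\Lbf^2}^2 + \|\curl Q\|_{HS}^2\bigr)\dt + 2\langle \omega_t, \curl Q\,\dee W_t\rangle,
\end{align}
while in 3D the analogous identity holds for $\|u_t\|_{\Lbf^2}^2$ with hyperdissipation $-2\nu\|\Delta u_t\|_{\Lbf^2}^2$ replacing the enstrophy dissipation. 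Applying Itô to $\exp(\eta \|u_t\|_{\Wbf}^2)$ then gives a drift proportional to
\begin{align}
\eta \exp(\eta \|u_t\|_{\Wbf}^2)\Bigl[-2\nu \|\grad u_t\|_{\Wbf}^2 + \|Q\|_{\Wbf, HS}^2 + 2\eta \sum_k \langle u_t, Qe_k\rangle_{\Wbf}^2\Bigr].
\end{align}
The quadratic variation term is bounded by $2\eta \mathcal Q \|u_t\|_{\Wbf}^2$, and Poincar\'e's inequality $\|u\|_{\Wbf}^2 \leq C \|\grad u\|_{\Wbf}^2$ (absorbed into $\mathcal Q$) turns the drift into $\bigl(-c(\nu - \eta \mathcal Q) \|u_t\|_{\Wbf}^2 + K_0\bigr)\exp(\eta \|u_t\|_{\Wbf}^2)$, which is negative outside a bounded set precisely when $\eta < \nu/\mathcal Q$. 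Taking expectation and applying a standard Gronwall/stopping-time argument yields a drift condition of the form $\mathbb E_u \exp(\eta \|u_T\|_{\Wbf}^2) \leq \gamma_1 \exp(\eta \|u\|_{\Wbf}^2) + K_1$ at any $T > 0$, with $\gamma_1 \in (0,1)$ for $T$ large.

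For the polynomial prefactor, higher-regularity energy estimates for $\|u_t\|_{H^\sigma}^2$ close using $\|u_t\|_{\Wbf}$ as a low-frequency input (in 2D via the enstrophy, in 3D via the hyperviscous $H^2$ dissipation), giving a polynomial-in-time bound $\mathbb E_u (1 + \|u_T\|_{\Hbf}^2)^{2\beta} \lesssim (1 + \|u\|_{\Hbf}^2)^{2\beta}\exp(C\eta' \|u\|_{\Wbf}^2)$ for some $\eta' > 0$ that can be taken arbitrarily small by choosing the power-mean inequalities appropriately. Combining with the exponential drift via Cauchy--Schwarz (splitting $\exp(\eta\|u\|_{\Wbf}^2) = \exp((\eta-\eta'')\|u\|_{\Wbf}^2)\cdot \exp(\eta''\|u\|_{\Wbf}^2)$ with $\eta'' \ll \eta^* - \eta$) yields the desired inequality $\mathbb E_u V_{\beta,\eta}(u_T) \leq \gamma V_{\beta,\eta}(u) + K$.

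The main obstacle is balancing the polynomial and exponential factors: the higher-regularity estimates needed to control $(1+\|u\|_{\Hbf}^2)^\beta$ cost some exponential moment on $\|u\|_{\Wbf}$, so one must verify that this cost is strictly less than the exponential decay budget $\eta^* - \eta$ made available by the enstrophy drift. This is exactly what forces the condition $\eta < \eta^*$ with strict inequality and requires that $\eta^*$ depend only on $\nu$ and the noise covariance. Since this lemma is quoted from \cite{BBPS19}, we refer the reader there for the detailed verification.
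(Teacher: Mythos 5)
Your proposal correctly identifies the underlying stochastic energy mechanism: It\^o's formula applied to $\exp(\eta\|u_t\|_{\Wbf}^2)$, fed by the enstrophy balance in 2D (or the hyperviscous energy balance in 3D), with the restriction $\eta < \eta^* = \nu/\mathcal Q$ emerging from the competition between the Poincar\'e-enhanced dissipation and the It\^o quadratic-variation term. That is indeed the engine driving this lemma in \cite{BBPS19}.

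However, the combination step where you try to reassemble $\EE V_{\beta,\eta}(u_T) = \EE\bigl[(1+\|u_T\|_{\Hbf}^2)^\beta\exp(\eta\|u_T\|_{\Wbf}^2)\bigr]$ from separate bounds on the two factors via Cauchy--Schwarz or H\"older has a genuine gap. Cauchy--Schwarz costs you a doubled exponent $2\eta$, which would restrict the admissible range to $\eta < \eta^*/2$, not $\eta < \eta^*$ as claimed. Even if you replace it with H\"older at exponents $(p,q)$ with $p$ close to $1$ so that $p\eta < \eta^*$, the residual constant $K_1$ in the exponential drift $\EE\exp(p\eta\|u_T\|_{\Wbf}^2) \leq \gamma_1\exp(p\eta\|u\|_{\Wbf}^2) + K_1$ still gets multiplied by $(\EE(1+\|u_T\|_{\Hbf}^2)^{q\beta})^{1/q}\gtrsim (1+\|u\|_{\Hbf}^2)^\beta$, which is unbounded. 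Since $\Hbf = H^\sigma$ (with $\sigma>1$) and $\|\cdot\|_{\Wbf}$ is a lower-regularity norm, neither factor dominates the other, so you cannot absorb this residual into $\gamma V + K$ with $K$ a constant. Your closing caveat flags this, but does not resolve it.

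The route actually used by the paper (stated in Remark \ref{rmk:SuperL}) circumvents the factor-mismatch entirely. One first proves the strictly stronger \emph{super-Lyapunov} estimate of Lemma \ref{lem:TwistBd}, $\EE_u V^{e^{\gamma T}}(u_T)\leq C V(u)$, valid under the slightly enlarged exponent constraint $e^{\gamma T}\eta < \eta^*$. This is still proved by the It\^o/energy argument you sketch, but applied to the full observable $V^{e^{\gamma t}}(u_t)$ rather than to the two factors separately, so no H\"older splitting is needed. Then Jensen's inequality applied to the concave map $x\mapsto x^{e^{-\gamma T}}$ gives $\EE_u V(u_T) \leq (CV(u))^{e^{-\gamma T}}$, and since $e^{-\gamma T}<1$, the sublinearity inequality $(CV)^{e^{-\gamma T}} \leq \delta V + C_\delta$ for any $\delta>0$ finishes the drift condition. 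This Jensen step is precisely what makes the full range $\eta<\eta^*$ accessible and what your combination step is missing.
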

Lemma \ref{lem:Lyapu} is taken verbatim from \cite{BBPS19}. 
In fact, a more powerful estimate than that in Definition \ref{defn:drift} holds (a so-called super-Lyapunov property): see Lemma \ref{lem:TwistBd} in Section \ref{sec:NSEsup} for details. Obviously, these drift conditions do not depend on the $\kappa$ parameter, which only drives the Lagrangian flow itself. 

\subsubsection*{Motivation: controlling dynamics near $\mathcal D$}

To bound the dynamics away from small neighborhoods $\{ d(x,y) \ll 1\}$ of the diagonal, we
seek to build $\Vc_\kappa$ with an infinite singularity along $\Hbf \times \Dc$. 
We again follow our previous approach from \cite{BBPS19}, where
a Lypaunov function for $P^{(2)}_t$ at $\kappa = 0$ was built using the linearized approximation when
$x_t \approx y_t$. 
As proved in our earlier work \cite{BBPS18}, this linearization satisfies the following $\PP$-a.e.:
\begin{align}\label{eq:lyapExponent}
0 < \lambda_1 = \lim_{t \to \infty} \frac{1}{t} \log | D_x \phi^t| \qquad \text{ for all } (u,x) \in \Hbf \times \T^d \, , 
\end{align}
where the \emph{Lyapunov exponent} $\lambda_1 > 0$ is a (deterministic) constant independent of the initial $(u, x) \in \Hbf \times \T^d$.
This guarantees that nearby particles separate exponentially fast with high probability.

 With this intuition in mind, following the reasoning given in [Section 2 of \cite{BBPS19}], it is natural to seek a Lyapunov function of the form
$\mathcal V_\kappa = V_{\beta, \eta}(u) + h_{p, \kappa}(u,x,y)$, where 
 $h_{p, \kappa} (u,x,y) : \Hbf \times \mathcal D^c \to \R_{> 0}$ is of the form 
\begin{align}\label{eq:defineh}
h_{p, \kappa} (u,x,y) = \chi(|w|) d(x,y)^{-p} \psi_{p, \kappa} \left(u,x,\frac{w}{|w|}\right) \, 
\end{align}
for some $p > 0$. Here, $w = w(x,y)$ denotes the minimal 
displacement vector in $\R^d$ from $x$ to $y$, noting $|w| = d(x,y)$, and
$\chi : \R_{\geq 0}\to [0,1]$ is a smooth cutoff satisfying $\chi|_{[0,1/10]} \equiv 1$ and $\chi|_{[1/5, \infty)} \equiv 0$. 
We regard $\psi_{p, \kappa}$ as a function on the space $\Hbf \times P \T^d$, where $P \T^d = \T^d \times P^{d-1}$ is the projective bundle over $\T^d$. 

A natural candidate for $\psi_{p, \kappa}$ is (if it exists) the dominant, positive-valued eigenfunction 
of the `twisted' Markov semigroups $\hat P^{\kappa, p}_t$, defined 
for observables 
$\psi : \Hbf \times P \T^d \to \R$, by
\begin{align}\label{eq:feynmanFormulaOutline}
\hat{P}^{\kappa,p}_t \psi(u,x,v) = \EE_{(u,x,v)} \abs{D_x \phi_t^\kappa v}^{-p}\psi(u_t,x_t^\kappa,v_t^\kappa) \, , 
\end{align}
whenever the RHS exists. Here, for $\kappa > 0$, we let $(u_t, x_t^\kappa, v_t^\kappa)$ denote the \emph{projective process} 
\footnote{Equivalently, we can think of $(u_t, x_t^\kappa, v_t^\kappa)$ as evolving on the 
sphere bundle $\Hbf \times S \T^d$, where $S \T^d \cong \T^d \times \S^{d-1}$. In this parametrization,
$v_t^\kappa$ evolves according to the random ODE
\[
\dot v_t^\kappa  = (1 - v_t^\kappa \otimes v_t^\kappa) D u_t(x_t^\kappa) v_t^\kappa \, .
\]}
on $\Hbf \times P \T^d$: the \emph{one-point} process  $x_t^\kappa$ on $\T^d$ is as before, and $v_t^\kappa \in P^{d-1}$ is defined for initial $v \in P^{d-1}$ to be the projective representative of $D_x \phi^t_\kappa v$. We write $\hat P^\kappa_t$ for the $p = 0$ Markov semigroup corresponding to $(u_t, x_t^\kappa, v_t^\kappa)$. 

 In \cite{BBPS19}, we showed that for $\kappa = 0$, the dominant eigenfunction $\psi_{p,0}$ exists, is unique up to scaling, and satisfies $\hat P^{0,p}_t \psi_{p,0} = e^{- \Lambda(p,0) t} \psi_{p,\kappa}$ where $\Lambda(p,0) > 0$ for all $p$ sufficiently small-- in fact, $\Lambda(p,0) = p \lambda_1 + o(p^2)$, $\lambda_1$ as in \eqref{eq:lyapExponent}, and so our ability to build a drift condition is 
directly the result of a positive Lyapunov exponent (see also Remark \ref{rmk:momentLyapExponent}).
Once $\psi_{p, 0}$ was been constructed, a careful infinitesimal 
generator argument is then applied to pass from the linearized process
$(u_t, x_t, v_t)$ to nonlinear process
$(u_t, x_t, y_t)$ [Section 6.3 of \cite{BBPS19}].
In what remains we denote $\psi_{p} := \psi_{p,0}$, $\hat P^{p} := \hat P^{0,p}$, and $\Lambda(p) := \Lambda(p,0)$. 

In our context, we seek to show that the dominant eigenfunctions $\psi_{p, \kappa}$ for $\hat P_t^{\kappa, p}$, if they exist, result in analogous drift and 
uniform coercivity conditions 
with constants uniformly controlled in $\kappa$. 
The quality of these conditions depends on 
  (A) $\kappa$-uniform control on $\psi_{p, \kappa}$ from above and below, to ensure $\kappa$-uniform coercivity and to control error in the linearization approximation; and (B) a $\kappa$-uniform lower bound on the value $\Lambda(p,\kappa)$
  for which $\hat P^{\kappa, p}_t \psi_{p,\kappa} = e^{- \Lambda(p,\kappa) t}\psi_{p,\kappa}$,
 ensuring $\kappa$-uniform parameters in the resulting drift condition. 

 The primary challenge in achieving these points is the fact 
 that $\kappa \to \hat P^{\kappa, p}_t$ is a singular (not operator-
norm continuous) perturbation 
for $p \geq 0, t > 0$, and so it 
 is a subtle and technically challenging problem to obtain $\kappa$-uniform
control over $\psi_{p, \kappa}$ and $\Lambda(p, \kappa)$.
This is the aim of Proposition \ref{prop:psiPUniform} below, which summarizes the $\kappa$-uniform controlled needed
on this eigenproblem.
 
 \subsubsection*{Technical formulation of the eigenproblem for $\hat P^{\kappa, p}_t$}
 
 In what follows, $\beta, \eta > 0$ are fixed admissible parameters for Lemma \ref{lem:Lyapu}, and $V := V_{\beta, \eta}$. 
A finite number of times in the coming proofs, we will assume $\beta$ is taken sufficiently large, 
but always in a $\kappa$-independent way. 

We define
$C_V^1$ to be the set of Fr\'echet-differentiable observables $\psi : \Hbf \times P \T^d \to \R$ for which
\[
\| \psi \|_{C_V^1} := \| \psi\|_{C_V} + \sup_{(u,x,v) \in \Hbf \times P \T^d} \frac{\| D \psi(u,x,v)\|_{\Hbf^\ast}}{V(u)} < \infty \, ,
\]
where $\Hbf^\ast$ is shorthand for the dual space to $\Hbf \times T_{(x,v)} (P \T^d)$.

For reasons discussed in \cite{BBPS19} (see also, e.g., \cite{HM08}), for the 
purposes of $C_0$ semigroup theory one usually 
restricts to the following separable subspace of observables well-approximated by 
smooth, finite-dimensional observables. We define the (norm-closed)  subspace $\mathring C_V^1 \subset C_V^1$ to be the $C_V^1$-closure and $\mathring C_V \subset C_V$ to be the $C_V$-closure of the space of smooth cylinder functions
\begin{align}
\mathring C_0^\infty (\Hbf \times P \T^d) := \{ \psi | \psi(u,x,v) = \phi(\Pi_{\mathcal K} u, x, v), \mathcal K \subset \mathbb K, 
\phi \in C_0^\infty\} \, , \label{def:CylSpc}
\end{align}
where $\Pi_{\mathcal K}$ denotes the orthogonal projection onto $\Hbf_{\mathcal K} \cong \R^{\abs{\mathcal K}}$.

The following statement lists all required properties of the dominant eigenfunctions
for $\hat P^{\kappa, p}_t$ under the singular perturbation $\kappa \to 0$.
The result is crucial to our method for dealing with this singularity and its proof occupies a substantial
portion of the paper. The proof is outlined in Section \ref{sec:sktPsiPuni} below.

\begin{proposition}\label{prop:psiPUniform}
There exist $\kappa_0, p_0 > 0$ for which the following holds. 
\begin{itemize}
\item[(a)] There exists $T_0 > 0$ such that for all $(\kappa, p) \in [0,\kappa_0] \times [0,p_0]$, 
the (positive) operator $\hat P^{\kappa, p}_{T_0}$ admits a simple, dominant, isolated, positive, real eigenvalue 
$e^{- T_0 \Lambda(p, \kappa)}$ in $\mathring C_V^1$ such that $\Lambda(p, \kappa) > 0$, and have the following
property: for each fixed $p > 0$ 
\begin{align}\label{eq:unifLambdaBoundsOutline}
\lim_{\kappa \to 0}\Lambda(p,\kappa) = \Lambda(p) > 0. 
\end{align}
\item[(b)]
With $\pi_{p,\kappa}$ denoting the (rank 1) spectral projector corresponding to
the dominant eigenvalue of $\hat P^{\kappa,p}_{T_0}$, let $\psi_{p,\kappa} = \pi_{p,\kappa}({\bf 1})$, where
${\bf 1}$ denotes the unit constant function on $\Hbf \times P \T^d$. The family $\{ \psi_{p,\kappa}\}$ has the following
properties.
	\begin{itemize}
		\item[(i)] For all $t > 0$, we have
		\[
		\hat P^{p, \kappa}_t \psi_{p, \kappa} = e^{- \Lambda(p, \kappa) t} \psi_{p, \kappa} \, .
		\]
		\item[(ii)] We have $\psi_{p,\kappa} \in \mathring C_V^1$, with $\| \psi_{p,\kappa}\|_{C_V^1}$ bounded
		from above uniformly in $\kappa, p$.
      \item[(iii)]  For all $p,\kappa$ sufficiently small, $\psi_{p,\kappa} \geq 0$ and there holds the convergence
       \begin{align}
         \lim_{\kappa \to 0} \norm{\psi_{p,\kappa} - \psi_p}_{C_V} = 0 \,. 
       \end{align}
       Finally, for $p$ sufficiently small, $\forall R > 0$, $\exists \kappa_0 = \kappa_0(R)$ such that  
		\[
		\inf_{\kappa \in [0,\kappa_0]} \inf_{\substack{(u,x,y) \in \Hbf \times P \T^d \\ \| u \|_\Hbf \leq R}} \psi_{p, \kappa}(u,x,v) > 0 \, .
        \]
	\end{itemize}
\end{itemize}
\end{proposition}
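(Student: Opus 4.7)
The plan is to build $\psi_{p,\kappa}$ and $\Lambda(p,\kappa)$ via a uniform-in-$\kappa$ Krein--Rutman / Perron--Frobenius analysis of the twisted semigroups $\hat P^{\kappa,p}_t$ on $\mathring C_V^1$, anchored in the $\kappa = 0$ result from \cite{BBPS19}. The dominant eigenpair is produced by establishing quasicompactness with a $\kappa$-uniform spectral gap and then passing to $\kappa > 0$ by a Keller--Liverani-type two-norm perturbation argument that tolerates singularity in operator norm while preserving isolated simple eigenvalues.

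First I would derive a $\kappa$-uniform twisted drift inequality of the form $\hat P^{\kappa,p}_{T_0} V \leq \gamma V + K$ with $\gamma < 1$, valid for all $p \in [0,p_0]$ with $p_0$ small. This couples the super-Lyapunov estimate for $V = V_{\beta,\eta}$ (Lemmata \ref{lem:Lyapu} and \ref{lem:TwistBd}) with moment bounds on $\abs{D_x \phi^t_\kappa v}^{-p}$ that are uniform in $\kappa \in [0,\kappa_0]$. The latter follow because the variational equation for $D_x \phi^t_\kappa$ uses the coefficient $\nabla u_t$, identical to the $\kappa = 0$ case, and $\sqrt{2\kappa}\,\widetilde W_t$ enters only through a mild perturbation of base points that is absorbed by the embedding $\Hbf \hookrightarrow C^3$. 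Next I would lift the $\kappa$-uniform strong Feller and topological irreducibility (analogues of Lemmata \ref{lem:USF} and \ref{lem:getControlled}) to the projective bundle $\Hbf \times P\T^d$; combined with the drift above, the quantitative Harris Theorem \ref{thm:Harris} gives a $\kappa$-uniform spectral gap for the $p=0$ semigroup $\hat P^\kappa_t$, and a standard perturbation in $p$ about $p=0$ extends this to $p \in [0,p_0]$, yielding quasicompactness of $\hat P^{\kappa,p}_{T_0}$ on $\mathring C_V^1$ with uniform constants.

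The central obstacle is that $\kappa \mapsto \hat P^{\kappa,p}_t$ is not operator-norm continuous, so classical analytic perturbation theory does not apply. The plan is a Keller--Liverani-style argument in the pair $(C_V^1, C_V)$: establish $\| \hat P^{\kappa,p}_t - \hat P^{0,p}_t \|_{C_V^1 \to C_V} \to 0$ as $\kappa \to 0$, uniformly for $t$ in a bounded interval, while retaining $\kappa$-uniform quasicompactness in $C_V^1$. Weak continuity is obtained by pathwise comparison: $\phi^t_\kappa \to \phi^t_0$ uniformly on compact time intervals $\widetilde\PP$-almost surely as $\kappa \to 0$ since both flows are driven by the same realization of $u_t$, and the Fr\'echet-derivative control in $C_V^1$ upgrades this to the required norm convergence on $C_V$ via dominated convergence against the uniform moment bounds from Step 1. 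The two-norm framework then ensures that isolated simple eigenvalues of $\hat P^{0,p}_{T_0}$ persist and vary continuously in $\kappa$, as do their rank-one spectral projectors $\pi_{p,\kappa}$.

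Finally, setting $\psi_{p,\kappa} := \pi_{p,\kappa}(\mathbf{1})$, property (i) is immediate, (ii) follows from uniform boundedness of $\pi_{p,\kappa}$ on $\mathring C_V^1$, and the convergence $\psi_{p,\kappa} \to \psi_p$ in $C_V$ in (iii) is a direct consequence of the spectral continuity. Positivity of $\psi_{p,\kappa}$ for small $p,\kappa$ follows from positivity of $\psi_p$ at $\kappa=0$, $C_V$-continuity, and preservation of the positive cone by $\hat P^{\kappa,p}_t$; a standard Perron--Frobenius argument using uniqueness of the dominant eigendirection rules out sign changes. The strict positive lower bound on $\{\|u\|_{\Hbf} \leq R\}$ is extracted from the uniform minorization of the projective process, which together with the eigenrelation forces $\psi_{p,\kappa}(u,x,v) = e^{\Lambda(p,\kappa)t}\,\hat P^{\kappa,p}_t \psi_{p,\kappa}(u,x,v) \gtrsim 1$ on such sets. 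The bulk of the technical weight of the proof will lie in Step 3, in the pathwise convergence estimates for $\phi^t_\kappa \to \phi^t_0$ and $D_x \phi^t_\kappa \to D_x \phi^t$ and the verification that this weak continuity suffices to propagate the spectral picture across the singular perturbation.
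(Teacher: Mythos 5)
Your proposal is correct in its overall architecture and its first two steps coincide with the paper's: a $\kappa$-uniform twisted drift estimate via the Feynman--Kac representation and Lemma \ref{lem:TwistBd}, uniform strong Feller and irreducibility for the projective process feeding into the quantitative Harris theorem at $p=0$, and a resolvent perturbation in $p$ to obtain the dominant eigenvalue of $\hat P^{\kappa,p}_{T_0}$ for each fixed $\kappa$ (this is Proposition \ref{prop:outlineSpecPic}). Where you genuinely diverge is the treatment of the singular limit $\kappa\to 0$. You propose a Keller--Liverani two-norm argument in the pair $(C_V^1, C_V)$, resting on a $\kappa$-uniform Lasota--Yorke inequality plus the triple-norm convergence $\|\hat P^{\kappa,p}_{T_0}-\hat P^{0,p}_{T_0}\|_{C_V^1\to C_V}\to 0$. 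The paper instead proves only strong operator convergence in $C_V$ (Lemma \ref{lem:SOT-convergence}), and compensates by an Arzela--Ascoli compactness argument on the family $\{\psi_{p,\kappa}\}$ (Lemma \ref{lem:compact}), exploiting the compactness of $\Hbf^\sigma$-balls in $\Hbf^{\sigma'}$ together with the uniform $C_V^1$ bounds, then passes to the limit in the eigenvalue relation and identifies the limit with $\psi_p$ via the $\kappa=0$ spectral gap. Your route, if the hypotheses are verified, is arguably more efficient: it delivers continuity of $\Lambda(p,\kappa)$ and of the rank-one projectors in one stroke, without the subsequence-extraction and identification steps. The price is two verifications the paper does not carry out in this form: (1) the Lasota--Yorke inequality must hold for the \emph{twisted} semigroups uniformly in $\kappa$ — the paper proves it only for $p=0$ (Lemma \ref{lem:unifGradBound}) and reaches the twisted $C_V^1$ spectral picture indirectly through resolvent perturbation in $p$, so you would either need to twist the gradient bound directly or run Keller--Liverani at $p=0$ and perturb in $p$ afterwards; and (2) the triple-norm convergence must land in the \emph{same} weak norm $C_V$ used in the Lasota--Yorke pair — the pathwise comparison of $\phi^t_\kappa$ with $\phi^t_0$ naturally produces a bound by $V_{\beta',\eta'}$ with $\beta'>\beta$, so the Lyapunov-function bookkeeping (fixing $\beta$ large enough at the outset) needs care. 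Neither issue is a fundamental obstruction, and your concluding steps for (b)(i)--(iii), including positivity via the positive cone and the lower bound via minorization plus the eigenrelation, match the paper's Lemmata \ref{lem:uniformCV-limit}--\ref{lem:unifLowerBdPsiPKappa} in substance.
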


\begin{remark}\label{rmk:momentLyapExponent}
The value $\Lambda(p, \kappa)$ is referred to as the \emph{moment Lyapunov exponent}
in the random dynamical systems literature \cite{arnold1984formula}, 
and governs large deviation-scale fluctuations in the 
convergence of Lyapunov exponents. Indeed, $\hat P_t^{\kappa, p}$ is the
Feynman-Kac semigroup \cite{varadhan1984large} with respect to the potential $H(u,x,v) = \langle v, D u(x) v\rangle$; 
see \eqref{FKRep}. 

As in [Lemma 5.8 of \cite{BBPS19}], one can show that
\[
\Lambda(p, \kappa) = - \lim_{t \to \infty} \frac{1}{t} \log \E |D_x \phi^t v|^{-p}
\]
holds for all initial $u \in \Hbf$ and $(x,v) \in P \T^d$. This, in turn, implies the asymptotic
\[
\Lambda(p, \kappa) = p \lambda_1^\kappa + o(p)
\]
where $\lambda_1^\kappa$ is the Lyapunov exponent
\[
\lambda_1^\kappa = \lim_{t \to \infty} \frac{1}{t} \log |D_x \phi_\kappa^t| 
\]
for the $\kappa$-driven Lagrangian flow $\phi^t_\kappa$. We remark that
what is really needed for our purposes is continuity of $\kappa \mapsto \Lambda(p, \kappa)$,
which does not follow from continuity of $\kappa \mapsto \lambda^1_\kappa$ (the latter
is a straightforward corollary of Proposition \ref{prop:outlineSpecPic} (a) applied
to $p = 0$). 
\end{remark}

\subsection{Proof outline of Proposition \ref{prop:psiPUniform}} \label{sec:sktPsiPuni}

The proof has two main components. The first is to establish
spectral properties of the semigroups $\hat P_t^{\kappa, p}$ by viewing these, for \emph{fixed} $\kappa > 0$, 
 as norm-continuous perturbations in the parameter $p > 0$ of the semigroups $\hat P_t^\kappa$. 
This part of the proof is a careful re-working of the arguments in \cite{BBPS19} to ensure
that the relevant quantities do not depend on the parameter $\kappa$. The following is a summary
of the spectral picture derived.

\begin{proposition}\label{prop:outlineSpecPic}
There exist $\kappa_0, p_0, T_0 > 0, c_0 \in (0,1)$ such that the following holds for any $\kappa \in [0,\kappa_0], p \in [0,p_0]$. 
\begin{itemize}
\item[(a)] The semigroup $\hat P_t^{\kappa, p}$ is a $C_0$-semigroup on $\mathring C_V$. For any fixed $t > 0$, the norm $\| \hat P_t^{\kappa, p}\|_{C_V}$ is bounded uniformly in $\kappa$. Additionally, for any $t > 0$, the operator $\hat P_t^{\kappa, p}$ has a 
simple, dominant, isolated eigenvalue $e^{- \Lambda(p, \kappa) t}$, and satisfies
\begin{align}\label{eq:specPicCVOutline}
\sigma(\hat P_t^{\kappa, p}) \setminus \{ e^{- \Lambda(p, \kappa) t}\} \subset B_{c_0^t}(0) \, .
\end{align}
\item[(b)] We have that $\hat P_{T_0}^{\kappa, p}$ is a bounded linear operator $C_V^1 \to C_V^1$ sending
$\mathring C_V^1$ into itself, with $\| \hat P^{\kappa, p}_{T_0}\|_{C_V^1}$ bounded uniformly in $\kappa$. Regarded as an operator in this space, the value $e^{- \Lambda(p, \kappa) T_0}$ 
is a simple, dominant, isolated eigenvalue for $\hat P_{T_0}^{\kappa, p}$, and satisfies
\[
\sigma(\hat P_{T_0}^{\kappa, p}) \setminus \{ e^{- \Lambda(p, \kappa) T_0}\} \subset B_{c_0^{T_0}}(0) \, .
\]
\end{itemize}
\end{proposition}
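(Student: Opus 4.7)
The plan is to first establish part (a) by reducing to the $p=0$ case and using $\kappa$-uniform geometric ergodicity of the projective process $(u_t, x_t^\kappa, v_t^\kappa)$, then to view $p \mapsto \hat P^{\kappa,p}_t$ as a norm-continuous perturbation of $\hat P^{\kappa,0}_t$ in $\mathring C_V$, and finally to pass from $\mathring C_V$ to $\mathring C_V^1$ using a smoothing estimate for the semigroup. Throughout, the working principle is to re-run the arguments of \cite{BBPS18, BBPS19} (where $\kappa = 0$ is fixed) while carefully tracking every constant to ensure it depends only on $\kappa$-independent quantities.

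For the $p=0$ case, since $V = V_{\beta,\eta}(u)$ from Lemma \ref{lem:Lyapu} depends only on the velocity component, it automatically provides a drift condition for the projective semigroup $\hat P^{\kappa,0}_t$ with parameters $\gamma, K$ independent of $\kappa$. The $\kappa$-uniform strong Feller estimate and topological irreducibility on sublevel sets $\{\|u\|_\Hbf \leq R\} \times P\T^d$ are the projective analogues of Lemmas \ref{lem:USF} and \ref{lem:getControlled}, established by reexamining the Malliavin calculus arguments of \cite{BBPS18} with $\kappa$-dependence tracked. Combined with the $C_0$-continuity of $\hat P^{\kappa,0}_t$ on $\mathring C_V$ (standard, using Assumption \ref{a:Highs}), the quantitative Harris theorem yields: there exist $T_0 > 0$ and $c_0 \in (0,1)$ so that for every $\kappa \in [0,\kappa_0]$, the value $1$ is a simple, isolated, dominant eigenvalue of $\hat P^{\kappa,0}_{T_0}$ on $\mathring C_V$ with the remainder of its spectrum inside $B_{c_0^{T_0}}(0)$. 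The uniform bound $\|\hat P^{\kappa,0}_t\|_{C_V} \lesssim 1$ is then immediate from the drift condition.

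For the perturbation in $p$, one writes
\[
(\hat P^{\kappa,p}_t - \hat P^{\kappa,0}_t)\psi(u,x,v) = \EE_{(u,x,v)}\bigl[(|D_x \phi^t_\kappa v|^{-p} - 1)\psi(u_t, x_t^\kappa, v_t^\kappa)\bigr],
\]
and uses the elementary inequality $|z^{-p}-1| \leq p\,(1 + z^{-p})\,|\log z|$ together with $\kappa$-uniform moment control on $\log|D_x \phi^t_\kappa v|$ (obtained by integrating the cocycle ODE against $\|Du_s\|_\infty$, whose moments are controlled by $V$ uniformly in $\kappa$, since $\widetilde W_t$ does not enter the derivative flow). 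This yields $\|\hat P^{\kappa,p}_t - \hat P^{\kappa,0}_t\|_{C_V \to C_V} \leq C(t)\,p$ with $C(t)$ independent of $\kappa$. Analytic perturbation theory (Kato), applied to the isolated simple eigenvalue $1$ of $\hat P^{\kappa,0}_{T_0}$ with $\kappa$-uniform spectral gap from the previous step, produces for all $p \in [0, p_0]$ with $p_0$ $\kappa$-uniform, a simple isolated eigenvalue for $\hat P^{\kappa,p}_{T_0}$ close to $1$, which is real and positive by Krein-Rutman (positivity of $\hat P^{\kappa,p}_t$) and real-analytic in $p$; we name it $e^{-T_0 \Lambda(p,\kappa)}$. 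Positivity $\Lambda(p,\kappa) > 0$ for $p > 0$ follows from the positive Lyapunov exponent inherited from $\kappa = 0$ (via $\Lambda(p,\kappa) = p\lambda_1^\kappa + o(p)$ and the continuity $\kappa \mapsto \lambda_1^\kappa$), and the residual spectrum remains inside $B_{c_0^{T_0}}(0)$ after mildly shrinking $c_0$, uniformly in $(\kappa, p)$.

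For part (b), one promotes the spectral picture to $\mathring C_V^1$ by showing that $\hat P^{\kappa,p}_{T_0} : \mathring C_V \to \mathring C_V^1$ is bounded with $\kappa$-uniform norm, via a Bismut-Elworthy-Li integration-by-parts formula for the Fr\'echet derivative along Malliavin directions driven by $W_t$; a semigroup identity then transports the isolated simple eigenvalue from $\mathring C_V$ to $\mathring C_V^1$ and shows the rest of the $\mathring C_V^1$-spectrum is contained in that of $\mathring C_V$. The main technical obstacle is the $\kappa$-uniform Malliavin/strong Feller regularity for the projective process: the augmented Malliavin matrix involves contributions from both $W_t$ and $\widetilde W_t$ and naive bounds degenerate as $\kappa \to 0$, so one must perform the integration-by-parts using only the velocity-field noise $W_t$ (propagating through $Du_t$ in the projective direction) and treat $\sqrt{2\kappa}\widetilde W_t$ as a perturbation whose contribution is controlled by soft SDE estimates. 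Securing these estimates with constants independent of $\kappa \in [0,\kappa_0]$, for $\kappa_0$ small, is where the bulk of the technical work lies.
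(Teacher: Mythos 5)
Part (a) of your proposal follows the paper's route essentially verbatim: drift via $V_{\beta,\eta}$ (which sees only the $u$-component, the $P\T^d$ factor being compact), $\kappa$-uniform strong Feller and topological irreducibility for the projective process, the quantitative Harris theorem, and then an $O(p)$ operator-norm perturbation off $\hat P^{\kappa}_t$ using the Feynman--Kac representation and Lemma \ref{lem:TwistBd}. One extraneous remark: your claim that $\Lambda(p,\kappa)>0$ follows from $\Lambda(p,\kappa)=p\lambda_1^\kappa+o(p)$ and continuity of $\kappa\mapsto\lambda_1^\kappa$ is not justified, since the $o(p)$ is not controlled uniformly in $\kappa$; the paper explicitly warns against this in Remark \ref{rmk:momentLyapExponent} and instead obtains positivity later from the much harder convergence $\Lambda(p,\kappa)\to\Lambda(p)$ for fixed $p$. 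This is harmless here only because Proposition \ref{prop:outlineSpecPic} does not assert $\Lambda(p,\kappa)>0$.

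Part (b) contains a genuine gap. You propose to prove that $\hat P^{\kappa,p}_{T_0}$ is bounded from $\mathring C_V$ \emph{into} $\mathring C_V^1$ via a Bismut--Elworthy--Li formula and then transport the spectrum by a semigroup identity. Such a full smoothing estimate is not available in this setting: the gradient in $C_V^1$ is measured in $\Hbf^\ast$ with $\Hbf=H^\sigma$, $\sigma<\alpha-\tfrac d2$, while the Cameron--Martin space of the forcing is $Q(\Lbf^2)\sim H^\alpha\subsetneq \Hbf$, so initial perturbations $h\in\Hbf$ are not admissible Malliavin directions and the relevant Malliavin covariance degenerates at high frequencies. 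One can only approximately cancel $Dw_T h$ by a control, leaving a residual $r_T$ (cf. \eqref{eq:remainderDefn}) that cannot be made to vanish; this is exactly why the paper replaces smoothing by the Lasota--Yorke/asymptotic strong Feller inequality of Proposition \ref{prop:LY}, in which the derivative of $\psi$ reappears on the right-hand side with an exponentially decaying prefactor $e^{-\varkappa t}$. (The genuine gradient bound in Lemma \ref{lem:GradBound} holds only for the \emph{cutoff} process $\widetilde P^{\kappa;\rho}_t$, with constants blowing up as $\rho\to\infty$, and the error from removing the cutoff is controlled only in $L^\infty$, not in $C^1$ -- enough for strong Feller continuity but not for $C_V\to C_V^1$ boundedness.) The correct completion of part (b) is: prove $C_V^1\to C_V^1$ boundedness of $\hat P^{\kappa,p}_{T_0}$ (Lemma \ref{lem:C1VbdEtc}), establish the $\kappa$-uniform Lasota--Yorke bound, combine it with the $C_V$ spectral gap and the super-Lyapunov property to get the $C_V^1$ gap at $p=0$ (Propositions \ref{prop:LY}, \ref{prop:C1VSpec}), and only then perturb in $p$ using Lemma \ref{lem:unifConvC1VPto0}. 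Your instinct to drive the integration by parts with $W_t$ alone and treat $\sqrt{2\kappa}\widetilde W_t$ perturbatively is the right one for $\kappa$-uniformity, but it does not rescue the claimed smoothing.
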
 


\subsubsection{Proof of Proposition \ref{prop:outlineSpecPic} following \cite{BBPS19}}

We provide a brief sketch of the arguments and highlight where one must be most careful about $\kappa$-dependence. 
Basic properties, such as $C_0$ continuity on $\mathring C_V$ and uniform bounds in the $C_V$ and $C_V^1$ norms follow essentially the same as those in \cite{BBPS19}; see Section \ref{sec:BasicProj} for more details. 

At $p = 0$, the uniform spectral picture for $\hat P_t^{\kappa}$ in $C_V$
is derived by applying the quantitative Harris theorem (Theorem \ref{thm:Harris}) to the projective process
 $(u_t, x_t^\kappa, v_t^\kappa)$. A $\kappa$-uniform spectral gap follows by verifing the minorization and drift conditions
 with constants independent of $\kappa > 0$.
 Since the $P \T^d$ factor is compact, it suffices to use $V = V_{\beta, \eta}$ as the Lyapunov function in Definition \ref{defn:drift} (via Lemma \ref{lem:Lyapu}). 
 The only thing to check here is the minorization condition using Proposition \ref{prop:minorize}. The following is sufficient for our purposes.
See Sections \ref{sec:USF} and \ref{sec:UTI} for sketches of parts (a) and (b) respectively. 

\begin{proposition}\label{prop:unifMinorCondProjective} \
\begin{itemize}
\item[(a)] (Uniform strong Feller) For all $\kappa$ sufficiently small, the following holds. 
For any $\zeta > 0$ there exists $\eps = \eps(\zeta, R) > 0$, independent of $\kappa$,
so that for all bounded measurable $\phi : \Hbf \times P \T^d \to \R$ and $(u,x,v) \in \Hbf \times P \T^d$, $\| u \|_\Hbf \leq R$,
we have
\[
\sup_{(u', x', v') \in B_\eps(u,x,v)} \left| \hat P^{\kappa}_1 \phi(u,x,v) - \hat P^\kappa_1 (u',x',v')\right| < \zeta \, .
\]
\item[(b)] (Uniform topological irreducibility) Fix $\zeta = 1/2$ and let $\eps = \eps(\frac12, R)$ be as in part (a). 
Fix a reference point $(0,x_\ast,v_\ast) \in \Hbf \times P \T^d$. 
Then, there exists $\kappa_0''= \kappa_0''(\eps, R), \eta = \eta(\eps, R) > 0$ so that for all $\kappa \in [0,\kappa_0'']$, the following holds:
for all $(u,x,v) \in \Hbf \times P \T^d, \| u \|_\Hbf \leq R$, we have
\[
\hat P_1^{\kappa}((u,x,v), B_\eps(0,x_\ast,v_\ast) ) \geq \eta
\]
\end{itemize}
\end{proposition}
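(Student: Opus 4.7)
The plan is to re-run the proofs of strong Feller and topological irreducibility for the projective process given in \cite{BBPS19} at $\kappa=0$, tracking all constants and verifying that the estimates remain uniform in $\kappa \in [0,\kappa_0]$.

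For part (a), I would follow the Malliavin/Bismut--Elworthy--Li approach from \cite{BBPS19}, which derives strong Feller by controlling $\nabla \hat P_1^{0}\phi$ purely in terms of integration-by-parts against the velocity-field noise $W_t$; the diffusivity noise $\sqrt{2\kappa}\widetilde W_t$ is not required to close the Malliavin estimates. The crucial observation is that when $\kappa>0$, the Malliavin matrix picks up an additional positive-semidefinite contribution from $\widetilde W_t$, so any lower bound on the inverse Malliavin matrix that is obtained using only the $W_t$-noise remains valid uniformly in $\kappa$. Concretely, I would (i) set up the Malliavin derivative of the joint process $(u_t, x_t^\kappa, v_t^\kappa)$ with respect to perturbations of $W_t$; (ii) show that the resulting finite-dimensional projected Malliavin matrix (in the $(x,v)$ directions, after a cutoff in the infinite-dimensional $u$-direction as in \cite{BBPS19}) is invertible with moment bounds independent of $\kappa$ on the sublevel set $\{\|u\|_{\Hbf}\leq R\}$; and (iii) combine this with uniform-in-$\kappa$ moments of $|D_x\phi^t_\kappa|$ and $|D^2_x \phi^t_\kappa|$, which hold because $\phi^t_\kappa$ solves an SDE whose drift is controlled by the Lyapunov function $V$ applied to the $u$-trajectory, and $\kappa$ enters only through a vanishing diffusion coefficient.

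For part (b), I would start from the deterministic control argument of \cite[Prop.~2.7]{BBPS19}: given $(u,x,v)$ with $\|u\|_{\Hbf}\leq R$, there is a control path $W^{\mathrm{ctrl}}\in C([0,1];\Lbf^2)$ such that the solution of the control problem at $\kappa=0$ lands in $B_{\eps/2}(0,x_\ast,v_\ast)$. The Stroock--Varadhan-type support argument together with strong Feller gives $\hat P^0_1((u,x,v), B_{\eps}(0,x_\ast,v_\ast))\geq \eta_0=\eta_0(R,\eps)>0$ uniformly on $\{\|u\|_{\Hbf}\leq R\}$. To upgrade to $\kappa>0$, I would observe that $\sqrt{2\kappa}\widetilde W_t$ is a continuous-in-$\kappa$ perturbation of the $\kappa=0$ Lagrangian SDE: by a standard coupling/Gronwall estimate one has, uniformly over bounded $\|u\|_{\Hbf}$,
\[
\sup_{t\in[0,1]} \bigl(|\phi^t_\kappa(x)-\phi^t_0(x)| + |D_x\phi^t_\kappa - D_x\phi^t_0|\bigr) \xrightarrow[\kappa\to 0]{\P} 0,
\]
which in turn yields $(x_t^\kappa,v_t^\kappa)\to(x_t,v_t)$ in probability. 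Choosing $\kappa_0''(\eps,R)$ small enough that this error is $<\eps/2$ with probability at least $\eta_0/2$, we obtain $\hat P^\kappa_1((u,x,v), B_{\eps}(0,x_\ast,v_\ast))\geq \eta_0/2$, as required.

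The main obstacle I expect is in part (a): the $\kappa$-uniform lower bound on the projected Malliavin matrix of the enlarged process on $\Hbf\times P\T^d$. The projective factor involves derivatives of $u_t$ evaluated along $x_t^\kappa$, so one must show that the random families $\{D u_t(x_t^\kappa)\}$ and their Malliavin derivatives depend on $\kappa$ continuously and with moments bounded uniformly in $\kappa$ on bounded sets of initial data. A secondary but routine issue is verifying that the exponential-moment Lyapunov bounds of Lemma \ref{lem:Lyapu} for $(u_t)$ transfer to uniform-in-$\kappa$ moment bounds for the joint process, needed to justify integration-by-parts and dominated-convergence steps.
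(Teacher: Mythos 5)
Your plan is essentially the same as the paper's. For part (a), the paper (Lemma \ref{lem:ProjQSF} and Lemma \ref{lem:GradBound}) re-runs the Malliavin/integration-by-parts scheme of \cite{BBPS18}, including the augmented process with artificial finite-dimensional noise $M_t$, the drift regularization $F_\rho$, and the $\kappa$-uniform non-degeneracy of the partial Malliavin matrix; your items (i)--(iii) and your "crucial observation" (that the $\sqrt{2\kappa}\widetilde W_t$ contribution to the Malliavin matrix is positive semi-definite, so any $W_t$-only lower bound survives uniformly in $\kappa$, and that the Jacobian moment bounds are insensitive to $\kappa$ because the noise is additive) are precisely the points the paper makes. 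You gloss over the augmented diffusion $m_t$ on $\R^{2d}$ and the precise form of the cutoff $F_\rho$, but those are present in the cited work and not a conceptual gap.

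For part (b), the paper (Lemma \ref{lem:projIR}) uses the deterministic control lemmas, a support-type estimate for the stochastic convolution, and then controls the diffusivity noise by the simple tail bound $\PP(\sup_{t\in(0,1)}\sqrt{2\kappa}|\widetilde W_t| > \eps) \lesssim \exp(-\eps^2/(4d^2\kappa))$, followed by a pathwise stability/Gr\"onwall argument. Your plan reaches the same conclusion but in a slightly different order: first establish $\kappa=0$ irreducibility, then couple the $\kappa$ and $0$ flows via Gr\"onwall and show the difference is small in probability, uniformly over $\|u\|_{\Hbf}\leq R$, and choose $\kappa_0''$ accordingly. This is equivalent in substance. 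One minor efficiency of the paper's version is that bounding $\sup|\sqrt{2\kappa}\widetilde W_t|$ directly gives a tail estimate that is manifestly independent of the initial data, whereas your coupling estimate must be integrated against Lyapunov moments to get the claimed uniformity over $\{\|u\|_{\Hbf}\leq R\}$; both routes work. No gaps.
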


Having verified the uniform spectral gaps for $\hat P_t^\kappa$ semigroup,
the proof of Proposition \ref{prop:outlineSpecPic} (a) is completed using a spectral perturbation argument carried out in Section \ref{subsubsec:specPicCV} and the convergence 
\[
\lim_{p \to 0} \sup_{\kappa \in [0,\kappa_0]} \| \hat P_t^{\kappa, p} - \hat P_t^\kappa\|_{C_V \to C_V} = 0
\]
for any fixed $t > 0$ (see Lemma \ref{lem:unifConvCVPto0}).


Next, we sketch the proof of Proposition \ref{prop:outlineSpecPic} (b). Checking $\kappa$-uniform boundedness in $C_V^1$ and propagation of $\mathring C_V^1$ again proceeds 
more-or-less verbatim from arguments in \cite{BBPS19}; see Section \ref{sec:BasicProj} for more details. As in \cite{BBPS19}, we are only able to show $\hat P_t^{\kappa, p}$ is bounded in $C_V^1$ for $t \geq T_0$ ($T_0 > 0$ a $\kappa$-independent constant), which is why we state the $C_V^1$ spectral picture for $\hat P_{T_0}^{\kappa, p}$. Following a standard argument in [Proposition 4.7; \cite{BBPS19}], the $\kappa$-uniform spectral gap in $C^1_V$ is obtained from the $C_V$ spectral gap from Proposition \ref{prop:outlineSpecPic}(a) and the following $\kappa$-uniform 
gradient-type bound similar to those pioneered by Hairer \& Mattingly \cite{HM06,HM11} for ergodicity with degenerate noise. 

\begin{lemma}[Uniform Lasota-Yorke regularity]\label{lem:unifGradBound}
There exists $\kappa_0$ such that the following holds uniformly in $\kappa \in [0,\kappa_0]$. 
For all $\beta' \geq 2$ sufficiently large and all admissible $\eta' > 0$ for Lemma \ref{lem:Lyapu},
there exist $C_1 > 0, \varkappa > 0$ such that the following holds for all $\kappa \in [0,\kappa_0]$.
For all $\psi \in C_V$ and $t > 0$, we have
\[
\| D\widehat P_t^\kappa \psi \|_{\Hbf^*} \leq C_1 V_{\beta', \eta'} \left( \sqrt{\widehat P_t^\kappa | \psi|^2} + e^{-\varkappa t}\sqrt{\widehat P_t^\kappa\|D\psi\|_{\Hbf^*}^2} \right) 
\]
pointwise on $\Hbf \times P \T^d$.
\end{lemma}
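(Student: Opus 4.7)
The plan is to adapt the Bismut--Hairer--Mattingly gradient bound argument carried out at $\kappa = 0$ in \cite{BBPS19} for $\hat P_t$, tracking every estimate to verify uniformity in $\kappa \in [0,\kappa_0]$. The key structural observation is that $\sqrt{2\kappa}\,\dot{\widetilde W}_t$ enters the Lagrangian SDE additively, so the tangent equation $\dot J_t^x = Du_t(x_t^\kappa) J_t^x$ has $\kappa$-free form, and all Malliavin calculus is performed with respect to the fluid noise $W_t$ alone (never $\widetilde W_t$), depending on $\kappa$ only through the trajectory $z_t^\kappa := (u_t, x_t^\kappa, v_t^\kappa)$. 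Starting from the tangent-flow identity
\begin{equation}
D(\hat P_t^\kappa \psi)(z)[\xi] = \EE\bigl[D\psi(z_t^\kappa) \cdot J_t^\kappa \xi\bigr], \qquad \xi \in T_z(\Hbf \times P\T^d),
\end{equation}
we decompose the right-hand side using an approximate Malliavin shift.

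The central task is to construct an adapted $h^\xi \in L^2_{\loc}([0,\infty); \Hbf)$, linear in $\xi$, whose Malliavin derivative in direction $h^\xi$ approximates the tangent flow in the sense
\begin{equation}
\rho_t^\kappa := J_t^\kappa \xi - \MalD^{h^\xi} z_t^\kappa, \qquad \EE\|\rho_t^\kappa\|^2 \lesssim V_{\beta', \eta'}(u)^2 e^{-2\varkappa t} |\xi|^2,
\end{equation}
with $L^2$-cost $\EE \int_0^t |Q^{-1/2} h_s^\xi|^2 \,\ds \lesssim V_{\beta', \eta'}(u)^2 |\xi|^2$, all uniformly in $\kappa$. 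Following the iterative Hairer--Mattingly construction as implemented in \cite{BBPS19}, one partitions time into unit intervals and on each solves a regularized quadratic minimization whose leading term is the low-mode Malliavin covariance of $(u_t, x_t^\kappa, v_t^\kappa)$ with respect to $W_t$. Invertibility with $\kappa$-independent bounds uses Assumption~\ref{a:Highs} together with Foias--Prodi type high-mode decay for the fluid; the sensitivities $\MalD^h x_t^\kappa$ and $\MalD^h v_t^\kappa$ arise by differentiating through the coupling $u \mapsto \phi_\kappa^t$ and satisfy Duhamel formulas of the same structural form as at $\kappa = 0$, evaluated along the $\kappa$-trajectory. Polynomial growth in $\|u_t\|_{C^1}, \|u_t\|_{C^2}$ is absorbed into $V_{\beta', \eta'}(u)$ via the super-Lyapunov estimate of Lemma~\ref{lem:TwistBd} for $\beta'$ sufficiently large.

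Granting the control, Malliavin integration by parts on $W_t$ gives $\EE[D\psi(z_t^\kappa) \cdot \MalD^{h^\xi} z_t^\kappa] = \EE[\psi(z_t^\kappa)\, \delta h^\xi]$, where $\delta h^\xi$ is the Skorohod integral of $Q^{-1/2} h^\xi$; the independent noise $\widetilde W_t$ is untouched by the IBP. Cauchy--Schwarz applied to the resulting identity
\begin{equation}
D(\hat P_t^\kappa \psi)(z)[\xi] = \EE[\psi(z_t^\kappa)\, \delta h^\xi] + \EE[D\psi(z_t^\kappa) \cdot \rho_t^\kappa]
\end{equation}
yields
\begin{equation}
|D(\hat P_t^\kappa \psi)(z)[\xi]| \leq \sqrt{\hat P_t^\kappa |\psi|^2(z)}\,\sqrt{\EE|\delta h^\xi|^2} \;+\; \sqrt{\hat P_t^\kappa \|D\psi\|_{\Hbf^*}^2(z)}\,\sqrt{\EE\|\rho_t^\kappa\|^2},
\end{equation}
which, combined with the control/residual bounds, gives the stated pointwise inequality after dividing by $|\xi|$. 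The principal obstacle is the control construction itself: one must show that the finite-dimensional Malliavin covariance controlling low-frequency excitation of $(u_t, x_t^\kappa, v_t^\kappa)$ by $W_t$ is invertible with $\kappa$-uniform condition-number bounds, and that the trajectory-dependent coefficients $Du_t(x_t^\kappa)$, $D^2 u_t(x_t^\kappa)$ admit $\kappa$-uniform moment estimates. Both rely on the additive form of the diffusive term and the $\kappa$-independent moment bounds on $u_t$ inherited from Lemma~\ref{lem:Lyapu}.
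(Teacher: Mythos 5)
Your proposal is substantially correct and follows the same Bismut--Hairer--Mattingly/Malliavin strategy that the paper uses (its Proposition \ref{prop:LY}): express $D\hat P_t^\kappa\psi[\xi]$ as an IBP term plus a residual $\rho_t = J_t\xi - \MalD_g \hat z_t$, control both uniformly in $\kappa$ via Jacobian/Malliavin moment bounds and a $\kappa$-uniform low-mode Malliavin non-degeneracy estimate, and absorb polynomial growth into $V_{\beta',\eta'}$ via the super-Lyapunov bound. The one structural deviation is that you restrict the Malliavin shift to the $W_t$-directions alone; the paper instead takes the control valued in $\Lbf^2 \times \R^d$ and obtains a Malliavin covariance containing additional $O(\kappa)$ terms from the $\widetilde W_t$-directions, which it then argues ``neither help nor hinder'' the $\kappa$-uniform non-degeneracy --- the two choices are essentially interchangeable here, since the $W_t$-only covariance must already be non-degenerate for the $\kappa = 0$ case of \cite{BBPS19} to apply.
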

The proof of Lemma \ref{lem:unifGradBound} is analogous to that in [Proposition 4.6; \cite{BBPS19}]; we provide a sketch in Section \ref{subsec:CV1proj} below.


Finally, the $\kappa$-uniform spectral gap in $\mathring C_V^1$ for $\hat P^{\kappa, p}_{T_0}$ is obtained by a spectral perturbation argument (see Section \ref{sec:CV1twist}) and the fact that
\[
\lim_{p \to 0} \sup_{\kappa \in [0,\kappa_0]} \| \hat P_{T_0}^{\kappa, p} - \hat P_{T_0}^\kappa \|_{C_V^1 \to C_V^1} = 0. 
\]
This completes the proof of Proposition \ref{prop:outlineSpecPic}; see Section \ref{sec:prfSpecPic} for more details.  

\subsubsection{Overcoming the singular perturbation $\kappa \mapsto \hat P_t^{\kappa, p}$}\label{subsubsec:singular}

We now move on to completing the proof of Proposition \ref{prop:psiPUniform}, 
which requires that we contend with the potentially singular
nature of $\kappa \mapsto \hat P_t^{\kappa, p}$. 
This is a significant deviation from our previous work \cite{BBPS19}, which
considers only the $\kappa = 0$ case. 

More precisely, the mapping $\kappa \mapsto \hat P_t^{\kappa, p}$ 
is not, to the best of our knowledge, continuous with respect to the
 operator norm derived from any of the usual 
topologies on observables $\psi : \Hbf \times P \T^d \to \R$. 
From the perspective of smooth dynamics, this is unsurprising. For deterministic
maps, Markov semigroups on observables are called Koopman operators, and 
for parametrized families of (deterministic) maps, 
these Koopman operators typically vary discontinuously in the parameter 
with respect to most useful operator norms. 
For an example related to our setting, where the parameter dictates the amplitude of noise, 
see \cite{baladi2002almost}. 

At least, we have the following strong operator continuity: 

\begin{lemma}
Assume $\beta > 0$ to be taken sufficiently large. 
There exists $p_0 > 0$ such that the following holds for any $\psi \in C_V( \Hbf \times P \T^d)$:
\begin{align}\label{ineq:SOTintro}
\lim_{\kappa \to 0} \sup_{p \in [- p_0, p_0]} \| \hat P_t^{p, \kappa} \psi - \hat P_t^p \psi \|_{C_V} = 0 \, .
\end{align}
\end{lemma}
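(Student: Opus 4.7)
The plan is to establish \eqref{ineq:SOTintro} via three steps: pointwise convergence of the Feynman--Kac integrand as $\kappa \to 0$, a tail truncation in $\|u\|_\Hbf$ via super-Lyapunov bounds, and an equicontinuity argument on bounded regions. The crucial structural input is that the velocity process $(u_t)$ is independent of $\kappa$: only the Lagrangian flow $\phi_\kappa^t$ carries the $\kappa$-diffusion, so the two processes $(u_t, x_t^\kappa, v_t^\kappa)$ and $(u_t, x_t, v_t)$ can be coupled on the same probability space sharing the same $u_t$-factor.

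For pointwise convergence at each $(u,x,v) \in \Hbf \times P \T^d$, since $u_t \in \Hbf \hookrightarrow C^3$, standard SDE convergence estimates applied to
\[
\phi_\kappa^t(x) = x + \int_0^t u_s(\phi_\kappa^s(x))\, \ds + \sqrt{2\kappa}\, \widetilde W_t
\]
yield $\widetilde{\EE} \sup_{s \leq t}|\phi_\kappa^s(x) - \phi_0^s(x)|^2 \lesssim \kappa \exp\!\left( C \int_0^t \|u_s\|_{C^1}^2\, \ds \right)$ and an analogous bound for the derivative cocycle. Consequently $(x_t^\kappa, v_t^\kappa, M_t^{p,\kappa}) \to (x_t, v_t, M_t^p)$ almost surely, with $M_t^{p,\kappa} := |D_x\phi_\kappa^t v|^{-p}$. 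Combined with continuity of $\psi$ on the compact factor $P \T^d$ and uniform-in-$(\kappa,p)$ moment bounds on $M_t^{p,\kappa}\, V(u_t)^q$ for some $q > 1$---derived by adapting the super-Lyapunov analysis of Sections~\ref{sec:NSEsup}--\ref{sec:BasicProj} and of \cite{BBPS19} to the $\kappa$-diffused flow---Vitali's convergence theorem gives, for each fixed $(u,x,v)$ and for $p_0$ sufficiently small,
\[
\lim_{\kappa \to 0} \sup_{|p| \leq p_0} \bigl| \hat{P}_t^{p,\kappa}\psi(u,x,v) - \hat{P}_t^p\psi(u,x,v) \bigr| = 0.
\]

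To upgrade this pointwise convergence to $C_V$-norm convergence, I split $\Hbf \times P \T^d$ at a threshold $\{\|u\|_\Hbf \leq M\}$. On the tail $\{\|u\|_\Hbf > M\}$, for $\beta$ sufficiently large the super-Lyapunov property (Lemma~\ref{lem:TwistBd}), combined with $\kappa$-uniform moment bounds on $M_t^{p,\kappa}$, yields a $\delta > 0$ such that $\hat{P}_t^{p,\kappa} V \lesssim V^{1-\delta}$ uniformly in $\kappa \in [0,\kappa_0]$ and $|p| \leq p_0$. Hence
\[
\frac{|\hat{P}_t^{p,\kappa}\psi - \hat{P}_t^p\psi|}{V} \lesssim \|\psi\|_{C_V} V^{-\delta},
\]
which vanishes uniformly in $\kappa, p$ as $\|u\|_\Hbf \to \infty$. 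On the bounded region $\{\|u\|_\Hbf \leq M\}$, $V$ is bounded, so one needs uniform $L^\infty$ convergence of $\hat P_t^{p,\kappa}\psi - \hat P_t^p\psi$ itself. Using the uniform boundedness $\|\hat{P}_t^{p,\kappa}\|_{C_V \to C_V} \leq C$ from Proposition~\ref{prop:outlineSpecPic}(a), a standard $\eps/3$-density argument reduces this to $\psi$ in a dense subset, namely smooth cylinder functions in $\mathring C_V^1$. For such $\psi$, iterating Lemma~\ref{lem:unifGradBound} through the identity $\hat{P}_t^{p,\kappa} = \hat{P}_{t/2}^{p,\kappa} \hat{P}_{t/2}^{p,\kappa}$ produces a $\kappa$-uniform Fréchet Lipschitz bound on $\hat{P}_t^{p,\kappa}\psi$; Arzelà--Ascoli applied to the finite-dimensional restriction of $\{\|u\|_\Hbf \leq M\}$ (on which only finitely many Fourier modes of $\psi$ are relevant) together with compactness of $P \T^d$ then promotes pointwise to uniform convergence.

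The main obstacle will be carrying out the density reduction rigorously: cylinder functions are dense in $\mathring C_V$ but not in all of $C_V$, so extending the SOT convergence from $\mathring C_V$ to $C_V$ requires additional input. One route is a propagation statement that $\hat{P}_{t_0}^{p,\kappa}(C_V) \subset \mathring C_V^1$ for some $t_0 > 0$, letting the semigroup immediately smooth into the well-behaved subspace. An alternative is to establish equicontinuity directly for general $\psi \in C_V$ via a weaker-norm Lasota--Yorke bound on $\|D\hat{P}_t^{p,\kappa}\psi\|_{(H^{\sigma-\eps})^\ast}$, exploiting the smoothing of the Stokes operator so that Rellich--Kondrachov compactness of $\{\|u\|_\Hbf \leq M\} \hookrightarrow H^{\sigma-\eps}$ can be invoked directly. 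Either route requires $\kappa$-uniform tracking of constants but is feasible given the smoothing structure of the NSE semigroup already exploited in the paper.
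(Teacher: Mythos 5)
The paper's proof of this lemma (stated there as Lemma~\ref{lem:SOT-convergence}, for $\psi \in \mathring C_V$) is a two-line quantitative computation, and your approach is substantially more roundabout. The paper first reduces to smooth cylinder functions by density and the uniform $C_V$-boundedness of the semigroups. For such $\psi$, it then uses the fact that $\psi$ is Lipschitz together with the coupling estimate Lemma~\ref{lem:proj-kap-convergence}, $\widetilde\E \sup_{s\le t} d(x^\kappa_s,v^\kappa_s;x_s,v_s) \lesssim \sqrt{\kappa t}\exp(\int_0^t \|\nabla u_s\|_\infty\, ds)$, and an elementary estimate on the difference of the Feynman--Kac weights, to get directly the explicit bound $|\hat P^{p,\kappa}_t\psi - \hat P_t^p\psi| \lesssim_\psi \sqrt{\kappa t}(1+p) \E_u\exp((1+p)\int_0^t\|\nabla u_s\|_\infty\, ds)\sup_{s\le t}\|u_s\|_{H^\sigma}$; Lemma~\ref{lem:TwistBd} controls the right side by $\sqrt{\kappa}\, V(u)$, which gives $C_V$-convergence immediately. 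You have the same coupling input (your SDE comparison of $\phi^t_\kappa$ and $\phi^t_0$), but instead of cashing it in for the quantitative bound, you run it through Vitali to get only pointwise convergence, then attempt a soft Arzel\`a--Ascoli promotion plus a super-Lyapunov tail cut. None of that machinery is needed, and it introduces the difficulties you then spend the last paragraph worrying about.

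There are two concrete gaps in the promotion step. First, $\{\|u\|_\Hbf \le M\}$ is \emph{not} compact in $\Hbf$, and the assertion that ``only finitely many Fourier modes of $\psi$ are relevant'' on this set is false even for cylinder $\psi$, because $\hat P_t^{p,\kappa}\psi$ involves the flow $u_s$, which couples all modes. What the paper actually does in the analogous situation (Lemma~\ref{lem:compact}) is to work on the $H^{\sigma'}$-compact set $\{\|u\|_{\Hbf^\sigma}\le R\}$ with $\sigma'<\sigma$, and this requires a \emph{uniform $C_V^1$ bound} to get $H^{\sigma'}$-equicontinuity. That is your second gap: the uniform Lasota--Yorke estimate (Lemma~\ref{lem:unifGradBound}) produces a gradient bound only from an input gradient bound on $\psi$, so it applies to $\psi \in C_V^1$, not to general $\psi \in C_V$; using $\hat P_t^{p,\kappa} = \hat P_{t/2}^{p,\kappa}\hat P_{t/2}^{p,\kappa}$ does not fix this because the inner factor does not by itself land in $C_V^1$. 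You have therefore not established the equicontinuity that your Arzel\`a--Ascoli step requires.

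You are right to flag the $C_V$ versus $\mathring C_V$ issue, and this is in fact a minor inconsistency in the paper: the outline states the lemma for $C_V$ but what is proved (and what is actually needed downstream, since it is applied to $\psi_{p,\kappa}\in \mathring C_V^1$ and to cylinder approximants) is the $\mathring C_V$ version. The clean resolution is not to try to upgrade to all of $C_V$ --- neither of your two ``routes'' is carried out, and your route (a) as described does not by itself remove the $\kappa$-dependence of $\hat P_{t_0}^{p,\kappa}\psi$ --- but to note that the statement should read $\psi \in \mathring C_V$, after which the standard density argument works with the paper's explicit Lipschitz-plus-coupling estimate.
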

\noindent For proof, see Lemma \ref{lem:SOT-convergence}.

The continuity in \eqref{ineq:SOTintro} is not strong enough to immediately extend the $\hat{P}_t^p$ spectral gap to a $\kappa$-uniform spectral gap on $\hat{P}_t^{\kappa,p}$. 
In order to leverage \eqref{ineq:SOTintro}, we instead pass to the limit in the eigenfunction/value problem. 
To roughly summarize: estimates on dominant spectral projectors (Lemma \ref{cor:specProjUnifCTRL}) 
and arguments using the scale of compactly-embedded spaces $\Hbf^{\sigma'}$ and the uniform $C^1_V$ estimates
imply that $\set{\psi_{p,\kappa}}_{\kappa \in (0,1)}$ is 
 suitably `locally sequentially pre-compact' in $C_V$ using a version of Arzela-Ascoli (Lemma \ref{lem:compact}). This pre-compactness together with \eqref{ineq:SOTintro} ultimately allows to pass to the limit in the eigenvalue problem 
 $\hat P^{\kappa, p}_t \psi_{\kappa, p} = e^{- t \Lambda(p, \kappa)} \psi_{\kappa, p}$, obtaining the following.

\begin{proposition}\label{prop:convSpecObjectsOutline}
Let $p \in [0,p_0]$ be fixed. Then,
\begin{gather}\label{eq:ctyLambdaOutline}
\lim_{\kappa \to 0}\| \psi_{p, \kappa} - \psi_{p}\|_{C_V} = 0 \quad \text{ and }\quad 
\lim_{\kappa \to 0}\Lambda(p, \kappa) = \Lambda(p). 
\end{gather}
\end{proposition}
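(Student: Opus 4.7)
The plan is to prove the two convergences simultaneously by a subsequential compactness argument. Fix an arbitrary sequence $\kappa_n \to 0$; it suffices to show that every subsequence of $(\psi_{p,\kappa_n}, \Lambda(p,\kappa_n))$ admits a further subsequence converging to $(\psi_p, \Lambda(p))$. By Proposition \ref{prop:psiPUniform}(b)(ii), the family $\{\psi_{p,\kappa}\}$ is uniformly bounded in $C_V^1$, so the Arzela-Ascoli-type Lemma \ref{lem:compact} extracts a subsequence (still denoted $\kappa_n$) and a limit $\psi_\ast \in C_V$ with $\psi_{p,\kappa_n} \to \psi_\ast$ in $C_V$. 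The uniform $C_V \to C_V$ operator bounds in Proposition \ref{prop:outlineSpecPic}(a) together with the uniform spectral gap $\sigma(\hat P^{\kappa,p}_{T_0})\setminus\{e^{-T_0\Lambda(p,\kappa)}\}\subset B_{c_0^{T_0}}(0)$ pin $\Lambda(p,\kappa)$ in a $\kappa$-independent bounded interval of $(0,\infty)$, so after passing to a further subsequence we also have $\Lambda(p,\kappa_n)\to \Lambda_\ast$ for some $\Lambda_\ast>0$.

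Next, I pass to the limit in the eigenvalue relation $\hat P^{\kappa_n,p}_{T_0}\psi_{p,\kappa_n} = e^{-T_0\Lambda(p,\kappa_n)}\psi_{p,\kappa_n}$ using the triangle split
\begin{align}
\hat P^{\kappa_n,p}_{T_0}\psi_{p,\kappa_n}-\hat P^p_{T_0}\psi_\ast
=\bigl(\hat P^{\kappa_n,p}_{T_0}-\hat P^p_{T_0}\bigr)\psi_\ast + \hat P^{\kappa_n,p}_{T_0}(\psi_{p,\kappa_n}-\psi_\ast).
\end{align}
The first term vanishes in $C_V$ by applying the strong operator convergence \eqref{ineq:SOTintro} to the fixed observable $\psi_\ast$. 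The second is controlled by $\sup_\kappa \|\hat P^{\kappa,p}_{T_0}\|_{C_V\to C_V}\cdot\|\psi_{p,\kappa_n}-\psi_\ast\|_{C_V}$, which vanishes by the uniform operator bound and the $C_V$ convergence from the first step. Since the right-hand side $e^{-T_0\Lambda(p,\kappa_n)}\psi_{p,\kappa_n}$ converges to $e^{-T_0\Lambda_\ast}\psi_\ast$ in $C_V$, I conclude $\hat P^p_{T_0}\psi_\ast = e^{-T_0\Lambda_\ast}\psi_\ast$.

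The final step is to identify $(\psi_\ast,\Lambda_\ast)$ with $(\psi_p,\Lambda(p))$, which amounts to ruling out $\psi_\ast\equiv 0$. I exploit the normalization $\psi_{p,\kappa}=\pi_{p,\kappa}(\mathbf{1})$ by representing the spectral projector as a contour integral
\begin{align}
\pi_{p,\kappa}=\frac{1}{2\pi i}\oint_{\Gamma}(z-\hat P^{\kappa,p}_{T_0})^{-1}\,dz,
\end{align}
where $\Gamma$ is a fixed circle about $e^{-T_0\Lambda(p)}$ of radius smaller than the spectral gap. The uniform spectral gap guarantees that $\Gamma$ separates $e^{-T_0\Lambda(p,\kappa)}$ from the rest of $\sigma(\hat P^{\kappa,p}_{T_0})$ for all small $\kappa$ and supplies a uniform resolvent bound along $\Gamma$. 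The strong operator convergence \eqref{ineq:SOTintro} combined with the second resolvent identity then upgrades to strong resolvent convergence on $\Gamma$; integrating gives $\pi_{p,\kappa_n}(\mathbf{1})\to \pi_p(\mathbf{1})=\psi_p$ in $C_V$. This forces $\psi_\ast=\psi_p$ and, since $e^{-T_0\Lambda_\ast}$ lies outside $B_{c_0^{T_0}}(0)$ and is an eigenvalue of $\hat P^p_{T_0}$, it must equal the dominant eigenvalue $e^{-T_0\Lambda(p)}$; thus $\Lambda_\ast=\Lambda(p)$.

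The main obstacle is precisely the singular nature of $\kappa \mapsto \hat P^{\kappa,p}_{T_0}$: strong operator convergence is too weak to force norm convergence of spectral projectors, so the nontriviality of $\psi_\ast$ and the identification of $\Lambda_\ast$ cannot be read off directly from \eqref{ineq:SOTintro}. The essential trick is that the \emph{uniform} spectral gap from Proposition \ref{prop:outlineSpecPic} furnishes a single contour $\Gamma$ that works for all small $\kappa$, on which strong resolvent convergence is sufficient to transfer the normalization of the eigenfunctions into the limit.
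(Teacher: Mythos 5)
There is a genuine gap in your first step, which propagates into the second. Lemma \ref{lem:compact} does \emph{not} deliver convergence in the $C_V$ norm: it gives only uniform convergence of $\psi_{p,\kappa_{n'}}$ to $\psi_{p,\ast}$ on each bounded set $\{\norm{u}_\Hbf \leq R\}$, together with the pointwise bound $|\psi_{p,\ast}| \lesssim V$. Since $\Hbf$ is unbounded and infinite-dimensional and the weight $V$ is unbounded, uniform convergence on bounded sets is strictly weaker than $C_V$-norm convergence; a uniform $C_V^1$ bound does not give $C_V$-compactness (the Arzela--Ascoli argument in the paper uses compactness of $\Hbf^\sigma$-balls in the weaker $\Hbf^{\sigma'}$ topology, not any compactness of $\Hbf$). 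Your second step then explicitly relies on $\norm{\psi_{p,\kappa_n}-\psi_\ast}_{C_V}\to 0$ --- both to control $\hat P^{\kappa_n,p}_{T_0}(\psi_{p,\kappa_n}-\psi_\ast)$ and to pass to the limit in $e^{-T_0\Lambda(p,\kappa_n)}\psi_{p,\kappa_n}$ --- so as written it does not go through.

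Your third step, on the other hand, is a viable route genuinely different from the paper's, and had you led with it you would not have needed steps 1--2 at all. A contour $\Gamma = \{|z-1|=\delta'\}$ with $\delta < \delta' < 1/4$ encloses all the dominant eigenvalues $e^{-T_0\Lambda(p,\kappa)}$ (the proof of Proposition \ref{prop:outlineSpecPic} confines them to $\{|z-1|<\delta\}$ for $p$ small), excludes the rest of the spectrum, and carries a $\kappa$-uniform resolvent bound. The second resolvent identity together with the strong operator convergence \eqref{ineq:SOTintro} applied to $(z-\hat P^p_{T_0})^{-1}\mathbf 1 \in \mathring C_V$ then gives strong resolvent convergence on $\Gamma$, and dominated convergence over the compact contour yields $\psi_{p,\kappa}=\pi_{p,\kappa}(\mathbf 1)\to\pi_p(\mathbf 1)=\psi_p$ in $C_V$ directly, with eigenvalue convergence following. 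The paper instead works only with the weaker bounded-set convergence: it shows $\psi_{p,\ast}\not\equiv 0$ using the $\kappa$-uniform $C_V^1$-proximity of $\pi^{p,\kappa}$ to $\pi^\kappa$ (so $\psi_{p,\kappa}$ is uniformly close to $\mathbf 1$), identifies $\Lambda_\ast=\Lambda(p)$ from the $\kappa=0$ spectral gap, and then separately upgrades to $C_V$ convergence via the $\kappa$-uniform limit formula $\psi_{p,\kappa}=\lim_{t\to\infty}e^{\Lambda(p,\kappa)t}\hat P^{\kappa,p}_t\mathbf 1$ (Lemmas \ref{lem:uniformCV-limit} and \ref{lem:psitopsi-kappa}). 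Both routes rest on the same two inputs --- the $\kappa$-uniform spectral gap and the strong operator convergence --- but your contour approach is arguably more direct if spelled out carefully.
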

See Section \ref{sec:ProofConv} for the detailed proof. With Proposition \ref{prop:convSpecObjectsOutline} in hand, it is now straightforward to check the remaining items in Proposition \ref{prop:psiPUniform}; see Section \ref{sec:ProofConv} for such details.

\subsubsection*{Verifying the drift condition: infinitesimal generator argument}

Assuming Proposition \ref{prop:psiPUniform}, let us sketch how the drift condition for the `nonlinear'
$(u_t, x_t^\kappa, y_t^\kappa)$ process is derived, thereby completing the proof of 
Theorem \ref{thm:unifTwoPtMixing}. 
 Let $p \in (0,p_0]$ be fixed once and for all, and let $\kappa > 0$ be sufficiently small so that, as in Proposition \ref{prop:psiPUniform}(a), we have $\Lambda(p, \kappa) \geq \frac12 \Lambda(p, 0)$ uniformly in $\kappa$. 
Our Lyapunov function $\Vc_\kappa$ is of the form
\begin{align}
\Vc_\kappa(u,x,y) = V_{\beta + 1, \eta}(u) + h_{p, \kappa}(u,x,y) \, , \label{def:Vc}
\end{align}
where $h_{p, \kappa}$ is as in \eqref{eq:defineh}. 
Observe that Proposition \ref{prop:psiPUniform} (b)(iii) ensures that $\Vc_\kappa$ as above is
 uniformly coercive as in Definition \ref{defn:coercive}.

\newcommand{\Lc}{\mathcal L}

To conclude the drift condition for $\Vc_\kappa$ as in Definition \ref{defn:drift}, we 
apply the analogue of the infinitesimal generator argument used for the $\kappa = 0$
case in \cite{BBPS19}, again
carefully ensuring $\kappa$-independence of relevant quantities. 
Brushing aside details for the moment, for $\kappa \geq 0$ let $\Lc_{(2), \kappa}$ denote
the (formal) infinitesimal generator of the $(u_t, x_t^\kappa, y_t^\kappa)$ process. We show that in fact $h_{p, \kappa}$ is in the domain of this generator, and that
\[
\Lc_{(2), \kappa} h_{p, \kappa} \leq - \Lambda(p, \kappa) h_{p, \kappa} + C_0 V_{\beta + 1, \eta}.  
\]
The first term is good and reflects the strong exponential separation of nearby trajectories (equivalently, repulsion
from the diagonal), while the second is an error arising from the linearized approximation of the velocity field (the constant $C_0$ being independent of $\kappa$).
This uniform control in the linearization error makes critical use of the uniform $C_V^1$ control on $\psi_{p, \kappa}$ as in Proposition 
\ref{prop:psiPUniform}(b)(ii), while verifying that $\psi_{p, \kappa}$ is in the domain of $\Lc_{(2), \kappa}$ uses $\psi_{p, \kappa} \in \mathring C_V^1$ and
Proposition \ref{prop:psiPUniform} (b)(i). See Section \ref{sec:Udrift} where this argument is carried out in more detail.

The linearization error is overcome as follows: formally, 
a stronger version of the drift condition for $V_{\beta + 1, \eta}$ (see Remark \ref{rmk:SuperL})
implies that 
for any $\xi > 0$ there exists $C_\xi > 0$ such that
\[
\Lc V_{\beta + 1, \eta} \leq - \xi V_{\beta + 1, \eta} + C_\xi \, ,
\]
where $\Lc$ is the generator of the $(u_t)$ process (we do not justify this inequality precisely as written, but instead an integrated version that is almost equivalent; for details, see Section \ref{sec:Udrift} below  and the proof of [Proposition 2.13; \cite{BBPS19}]). Taking $\xi \geq C_0 + \frac12 \Lambda(p, 0)$ 
ensures that the $- \xi V_{\beta + 1, \eta}$ term successfully absorbs the linearization error
$C_0 V_{\beta + 1, \eta}$, verifying the desired drift condition.
With this established, Theorems \ref{thm:unifTwoPtMixing} and \ref{thm:UniMix} now follow.
See Proposition \ref{prop:drift} in Section \ref{sec:Udrift} for mathematical details. 

\begin{remark}[Setting the parameters ]\label{rmk:kapR}
Let us lastly point out how to set parameters consistently in a non-circular manner. 
  Notice that Proposition \ref{prop:psiPUniform} (b) (iii) has the same ordering in the quantifiers of $R$ and $\kappa$ as Definition \ref{defn:coercive}. We choose parameters like this: first we fix $p,\kappa$ small to obtain a $\kappa$-independent drift condition for $\Vc_{\kappa}$ as defined in \eqref{def:Vc} -- that is, \eqref{def:Vc} satisfies Definition \ref{defn:drift} for $\gamma$, $K$ both independent of $\kappa$. Then, $\Vc_\kappa$ satisfies Definition \ref{defn:coercive} by Proposition \ref{prop:psiPUniform} (b) (iii). Then, choose $R$ sufficiently large to satisfy Proposition \ref{prop:minorize} based on these parameters. Then, chooose $\kappa_0$ sufficiently small based on Definition \ref{defn:coercive} and Lemma \ref{lem:getControlled} of Section \ref{sec:chkMin} to obtain minorization. 
\end{remark}

\subsection{Notation} \label{sec:Note} 

We use the notation $f \lesssim g$ if there exists a constant $C > 0$ such that $f \leq Cg$ where $C$ is independent of the parameters of interest. Sometimes we use the notation $f \approx_{a,b,c,...} g$ to emphasize the dependence of the implicit constant on the parameters, e.g. $C = C(a,b,c,...)$. We denote $f \approx g$ if $f \lesssim g$ and $g \lesssim f$.
In this work, such implicit constants \emph{never depend on $\omega$, $\kappa$, $t$, $(u_t)$ (the velocity), or $(g_t)$ (the passive scalar)}.

Throughout, $\R^d$ is endowed with the standard Euclidean inner product $(\cdot, \cdot)$ and corresponding norm $|\cdot|$. We continue
to write $|\cdot|$ for the corresponding matrix norm. When the domain of the $L^p$ space is omitted it is understood to be $\T^d$: $\norm{f}_{L^p} = \norm{f}_{L^p(\T^d)}$. We use the notations $\EE X = \int_{\Omega} X(\omega) \PP(d\omega)$ and $\norm{X}_{L^p(\Omega)} = \left(\EE \abs{X}^p \right)^{1/p}$. When $(z_t)$ is a Markov process, we write $\EE_z, \P_z$ for the expectation and probability, respectively, conditioned on the event $z_0 = z$. We use the notation $\norm{f}_{H^s} = \sum_{k \in \Z^d} \abs{k}^{2s} \abs{\hat{f}(k)}^2$ (denoting $\hat{f}(k) = \frac{1}{(2\pi)^{d/2}} \int_{\T^d} e^{-ik\cdot x}f(x) dx$ the usual complex Fourier transform).  We occasionally use Fourier multiplier notation $\widehat{m(\grad) f}(\xi) := m(i\xi) \hat{f}(\xi)$. Additionally, we will often use $r_0$ to denote a number in $(\frac{d}{2}+1,3)$ such that the Sobolev embedding $H^{r_0} \hookrightarrow W^{1,\infty}$ holds.

We denote $P \T^d \cong \T^d \times P^{d-1}$  for projective bundle.
We often abbreviate $T_v P\T^d = T_{(x,v)} P \T^d$ for the tangent space of $P \T^d$ at $(x,v)$ as the $\T^d$ factor is flat. 
We are often working with the Hilbert spaces $\Wbf \times T_v P \T^d$ and $\Hbf \times T_v P \T^d$.
For these spaces we denote the inner product $\brak{\cdot,\cdot}_{\Wbf}$ (respectively $\Hbf$) and correspondingly for the norms as the finite-dimensional contribution to the inner product is unambiguous. For linear operators $A: \Wbf \times T_v P \T^d \to \Wbf \times T_v P \T^d$ we similarly denote the operator norm $\norm{A}_{\Wbf}$ and for linear operators $A:\Wbf \times T_v P\T^d \to \Real$ we use the notation $\norm{A}_{\Wbf^\ast}$ (analogously for $\Hbf$).
For $\mathcal{K} \subset \mathbb K$, define $\Pi_{\mathcal{K}}: \Wbf \times P \T^d \to \mathcal{K} \times P\T^d$ to be the orthogonal projection onto the subset of modes in $\mathcal{K}$.
For $n \in \mathbb N$, $\Pi_n$ denotes the orthogonal projection onto the modes with $k \in \mathbb K$, $\abs{k} \leq n$.

\section{Preliminaries}\label{sec:Prelim}

\subsection{Proof of Proposition \ref{prop:minorize}} \label{sec:Minor}






For completeness, we provide a proof of our criterion for minorization, Proposition \ref{prop:minorize}. 
\begin{proof}[\textbf{Proof of Proposition \ref{prop:minorize}}]
Let $z_1,z_2 \in \set{V \leq R}$ be as in the statement, and 
let $z_\ast$ be as in hypothesis (a) of Proposition \ref{prop:minorize}. 
Fix $\zeta = \frac{1}{2}$ and the corresponding value of $\eps$ as in hypothesis (b). 
By hypothesis (b), we have
\[
	\mathcal P_{1/2}(z_i,A) \geq \eta \,\hat{\nu}_{z_i}(A) \, , \quad \hat \nu_{z_i}(A) := \frac{\mathcal P_{1/2} (z_i,A\cap B_\ep(z_*))}{\mathcal P_{1/2}(z_i,B_\ep(z_*))}
\]
Consequently we can write $\mathcal P_{1/2}(z_i,\cdot)$ as a convex combination of probability measures 
\[
	\mathcal P_{1/2}(z_i,\cdot) = \eta \hat{\nu}_{z_i}(\cdot) + (1-\eta)\tilde{\nu}_{z_i}(\cdot) \, .
\]
Using $\mathcal P(x,A) = \int_Z \mathcal P_{1/2}(y,A) \mathcal P_{1/2}(x,dy)$, we estimate
\begin{equation}
\begin{aligned}
	&\abs{ \mathcal P(z_1,A) - \mathcal P(z_2,A)} \\
	&\hspace{.5in}\leq \eta \int_{B_\ep(z_*)}\int_{B_{\ep}(z_*)} |\mathcal P_{1/2}(w_1,A) - \mathcal P_{1/2}(w_2,A)|\hat \nu_{z_1}(\dee w_1)\hat \nu_{z_2}(\dee w_2) + (1-\eta) \\
\end{aligned}
\end{equation}
Using hypothesis (a) and our choice of $\eps$, there holds
\begin{equation}
\begin{aligned}
	&\abs{ \mathcal P(z_1,A) - \mathcal P(z_2,A)} & \leq 1-\frac{\eta}{2},
\end{aligned}
\end{equation}
which provides the desired minorization with $\alpha = 1 - \frac{\eta}{2}$. 
\end{proof}

\subsection{Stochastic Navier-Stokes and the super-Lyapunov property} \label{sec:NSEsup}

Following the convention used in \cite{E2001-lg,BBPS18,BBPS19}, we define a natural real Fourier basis on $\Lbf^2$ by defining for each $m = (k,i) \in \mathbb{K} := \Z^d_0 \times \{1,\ldots,d-1\}$
 \[\label{eq:Fourier-Basis}
 e_m(x) = \begin{cases}
 c_d\gamma_k^i\sin(k\cdot x), \quad& k \in \Z^d_+\\
 c_d\gamma_k^i\cos(k\cdot x),\quad& k\in \Z^d_-,
 \end{cases}
 \]
where $\Z^d_0 := \Z^d \setminus \set{0,\ldots, 0}$, $\Z_+^d = \{k\in \Z^d_0 : k^{(d)} >0\}\cup\{k\in \Z^d_0 \,:\, k^{(1)}>0, k^{(d)}=0\}$ and $\Z_-^d = - \Z_+^d$, and for each $k\in \Z_0^d$, $\{\gamma_k^i\}_{i=1}^{d-1}$ is a set of ${d-1}$ orthonormal vectors spanning the plane perpendicular to $k \in \R^d$ with the property that $\gamma_{-k}^i = - \gamma_{k}^i$. The constant $c_d = \sqrt{2}(2\pi)^{-d/2}$ is a normalization factor so that $e_m(x)$ are a complete orthonormal basis of $\Lbf^2$. Note that in dimension $d=2$  $\mathbb{K} = \Z^d_0$, hence $\gamma_k^1 = \gamma_k$ is just a vector in $\R^2$ perpendicular to $k$ and is therefore given by $\gamma_k = \pm k^{\perp}/|k|$. We assume that $Q$ can be diagonalized with respect to $\{e_m\}$ with eigenvalues $\{q_m\}\in \ell^2(\mathbb{K})$ defined by
\[
Qe_m = q_m e_m,\quad m = (k,i)\in \mathbb{K}.
\]
Note that Assumption \ref{a:Highs} is equivalent to
\[
	|q_m| \approx |k|^{-\alpha},\quad m = (k,i)
\]
We will write the Navier-Stokes system as an abstract evolution equation on $\Hbf$ by 
\begin{equation}\label{eq:NS-Abstract}
	\partial_t u + B(u,u) + Au = Q\dot{W} = \sum_{m\in\mathbb{K}} q_m e_m \dot{W}^m, 
  \end{equation}
where
\begin{align}
B(u,v) &= \left(\Id - \grad (-\Delta)^{-1} \grad \cdot \right)\grad \cdot (u \otimes v) \\
Au &= \begin{cases} -\nu \Delta u \quad&  \textup{ if } d=2 \\
-\nu' \Delta u + \nu\Delta^2 u \quad& \textup{ if } d=3. 
\end{cases}
\end{align}
The $(u_t)$ process with initial data $u$ is defined as the solution to \eqref{eq:NS-Abstract} in the mild sense \cite{KS,DPZ96}:
\begin{align}
u_t = e^{-tA}u - \int_0^t e^{-(t-s)A} B(u_s,u_s) ds + \underbrace{\int_0^t e^{-(t-s)A} Q \dee W(s)}_{=: \Gamma_t} \, ,  \label{eq:Mild} 
\end{align}
where the above identity holds $\P$ almost surely for all $t>0$. The random process $\Gamma_t$ is referred
to as the \emph{stochastic convolution} for this additive SPDE. For \eqref{eq:Mild}, we have the following well-posedness theorem.
\begin{proposition}[\cite{KS,DPZ96}] \label{prop:WPapp}
For each of Systems \ref{sys:NSE}--\ref{sys:3DNSE}, we have the following.
 For all initial $u \in \Hbf \cap \Hbf^{\sigma'}$ with $\sigma' < \alpha-\frac{d}{2}$ and all $T> 0, p \geq 1$, there exists a $\P$-a.s. unique solution $(u_t)$ to \eqref{eq:Mild} which is $\mathscr{F}_t$-adapted, and belongs to $L^p(\Omega;C([0,T];\Hbf \cap \Hbf^{\sigma'})) \cap L^2(\Omega;L^2(0,T;\Hbf^{\sigma'+(d-1)}))$. 

Additionally, for all $p \geq 1$ and $0 \leq \sigma' < \sigma'' < \alpha - \frac{d}{2}$,  
\begin{align}
\EE_u \sup_{t \in [0,T]} \norm{u_t}_{\Hbf^{\sigma'}}^p & \lesssim_{T,p,\sigma'} 1 + \norm{u}_{\Hbf \cap \Hbf^{\sigma'}}^p \\
\EE_u \int_0^T \norm{u_s}_{\Hbf^{\sigma' + (d-1)}}^2 ds & \lesssim_{T,\delta} 1 + \norm{u}^2_{\Hbf^{\sigma'}} \\ 
\EE_u \sup_{t \in [0,T]} \left(t^{\frac{\sigma''-\sigma'}{2(d-1)}} \norm{u_t}_{\Hbf^{\sigma''}}\right)^p &\lesssim_{p,T,\sigma',\sigma''} 1 + \norm{u}^p_{\Hbf^{\sigma'}}. \label{ineq:locRegu}
\end{align}
\end{proposition}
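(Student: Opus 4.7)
The proof proposal rests on the classical Da Prato--Zabczyk mild-solution strategy, adapted to capture the degree of spatial smoothing in each system. My plan is to first handle the linear stochastic object, then subtract it out and treat the nonlinear residual pathwise, and finally obtain the parabolic regularization estimate via an integral iteration.

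\textbf{Step 1: Regularity of the stochastic convolution.} I would begin by analyzing $\Gamma_t = \int_0^t e^{-(t-s)A}Q\,dW_s$ as a Gaussian process. Under Assumption \ref{a:Highs}, one has $\|Q(-\Delta)^{s/2}\|_{HS(\Lbf^2)}^2 \lesssim \sum_{k\neq 0} |k|^{2s-2\alpha} < \infty$ precisely when $s < \alpha - d/2$. Combining this with the smoothing of $e^{-tA}$ (which gains $2(d-1)$ spatial derivatives per unit time, in the $L^2_tH^{\sigma+(d-1)}$ sense) and the Da Prato--Kwapie\'n--Zabczyk factorization trick $\Gamma_t = \frac{\sin\pi\alpha_0}{\pi}\int_0^t(t-s)^{\alpha_0-1}e^{-(t-s)A}Y_s\,ds$ with $Y_s := \int_0^s(s-r)^{-\alpha_0}e^{-(s-r)A}Q\,dW_r$ for a small $\alpha_0 > 0$, I would deduce that $\Gamma_\cdot \in L^p(\Omega; C([0,T]; \Hbf^{\sigma''}))$ for every $\sigma'' < \alpha - d/2$ and every $p \geq 1$, with bounds depending only on $T,p,\sigma''$.

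\textbf{Step 2: Pathwise problem for $v_t := u_t - \Gamma_t$.} Setting $v_t = u_t - \Gamma_t$, $v_t$ solves the deterministic (but random-coefficient) equation $\partial_t v + A v = -B(v+\Gamma, v+\Gamma)$ with $v_0 = u$. Because $\Gamma_t$ has the regularity from Step 1 (in particular is $C^3$ in space almost surely, since $\alpha > 5d/2$ makes $\sigma'' > \sigma > d/2 + 3$), this is a standard viscous perturbation of Navier-Stokes with a smooth forcing. I would construct $v_t$ by a Galerkin approximation $v^N_t$, passing to the limit. The energy estimates are different for $d=2$ and $d=3$: in 2D I would rely on the $L^2$ vorticity estimate (hence the $\|\cdot\|_{\Wbf}$ norm), whose linear-in-log Gronwall closure is classical and yields global-in-time $H^1$ control; in 3D the hyperviscous term $-\nu\Delta^2$ dominates the nonlinearity at the $H^1$ level, again giving global $L^\infty_t\Hbf \cap L^2_t\Hbf^{\sigma+2}$ control. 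The higher-regularity estimate in $\Hbf^{\sigma'}$ for $\sigma' < \alpha - d/2$ is then obtained by a bootstrapping argument, differentiating and commuting derivatives through $B(\cdot,\cdot)$, using the Sobolev embedding $\Hbf^{\sigma'} \hookrightarrow W^{1,\infty}$ once $\sigma' > d/2+1$ to control $\|v\cdot\nabla v\|_{\Hbf^{\sigma'}}$. Uniqueness is standard: if $v_t, \tilde v_t$ are two solutions, the difference satisfies a linear equation whose Gronwall closure uses $L^\infty_t W^{1,\infty}$ bounds on one of the two solutions.

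\textbf{Step 3: Moment bounds and instantaneous regularization.} The three claimed moment bounds follow by combining the pathwise estimates from Step 2 with the moment bounds on $\Gamma_t$ from Step 1 and the stability of the $p$-th moments under taking suprema in time (here the $v_t$ equation contributes polynomial-in-$\|\Gamma\|_{L^\infty_t \Hbf^{\sigma'}}$ dependence, which has all moments). For the instantaneous smoothing estimate \eqref{ineq:locRegu}, I would use the mild formulation \eqref{eq:Mild} directly: bounding
\begin{equation}
\|u_t\|_{\Hbf^{\sigma''}} \leq \|e^{-tA}u\|_{\Hbf^{\sigma''}} + \int_0^t \|e^{-(t-s)A}B(u_s,u_s)\|_{\Hbf^{\sigma''}}\,ds + \|\Gamma_t\|_{\Hbf^{\sigma''}},
\end{equation}
and using the smoothing $\|e^{-\tau A}\|_{\Hbf^{\sigma'}\to\Hbf^{\sigma''}} \lesssim \tau^{-(\sigma''-\sigma')/(2(d-1))}$ of the semigroup together with the tame product estimate for $B$, then closing via a (singular) Gronwall argument on $t^{(\sigma''-\sigma')/(2(d-1))}\|u_t\|_{\Hbf^{\sigma''}}$.

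\textbf{Main obstacle.} I expect the most delicate part to be Step 2 in the 3D hyperviscous case at the high-regularity level $\Hbf^{\sigma'}$ for $\sigma'$ close to $\alpha - d/2$: one must carefully track how the loss in the nonlinear term is compensated by the $(d-1)$-derivative gain from the (hyper)viscous semigroup, and verify that the choice $\sigma' + (d-1) < \alpha - d/2 + (d-1) = \alpha - 1$ keeps the forcing $B(v+\Gamma,v+\Gamma)$ in the appropriate dual space so that $L^2_t\Hbf^{\sigma'+(d-1)}$ regularity genuinely closes. The rest is a careful bookkeeping of the now-standard Da Prato--Zabczyk machinery.
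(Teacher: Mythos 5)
The paper does not prove this proposition: it is stated as classical and cited to \cite{KS,DPZ96}, and your outline (factorization bound on the stochastic convolution, the Da Prato--Zabczyk pathwise subtraction $v_t=u_t-\Gamma_t$ with Galerkin/energy estimates, and the singular-Gronwall smoothing argument on the mild formulation) is precisely the standard argument from those references. Your proposal is correct and matches the approach the paper relies on.
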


We now state a precise version of the super-Lyapunov property for the
drift functions $V_{\beta, \eta}(u)  := (1 + \| u \|^2_{\Hbf})^\beta \exp( \eta \| u \|_{\Wbf}^2)$. 
If $d=2$ define $\mathcal{Q} = 64 \sup_{m = (k,i) \in \mathbb K} \abs{k}\abs{q_m}$, and if $d=3$ define $\mathcal{Q} = 64 \sup_{m = (k,i) \in \mathbb K} \abs{q_m}$. Define 
$\eta_* = \nu / \mathcal Q$. 

\begin{lemma}[Lemma 3.7 in \cite{BBPS19}] \label{lem:TwistBd} Let $(u_t)$ solve either Systems \ref{sys:NSE} or \ref{sys:3DNSE}. There exists a $\gamma_* >0$, such that for all $0\leq \gamma < \gamma_*$, $T\geq 0$, $r\in (0,3)$, $C_0 \geq 0$, and $V(u) = V_{\beta,\eta}$ where $\beta \geq 0$ and $0 < e^{\gamma T}\eta < \eta_*$, there exists a constant $C = C(\gamma,T,r,C_0,\beta,\eta) >0$ such that the following estimate holds:
\begin{equation}\label{eq:Twistbd}
\EE_u \exp\left(C_0 \int_0^T\norm{u_s}_{\Hbf^r}\ds\right)\sup_{0\leq t\leq T}V^{e^{\gamma t}}(u_t) \leq C V(u).
\end{equation}

\end{lemma}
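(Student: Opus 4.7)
The plan is to construct an exponential supermartingale via Itô's formula. The key mechanism is that the dissipation in Systems \ref{sys:NSE}--\ref{sys:3DNSE} dominates the Itô correction produced by exponentiating the energy, provided $\eta e^{\gamma T}<\nu/\mathcal Q$ and $\gamma$ is sufficiently small.

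First, I would apply Itô's formula to $\|u_t\|_\Wbf^2$: in 2D using the vorticity equation, where the transport term $\langle u\cdot\grad\omega,\omega\rangle$ vanishes by incompressibility; in 3D using the $\Lbf^2$ inner product, where $\langle u\cdot\grad u,u\rangle$ similarly vanishes. Writing $\mathscr D(u)$ for the natural dissipation (namely $\|\grad\omega\|_{\Lbf^2}^2$ in 2D and $\|\Delta u\|_{\Lbf^2}^2$ in 3D), this yields
\[
\mathrm{d}\|u_t\|_\Wbf^2 + 2\nu\mathscr D(u_t)\,\mathrm{d}t = \tr(Q^\ast Q)_\Wbf\,\mathrm{d}t + \mathrm{d}M_t,
\]
where $M_t$ is a continuous martingale with $\mathrm{d}\langle M\rangle_t\leq 4\mathcal Q\|u_t\|_\Wbf^2\,\mathrm{d}t$ by the definition of $\mathcal Q$. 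Next, apply Itô to $\Phi_t := \exp\bigl(\eta e^{\gamma t}\|u_t\|_\Wbf^2\bigr)$; using $\partial_t(\eta e^{\gamma t})=\gamma\eta e^{\gamma t}$, the drift becomes
\[
\Phi_t\cdot\eta e^{\gamma t}\Bigl[-2\nu\mathscr D(u_t) + \tr(Q^\ast Q)_\Wbf + 2\eta e^{\gamma t}\mathcal Q\|u_t\|_\Wbf^2 + \gamma\|u_t\|_\Wbf^2\Bigr]\mathrm{d}t.
\]
Under $\eta e^{\gamma T}\mathcal Q<\nu$, the coefficient $2\eta e^{\gamma t}\mathcal Q<2\nu$; combined with Poincaré ($\|u\|_\Wbf^2\lesssim\mathscr D(u)$) and $\gamma<\gamma_\ast$ small, the noise-correction and time-derivative terms are absorbed into a fraction of $2\nu\mathscr D(u_t)$, leaving a strictly negative coefficient on $\mathscr D(u_t)$ which provides a dissipation budget for the remaining steps.

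The polynomial factor $(1+\|u_t\|_\Hbf^2)^{\beta e^{\gamma t}}$ is handled by an analogous Itô expansion using the higher-order energy estimates from Proposition \ref{prop:WPapp}; since its drift grows only polynomially in $\|u\|_\Hbf$, it is absorbed into a further fraction of the leftover dissipation (possibly at the cost of enlarging the polynomial prefactor by a constant). The multiplier $\exp\bigl(C_0\int_0^T\|u_s\|_{\Hbf^r}\mathrm{d}s\bigr)$ with $r\in(0,3)$ is handled by interpolation against $\mathscr D$: for any $\epsilon>0$,
\[
\|u\|_{\Hbf^r} \leq \epsilon\,\mathscr D(u) + C_\epsilon\bigl(1+\|u\|_\Wbf\bigr)^{K(r)},
\]
so choosing $\epsilon$ small lets $C_0\epsilon\int_0^T\mathscr D(u_s)\,\mathrm{d}s$ get absorbed into the remaining dissipation budget, while the lower-order remainder is subsumed into the exponential-in-vorticity prefactor (using that $\eta$ was chosen with some slack so that moment bounds on $\exp(\eta'\|u\|_\Wbf^2)$ with $\eta'$ slightly larger than $\eta$ still hold). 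The combination produces a nonnegative supermartingale of the form $e^{-Ct}\,\Phi_t\,(1+\|u_t\|_\Hbf^2)^{\beta e^{\gamma t}}\exp\bigl(C_0\int_0^t\|u_s\|_{\Hbf^r}\mathrm{d}s\bigr)$, and Doob's maximal inequality for nonnegative supermartingales delivers the $\sup_{t\in[0,T]}$ on the left-hand side.

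The main obstacle will be the \emph{allocation of the dissipation budget}: the single term $-2\nu\eta e^{\gamma t}\mathscr D(u)$ must simultaneously absorb the noise correction, the $\gamma$-term, the polynomial-factor drift, and $C_0\epsilon\mathscr D$ from the interpolation, with absorption constants uniform in $t\in[0,T]$ under $\eta e^{\gamma T}<\eta_\ast$. The choice of $\gamma_\ast$ ends up depending on the Poincaré constant and on how much slack is needed for the interpolation of the $\|u\|_{\Hbf^r}$ term, but not on $T$ itself once $\eta e^{\gamma T}<\eta_\ast$ is enforced. Rigorous justification of the Itô computation in infinite dimensions proceeds through Galerkin truncation in the standard way, passing to the limit using the a priori bounds of Proposition \ref{prop:WPapp}.
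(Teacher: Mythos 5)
The paper does not prove this lemma itself; it imports it verbatim as Lemma 3.7 of \cite{BBPS19}, so there is no in-paper proof to compare against line by line. The exponential-martingale mechanism you set up for $\exp(\eta e^{\gamma t}\|u_t\|_{\Wbf}^2)$ is the right starting point and is how such super-Lyapunov estimates are typically obtained. However, your proposal has a genuine gap in the step that absorbs the $\exp(C_0\int_0^T\|u_s\|_{\Hbf^r}\ds)$ factor.

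The interpolation inequality you invoke,
\[
\|u\|_{\Hbf^r}\leq \epsilon\,\mathscr D(u)+C_\epsilon(1+\|u\|_{\Wbf})^{K(r)},
\]
is false precisely in the range of $r$ that the paper actually needs, namely $r\in(\tfrac d2+1,3)$ (so $r\in(2,3)$ for $d=2$ and $r\in(2.5,3)$ for $d=3$); this is the range used in Lemmas \ref{lem:PathJacobian}--\ref{lem:PathAdjoint} and Lemma \ref{lem:SOT-convergence}, which is why the statement allows $r$ up to $3$. In $2$D one has $\mathscr D(u)=\|\grad\omega\|_{L^2}^2\sim\|u\|_{\dot H^2}^2$ and $\|u\|_{\Wbf}=\|\omega\|_{L^2}\sim\|u\|_{\dot H^1}$, so both terms on your right-hand side live at or below the $H^2$ level; they simply cannot control $\|u\|_{H^r}$ for $r>2$. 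Concretely, take $u$ supported at frequency $N$ with $\|\omega\|_{L^2}= N^{-1}$. Then $\|u\|_{H^r}\sim N^{r-2}\to\infty$ while $\mathscr D(u)\sim 1$ and $(1+\|u\|_\Wbf)^K\leq 2^K$, so the inequality fails for large $N$ whenever $r>2$. The same counterexample works in $3$D (where $\|u\|_\Wbf=\|u\|_{L^2}$ and $\mathscr D(u)\sim\|u\|_{\dot H^2}^2$). Consequently your ``dissipation budget'' cannot pay for the $\Hbf^r$ multiplier in the relevant range, and this step of the argument collapses.

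A secondary, but related, issue is the treatment of the polynomial prefactor $(1+\|u_t\|_{\Hbf}^2)^{\beta e^{\gamma t}}$. You assert that its It\^o drift is ``absorbed into a further fraction of the leftover dissipation,'' but the leftover dissipation in your scheme is again $\mathscr D(u)$ (an $H^2$-level quantity), whereas this factor involves the much higher norm $\|u\|_{\Hbf}=\|u\|_{H^\sigma}$ with $\sigma> d/2+3$; the nonlinear term in the $H^\sigma$-balance cannot be absorbed by the vorticity-level dissipation. Some additional structure is needed here -- e.g.\ a H\"older/Young split that decouples the exponential-in-$\Wbf$ factor from the polynomial-in-$\Hbf$ factor and treats the latter with the polynomial moment bounds of Proposition \ref{prop:WPapp} -- rather than another It\^o absorption. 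As written, the absorption scheme is not self-consistent, and fixing the $r\in(2,3)$ case requires a genuinely different idea beyond what your interpolation provides.
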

\begin{remark}
It suffices to take $\gamma_\ast = \frac{\nu}{8}$. 
\end{remark}

\begin{remark} \label{rmk:SuperL}
Note that Lemma \ref{lem:TwistBd} is strictly stronger than a drift condition. The improvement in the power of $V$ is sometimes called a \emph{super-Lyapunov property} and it provides an important strengthening of the notion of a drift condition.  To see that \eqref{eq:Twistbd} implies a drift condition, we write $P_1 \varphi (u) = \E_u \varphi(u_1)$ as the Markov semi-group for Navier-Stokes and apply Jensen's inequality with \eqref{eq:Twistbd} to deduce that $\exists C_L > 0$,
\begin{align}
P_1 V \leq  (e^{C_L}V)^{e^{-\gamma}}. \label{eq:P_1-super-Lyapunov}
\end{align}
Hence, $\forall \delta > 0$, $\exists C_\delta > 0$ such that $P_1V \leq \delta V + C_{\delta}$. 
Furthermore, the bound \eqref{eq:P_1-super-Lyapunov} can be iterated with repeated applications of Jensen's inequality (c.f. [Proposition 5.11, \cite{HM11}]) to produce
\begin{align}
P_nV \leq e^{C_L\frac{e^{-\gamma}}{1-e^{-\gamma n}}} V^{e^{-\gamma n}}.  \label{ineq:SuperLyapIter}
\end{align}
\end{remark}

\subsection{Jacobian estimates} \label{sec:Jacobian}

In the course of this paper, we require a variety of Jacobian estimates for the projective process $(u_t, x_t^\kappa, v_t^\kappa)$ 
on $\Hbf \times P \T^d$ (defined in Section \ref{sec:GeoP2kOut}).  
Analogous estimates when $\kappa = 0$ were derived in [Section 3; \cite{BBPS19}] and the same estimates apply here as well (uniformly in $\kappa$).
This is because the Lagrangian and projective processes were estimated by $L^\infty$ estimates on the velocity (and its gradients), and hence are not sensitive to the noise path of $\widetilde{W}_t$ and so do not depend on $\kappa$.
Since no real changes are needed, we will merely state the necessary lemmas here and refer the reader to [Section 3; \cite{BBPS19}] for proofs. 

Let us establish some useful shorthand notation.
Recall the projective process $(\hat{z}^\kappa_t) = (u_t,x_t^\kappa,v^\kappa_t)$ solves the abstract SDE in $\Hbf\times P\T^d$
\[
	\partial_t \hat{z}^\kappa_t = F(\hat{z}^\kappa_t) + Q\dot{W}_t + \sqrt{2\kappa} \dot{\widetilde{W}}_t.
\]
where we view $Q \dot{W}_t$ and $\dot{\widetilde{W}}_t$ as extended to $\Hbf\times T_{v^\kappa_t}P\T^d$ (we will abbreviate $T_{v} P\T^d = T_{(x,v)} P\T^d$) in the obvious manner and for each $\hat{z} = (u,x,v)\in \Hbf\times P\T^d$ we write
\[
	F(\hat{z}) = \begin{pmatrix}-B(u,u) - Au\\ u(x)\\ (I - v\tensor v) (Du(x)v)\end{pmatrix}.
\] 
The Jacobian process $J^\kappa_{s,t}$ denotes the Fr\'echet derivative of the solution $\hat z_t^\kappa$ with respect to the value at time $s<t$. Hence, $J^\kappa_{s,t}$ solves the operator-valued equation
\begin{equation}\label{eq:Jacdef}
	\partial_t J^\kappa_{s,t} = DF(\hat{z}^\kappa_t)J^\kappa_{s,t}, \quad J^\kappa_{s,s} = \Id.
\end{equation}
Additionally we let $K^\kappa_{s,t}:\Wbf\times T_{v^\kappa_t}P\T^d \to \Wbf\times T_{v^\kappa_s}P\T^d$ denote the adjoint of $J^\kappa_{s,t}$, in the sense that
\begin{align}
\brak{f, J^\kappa_{s,t} \xi }_{\Wbf} = \brak{K^\kappa_{s,t} f,  \xi }_{\Wbf}.
\end{align}
A straightforward calculation (see \cite{HM11}) shows that $K^\kappa_{s,t}$ solves the backward-in-time equation
\begin{align}
\partial_s K^\kappa_{s,t}  = -DF(\hat{z}^\kappa_s)^\ast K^\kappa_{s,t}, \quad K^\kappa_{t,t} = I \, , 
\end{align}
where $DF(\hat{z}^\kappa_s)^*: \Wbf \times T_{v^\kappa_s}P\T^d \to \Wbf \times T_{v^\kappa_s}P\T^d$ is the adjoint to $DF(\hat{z}^\kappa_s)$.

In what follows, we will find it convenient to let $\tilde{z} = (\tilde{u},\tilde{x},\tilde{v}) \in \Wbf \times T_{v^\kappa_s} P \mathbb T^d$ be an initial perturbation and denote 
\[
\tilde{z}^\kappa_t := (\tilde{u}_t, \tilde{x}^\kappa_t, \tilde{v}^\kappa_t) = J_{s,t}^\kappa \tilde{z}  \in \Wbf \times T_{v_t^\kappa}P\T^d,
\]
which readily solves the linear evolution equation
\[
	\partial_t \tilde{z}_t = DF(\hat{z}_t)\tilde{z}_t, \quad \tilde{z}_s = \tilde{z}.
\]

We now state the necessary Jacobian estimates. As usual, all constants are implicitly independent of $\kappa$.
\begin{lemma} \label{lem:PathJacobian} 
$\forall \sigma > \frac{d}{2}+1$, $\forall r \in (\frac{d}{2}+1,3)$,  $\exists C, q' > 0$ such that the following holds path-wise
\begin{subequations} 
\begin{align}
\norm{\tilde{u}_t}_{\Wbf} & \leq \norm{\tilde{u}}_{\Wbf} \exp\left(C \int_s^t \norm{u_\tau}_{\Hbf^{r}} \dee\tau \right) \label{def:JHbd} \\ 
\norm{J_{s,t}^\kappa}_{\Hbf^\sigma \to \Hbf^\sigma} & \lesssim \exp\left(C \int_s^t \norm{u_\tau}_{\Hbf^{r}} \dee\tau \right) \left(1 + \brak{t-s}^3 \sup_{s < \tau < t}\norm{u_\tau}_{\Hbf^\sigma}^{q'} \right). \label{def:JHSigbd}
\end{align}
\end{subequations}
\end{lemma}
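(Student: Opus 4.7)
\medskip

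\noindent\textbf{Proof plan.} The plan is to exploit the fact that the velocity component of the linearization decouples from the spatial/projective components and, crucially, is independent of the diffusive noise $\widetilde W_t$. Writing out $DF(\hat z_t^\kappa)$ explicitly, the perturbation $\tilde z_t = (\tilde u_t,\tilde x_t^\kappa,\tilde v_t^\kappa)$ satisfies
\begin{align}
\partial_t \tilde u_t &+ B(\tilde u_t,u_t) + B(u_t,\tilde u_t) + A\tilde u_t = 0, \\
\partial_t \tilde x_t^\kappa &= \tilde u_t(x_t^\kappa) + Du_t(x_t^\kappa)\tilde x_t^\kappa, \\
\partial_t \tilde v_t^\kappa &= (\Id - \tilde v_t^\kappa\otimes \tilde v_t^\kappa)\big(D\tilde u_t(x_t^\kappa)v_t^\kappa + D^2 u_t(x_t^\kappa)(\tilde x_t^\kappa,v_t^\kappa) + Du_t(x_t^\kappa)\tilde v_t^\kappa\big).
\end{align}
The first equation is the linearized Navier--Stokes (or hyperviscous NSE) evolution, and it depends on $\kappa$ only through the path $u_t$, itself independent of $\widetilde W_t$. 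All $\kappa$-uniformity assertions will ultimately follow from this decoupling.

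For the $\Wbf$-bound \eqref{def:JHbd}, in dimension $d=2$ I would pass to vorticity $\tilde\omega_t = \curl\tilde u_t$, obtaining the transport-diffusion equation $\partial_t\tilde\omega + u\cdot\grad\tilde\omega + \tilde u\cdot\grad\omega = \nu\Delta\tilde\omega$. A standard $L^2$ energy estimate: the transport term vanishes, the viscous term is dissipative, and the dangerous coupling is handled by integrating by parts, applying Biot--Savart ($\|\tilde u\|_{L^2}\lesssim\|\tilde\omega\|_{L^2}$), and absorbing a $\|\nabla\tilde\omega\|_{L^2}^2$ piece into the diffusion via Young's inequality. This yields $\tfrac{d}{dt}\|\tilde\omega\|_{L^2}^2 \lesssim \|\omega\|_{L^\infty}^2\|\tilde\omega\|_{L^2}^2$, and since $r>d/2+1$ gives the Sobolev embedding $\Hbf^{r}\hookrightarrow W^{1,\infty}$, Gr\"onwall's inequality closes the estimate. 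For $d=3$, the same strategy is applied directly to $\|\tilde u_t\|_{\Lbf^2}$, with $A = -\nu'\Delta + \nu\Delta^2$ providing the dissipation used to absorb the bilinear coupling $B(\tilde u,u)$ after integration by parts.

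For the $\Hbf^\sigma$-bound \eqref{def:JHSigbd}, I would split the Jacobian into its three components. The $\tilde u$-part is estimated at $\Hbf^\sigma$-regularity by testing the linearized equation against $(-\Delta)^\sigma\tilde u$, commuting derivatives past $B$, and controlling the resulting bilinear terms with paraproduct/Moser estimates; the loss of one derivative in $B(\tilde u,u)$ is absorbed by the dissipation from $A$, the remaining error being bounded by $\sup_{[s,t]}\|u_\tau\|_{\Hbf^\sigma}$ times $\|\tilde u\|_{\Hbf^\sigma}$ plus a forcing term controlled by the $\Wbf$-estimate already obtained. The $(\tilde x_t^\kappa,\tilde v_t^\kappa)$ components evolve via linear ODEs whose coefficients are $L^\infty$ functions of $u_t,Du_t,D^2 u_t$, each controlled by $\|u_t\|_{\Hbf^\sigma}$ via Sobolev embedding (using $\sigma>d/2+1$, possibly $\sigma>d/2+2$ for $D^2 u$); these are driven by the source terms $\tilde u_t(x_t^\kappa)$ and $D\tilde u_t(x_t^\kappa)v_t^\kappa$, bounded by $\|\tilde u\|_{\Hbf^\sigma}$. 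Integrating the ODEs over $[s,t]$ produces the polynomial factor $\langle t-s\rangle^3\sup_{[s,t]}\|u_\tau\|_{\Hbf^\sigma}^{q'}$, and combining with the $\Wbf$-estimate yields the claimed bound.

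The main subtlety is the higher-regularity step for $\tilde u$: one must carefully track where derivatives land in $B(\tilde u,u) + B(u,\tilde u)$ so that the worst bilinear interaction is either closed by the dissipative operator $A$ or controlled by the $\Wbf$-norm from \eqref{def:JHbd}, all without introducing $\kappa$-dependence. Once this is done, uniformity in $\kappa$ is automatic: the linearized $u$-equation is $\kappa$-independent, and all $L^\infty$ quantities driving the $(\tilde x^\kappa,\tilde v^\kappa)$ ODEs depend on $\kappa$ only through the velocity path, whose pathwise bounds are $\kappa$-independent. Since these estimates are purely pathwise analogues of those proved for $\kappa=0$ in [Section~3, \cite{BBPS19}], I would refer there for the detailed calculations.
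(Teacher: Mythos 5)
Your high-level plan matches the paper's own treatment of this lemma: the paper gives no proof beyond the observation at the start of Section 3.3 that the Lagrangian and projective components of the linearization are driven entirely by $L^\infty$-type bounds on $u_t$ and its derivatives, are therefore insensitive to the $\widetilde W_t$ noise path, and so the $\kappa=0$ estimates of [Section 3, BBPS19] carry over verbatim and $\kappa$-uniformly. You identify exactly this decoupling and, like the paper, defer detailed calculations to BBPS19.

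That said, the $d=2$ sketch you give for \eqref{def:JHbd} would not deliver the claimed estimate. Integrating by parts and then Young's inequality, as you describe, produces
$\frac{d}{dt}\|\tilde\omega\|_{L^2}^2 \lesssim \|\omega\|_{L^\infty}^2\|\tilde\omega\|_{L^2}^2$,
hence after Gr\"onwall a factor $\exp\bigl(C\int_s^t\|u_\tau\|_{\Hbf^r}^2\,d\tau\bigr)$ with a \emph{quadratic} exponent, whereas \eqref{def:JHbd} asserts the \emph{linear} $\exp\bigl(C\int_s^t\|u_\tau\|_{\Hbf^r}\,d\tau\bigr)$. This is not cosmetic: Lemma \ref{lem:TwistBd}, which converts these pathwise Jacobian bounds into the moment estimates of Lemma \ref{lem:JacExps}, controls only the linear exponential of $\int\|u_\tau\|_{\Hbf^r}\,d\tau$. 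The correct estimate needs neither the viscous term nor Young: by H\"older with $1/a+1/b=1/2$, $a,b\in(2,\infty)$, and 2D Sobolev embedding,
$\bigl|\int\tilde\omega\,(\tilde u\cdot\grad\omega)\bigr|\le \|\tilde\omega\|_{L^2}\|\tilde u\|_{L^a}\|\grad\omega\|_{L^b}\lesssim \|\tilde\omega\|_{L^2}^2\,\|u\|_{\Hbf^{3-2/b}}$,
using Biot--Savart to get $\|\tilde u\|_{L^a}\lesssim \|\tilde u\|_{\Hbf^{1-2/a}}\approx\|\tilde\omega\|_{H^{-2/a}}\le\|\tilde\omega\|_{L^2}$; taking $r=3-2/b\in(2,3)$ and applying Gr\"onwall yields the linear exponent. (A more minor issue: your linearized projective equation is not quite the derivative of $\partial_t v = (\Id - v\otimes v)Du(x)v$; you dropped the term from differentiating $\Pi_v=\Id-v\otimes v$ in $v$, and the projector out front should involve $v_t^\kappa$, not $\tilde v_t^\kappa$. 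This does not affect the $L^\infty$ control you invoke, but the displayed equation is wrong as written.)
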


\begin{lemma}[Jacobian bounds in expectation] \label{lem:JacExps}
For all $\sigma$ and all $\eta > 0$, there is a constant $C_J$ such that the following holds for all $1 \leq p < \infty$, 
\begin{align}
\sup_{s \leq t \leq 1}\EE\norm{J_{s,t}^\kappa}^p_{\Hbf^\sigma \to \Hbf^\sigma} & \leq V_{q', \eta}^p (u_s) \exp \left( p C_J \right). 
\end{align}
\end{lemma}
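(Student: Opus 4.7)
The plan is to combine the pathwise Jacobian estimate \eqref{def:JHSigbd} with the super-Lyapunov property from Lemma \ref{lem:TwistBd}, invoking the Markov property of the velocity process. Since the pathwise bound involves only Sobolev norms of $u_\tau$ (and not $\widetilde{W}_t$ or $\kappa$), the resulting moment bounds are automatically $\kappa$-uniform; this is the key conceptual reason why $\kappa$-uniformity is essentially free here.

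Concretely, raising \eqref{def:JHSigbd} to the $p$-th power and using $\langle t-s\rangle \leq 2$ on $s \leq t \leq 1$ yields the pathwise bound
\begin{equation*}
\norm{J_{s,t}^\kappa}^p_{\Hbf^\sigma \to \Hbf^\sigma} \lesssim_p \exp\!\Big(pC \int_s^t \!\|u_\tau\|_{\Hbf^r}\, d\tau\Big)\Big(1 + \sup_{s \leq \tau \leq t}\|u_\tau\|_{\Hbf^\sigma}^{pq'}\Big).
\end{equation*}
Taking expectation and invoking the Markov property to condition on $u_s$ reduces matters to a moment estimate for the velocity process started at $u_s$ over horizon $T = t-s \leq 1$. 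For $\beta = pq'$ and $\eta' > 0$ sufficiently small, the function $V_{\beta,\eta'}$ pointwise dominates $1 + \|u\|_{\Hbf^\sigma}^{pq'}$ (using $\|u\|_{\Hbf^\sigma} \lesssim \|u\|_\Hbf$ in the admissible range of $\sigma$; parabolic smoothing via \eqref{ineq:locRegu} covers slightly larger $\sigma$). Applying Lemma \ref{lem:TwistBd} with $C_0 = pC$ and $V = V_{\beta,\eta'}$ then gives
\begin{equation*}
\EE_{u_s}\!\Big[\exp\!\Big(pC \int_0^T \!\|u_\tau\|_{\Hbf^r}\, d\tau\Big) \sup_{0 \leq \tau \leq T} V_{\beta,\eta'}(u_\tau)\Big] \leq C_1 V_{\beta,\eta'}(u_s),
\end{equation*}
with $C_1$ independent of $\kappa, s, t$.

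To reach the target $V_{q',\eta}^p(u_s)\exp(pC_J)$ we exploit the identity $V_{\beta,\eta'} = V_{pq', p\eta}$, i.e.\ we want $\eta' = p\eta$. The main technical obstacle is that for large $p\eta$ this may violate the admissibility constraint $e^\gamma \eta' < \eta_\ast$ in Lemma \ref{lem:TwistBd}. I would handle it via H\"older's inequality: choose $N = N(p, \eta)$ so that $p\eta/N$ lies below the admissibility threshold, split the product of the exponential integral and the supremum into $N$ factors at the cost of $N$-th roots, and apply Lemma \ref{lem:TwistBd} to each factor (using the iterated super-Lyapunov bound \eqref{ineq:SuperLyapIter} as needed). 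Bundling the resulting constants into a single $\exp(pC_J)$ with $C_J = C_J(\eta, \sigma)$ independent of $\kappa, s, t$ completes the proof.
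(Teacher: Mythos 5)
Your core strategy---pathwise Jacobian bound \eqref{def:JHSigbd}, conditional expectation/Markov property, then the super-Lyapunov estimate (Lemma \ref{lem:TwistBd})---is exactly the intended argument; the paper itself does not write out a proof of this lemma but defers it to [Section 3, BBPS19], where this same combination is used, and $\kappa$-uniformity is free for precisely the reason you state.

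However, the final paragraph of your proposal addresses an obstacle that does not actually exist, and the workaround you propose would not work. You claim you must match $\eta' = p\eta$ in $V_{\beta,\eta'}$ in order to land on the target $V_{q',\eta}^p(u_s) = V_{pq',\,p\eta}(u_s)$, and then worry that $p\eta$ may exceed the admissibility threshold $\eta_\ast$ when $p$ is large. But the target is an \emph{upper} bound, and $p \geq 1$, so $p\eta \geq \eta$; hence $V_{pq',\,\eta'}(u_s) \leq V_{pq',\,p\eta}(u_s) = V_{q',\eta}^p(u_s)$ for \emph{any} $\eta' \leq \eta$. You should simply apply Lemma \ref{lem:TwistBd} with a $p$-independent $\eta' := \min(\eta, \eta_\ast/2)$, $\beta = pq'$, $C_0 = pC$, $\gamma = 0$. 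Meanwhile, your proposed H\"older split does not help with a large exponent inside $\exp(\cdot\,\|u\|_\Wbf^2)$: raising $e^{a\|u\|_\Wbf^2}$ to the $N$-th power and taking an $N$-th root leaves the exponent unchanged, so one cannot reduce an inadmissible $p\eta$ this way.

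Finally, you should be a bit more explicit that the constant $C_1 = C(0,T,r,C_0,\beta,\eta')$ coming out of Lemma \ref{lem:TwistBd} now depends on $p$ through both $C_0 = pC$ and $\beta = pq'$, and the final bound $\exp(pC_J)$ tacitly uses that this dependence is at most exponential in $p$ (which one can read off the proof of the super-Lyapunov estimate in BBPS19, but which the statement of Lemma \ref{lem:TwistBd} alone does not assert). This is worth a sentence, since it is where all the $p$-dependence is hiding.
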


\begin{lemma} \label{lem:JacSmooth}
Let $\gamma \in [0,\alpha - \frac{d}{2})$ and $r \in (\frac{d}{2}+1,3)$. Then, $\exists \varkappa'$ such that the following holds path-wise for $0 \leq s \leq t \leq 1$: 
\begin{align}
(t-s)^{\frac{\gamma}{2(d-1)}}\norm{J^\kappa_{s,t}}_{\Wbf \to \Hbf^\gamma} \lesssim  \exp\left(C \int_s^t \norm{u_\tau}_{\Hbf^{r}} \dee\tau \right) \left(1 +  \sup_{\tau \in (s,t)} \norm{u_\tau}_{\Hbf^\sigma}^{\varkappa'}\right).    
\end{align}
\end{lemma}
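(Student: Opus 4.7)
The plan is to decompose the estimate into the infinite-dimensional velocity component $\tilde u_t$ of $J^\kappa_{s,t}\tilde z$ in $H^\gamma$ (the main content) and the finite-dimensional components $(\tilde x_t, \tilde v_t)$. Since the tangent equation $\partial_t \tilde z_t = DF(\hat z_t^\kappa)\tilde z_t$ depends on $\kappa$ only through the path $(u_\tau)$, which has already been $\kappa$-uniformly controlled, $\kappa$-independence of all constants comes for free. For the velocity component I would apply Duhamel's formula to
\[
\partial_t \tilde u_t + A \tilde u_t = -B(\tilde u_t, u_t) - B(u_t, \tilde u_t), \qquad \tilde u_s = \tilde u \in \Wbf,
\]
together with the parabolic smoothing estimate $\|e^{-tA}\|_{H^a \to H^b} \lesssim t^{-(b-a)/(2(d-1))}$ for $b \geq a$ (the Stokes heat semigroup when $d=2$, dominated by the hyperviscous bi-Laplacian for short times when $d=3$). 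The linear Duhamel term contributes $(t-s)^{-\gamma/(2(d-1))}\|\tilde u\|_\Wbf$ using $\Wbf \hookrightarrow L^2$ and $t-s \leq 1$.

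For the nonlinear Duhamel integral, the base case uses the divergence-form bilinear estimate $\|B(\tilde u_\tau, u_\tau) + B(u_\tau, \tilde u_\tau)\|_{H^{-1}} \lesssim \|u_\tau\|_{H^\sigma}\|\tilde u_\tau\|_{L^2}$ (improved to $\|\cdot\|_{L^2} \lesssim \|u_\tau\|_{H^\sigma}\|\tilde u_\tau\|_{H^1}$ when $d=2$), combined with the a priori control of $\|\tilde u_\tau\|_\Wbf$ from \eqref{def:JHbd} and a Grönwall-type argument in the time-weighted norm $(\tau-s)^{\gamma/(2(d-1))}\|\tilde u_\tau\|_{H^\gamma}$. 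This closes the estimate directly for $\gamma$ in a restricted range (roughly $\gamma < 2$ when $d=2$, $\gamma < 3$ when $d=3$). To reach the full range $\gamma < \alpha - d/2$ I would then bootstrap: once $\|\tilde u_\tau\|_{H^a}$ is controlled with $a$ above the Sobolev embedding threshold $H^{r_0}\hookrightarrow L^\infty$ ($r_0 > d/2$), the multiplier estimate $\|B(\tilde u_\tau,u_\tau) + B(u_\tau, \tilde u_\tau)\|_{H^{a-1}} \lesssim \|u_\tau\|_{H^\sigma}\|\tilde u_\tau\|_{H^a}$ holds, and combining with $\|e^{-(t-\tau)A}\|_{H^{a-1}\to H^{\gamma'}} \lesssim (t-\tau)^{-(\gamma'-a+1)/(2(d-1))}$ yields $\|\tilde u_t\|_{H^{\gamma'}}$ for any $\gamma' < a + 2(d-1) - 1$. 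Each iteration gains up to $2(d-1)-1$ derivatives while picking up one extra factor of $\sup_\tau \|u_\tau\|_{H^\sigma}$; a finite number of iterations, depending only on $\gamma, \alpha, d$, produces the polynomial factor $\sup_\tau \|u_\tau\|_{H^\sigma}^{\varkappa'}$ in the conclusion.

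The finite-dimensional components $(\tilde x_t, \tilde v_t)$ satisfy linear ODEs whose coefficients are built from $u_t, Du_t, D^2u_t$ at $(x_t^\kappa, v_t^\kappa)$ (bounded pointwise via $H^\sigma \hookrightarrow C^3$), and whose forcing involves $\tilde u_t(x_t^\kappa)$ (bounded by $\|\tilde u_t\|_{L^\infty} \lesssim \|\tilde u_t\|_{H^{r_0}}$ from the velocity estimate, with the relevant time integral $\int_s^t (\tau-s)^{-r_0/(2(d-1))} d\tau$ converging since $r_0 < 2(d-1)$). A direct Grönwall then closes the pointwise bound on $|\tilde x_t| + |\tilde v_t|$. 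The hard part will be the bootstrap step for the velocity component: the one-shot Duhamel kernel fails to be integrable once $\gamma$ exceeds the first-iteration threshold, forcing the iterative scheme and the accumulating polynomial factors of $\sup\|u_\tau\|_{H^\sigma}$, all while maintaining the $\kappa$-independence of every constant that appears.
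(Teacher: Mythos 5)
The paper does not prove this lemma in-line: Section \ref{sec:Jacobian} simply observes that the estimates are pathwise $L^\infty$-type bounds on the velocity (hence insensitive to the $\widetilde W_t$ path and automatically $\kappa$-uniform) and defers to [Section 3; BBPS19]. Your Duhamel-plus-parabolic-smoothing argument, with the divergence-form bilinear estimates, the time-weighted norms $(\tau-s)^{a/(2(d-1))}\|\tilde u_\tau\|_{H^a}$, and the finite bootstrap that accumulates powers of $\sup_\tau\|u_\tau\|_{H^\sigma}$, is exactly the standard route that the cited reference takes, and your observation that $\kappa$ enters only through the already-controlled path $(u_\tau)$ matches the paper's justification for $\kappa$-uniformity.

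One slip to fix: in the finite-dimensional step you claim the integral $\int_s^t(\tau-s)^{-r_0/(2(d-1))}\,d\tau$ converges ``since $r_0<2(d-1)$.'' For $d=2$ this is false: $2(d-1)=2$ while $r_0>d/2+1=2$, so the integral as written diverges. The rescue is that for $d=2$ one has $\Wbf\cong H^1$, so the smoothing weight on $\|\tilde u_\tau\|_{H^{r_0}}$ is $(\tau-s)^{-(r_0-1)/2}$, which is integrable precisely because $r_0<3$; for $d=3$ ($\Wbf=L^2$, $2(d-1)=4$) your condition does hold. Also note that the $\tilde v$-equation forces you through $D\tilde u_t(x_t^\kappa)$, not just $\tilde u_t(x_t^\kappa)$, so the relevant embedding is $H^{r_0}\hookrightarrow W^{1,\infty}$ (which your choice of $r_0$ does provide, even though you only wrote the $L^\infty$ bound). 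With those corrections the argument closes.
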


\begin{lemma} \label{lem:PathAdjoint}
$\forall \sigma > \frac{d}{2}+1$, $\forall r \in (\frac{d}{2}+1,3)$, $\exists C, q' > 0$ such that the following hold path-wise 
\begin{align}
\norm{K_{s,t}^\kappa}_{\Wbf \to \Wbf} & \lesssim \exp\left(C \int_s^t \norm{u_\tau}_{\Hbf^{r}} \dee\tau \right) \\ 
\norm{K_{s,t}^\kappa}_{\Hbf \to \Hbf} & \lesssim \exp\left(C \int_s^t \norm{u_\tau}_{\Hbf^{r}} \dee\tau \right)\left(1 +  \brak{t-s}^3 \sup_{s < \tau < t}\norm{u_\tau}_{\Hbf^\sigma}^{q'}\right). 
\end{align}
\end{lemma}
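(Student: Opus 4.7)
The natural plan is to mirror the energy-estimate proof of Lemma~\ref{lem:PathJacobian} for the forward Jacobian, but applied to the backward-in-time evolution satisfied by $K^\kappa_{s,t}$. Concretely, fix $f \in \Wbf \times T_{v^\kappa_t} P\T^d$ and let $f_s := K^\kappa_{s,t} f$, so that
\begin{equation}
\partial_s f_s = -DF(\hat{z}^\kappa_s)^{\ast} f_s, \qquad f_t = f.
\end{equation}
The crucial observation is that the quadratic form controlling the $\Wbf$-norm evolution is symmetric:
\begin{equation}
-\tfrac{1}{2}\partial_s \|f_s\|_{\Wbf}^2 = \langle DF(\hat{z}^\kappa_s)^{\ast} f_s, f_s\rangle_{\Wbf} = \langle f_s, DF(\hat{z}^\kappa_s) f_s\rangle_{\Wbf},
\end{equation}
so the same quadratic-form bounds that yielded \eqref{def:JHbd} and \eqref{def:JHSigbd} in the proof of Lemma~\ref{lem:PathJacobian} apply here, with only a sign change arising from running backwards in $s$. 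In particular, the estimate is independent of the additive $\sqrt{2\kappa}\,\dot{\widetilde W}_t$ forcing, which does not enter $DF$, giving $\kappa$-uniformity for free.

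For the $\Wbf \to \Wbf$ bound I would split $DF$ into three blocks, treating each separately. The velocity block is $-B(\tilde u,u) - B(u,\tilde u) - A\tilde u$; after pairing against $\tilde u_s$ in the $\Wbf$-inner product one uses the standard Navier--Stokes cancellation from incompressibility (in 2D, exploiting the vorticity formulation; in 3D, integrating by parts in the $\Lbf^2$ pairing) together with $\|u_s\|_{W^{1,\infty}} \lesssim \|u_s\|_{\Hbf^r}$ from the Sobolev embedding $H^r \hookrightarrow W^{1,\infty}$ for $r>\frac d2+1$, to absorb the dissipation and obtain a bound by $C\|u_s\|_{\Hbf^r}\|\tilde u_s\|_{\Wbf}^2$. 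The finite-dimensional $(x,v)$ blocks are linear ODEs whose coefficients involve $Du_s(x^\kappa_s)$ and $D^2 u_s(x^\kappa_s)$; these coefficients are again controlled by $\|u_s\|_{\Hbf^r}$ (recall $\Hbf \hookrightarrow C^3$). The cross-couplings (velocity $\to$ tangent, tangent $\to$ velocity) are also controlled by $\|u_s\|_{\Hbf^r}$. Combining and applying Young's inequality gives
\begin{equation}
-\tfrac{1}{2}\partial_s \|f_s\|_{\Wbf}^2 \leq C\|u_s\|_{\Hbf^r}\|f_s\|_{\Wbf}^2,
\end{equation}
and a backward Gr\"onwall from $s=t$ yields the first bound.

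For the $\Hbf \to \Hbf$ bound I would repeat the argument, pairing $\partial_s f_s$ against $f_s$ in the $\Hbf^\sigma$ inner product. The NSE velocity block now generates commutator expressions of the type $[\Lambda^\sigma, u_s\cdot\nabla]\tilde u$; these are handled by standard tame/Kato--Ponce product estimates, producing a loss that is linear in $\|u_s\|_{\Hbf^\sigma}$ times polynomial factors of lower-regularity norms (this is exactly the mechanism producing the $\sup_\tau \|u_\tau\|_{\Hbf^\sigma}^{q'}$ factor appearing in \eqref{def:JHSigbd}). Applying Gr\"onwall backward in $s$ and absorbing the polynomial prefactor gives the second bound.

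The main obstacle, and the step deserving the most care, is the higher-regularity energy estimate used for the $\Hbf \to \Hbf$ bound: one must ensure that the commutator estimates on the nonlinear terms are carried out so that the loss of $\|u\|_{\Hbf^\sigma}$-type factors really is polynomial in $\sup_{s\le\tau\le t}\|u_\tau\|_{\Hbf^\sigma}$, rather than introducing extra integrated factors that would clash with the $\exp(C\int\|u_\tau\|_{\Hbf^r}d\tau)$ prefactor. This is essentially a symmetric repeat of the argument behind \eqref{def:JHSigbd}, but one should double-check that the adjoint $DF^{\ast}$ does not introduce any additional derivative loss beyond what appeared for $DF$; by incompressibility of $u$, it does not. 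As a sanity check, the $\Wbf \to \Wbf$ bound can alternatively be recovered from duality, $\|K^\kappa_{s,t}\|_{\Wbf\to\Wbf} = \|J^\kappa_{s,t}\|_{\Wbf\to\Wbf}$, combining \eqref{def:JHbd} for the velocity block with Gr\"onwall on the finite-dimensional tangent ODE; this provides an independent consistency check but does not directly give the $\Hbf \to \Hbf$ estimate, since duality is taken in the $\Wbf$-inner product.
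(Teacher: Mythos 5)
First, a point of comparison: the paper does not actually prove this lemma. Section \ref{sec:Jacobian} states the Jacobian/adjoint estimates and defers all proofs to [Section 3; BBPS19], adding only that the bounds are insensitive to $\kappa$ because the additive noise $\sqrt{2\kappa}\,\dot{\widetilde W}_t$ does not enter $DF$. Your closing observation that $\kappa$-uniformity "comes for free" for exactly this reason is precisely the paper's justification. The issue is with your reconstruction of the underlying pathwise estimate, which has a genuine gap.

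A single coupled Gr\"onwall on $\norm{f_s}_{\Wbf}^2$ via the quadratic form $\brak{f_s,DF(\hat z^\kappa_s)f_s}_{\Wbf}$ cannot produce the bounds as stated, because two of the blocks you wave through are not controlled by $\norm{u_s}_{\Hbf^r}$ with $r<3$. First, the $x\to v$ coupling carries the coefficient $D^2u_s(x_s^\kappa)$, which needs $\norm{u_s}_{W^{2,\infty}}$; the embedding $H^r\hookrightarrow W^{2,\infty}$ requires $r>2+\tfrac d2\geq 3$, outside the allowed range, and the embedding $\Hbf\hookrightarrow C^3$ you invoke uses $\sigma>\tfrac d2+3$, not $r$. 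Putting $\norm{u_s}_{C^2}\sim\norm{u_s}_{\Hbf^\sigma}$ into the Gr\"onwall exponent yields $\exp(C\int\norm{u_\tau}_{\Hbf^\sigma}\dee\tau)$, which is not the claimed estimate and is fatal downstream: Lemma \ref{lem:TwistBd} controls exponential moments of $\int\norm{u_\tau}_{\Hbf^r}\dee\tau$ only for $r<3$. The entire point of the two-factor structure in the statement is that higher derivatives of $u$ must enter \emph{polynomially}, via the factor $\brak{t-s}^3\sup_\tau\norm{u_\tau}_{\Hbf^\sigma}^{q'}$; this is achieved by exploiting the block-triangular structure of $DF^*$ (solve the finite-dimensional components of the backward equation first, then feed them into the remaining components as Duhamel sources, each pass through a $D^2u$ or $D^3u$ block contributing one factor of $\brak{t-s}\sup_\tau\norm{u_\tau}_{\Hbf^\sigma}$), not by a single coupled energy estimate. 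Second, the velocity-to-tangent cross terms in the quadratic form are $\brak{f_x,f_u(x)}$ and $\brak{f_v,\Pi_v Df_u(x)v}$; these require pointwise evaluation of $f_u$ and $Df_u$ and are simply not bounded by $\norm{f_u}_{\Wbf}$ (recall $\Wbf$ is the $L^2$ norm when $d=3$), so the differential inequality $-\tfrac12\partial_s\norm{f_s}_{\Wbf}^2\leq C\norm{u_s}_{\Hbf^r}\norm{f_s}_{\Wbf}^2$ does not follow from your block-by-block claims.

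A secondary but real issue: the identity $\brak{DF^*f,f}=\brak{f,DFf}$ holds only in the $\Wbf$ inner product, since that is the pairing defining $K^\kappa_{s,t}$. For the $\Hbf\to\Hbf$ bound you therefore cannot "repeat the argument, pairing in the $\Hbf^\sigma$ inner product" and reuse the same quadratic form: $DF^*$ is not the $\Hbf$-adjoint of $DF$, so the relevant operator in the $\Hbf$-energy estimate is a conjugation of $DF^*$ by the appropriate Fourier multiplier, whose commutator structure must be worked out separately from that of $DF$. This is exactly why the adjoint bounds are stated as a separate lemma rather than deduced by duality from Lemma \ref{lem:PathJacobian}.
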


\subsection{Malliavin calculus preliminaries}\label{subsec:MalliavinPrelims}

In order to make hypoellipticity arguments in infinite dimensions, we apply Malliavin calculus.
We will be dealing with variables $X  \in \Wbf\times \mathcal{M}$, where $\mathcal{M} = P \mathbb T^d, \mathcal{D}^c$ or trivial variations thereof, and assume that $X$ is a measurable function of a Wiener process $W = (W_t)$ on $\Lbf^2\times \R^M$. The Malliavin derivative $\MalD_h X$ of $X$ in a Cameron-Martin direction $h = (h_t) \in L^2(\R_+,\Lbf^2 \times \Real^M)$ is then defined by
\[
	\MalD_h X(W) := \lim_{\ep \to 0} \ep^{-1}\left[X\left(W + \eps \int_0^\cdot h_s\ds\right) -  X(W)\right]
\]
when the limit exists in $\Wbf\times \mathcal{M}$. If the above limit exists, we say that $X$ is {\em Malliavin differentiable}. In practice, the directional derivative $\MalD_h X$ admits a representation of the form
\begin{equation}\label{eq:D_sX-def}
\MalD_h X = \int_0^{\infty} \MalD_s X h_s\, \ds,
\end{equation}
where for a.e. $s\in \R_+$, $\MalD_s X$ is a Fr\`{e}ch\`{e}t derivative and defines a random, bounded linear operator from $\Lbf^2 \times \Real^M$ to $\Wbf \times \mathcal{M}$ (see \cite{Nualart06} for more details).
It is standard that if $X_t$ is a process adapted to the filtration $\mathscr{F}_t$ generated by $W_t$, then $\MalD_s X_t= 0$ if $s \geq t$. 

For real-valued random variables, the Malliavin derivative can be realized as a Fr\'{e}chet differential operator $\MalD: L^2(\Omega)\to L^2(\Omega; L^2(\R_+;\Lbf^2 \times \Real^M))$. The adjoint operator $\MalD^*:L^2(\Omega; L^2(\R_+;\Lbf^2 \times \Real^M)) \to L^2(\Omega)$ is referred to as the {\em Skorohod integral}, whose action on $h \in L^2(\Omega; L^2(\R_+;\Lbf^2 \times \Real^M))$ we denote by
\[
	\int_0^\infty \langle h_t,\delta W_t\rangle_{\Lbf^2} := \MalD^*h.
\]
The Skorohod integral is an extension of the usual It\^{o} integral; see \cite{Nualart06,HM11}. 
Above, we write $\langle {\cdot, \cdot} \rangle_{\Lbf^2}$ for the inner product on $\Lbf^2 \times \Real^M$, 
and throughout will suppress dependence of inner products on finite-dimensional factors. 
One moreover has the following version of It\^{o} isometry (see \cite{Nualart06} or \cite{DaPrato14}):
\[
	\E \left(\int_0^\infty \langle h_t, \delta W_t\rangle_{\Lbf^2}\right)^2\leq \EE \int_0^\infty \norm{h_{t}}_{\Lbf^2}^2 + \E \int_0^\infty \int_0^\infty \norm{\MalD_sh_{t}}_{\Lbf^2\to \Lbf^2}^2\ds\dt. \label{ineq:SkorIto}
\]

A fundamental result in the theory of Malliavin calculus is the Malliavin integration by parts formula. We stated the result for a process $(\hat z_t)$ which takes values in $\Hbf \times P\T^d$ (see e.g. \cite{DaPrato14,Nualart06}); only trivial modifications are needed to state for the other processes we apply Malliavin calculus to. 
\begin{proposition}\label{lem:MalIBP}
Let $\psi$ be a bounded Fr\'{e}ch\'{e}t differentiable function on $\Hbf\times P \T^d$ with bounded derivatives and let $h_t$ be any process satisfying 
\begin{equation}\label{eq:control-bound}
\EE \int_0^T \norm{h_t}_{\Lbf^2}^2 \dt + \E \int_0^T \int_0^T \norm{\MalD_sh_t}_{\Lbf^2\to \Lbf^2}^2\ds\dt < \infty \, .
\end{equation}
Then, the following relation holds
\[
	\E \MalD_h\psi(\hat{z}_T) = \E\left(\psi(\hat{z}_T)\int_0^T \langle h_s , \delta W_s\rangle_{\Lbf^2} \right).
\]
\end{proposition}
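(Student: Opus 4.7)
The plan is to obtain this as an immediate consequence of the duality between the Malliavin derivative $\MalD$ and its adjoint, the Skorohod integral $\MalD^\ast$. Specifically, once we verify that $\psi(\hat z_T)$ lies in $\mathbb{D}^{1,2}$ (the domain of $\MalD$) and that $h$ lies in the domain of $\MalD^\ast$, the claimed identity is nothing more than the general duality pairing
\[
\E \int_0^T \langle \MalD_s F, u_s\rangle_{\Lbf^2}\,\ds = \E\left(F\cdot \MalD^\ast u\right),
\]
applied with $F=\psi(\hat z_T)$ and $u = h$. Indeed, by the representation \eqref{eq:D_sX-def} and the definition of the directional Malliavin derivative, the left-hand side is exactly $\E \MalD_h \psi(\hat z_T)$, while the right-hand side is $\E(\psi(\hat z_T)\int_0^T \langle h_s,\delta W_s\rangle_{\Lbf^2})$ by the notation just introduced for the Skorohod integral.

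It remains to verify the two domain memberships. For $h \in \mathrm{Dom}(\MalD^\ast)$, the assumption \eqref{eq:control-bound} is precisely the classical $\mathbb{L}^{1,2}$ bound that guarantees existence of the Skorohod integral of $h$ in $L^2(\Omega)$, together with the It\^o--Skorohod isometry stated just above Proposition \ref{lem:MalIBP}; this is standard, see \cite{Nualart06}. For $\psi(\hat z_T) \in \mathbb{D}^{1,2}$, I would apply the Malliavin chain rule
\[
\MalD_s \psi(\hat z_T) = D\psi(\hat z_T)^\ast\, \MalD_s \hat z_T \, ,
\]
so that boundedness of $D\psi$ together with square-integrability of $\MalD_s \hat z_T$ yields the required $\mathbb{D}^{1,2}$ membership.

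To obtain Malliavin differentiability of $\hat z_T = (u_T, x^\kappa_T, v^\kappa_T)$, I would invoke the standard theory for additive-noise SPDEs applied to \eqref{eq:NS-Abstract} to produce the Malliavin derivative of $(u_t)$ in the direction of the fluid noise $W_t$, with the variation $\MalD_s u_t$ satisfying the linearized Navier--Stokes equation driven by the perturbation and vanishing initial data; square-integrability then follows from Proposition \ref{prop:WPapp} and the Jacobian bounds of Lemma \ref{lem:JacExps}. Differentiability of the Lagrangian and projective variables in both $W_t$ and the Lagrangian noise $\widetilde W_t$ then follows from the standard SDE theory once $u$ is fixed, and concretely $\MalD_s \hat z_T$ is given in terms of the Jacobian $J^\kappa_{s,T}$ and the noise coefficients.

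The only conceptual subtlety is ensuring these verifications hold for our coupled SPDE--SDE system, but since both levels of noise enter additively and the drift has the regularity dictated by Assumption \ref{a:Highs}, no genuinely new ingredient is required: the proof is essentially a book-keeping exercise reducing the statement to the duality $\MalD^\ast = (\MalD)^\ast$ from \cite{Nualart06,DaPrato14,HM11}. I do not expect the truncation/approximation argument needed to pass from smooth cylindrical functionals to $\psi(\hat z_T)$ to present any difficulty beyond standard density in $\mathbb{D}^{1,2}$.
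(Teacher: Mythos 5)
Your proposal is correct and follows the standard argument from the very references the paper cites (\cite{Nualart06,DaPrato14}); the paper in fact offers no proof of this proposition, treating it as a known result and citing those sources. Your reduction to the duality $\E \langle \MalD F, h\rangle = \E (F\, \MalD^\ast h)$, together with the verifications that $\psi(\hat z_T)\in\mathbb{D}^{1,2}$ (via the chain rule and Malliavin differentiability of the solution to the additive-noise SPDE/SDE system) and that $h\in\mathrm{Dom}(\MalD^\ast)$ (via the hypothesis \eqref{eq:control-bound} and the It\^o--Skorohod isometry), is precisely the textbook route and matches what the paper implicitly relies on.
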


\section{Spectral theory for twisted Markov semigroups} \label{sec:prfSpecPic}

%
%

The primary aim of this section is to prove Proposition \ref{prop:outlineSpecPic}, which summarizes the spectral picture
we will use for the semigroups $\hat P^{\kappa, p}_t$ to construct our drift condition.
First, we outline the basic boundedness, mapping, and convergence properties of the projective $\hat P_t^\kappa$ and twisted $\hat P_t^{\kappa,p}$ Markov semigroups.
Starting with $p = 0$, in Section \ref{subsec:CVproj} we establish $\kappa$-uniform spectral gaps in $\mathring C_V$ for $\hat P_t^\kappa$ (Corollary \ref{cor:specCVuntwisted}),
while in Section \ref{subsec:CV1proj} we establish $\kappa$-uniform spectral gaps for $\hat P_{T_0}^\kappa$ in $\mathring C_V^1$, where $T_0 > 0$
is a fixed time chosen large ($\kappa$ independent). In Section \ref{subsec:miscProj}, we collect the remaining ingredients necessary
to apply our spectral perturbation arguments to conclude Proposition \ref{prop:outlineSpecPic}.

\subsection{Basic properties} \label{sec:BasicProj}

\subsubsection{Mapping and semigroup properties} 
\begin{lemma}
For all $p,\kappa \in [0,1]$, $\hat{P}_t^{\kappa,p}$ is a bounded (uniformly in $p,\kappa$) linear operator $C_V \to C_V$, satisfies the mapping $\hat P_t^{\kappa,p}\left( \mathring{C}_V \right) \subset \mathring{C}_V$, and moreover $\set{\hat{P}_t^{\kappa,p}}_{t \geq 0}$ defines a $C_0$-semigroup $\mathring{C}_V \to \mathring{C}_V$. 
\end{lemma}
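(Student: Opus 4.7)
The plan is to follow the arguments for the $\kappa = 0$ case from \cite{BBPS19} with careful attention to uniformity in $\kappa$ throughout. The key observation is that $\kappa$ enters the Lagrangian flow only through the additive term $\sqrt{2\kappa}\dot{\widetilde W}_t$, which does not affect the velocity process $(u_t)$ nor any \emph{pathwise} estimate on $D_x \phi^t_\kappa$ expressed in terms of $u$. Consequently, the pathwise bounds of Section \ref{sec:Prelim} and the super-Lyapunov estimate of Lemma \ref{lem:TwistBd} all apply $\kappa$-uniformly.

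For the uniform $C_V$-bound, I would combine Lemma \ref{lem:PathJacobian} with Lemma \ref{lem:TwistBd}. By incompressibility of $\phi^t_\kappa$, the spatial Jacobian $D_x \phi^t_\kappa$ has determinant one almost surely, so for any unit $v$,
\begin{align}
|D_x \phi^t_\kappa v|^{-1} \leq \|(D_x \phi^t_\kappa)^{-1}\| \leq \|D_x \phi^t_\kappa\|^{d-1},
\end{align}
while the analogue of \eqref{def:JHbd} applied to the spatial component of $J^\kappa_{0,t}$ yields $\|D_x \phi^t_\kappa\| \leq \exp\bigl(C\int_0^t \|u_\tau\|_{\Hbf^{r_0}} \, d\tau\bigr)$. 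Combining these we obtain the $\kappa$-independent pathwise bound $|D_x \phi^t_\kappa v|^{-p} \leq \exp\bigl(C(d-1) \int_0^t \|u_\tau\|_{\Hbf^{r_0}}\, d\tau\bigr)$ uniformly in $p \in [0,1]$. Applying Lemma \ref{lem:TwistBd} with $C_0 = C(d-1)$ and $V = V_{\beta,\eta}$ for $\eta$ sufficiently small gives
\begin{align}
|\hat{P}_t^{\kappa,p}\psi(u,x,v)| \leq \|\psi\|_{C_V}\, \EE_u\bigl[|D_x\phi^t_\kappa v|^{-p} V(u_t)\bigr] \leq C_1 \|\psi\|_{C_V} V(u),
\end{align}
with $C_1$ independent of $p, \kappa \in [0,1]$.

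The semigroup property $\hat{P}_{t+s}^{\kappa,p} = \hat{P}_t^{\kappa,p}\hat{P}_s^{\kappa,p}$ follows from the Markov property of $(u_t, x_t^\kappa, v_t^\kappa)$ together with the cocycle identity $|D_x \phi^{t+s}_\kappa v| = |D_x \phi^s_\kappa v| \cdot |D_{x_s^\kappa} \phi^t_\kappa v_s^\kappa|$, which ensures the Feynman--Kac weight factorizes correctly. For the $\mathring C_V$-invariance, the set $\{\psi \in C_V : \hat P_t^{\kappa,p}\psi \in \mathring C_V\}$ is $C_V$-closed by the uniform boundedness just established, so it suffices to show $\hat P_t^{\kappa,p}(\mathring C_0^\infty) \subset \mathring C_V$. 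For a smooth cylinder $\psi$, continuous dependence of the projective SDE on initial data (using $\Hbf \hookrightarrow C^3$) together with dominated convergence furnished by the $C_V$-bound above imply continuity of $\hat P_t^{\kappa,p}\psi$ on $\Hbf \times P \T^d$; a standard Galerkin truncation in the velocity equation then produces a sequence of cylinder functions converging to $\hat P_t^{\kappa,p}\psi$ in $C_V$-norm.

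Finally, for the $C_0$ property it suffices by density and uniform boundedness to verify $\lim_{t \to 0^+} \|\hat P_t^{\kappa,p}\psi - \psi\|_{C_V} = 0$ on bounded cylinder $\psi \in \mathring C_0^\infty$. Splitting
\begin{align}
\hat P_t^{\kappa,p}\psi - \psi = \EE\bigl[(|D_x \phi^t_\kappa v|^{-p} - 1)\psi(u_t,x_t^\kappa,v_t^\kappa)\bigr] + \EE\bigl[\psi(u_t,x_t^\kappa,v_t^\kappa) - \psi(u,x,v)\bigr],
\end{align}
both terms vanish pointwise in $(u,x,v)$ as $t \to 0^+$ by continuity of paths and dominated convergence. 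To upgrade to $C_V$-convergence, I would partition $\Hbf$ into $\{\|u\|_\Hbf \leq R\}$ (on which uniform continuity of the projective SDE on precompact subsets of $\Hbf^{\sigma'}$ via \eqref{ineq:locRegu} yields uniform-in-$u$ smallness) and its complement (on which the uniform $C_V$-bound, divided by $V(u) \gtrsim (1+R^2)^\beta$, can be made small by taking $R$ large). The main obstacle I expect lies in the $\mathring C_V$-invariance step: verifying that the Galerkin-truncated semigroups converge in $C_V$-operator norm with a $\kappa$-uniform rate. This is ultimately resolved because the Galerkin cut-off affects only the velocity equation and the resulting convergence estimates are independent of the Lagrangian perturbation parameter $\kappa$.
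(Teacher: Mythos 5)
Your proposal is correct and follows essentially the same route as the paper: reduce the boundedness in $C_V$ to Lemma~\ref{lem:TwistBd} via a pathwise bound on $|D_x\phi^t_\kappa v|^{-p}$ in terms of $\exp(C\int_0^t\|u_s\|_{H^{r_0}}\,ds)$, then handle $\mathring{C}_V$-invariance and strong continuity by density and Galerkin truncation in the velocity variable. The one cosmetic difference is that the paper invokes the Feynman--Kac form $|D_x\phi^t_\kappa v|^{-p}=\exp(-p\int_0^tH(u_s,x_s^\kappa,v_s^\kappa)\,ds)$ directly (so the pathwise bound has exponent $p\int\|Du_s\|_{L^\infty}$), whereas you detour through incompressibility $|D_x\phi^t_\kappa v|^{-1}\le\|D_x\phi^t_\kappa\|^{d-1}$, which costs an extra harmless factor of $d-1$ in the exponent; both reduce cleanly to Lemma~\ref{lem:TwistBd}. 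One small imprecision to tidy: in the $C_0$ step your ``divide the uniform $C_V$-bound by $V(u)$'' tail estimate needs the slightly sharper observation that a bounded cylinder $\psi$ lies in $C_{V_{0,\eta'}}$ for every $\eta'>0$, so $\hat P_t^{\kappa,p}\psi$ is bounded in $C_{V_{0,\eta'}}$ with $\eta'<\eta$ and the ratio $V_{0,\eta'}/V_{\beta,\eta}\to 0$ as $\|u\|_\Hbf\to\infty$; as written, dividing an $O(V)$ bound by $V$ does not produce decay.
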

\begin{proof}
Uniform boundedness in $\kappa$ for $p \neq 0$ follows from the representation
\begin{equation}\label{FKRep}
\hat P_t^{\kappa, p} \psi(u,x,v) = \E_{(u,x,v)} \exp \left(- p \int_0^t
H(u_s, x_s^\kappa, v_s^\kappa) \, ds \right) \psi(u_t, x_t^\kappa, v_t^\kappa) 
\end{equation}
of $\hat P^{\kappa, p}_t$ as a Feynman-Kac semigroup with potential
 $H(u,x,v) := \langle v, D u(x) v \rangle$, together with Lemma \ref{lem:TwistBd}. 
Since the $\sqrt{\kappa} \widetilde{W}_t$ noise applied to the Lagrangian flow is additive, 
the $\mathring{C}_V$ mapping property follows as in [Lemma 5.3 (a); \cite{BBPS19}] with no changes and the strong continuity follows as in [Proposition 5.5; \cite{BBPS19}].  
\end{proof}

\begin{lemma} \label{lem:C1VbdEtc}
There exists a time $T_0 > 0$ such that  $\forall p,\kappa \in [0,1]$, $\hat{P}_{T_0}^{\kappa,p}$ is a bounded (uniformly in $p,\kappa$) linear operator $C_V^1 \to C_V^1$ and satisfies the mapping property $\hat P_t^{\kappa,p}\left( \mathring{C}_V^1 \right) \subset \mathring{C}_V^1$.
\end{lemma}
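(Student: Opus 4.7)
The plan is to differentiate the Feynman-Kac representation \eqref{FKRep} directly and bound the resulting two-term expression using the pathwise Jacobian estimates of Section \ref{sec:Jacobian} together with the super-Lyapunov property of Lemma \ref{lem:TwistBd} to absorb exponentially growing Jacobian factors. Crucially, the extra $\sqrt{\kappa}\widetilde W_t$ noise in the Lagrangian flow is additive and independent of $W_t$, so it affects neither the velocity-field estimates nor the Jacobian bounds \eqref{def:JHbd}--\eqref{def:JHSigbd}, yielding constants uniform in $\kappa \in [0,1]$. Uniformity in $p \in [0,1]$ will be immediate because $p$ enters only as a prefactor in the Feynman-Kac exponent. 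Any fixed $T_0 > 0$ should in fact work for $C_V^1$ boundedness alone; the specific value will be chosen later to satisfy the spectral-gap requirements of Proposition \ref{prop:outlineSpecPic}(b).

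Concretely, for $\xi \in \Hbf \times T_v P\T^d$, differentiation gives
\[
D\hat P_{T_0}^{\kappa,p}\psi(z)\cdot \xi = \E_z\!\left[e^{-p\int_0^{T_0} H(\hat z_s)\,ds}\!\left(D\psi(\hat z_{T_0})\cdot J_{0,T_0}^\kappa\xi \;-\; p\!\int_0^{T_0}\!DH(\hat z_s)\cdot J_{0,s}^\kappa\xi\,ds\cdot\psi(\hat z_{T_0})\right)\right].
\]
The first term is pointwise bounded by $\|\psi\|_{C_V^1}V(u_{T_0})\|J_{0,T_0}^\kappa\xi\|_{\Hbf}$. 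For the twist contribution, the identity $H(u,x,v) = \langle v, Du(x)v\rangle$ and the Sobolev embedding $\Hbf \hookrightarrow C^2$ give $|H(\hat z_s)| + \|DH(\hat z_s)\|_{\Hbf^\ast} \lesssim 1 + \|u_s\|_{\Hbf}$, so that the twist is bounded by $p\|\psi\|_{C_V}V(u_{T_0})\int_0^{T_0}(1+\|u_s\|_\Hbf)\|J_{0,s}^\kappa\xi\|_{\Hbf}\,ds$. Inserting \eqref{def:JHSigbd}, all path-dependent quantities are dominated by $\exp\!\bigl(C\!\int_0^{T_0}\|u_\tau\|_{\Hbf^r}d\tau\bigr)$ times a polynomial in $\sup_{s\leq T_0}\|u_s\|_{\Hbf^\sigma}$. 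Taking the expectation and applying Lemma \ref{lem:TwistBd} with $\eta$ chosen so that $e^{\gamma T_0}\eta < \eta_\ast$ then absorbs this into $V(u)$ up to a $(p,\kappa)$-uniform constant, yielding $\|\hat P_{T_0}^{\kappa,p}\psi\|_{C_V^1} \lesssim \|\psi\|_{C_V^1}$.

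For the $\mathring C_V^1$ mapping, the approach is to approximate $\psi \in \mathring C_V^1$ by smooth cylinder functions and to approximate $\hat z_{T_0}$ by a Galerkin truncation $\hat z_{T_0}^{(N)}$ of the velocity process, producing cylindrical approximants of $\hat P_{T_0}^{\kappa,p}\psi$ that converge in $C_V$ by a standard Galerkin passage. The main obstacle, here as in \cite{BBPS19}, is upgrading this convergence to the stronger $C_V^1$ norm: one must propagate the gradient estimate of the previous paragraph through the Galerkin approximation while keeping all constants uniform in $\kappa$. This is precisely the step carried out in [Lemma 5.3(b); \cite{BBPS19}], and it extends verbatim to $\kappa \geq 0$ because the Jacobian of the Galerkin-truncated Lagrangian flow satisfies the same bounds uniformly in $\kappa$.
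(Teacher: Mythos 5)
Your proposal follows essentially the same route as the paper, which simply invokes the Feynman--Kac representation \eqref{FKRep} together with the arguments of [Lemmas 5.2(a) and 5.3(b); \cite{BBPS19}]; you have reconstructed that argument (differentiate under the expectation, bound the two resulting terms via the pathwise Jacobian estimates of Section \ref{sec:Jacobian}, absorb the path-dependent factors with Lemma \ref{lem:TwistBd}), and your observation that the additive $\sqrt{\kappa}\widetilde W_t$ noise leaves all of these estimates $\kappa$-uniform is exactly the point the paper makes.

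One claim deserves a correction: you assert that ``any fixed $T_0>0$ should in fact work for $C_V^1$ boundedness alone,'' whereas the paper states explicitly (Section \ref{sec:sktPsiPuni}) that boundedness in $C_V^1$ is only obtained for $t \geq T_0$ with $T_0$ a fixed constant. The reason is hidden in the step you describe as ``Lemma \ref{lem:TwistBd} then absorbs this into $V(u)$.'' After inserting \eqref{def:JHSigbd}, the quantity to control is of the form $\E\bigl[\exp\bigl(C\int_0^{T_0}\|u_\tau\|_{\Hbf^r}\,d\tau\bigr)\,(1+\sup_\tau\|u_\tau\|_{\Hbf}^{q'})\,V(u_{T_0})\bigr]$, and Lemma \ref{lem:TwistBd} only controls $\E\bigl[\exp(\cdot)\sup_t V^{e^{\gamma t}}(u_t)\bigr]\lesssim V(u)$. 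Separating the polynomial Jacobian factor from $V(u_{T_0})$ (e.g.\ by Cauchy--Schwarz, which doubles the Gaussian weight $\eta$, or by absorbing $(1+\|u\|^2)^{q'/2}$ into the excess power $V^{e^{\gamma T_0}-1}$) requires the gain $e^{\gamma T_0}-1$ to be bounded below in terms of $q'$ and $\beta$; this is what forces $T_0$ to be taken large (or, at best, $\beta$ to be enlarged in a $T_0$-dependent way). Since the lemma only asserts the existence of some $T_0$, this does not invalidate your proof, but the absorption step should be carried out with $T_0$ chosen large enough that, say, $e^{\gamma T_0}\geq 2$, rather than treated as automatic for every $T_0$.
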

\begin{proof}
The uniform-in-$\kappa$ boundedness follows from the representation \eqref{FKRep} and the argument in [Lemma 5.2 (a); \cite{BBPS19}]. The $\mathring{C}_V^1 \to \mathring{C}_V^1$ mapping property follows as in [Lemma 5.3 (b); \cite{BBPS19}]. 
\end{proof}

\subsubsection{Convergence results as $p \to 0$}

Next we show that $\hat P^{\kappa,p}_t \to \hat P^{\kappa}_t$ \emph{uniformly} in $\kappa$ as $p \to 0$ in various senses.
Both lemmas follow, as in [Lemma 5.2 (b); \cite{BBPS19}], from \eqref{FKRep} and Lemma \ref{lem:TwistBd}. 

\begin{lemma}\label{lem:unifConvCVPto0}
For fixed $t > 0$, the following uniform-in-$\kappa$ convergence holds:
\begin{align}
\lim_{p \to 0} \sup_{\kappa \in [0,1]} \| \hat P^{\kappa, p}_t - \hat P^\kappa_t\|_{C_V} = 0 \, .
\end{align}
\end{lemma}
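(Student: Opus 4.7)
The plan is to exploit the Feynman--Kac representation \eqref{FKRep} together with the super-Lyapunov property (Lemma \ref{lem:TwistBd}) and the observation that the velocity process $(u_t)$ itself does not depend on $\kappa$. Writing
\[
( \hat P_t^{\kappa, p} - \hat P_t^\kappa) \psi(u,x,v) = \mathbb{E}_{(u,x,v)} \left[ \left( e^{- p \int_0^t H(u_s, x_s^\kappa, v_s^\kappa) \, ds} - 1 \right) \psi(u_t, x_t^\kappa, v_t^\kappa) \right],
\]
with $H(u,x,v) = \langle v, Du(x) v \rangle$, I would estimate the integrand using the elementary inequality $|e^{-y} - 1| \leq |y| e^{|y|}$ valid for all $y \in \mathbb{R}$, combined with the pointwise bound $|H(u, x, v)| \lesssim \|u\|_{H^r}$ for any fixed $r \in (d/2 + 1, 3)$ (since $|v| = 1$ on $P\mathbb{T}^d$ and $H^r \hookrightarrow W^{1,\infty}$). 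This reduces matters to controlling
\[
p \cdot \mathbb{E}_{(u,x,v)} \left[ \int_0^t \|u_s\|_{H^r} \, ds \cdot \exp\left( p \int_0^t \|u_s\|_{H^r} \, ds \right) \cdot \|\psi\|_{C_V} V(u_t) \right].
\]

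Next I would apply Cauchy--Schwarz to split the expectation and absorb the polynomial factor $\int_0^t \|u_s\|_{H^r} ds$ into a slightly larger exponential (e.g.\ $y e^{py} \leq C_\epsilon e^{(p+\epsilon) y}$), producing a factor of the form
\[
p \, \|\psi\|_{C_V} \left( \mathbb{E}_u \exp\left( C_0 \int_0^t \|u_s\|_{H^r} \, ds \right) \sup_{0 \leq s \leq t} V^2(u_s) \right)^{1/2}.
\]
For $p$ small enough that $C_0 = C_0(p)$ remains in the admissible range of Lemma \ref{lem:TwistBd}, the super-Lyapunov estimate \eqref{eq:Twistbd} bounds the parenthetical quantity by $C V(u)$, yielding
\[
|(\hat P_t^{\kappa, p} - \hat P_t^\kappa) \psi(u,x,v)| \lesssim p \, \|\psi\|_{C_V} V(u),
\]
i.e.\ $\|\hat P_t^{\kappa, p} - \hat P_t^\kappa\|_{C_V} \lesssim p$, which sends $p \to 0$ uniformly in all relevant parameters.

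The crucial observation making this uniform in $\kappa$ is that $(u_t)$ solves an SPDE driven only by $W_t$ and is independent of the Lagrangian noise $\widetilde{W}_t$, so the right-hand side of \eqref{eq:Twistbd} has no $\kappa$ dependence whatsoever; the $\kappa$ enters only through $(x_t^\kappa, v_t^\kappa)$, which were already eliminated by the $|v|=1$ bound on $|H|$. The main (mild) obstacle is ensuring that the exponent parameters fit the admissibility constraints in Lemma \ref{lem:TwistBd} uniformly in $p$ near $0$: this requires choosing $\beta, \eta$ in $V = V_{\beta, \eta}$ with a small buffer so that Jensen/Young inequalities absorbing the $e^{p \int \|u_s\|_{H^r} ds}$ factor against $V(u_t)$ stay within the admissible range $\eta e^{\gamma t} < \eta_\ast$ uniformly for $p \in [0, p_0]$ and $\kappa \in [0,1]$, but this is just a matter of fixing $p_0$ small once.
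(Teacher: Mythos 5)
Your argument is correct and is essentially the same approach as the paper's (which cites \eqref{FKRep} and Lemma~\ref{lem:TwistBd}, deferring to [Lemma 5.2(b); BBPS19] for the details): you use the Feynman--Kac representation, bound $|e^{-y}-1|\le|y|e^{|y|}$, observe that $|H|\lesssim\|u\|_{H^r}$ uniformly over $P\T^d$ so all $\kappa$-dependence drops out, and close via the super-Lyapunov estimate \eqref{eq:Twistbd} applied to the $\kappa$-independent process $(u_t)$. Your remark about fixing $p_0$ small so that the exponent parameters stay within the admissible range of Lemma~\ref{lem:TwistBd} is the right (and only) technical point to be careful about.
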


\begin{lemma} \label{lem:unifConvC1VPto0}
For any fixed $T \geq T_0$, the following uniform-in-$\kappa$ convergence holds:
\begin{align}
\lim_{p \to 0} \sup_{\kappa \in [0,1]} \| \hat P_T^{\kappa, p} - \hat P^\kappa_T\|_{C_V^1} = 0 \, .
\end{align}
\end{lemma}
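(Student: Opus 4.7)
The plan is to reduce to the Feynman--Kac representation \eqref{FKRep} and then differentiate in the initial condition, producing explicit factors of $p$ whose coefficients can be controlled uniformly in $\kappa$ using the Jacobian estimates of Section \ref{sec:Jacobian} together with the super-Lyapunov bound of Lemma \ref{lem:TwistBd}. This parallels the argument sketched for Lemma \ref{lem:unifConvCVPto0} but now tracks first derivatives.

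First I would write the difference in Feynman--Kac form as
\[
(\hat P_T^{\kappa,p} - \hat P_T^\kappa)\psi(\hat z) = \E_{\hat z}\bigl[ (e^{-p I_T^\kappa} - 1)\,\psi(\hat z_T^\kappa)\bigr], \qquad I_T^\kappa := \int_0^T H(\hat z_s^\kappa)\, ds,
\]
where $H(u,x,v) = \langle v, Du(x)v\rangle$ and $\hat z_t^\kappa = (u_t,x_t^\kappa,v_t^\kappa)$. The $C_V$ piece of the $C_V^1$ norm is already controlled by Lemma \ref{lem:unifConvCVPto0}, so the remaining task is to show that the Fr\'echet derivative in $\hat z$ of this expression, divided by $V(u)$, tends to zero in sup-norm uniformly in $\kappa \in [0,1]$ as $p\to 0$. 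Differentiating using the chain rule for the projective flow $\hat z_t^\kappa$ with Jacobian $J_{0,t}^\kappa$, for $\tilde z \in \Hbf \times T_{(x,v)}P\T^d$,
\begin{align}
D(\hat P_T^{\kappa,p} - \hat P_T^\kappa)\psi(\hat z)[\tilde z] &= -p\,\E_{\hat z}\!\left[\Bigl(\int_0^T\! DH(\hat z_s^\kappa)[J_{0,s}^\kappa \tilde z]\, ds\Bigr) e^{-pI_T^\kappa}\,\psi(\hat z_T^\kappa)\right] \\
&\quad + \E_{\hat z}\!\left[(e^{-pI_T^\kappa}-1)\,D\psi(\hat z_T^\kappa)[J_{0,T}^\kappa\tilde z]\right].
\end{align}

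Next I would bound each term using the pointwise estimate $|DH(\hat z)| \lesssim 1 + \|u\|_{\Hbf}$ (which follows from $\Hbf \hookrightarrow C^3$), the inequality $|D\psi(\hat z_T^\kappa)[J_{0,T}^\kappa \tilde z]| \leq V(u_T)\|\psi\|_{C_V^1}\|J_{0,T}^\kappa\|_{\Hbf \to \Hbf}\|\tilde z\|_{\Hbf}$, and the elementary bounds $|e^{-pI_T^\kappa}| \leq e^{|p|\int_0^T|H|\,ds}$ and $|e^{-pI_T^\kappa}-1| \leq |p||I_T^\kappa|e^{|p||I_T^\kappa|}$. Both terms then acquire an overall factor of $p$ multiplying an expectation of the form
\[
\E_{\hat z}\!\left[\,\Bigl(\textrm{polynomial in $\|u_s\|_{\Hbf}$, $\|J_{0,s}^\kappa\|_{\Hbf\to\Hbf}$, $V(u_T)$}\Bigr)\,\exp\!\Bigl(c|p|\!\int_0^T(1+\|u_s\|_{\Hbf}^2)\,ds\Bigr)\right].
\]
After Cauchy--Schwarz, the Jacobian moment bound in Lemma \ref{lem:JacExps} (uniform in $\kappa$, since $J_{0,t}^\kappa$ depends only on the velocity path $(u_t)$, not on $\widetilde W_t$) and the super-Lyapunov control of Lemma \ref{lem:TwistBd} (a statement about the velocity process alone, hence $\kappa$-independent) yield a bound of the form $|p|\,C\,V(u)\,\|\psi\|_{C_V^1}\,\|\tilde z\|_{\Hbf}$ with $C$ independent of $\kappa$ and of $p$ in a small neighborhood of zero; the constraint on $|p|$ is used only to keep $c|p|T$ below the admissible threshold in Lemma \ref{lem:TwistBd} so that the exponential $\int_0^T \|u_s\|_{\Hbf^{r_0}}\,ds$ factor can be absorbed into $V(u)$.

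The main technical point is to ensure that the various Gronwall growth factors from the Jacobian (which enter via the $\kappa$-independent bounds \eqref{def:JHbd}--\eqref{def:JHSigbd}) do not spoil the uniformity in $\kappa$ when multiplied by the remaining random quantities; this works precisely because everything is expressed as a functional of the $(u_t)$ process alone, together with $J_{0,t}^\kappa$ whose $\kappa$-uniform moments are furnished by Lemma \ref{lem:JacExps}. The factor $|p|$ out front then gives the claimed convergence as $p \to 0$, uniformly in $\kappa \in [0,\kappa_0]$ and in $\psi$ in the unit ball of $C_V^1$.
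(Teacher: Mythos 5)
Your proposal is correct and follows essentially the same route as the paper: the paper's proof is a one-line reference to \cite{BBPS19} stating that the result follows from the Feynman--Kac representation \eqref{FKRep} together with Lemma \ref{lem:TwistBd}, and your argument fills in exactly that outline — differentiate the Feynman--Kac formula to expose an explicit factor of $p$, then close via the $\kappa$-uniform Jacobian moment bounds and the super-Lyapunov property. (One small slip: at the end you write uniformity in $\kappa\in[0,\kappa_0]$, but the lemma asserts, and your argument delivers, uniformity in $\kappa\in[0,1]$, since Lemmas \ref{lem:JacExps} and \ref{lem:TwistBd} are valid on that full range.)
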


\subsection{Spectral picture for $\hat P_t^\kappa$ in $\mathring C_V$}\label{subsec:CVproj}

As the drift conditions are settled by Lemma \ref{lem:Lyapu}, our main task in applying Theorem \ref{thm:Harris} is to establish the uniform minorization conditions contained in Proposition \ref{prop:unifMinorCondProjective}. 

\subsubsection{Proposition \ref{prop:unifMinorCondProjective} (a): uniform strong Feller} \label{sec:USF}
The following is more than sufficient to imply Proposition \ref{prop:unifMinorCondProjective} (a). The result follows from checking uniformity in the argument used to prove [Proposition 2.12, \cite{BBPS18}] (which in turn builds from \cite{EH01}). We provide a brief sketch. 
\begin{lemma}\label{lem:ProjQSF}
There exists $a,b > 0$ such that there exists a continuous, monotone increasing, concave function $X:[0,\infty) \to [0,1]$ with $X(r) = 1$ for $r > 1$ and $X(0) = 0$ such that the following holds uniformly in $\kappa < 1$, $d_{\Hbf}(z^1,z^2) < 1$, and $t \in (0,1)$: 
\begin{align}
\abs{\hat{P}^\kappa_t \varphi(z^1) - \hat{P}^\kappa_t \varphi(z^2)} \leq X \left( \frac{d_{\Hbf}(z^1,z^2)}{t^a} \right) (1 + \norm{z^1}_{\Hbf}^b) \norm{\varphi}_{L^\infty}. 
\end{align}
\end{lemma}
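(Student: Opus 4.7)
The plan is to produce a $\kappa$-uniform version of [Proposition 2.12; BBPS18], which in turn is a Malliavin-calculus adaptation of the Eckmann--Hairer approach from [EH01]. The key structural observation is that the additive noise $\sqrt{2\kappa}\dot{\widetilde W}_t$ acts only on the \emph{Lagrangian} coordinate, is independent of $W_t$, and can only make the projective process \emph{more} non-degenerate. All Malliavin IBP manipulations will be carried out with respect to the infinite-dimensional $W$-noise only; the $\kappa$-dependence is then absorbed into the Jacobian process, for which Section \ref{sec:Jacobian} already provides $\kappa$-uniform path-wise and moment bounds. Thus, the entire argument of [BBPS18, Section 4--5] should go through with identical constants, modulo some care about quantifiers.

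Concretely, I would first establish a $\kappa$-uniform gradient estimate of the form
\[
\|D \hat P^\kappa_t \varphi (z)\|_{\Hbf^*} \leq C\, t^{-a} (1 + \|u\|_\Hbf^b) \|\varphi\|_{L^\infty}, \qquad t \in (0,1],
\]
for some fixed $a, b > 0$. For each $\xi \in \Hbf \times T_v P\T^d$ one constructs an $\mathscr{F}_s$-adapted control $h^\xi = (h^\xi_s)_{s \leq t} \in L^2([0,t]; \Lbf^2)$, via a regularized inversion of a reduced Malliavin covariance matrix on the low-mode projection $\Pi_{\mathcal K_0}$, so that $\int_0^t \mathcal D_s \hat z^\kappa_t \, h^\xi_s\, ds = J^\kappa_{0,t}\xi - E_\xi$ with $E_\xi$ a small error. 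Applying Proposition \ref{lem:MalIBP} (after a cylindrical approximation of $\varphi$) and then iterating on $E_\xi$ as in [BBPS18] yields
\[
D\hat P^\kappa_t \varphi(z) \cdot \xi = \E\left[\varphi(\hat z^\kappa_t) \int_0^t \langle h^\xi_s, \delta W_s\rangle_{\Lbf^2}\right] + (\text{iterated error}).
\]
Controlling the Skorohod integral via \eqref{ineq:SkorIto}, together with the Jacobian and adjoint bounds (Lemmas \ref{lem:PathJacobian}, \ref{lem:JacExps}, \ref{lem:PathAdjoint}) and Lemma \ref{lem:TwistBd} to tame moments of $V_{\beta,\eta}(u)$, produces the gradient bound with $\kappa$-independent $C, a, b$.

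From the gradient bound, the claimed modulus-of-continuity estimate follows by connecting $z^1, z^2$ in $\Hbf \times P\T^d$ by a short path, integrating along it, and combining with the trivial bound $|\hat P^\kappa_t \varphi(z^1) - \hat P^\kappa_t \varphi(z^2)| \leq 2\|\varphi\|_{L^\infty}$. Explicitly, setting
\[
X(r) := \min\bigl(C\, r,\, 1\bigr),
\]
and taking its concave envelope (which is still increasing, vanishes at $0$, and equals $1$ for $r > 1$ after a harmless rescaling of $a$), we obtain
\[
|\hat P^\kappa_t\varphi(z^1) - \hat P^\kappa_t\varphi(z^2)| \leq X\!\left(\tfrac{d_\Hbf(z^1,z^2)}{t^a}\right) (1 + \|z^1\|_\Hbf^b) \|\varphi\|_{L^\infty},
\]
as required (with $b$ possibly enlarged to accommodate $\|z^2\|_\Hbf \leq \|z^1\|_\Hbf + 1$ when $d_\Hbf(z^1,z^2) < 1$).

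The main obstacle is ensuring that the error-iteration inside the Malliavin inversion is genuinely $\kappa$-uniform. This amounts to showing that the inverse of the regularized reduced Malliavin matrix (which is built only from directional derivatives in the $W$-noise) has operator norms controlled independently of $\kappa$, and that the $L^p(\Omega)$ norms of $h^\xi$ and of its Malliavin derivatives $\mathcal D_s h^\xi$ do not blow up as $\kappa \to 0$. Since $\widetilde W$ does not enter the $W$-Malliavin derivative at all, and every estimate in [BBPS18] that feeds into this inversion is expressed purely in terms of Jacobian and velocity-field moments already shown to be $\kappa$-uniform, this reduces to a bookkeeping exercise rather than a new argument. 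Once this is verified, the lemma follows by the path-integration step above.
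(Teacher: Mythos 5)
You assert a $\kappa$-uniform gradient bound of the form $\|D\hat P^\kappa_t \varphi(z)\|_{\Hbf^*} \lesssim t^{-a}(1+\|u\|_\Hbf^b)\|\varphi\|_{L^\infty}$ for the \emph{original} projective semigroup $\hat P^\kappa_t$, and then integrate it to get a Lipschitz modulus $X(r)=\min(Cr,1)$. That gradient bound is not what the Eckmann--Hairer/[BBPS18] machinery delivers, and the paper does not prove it. The Navier--Stokes drift $B(u,u)$ is unbounded on $\Hbf$, so the Malliavin inversion scheme you outline cannot be run directly on all of $\Hbf$: the moment estimates on the control $h^\xi$ and the inverse Malliavin matrix degenerate as $\|u\|_\Hbf \to \infty$. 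The paper, following [Proposition 2.12; BBPS18] and [EH01], therefore first \emph{cuts off} the drift to a bounded field $F_\rho$ on $\{\|u\|_\Hbf \lesssim \rho\}$, augments the system with a finite-dimensional Brownian $M_t$ and drift $L(v,m)$ to keep the cut-off dynamics elliptic on the $(x,v)$ fibre when the nonlinearity is switched off, and only then obtains a gradient bound (Lemma \ref{lem:GradBound}) for the \emph{cut-off} semigroup $\widetilde P^{\kappa;\rho}_t$, with constant $C_\rho$ that grows with $\rho$. The un-cut-off lemma is then recovered via
\[
|\hat P^\kappa_t\varphi(z^1)-\hat P^\kappa_t\varphi(z^2)| \leq |\widetilde P^{\kappa;\rho}_t\varphi(z^1)-\widetilde P^{\kappa;\rho}_t\varphi(z^2)| + 2\sup_j |\hat P^\kappa_t\varphi(z^j)-\widetilde P^{\kappa;\rho}_t\varphi(z^j)|,
\]
controlling the tail terms by Chebyshev at cost $\rho^{-b}$, which yields $X(r) = \min_{\rho\geq\rho_*}(C_\rho r + \rho^{-b})$. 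This is genuinely weaker than $\min(Cr,1)$: depending on the growth of $C_\rho$, $X$ need not be Lipschitz or even H\"older at $0$, which is exactly why the lemma is stated in terms of an abstract concave modulus.

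Your structural remark that the $\sqrt{\kappa}\dot{\widetilde W}_t$ term is additive, independent of $W$, acts only on the compact fibre, and hence propagates through all Jacobian and Skorohod estimates with $\kappa$-uniform constants is correct, and it is the right reason why Lemma \ref{lem:GradBound} holds with $\kappa$-independent $C_\rho$. But that observation needs to be implemented \emph{inside} the cut-off/augmented framework. As written, your proposal skips the cut-off step entirely, which is the load-bearing part of the argument, and in doing so it claims a strictly stronger (Lipschitz) conclusion than is actually available.
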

\begin{proof}
It suffices to consider $v_t \in \S^{d-1}$; see [Section 6.1 of \cite{BBPS18}] for discussion. 
Define the following augmented system (denoting $\Pi_v = I - v \otimes v$), 
\begin{align}
\partial_t u_t & =-B(u_t,u_t) -Au_t + Q \dot{W}_t \\ 
\partial_t x_t & = u_t(x_t) + \sqrt{2\kappa} \dot{\widetilde{W}}_t\\ 
\partial_t v_t & = \Pi_{v_t} Du_t(x_t) v_t \\
\partial_t m_t & = \dot{M}_t,
\end{align}
where $M_t \in \Real^{2d}$ is a finite dimensional Wiener process independent from $W_t$ and $\widetilde{W}_t$,
and $m_t = (m_t^i)_{i = 1}^{2d}$ is a diffusion on $\R^{2d}$. 
We denote this augmented process by $w_t = (u_t,x_t,v_t,m_t)\in \Hbf \times \cM$, where $\cM = \T^d \times \S^{d-1} \times \Real^{2d}$, which satisfies the abstract SPDE
\begin{align}
\partial_t w_t = \widehat{F}(w_t) - Aw_t + \widehat{Q}\dot{W}_t, \label{eq:SDE-w}
\end{align}
where $\widehat{F}$ and $\widehat{Q}\dot{W}$ are given by
\[
	\widehat{F}(u,x,v,m) = 
		\begin{pmatrix}-B(u,u)\\ 
		u(x) \\ 
		\Pi_{v} Du(x)v \\ 
		0 \end{pmatrix} \, , 
		\quad 
\widehat{Q}\dot{W} = 
	\begin{pmatrix}
	Q\dot{W} \\ 
	\sqrt{2\kappa} \dot{\widetilde{W}}_t \\ 
	0 \\ 
	\dot{M}
	\end{pmatrix} 
\]
(with the obvious extended definition $Aw = (-A u,0,0,0)$). We similarly denote the associated Markov semigroup as $ \tilde{P}^\kappa_t$. 
Analogously to \cite{BBPS18}, we prove uniform strong Feller for the augmented process \eqref{eq:SDE-w}, which then implies the corresponding result for the original process.
As in \cite{BBPS18,EH01} we fix a smooth, non-negative cutoff function $\chi$ satisfying 
\begin{align}
\chi(z) & = \begin{cases}
0 \quad z < 1 \\ 
1 \quad z > 2 
\end{cases} 
\end{align}
and let $\chi_\rho(x) = \chi(x/\rho)$ for $\rho > 0$. We then define a regularized drift $F_\rho(w)$ by
\begin{align}
F_\rho(u,x,v,m) = (1-\chi_{3\rho}(\norm{u}_{\Hbf})) \widehat{F}(u,x,v,m) + \chi_{\rho}(\|u\|_{\Hbf})L(v,m),   
\end{align}
where $L(v,m)$ is a bounded vector-field on $\Hbf\times\cM$ given by
\begin{equation}
L(v,m) = \begin{pmatrix} 0 \\ \sum_{j=1}^d \hat{e}_j \frac{m^j}{\left(1 + \abs{m^j}^2\right)^{1/2} } \\  \Pi_{v}\sum_{j=1}^d\hat{e}_j\frac{m^{d+j}}{\left(1 + \abs{m^{d+j}}^2 \right)^{1/2}} \\ 0\end{pmatrix}
\end{equation}
Here, $\{\hat{e}_j\}_{j=1}^d$ the canonical basis for $\R^d$, and we are using that for each $v\in \S^{d-1}$, $\{\Pi_v e_j\}_{j=1}^d$ spans $T_v\S^{d-1}$. The cutoff/regularized process $w_t^\rho = (u_t^\rho,x_t^\rho,v_t^\rho,m_t)$ then satisfies the SPDE (replacing $\widehat{Q} \mapsto Q$ for notational simplicity), 
\begin{align}\label{eq:cut-off-eq}
\partial_t w_t^\rho = F_\rho(w_t^\rho) - A w_t^\rho + Q \dot{W}_t. 
\end{align}
Denote $\widetilde{P}^{\kappa;\rho}_t$ the Markov semigroup associated with the process \eqref{eq:cut-off-eq}.
See the discussions in \cite{BBPS18,RomitoXu11,EH01} on the utility of this cutoff.   
The main difficulty is to follow the proof of [Proposition 6.1; \cite{BBPS18}] and verify that the following gradient bound holds \emph{uniformly in $\kappa$}. 
\begin{lemma} \label{lem:GradBound}
There exists $a,b,\rho_\ast,T_\ast > 0$ all independent of $\kappa$, such that $\forall \rho \in (\rho_\ast,\infty)$, $\exists C_\rho$ (independent of $\kappa$) such that for $t < T_\ast$ and all $\varphi \in C_b^2(\mathbb H \times P \mathbb T^{d})$, we have that $w \to \widetilde{P}^{\kappa;\rho}_t\varphi(w)$ is Fr\'echet-differentiable, and satisfies
\begin{align}
\abs{D\widetilde{P}^{\kappa;\rho}_t\varphi(w)h} \leq C_\rho t^{-a} \left(1 + \norm{w}_{\Hbf}^{b} \right) \norm{\varphi}_{L^\infty} \norm{h}_{\Hbf \times T_v P \mathbb T^d} 
\end{align}
for all $h \in \Hbf \times T_v P \T^d$.
\end{lemma}
\begin{proof}
The proof of [Proposition 6.1; \cite{BBPS18}] is based on Malliavin calculus (see Section \ref{subsec:MalliavinPrelims}). 
Specifically, the main step is construct, for each $h \in \Hbf \times T_v P \T^d$, a suitably bounded control $g = (g_t)_{t \in [0,T]}$ such that the remainder
\begin{align}\label{eq:remainderDefn}
r_T = \MalD_g w_T -  D w_T h 
\end{align}
satisfies suitable estimates.  
First, the semigroup property and the Malliavin integration by parts formula (Proposition \ref{lem:MalIBP}) imply 
\begin{equation}\label{eq:MalIBPform-rem}
	D \widetilde{P}^{\kappa;\rho}_{2T}\varphi(w)h = \E\left(\widetilde{P}^{\kappa;\rho}_T\varphi(w_T)\int_0^T \langle g_t, \delta W(t)\rangle_{\Lbf^2}\right) - \E \left(D \widetilde{P}^{\kappa;\rho}_{T}\varphi(w_T)r_T\right),
\end{equation}
where the stochastic integral above is interpreted as a Skorohod integral (Section \ref{subsec:MalliavinPrelims}), since the control is not necessarily adapted.
Lemma \ref{lem:GradBound} then follows from a perturbation argument (see \cite{BBPS18}) provided we prove the analogue of [Lemma 6.3; \cite{BBPS18}]:
\begin{lemma} \label{lem:RTbd+SkrdBd}
For all $\kappa \in (0,\kappa_0)$ where $\kappa_0$ is a universal constant, and $\forall \rho > 0$, there exists constants $a_\ast, b_\ast > 0$ such that for $T$ sufficiently small (all independent of $\kappa$), there exists a control $g = (g_{t})_{t\in[0,T]}$ (in general depending on $\kappa$) satisfying the $\kappa$-uniform estimate
\begin{equation}\label{eq:Skoro-est}
\EE \int_0^T \norm{g_{t}}_{\Lbf^2}^2 \dt + \E \int_0^T \int_0^T \norm{\MalD_sg_{t}}_{\Lbf^2\to \Lbf^2}^2\ds\dt\leqc_\rho T^{-2a_\ast} (1+\norm{w}_{\Hbf})^{2b_\ast}\|h\|_{\Hbf \times T_v \cM}^2 \, 
\end{equation}
with remainder term $r_T$ as in \eqref{eq:remainderDefn} estimated by
\begin{align}
\EE\norm{r_T}_{\Hbf \times T_{v_T} \cM}^2 \lesssim_{\rho} T \norm{h}_{\Hbf \times T_v \cM}^2 \, . \label{eq:Remain-est}
\end{align}
\end{lemma} 
In order to prove this lemma we need (A) uniform-in-$\kappa$ estimates on the Jacobians and Malliavin derivatives as in Section 6.5 of \cite{BBPS18} and (B) uniform-in-$\kappa$ estimates on the partial Malliavin matrix (specifically, a $\kappa$-independent version of [Lemma 6.9; \cite{BBPS18}]).

Jacobian and Malliavin estimates analogous to those in [Section 6.5; \cite{BBPS18}] follow essentially verbatim here as well. This is for the same reason as in Section \ref{sec:Jacobian}: the estimates on the $(x_t,v_t)$ processes are done using $L^\infty$ estimates on $(u_t)$ and its derivatives, and so are insensitive to the specific noise-path of $\widetilde{W}_t$.

The uniform Jacobian and Malliavin estimates are sufficient to perform the arguments of [Section 6.5; \cite{BBPS18}] once one verifies the uniform-in-$\kappa$ non-degeneracy of the Malliavin matrix [Lemma 6.9; \cite{BBPS18}]. 
This requires more care.
The addition of  new noise directions does not change the uniform spanning property of [Lemma 6.13; \cite{BBPS18}] (the new noise directions cannot help in a $\kappa$-independent way, but they are not detrimental either).
The addition of the new directions adds additional $O(\kappa)$ or $O(\sqrt{\kappa})$ terms, for example, in [Proposition 6.10; \cite{BBPS18}]; however, these terms do not present any new difficulties beyond what is already required to treat the existing terms.

The additional noise term $\sqrt{\kappa} \widetilde W_t$ also does not significantly change the time-regularity estimates of Jacobian because the noise is additive and hence is not directly present on the Lagragian trajectories (recall time-regularity estimates of the Jacobian and its approximations play an important role in [Lemma 6.9; \cite{BBPS18}]).  
The $\sqrt{\kappa} \widetilde W_t$ term adds additional noise terms (to those already existing) to the expression for the time-derivatives of the Jacobian. On the other hand, the coefficients are controlled using the available regularity in $\Hbf$ together with BDG, similar to the noise terms that are already present.
We omit these repetitive details for brevity; see [Section 6; \cite{BBPS18}] for more detail.
\end{proof}

We are now ready to complete the proof of Lemma \ref{lem:ProjQSF}.
To see the uniform modulus of continuity, we proceed as in [Proposition 2.12; \cite{BBPS18}] and \cite{EH01}: 
\begin{align}
\abs{\widetilde{P}_t^{\kappa} \varphi(z^1) - \widetilde{P}^{\kappa}_t \varphi(z^2)} & \leq \abs{\widetilde{P}^{\kappa}_t \varphi(z^2) - \widetilde{P}^{\kappa,\rho}_t \varphi(z^2)} + \abs{\widetilde{P}^\kappa_t \varphi(z^1) - \widetilde{P}^{\kappa,\rho}_t \varphi(z^1)} \\ & \quad + \abs{\widetilde{P}^{\kappa,\rho}_t \varphi(z^1) - \widetilde{P}^{\kappa,\rho}_t \varphi(z^2)}. 
\end{align}
The first two terms are controlled noting that the moment bounds are independent of $\kappa$ because this noise only affects the degrees of freedom on the compact manifold $P \mathbb T^d$, hence by Proposition \ref{prop:WPapp}, for all $b >0$ there holds  (recall $d_{\Hbf}(z^1,z^2) < 1$ so that the sizes of $z^{j}$ are comparable), 
\begin{align}
\abs{\widetilde{P}^\kappa_t \varphi(z^j) - \widetilde{P}^{\kappa;\rho}_t \varphi(z^j)} & \lesssim \norm{\varphi}_{L^\infty}  \mathbb P \left(\sup_{0 < s < t} \norm{z_s^j}_\Hbf > \rho\right) \\ 
& \lesssim (1 + \norm{z^1}_{\Hbf}^{b})  \norm{\varphi}_{L^\infty} \frac{1}{\rho^{b}}. 
\end{align}
As in \cite{BBPS18,EH01}, an adaptation of [Lemma 7.1.5, \cite{DPZ96}] combined with Lemma \ref{lem:GradBound} implies
\begin{align}
\abs{\widetilde{P}^{\kappa;\rho}_t \varphi(z^1) - \widetilde{P}^{\kappa;\rho}_t \varphi(z^2)} \lesssim \frac{C_{\eps,\rho} d_{\Hbf}(z^1,z^2)}{t^{a}} (1 + \norm{z^1}^{b}_{\Hbf})e^{\eps \norm{z^1}_{\Wbf}^2}  \norm{\varphi}_{L^\infty}. 
\end{align}
Putting these estimates together implies 
\begin{align}
\abs{\widetilde{P}^\kappa_t \varphi(z^1) - \widetilde{P}^\kappa_t \varphi(z^2)} \leq \left(\frac{C_{\rho} d_{\Hbf}(z^1,z^2)}{t^{a}} + \frac{1}{\rho^{b}}\right) (1 + \norm{z^1}^{b}_{\Hbf}) \norm{\varphi}_{L^\infty}. 
\end{align}
Without loss of generality we can assume $C_{\rho}$ is monotone increasing, continuous in $\rho$, and satisfies $\lim_{\rho \to \infty} C_{\rho} = \infty$.
We define the modulus of continuity by
\begin{align}
X(r) := \min_{\rho \in [\rho_\ast,\infty)} \left(C_{\rho} r + \frac{1}{\rho^{b}}\right). 
\end{align}
Concavity, continuity, and monotone increasing all follow by definition and the continuity and monotonicity of $C_{\rho}$ and $\rho^{-b}$.
Finally it suffices to replace $X$ with  $\min(1, X(r))$ since the minimum of two concave, monotone, continuous functions is still concave and continuous. 
\end{proof}

\subsubsection{Proof of Proposition \ref{prop:unifMinorCondProjective}(b): uniform topological irreducibility} \label{sec:UTI}

The uniform topological irreducibility for Proposition \ref{prop:unifMinorCondProjective} (b) is proved by a standard approximate control argument; we include a sketch of the argument for completeness.
Specifically we prove the following.
\begin{lemma} \label{lem:projIR}
Fix an arbitrary $z_\ast \in \Hbf \times P\mathbb T^d$.
  For all $R > 0$, $\forall \eps > 0$, $\forall T > 0$, $\exists \kappa_0' = \kappa_0'(\eps,T)$ and $\exists \eta > 0$ such that for all $\kappa \in [0,\kappa_0']$ and $z \in \Hbf \times P\mathbb T^d$ with $\norm{z}_{\Hbf} < R$, 
\begin{align}
\hat P^{\kappa}_T \left(z , B_\eps(z_\ast) \right) > \eta. 
\end{align}
\end{lemma}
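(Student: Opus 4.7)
The plan is to bootstrap from the $\kappa=0$ topological irreducibility established in [Proposition 2.7, \cite{BBPS19}], treating the additional diffusivity term $\sqrt{2\kappa}\widetilde W_t$ as a small path-wise perturbation to the Lagrangian flow on bounded time intervals. The $\kappa=0$ result supplies an $\eta_0 = \eta_0(R, \eps, T, z_\ast) > 0$ for which $\hat P^{0}_T(z, B_{\eps/2}(z_\ast)) \geq 2\eta_0$ uniformly over $z$ with $\|z\|_\Hbf < R$.

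Next, I would isolate a good event on which the velocity trajectory is bounded and the auxiliary Brownian motion is tame. By Proposition \ref{prop:WPapp} and Markov's inequality, pick $M = M(R, T, \eta_0)$ such that $\PP_z\{\sup_{[0,T]}\|u_t\|_\Hbf > M\} \leq \eta_0/4$ uniformly over $\|z\|_\Hbf < R$; since $\widetilde W_t$ is independent of $W_t$, pick $L = L(T, \eta_0)$ with $\widetilde \PP\{\sup_{[0,T]}|\widetilde W_t| > L\} \leq \eta_0/4$. Intersecting with the baseline event from the first step, the set
\[
A_z := \{\hat z^0_T \in B_{\eps/2}(z_\ast)\} \cap \{\sup_{[0,T]}\|u_t\|_\Hbf \leq M\} \cap \{\sup_{[0,T]}|\widetilde W_t| \leq L\}
\]
has $\EE$-probability at least $\eta_0$ uniformly over $\|z\|_\Hbf < R$.

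On $A_z$, I would compare $\hat z^\kappa_T$ and $\hat z^0_T$ path-wise, using that the two processes share the same realization of $(W_t, \widetilde W_t)$ and the same $u_t$ (which does not depend on $\kappa$). The Lagrangian difference $x_t^\kappa - x_t^0$ solves an integral equation forced by $\sqrt{2\kappa}\widetilde W_t$ whose nonlinearity is Lipschitz with constant controlled by $\|Du\|_{L^\infty} \lesssim M$ (via $\Hbf \hookrightarrow C^3$), so Gronwall gives $|x_T^\kappa - x_T^0| \lesssim \sqrt{\kappa}\,L\,e^{CMT}$. The projective coordinate $v_t^\kappa$ evolves by a linear ODE on the tangent bundle driven by $Du_t(x_t^\kappa)$; comparing against $v_t^0$ and using $C^2$ regularity of $Du$ together with the Lagrangian estimate yields $|v_T^\kappa - v_T^0| \lesssim_{M,T} \sqrt{\kappa}\,L$ by another Gronwall step. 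Choosing $\kappa_0' = \kappa_0'(R, \eps, T)$ so small that these bounds fall below $\eps/2$ gives $d(\hat z_T^\kappa, \hat z_T^0) < \eps/2$ on $A_z$, hence $\hat z_T^\kappa \in B_\eps(z_\ast)$, and the claim holds with $\eta := \eta_0$.

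The main subtlety I anticipate is ensuring that the $\kappa=0$ baseline irreducibility is genuinely uniform across the non-compact ball $\{\|z\|_\Hbf < R\}$; fortunately, the support-theorem/approximate-controllability argument in [Proposition 2.7, \cite{BBPS19}] already delivers such uniformity on bounded balls of $\Hbf$. Notably, no Girsanov or absolute-continuity argument is required, since the two processes are constructed on the same probability space from a common noise sample.
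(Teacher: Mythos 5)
Your proposal is correct but takes a genuinely different route from the paper. The paper's proof re-derives an approximate controllability structure: it introduces the deterministic control problem (with controls $g_t$ applied to the velocity alone), proves three sequential control lemmas driving $u_t$ to near zero, then to any target $(x',v')$, then to near any target $u'$, and finally lower-bounds the probability that the stochastic convolution $\Gamma_t$ tracks the deterministic control path. The $\sqrt{2\kappa}\,\widetilde W_t$ term is then handled by the reflection-principle estimate $\PP(\sup_{t\leq 1}\sqrt{2\kappa}|\widetilde W_t| > \eps) \lesssim e^{-\eps^2/(4d^2\kappa)}$, and the conclusion follows from the stability argument of [Lemma 7.3, BBPS18]. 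You instead take the $\kappa=0$ irreducibility over bounded $\Hbf$-balls as a black-box input and run a pathwise Gronwall comparison between $\hat z_T^\kappa$ and $\hat z_T^0$ (sharing the same $(W_t,\widetilde W_t)$ and the $\kappa$-independent velocity $u_t$) on an event where $u_t$ and $\widetilde W_t$ are both bounded. This avoids re-proving the control lemmas and is arguably cleaner, at the modest cost of (i) requiring as input the uniform-over-bounded-balls lower bound at $\kappa=0$ for the \emph{projective} process — which you attribute to [Prop.\ 2.7, BBPS19], though that reference is actually stated for the two-point process, so the correct citation is the analogous projective-process statement — and (ii) producing $\kappa_0'$ depending on $R$ through the cutoff $M=M(R,T,\eta_0)$, whereas the paper's statement asserts $\kappa_0'=\kappa_0'(\eps,T)$; this discrepancy is harmless for the way the lemma is used (cf.\ Proposition \ref{prop:unifMinorCondProjective}(b), which allows $\kappa_0''$ to depend on $R$), but does make your version of the statement formally weaker.
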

\begin{proof} 
Consider the deterministic, $\kappa = 0$, control problem on $\Hbf \times P \mathbb T^d$
\begin{subequations} \label{eq:detctrl}
\begin{align}
& \partial_t u_t + B(u_t,u_t) + Au_t = Q g_t \\
& \partial_t x_t = u_t (x_t) \\
& \partial_t v_t = Du_t(x_t) v_t.
\end{align}
\end{subequations}
Let $z = (u,x,v)$ and $z_\ast = (u',x',v')$. 
By local parabolic regularity (Proposition \ref{prop:WPapp}) it suffices to take $u \in \Hbf \cap H^{\sigma'}$ for any $\sigma < \sigma' < \alpha - \frac{d}{2}$
with $\norm{u}_{H^{\sigma'}} \lesssim R \max(1,T^{\frac{\sigma-\sigma'}{2(d-1)}})$.
For simplicity we further assume $T = 1$; the general case follows similarly.

The following lemma is standard (see the discussions in \cite{GHHM18,BBPS19} and the references therein).
\begin{lemma} \label{lem:scl1}
Let $u \in \Hbf \cap H^{\sigma'}$ for $\sigma < \sigma' < \alpha - \frac{d}{2}$ be as above.
Then $\forall \eps > 0$, $\exists \delta < \eps$ and a control $g:[0,\delta] \to \Lbf^2$ such that $\norm{u_{\delta}}_{\Hbf} \leq \eps/4$ and $\sup_{0 < t< \delta}\norm{u_t}_{\Hbf} \leq 3 \norm{u}_{\Hbf}$. Furthermore, $\sup_{0 < t < \delta}\norm{g_t}_{\Wbf}$ is bounded only in terms of $t$ and $\delta$. 
\end{lemma}

The following lemma is essentially [Lemma 7.1; \cite{BBPS18}].  
\begin{lemma}\label{lem:gctrl}
Let $a \in (0,\frac{1}{2})$ and suppose $u_a = 0$, $(x_a,v_a)= (x,v)$. There exists $C_g > 0$ such that $\forall (x,v),(x',v') \in \mathbb P\T^d$ there exists a control $g =: g^{ctr,a}$ satisfying $\sup_{t \in (a,1-a)} \norm{g^{ctr,a}_t}_{\Wbf} \leq C_g$ such that $u_{1-a} = 0$ and $(x_{1-a},v_{1-a}) = (x',v')$. 
\end{lemma}

The next lemma is essentially [Lemma 6.10; \cite{BBPS19}]. 
\begin{lemma} \label{lem:scl2}
Let $u' \in \Hbf$ be arbitrary. Then $\forall \eps > 0$, $\exists \delta \ll 1$ and a control $g:[1-\delta,1] \rightarrow \Lbf^2$ such that if $\norm{u_{1-\delta}}_{\Hbf} \leq \frac{\eps}{4}$, then there holds $\norm{u_1-u'}_{\Hbf} < \frac{\eps}{4}$, $\sup_{1-\delta \leq t \leq 1} \norm{u_t}_{\Hbf} \leq 3 \norm{u'}_{\Hbf}$, and $d(x_{1-\delta},x_1) + d(v_{1-\delta},v_1) \lesssim \delta \norm{u'}_{\Hbf}$. 
\end{lemma}

Lemmas \ref{lem:scl1}, \ref{lem:gctrl}, \ref{lem:scl2} exhibit an approximate control of the deterministic control problem \eqref{eq:detctrl}.
Let $(g_t)$ be such a deterministic control. 
As in [Lemma 7.3, \cite{BBPS18}] we have $\forall \eps$, $\exists \eta$ such that 
\begin{align}
\PP\left( \sup_{t \in (0,1)} \norm{\Gamma_t - \int_0^t e^{-(t-s)A} Qg_s ds}_{L^\infty_t(0,1;\Hbf)} < \eps \right) > \eta \, , 
\end{align}
where $\Gamma_t$ is the stochastic convolution as in \eqref{eq:Mild}. 

A remaining point is to bound the contribution of the noise term $\sqrt{2\kappa} \widetilde W_t$
applied directly to the Lagrangian flow. By a standard argument (using, e.g., the reflection principle applied to $\sup_{t \in (0,1)} \widetilde W_t^{(i)}$ for each component $\widetilde W_t^{i}$), we have the estimate
\begin{align}
\PP\left( \sup_{t \in (0,1)} \sqrt{2\kappa} |\widetilde{W}_t| > \eps \right) \lesssim \exp\left( - \frac{\eps^2}{4 d^2 \kappa} \right) 
\end{align}
for $\epsilon > 0$ fixed and all $\kappa$ sufficiently small (recall $d = 2$ or $3$). 
From here, Lemma \ref{lem:projIR} easily follows from a standard stability argument as in [Lemma 7.3; \cite{BBPS18}]. 
\end{proof}

\subsubsection{$\kappa$-uniform spectral gap for $\hat P_t^\kappa$ in $\mathring{C}_V$}

We now apply Theorem \ref{thm:Harris} with the Lypaunov function $V = V_{\beta, \eta}$ (Lemma \ref{lem:Lyapu})
and the minorization condition guaranteed by Proposition \ref{prop:unifMinorCondProjective} (c.f. Proposition \ref{prop:minorize}). 

\begin{proposition} \label{prop:CVspecGapProj}
There exist constants $C, \hat \gamma > 0$ (depending on the Lyapunov function $V$)
such that the following holds for all $\kappa > 0$ sufficiently small. 

There is a unique stationary measure $\nu^\kappa$ for the projective process in $\Hbf \times P \mathbb T^d$ and moreover, for all $\psi \in C_V$ and $t \geq 0$, we have
\begin{align}
\left\|\hat{P}^\kappa_t \psi - \int_{\Hbf \times P \mathbb T^d}  \psi \,\dee\nu^\kappa \right\|_{C_V} \leq C e^{-\hat \gamma t} \norm{\psi}_{C_V}. 
\end{align}
\end{proposition}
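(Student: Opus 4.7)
The plan is to apply the quantitative Harris theorem (Theorem \ref{thm:Harris}) to the discrete-time Markov chain $(\hat z_n^\kappa)_{n \in \mathbb N}$ sampled at integer multiples of a fixed time $T_\ast \geq 2$, with one-step operator $\mathcal P := \hat P_{T_\ast}^\kappa$ on $\Hbf \times P \T^d$, and then extend the resulting geometric contraction to all $t \geq 0$ by a routine short-time argument. All constants produced along the way will be checked to be $\kappa$-independent.

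\textbf{Step 1 (Drift condition, $\kappa$-uniform).} Take $V(u,x,v) := V_{\beta,\eta}(u)$. Since the projective factor $P \T^d$ is compact and $V_{\beta,\eta}\geq 1$, this $V$ is a valid Lyapunov function candidate. The marginal of $(u_t, x_t^\kappa, v_t^\kappa)$ onto $u_t$ is exactly the Navier--Stokes process (the $\sqrt{2\kappa}\,\widetilde W_t$ noise does not enter $V$), so Lemma \ref{lem:Lyapu} gives $\hat P_1^\kappa V = P_1 V_{\beta,\eta} \leq \gamma_0 V + K_0$ for $\gamma_0 \in (0,1)$, $K_0 > 0$ independent of $\kappa$. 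Iterating $T_\ast$ times yields $\mathcal P V \leq \gamma V + K$ with $\gamma := \gamma_0^{T_\ast}$ and a $\kappa$-independent $K$.

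\textbf{Step 2 (Minorization, $\kappa$-uniform).} Write $\mathcal P = \mathcal P_{1/2} \circ \mathcal P_{1/2}$ with $\mathcal P_{1/2} := \hat P_{T_\ast/2}^\kappa$ (taking $T_\ast/2 \geq 1$ so the sketches in Section \ref{sec:USF}, \ref{sec:UTI} apply at the relevant time). Hypothesis (a) of Proposition \ref{prop:minorize} at the reference point $z_\ast := (0, x_\ast, v_\ast)$ is supplied by Proposition \ref{prop:unifMinorCondProjective}(a), and hypothesis (b) by Proposition \ref{prop:unifMinorCondProjective}(b); both deliver $\kappa$-uniform moduli of continuity and lower bounds. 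Because $V$ depends only on $u$ and $P\T^d$ is compact, each sublevel set $\{V \leq R\}$ is of the form $\{\|u\|_\Hbf \leq R'(R)\} \times P\T^d$, so fixing $R > 2K/(1-\gamma)$ large enough places $\{V \leq R\}$ inside the region where Proposition \ref{prop:unifMinorCondProjective} applies. Proposition \ref{prop:minorize} then yields the minorization
\begin{equation}
\|\mathcal P(z_1, \cdot) - \mathcal P(z_2, \cdot)\|_{\mathrm{TV}} \leq \alpha
\qquad \forall z_1, z_2 \in \{V \leq R\},
\end{equation}
with $\alpha \in (0,1)$ independent of $\kappa$.

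\textbf{Step 3 (Invoke Harris and transfer to continuous time).} Theorem \ref{thm:Harris} applied to $\mathcal P$ produces a unique invariant probability measure $\nu^\kappa$ together with constants $C_0 > 0, \rho_0 \in (0,1)$ depending only on $\gamma, K, \alpha, R$ (hence $\kappa$-independent) such that
\begin{equation}
\left| \mathcal P^n \psi(z) - \int \psi\, d\nu^\kappa \right| \leq C_0 \rho_0^n V(z) \|\psi\|_{C_V}
\qquad \forall \psi \in C_V,\ n \in \mathbb N.
\end{equation}
Any $\hat P_t^\kappa$-invariant measure is also $\mathcal P$-invariant, so $\nu^\kappa$ is the unique stationary measure for the full semigroup. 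For general $t \geq 0$, write $t = n T_\ast + s$ with $s \in [0, T_\ast)$, use $\int \hat P_s^\kappa \psi \, d\nu^\kappa = \int \psi\, d\nu^\kappa$ and the $\kappa$-uniform short-time bound $M := \sup_{s \in [0, T_\ast]} \|\hat P_s^\kappa\|_{C_V \to C_V} < \infty$ (which follows from the drift condition and Lemma \ref{lem:TwistBd}) to obtain
\begin{equation}
\left| \hat P_t^\kappa \psi(z) - \int \psi \, d\nu^\kappa \right| \leq C_0 \rho_0^n V(z) \, M \|\psi\|_{C_V} \leq C e^{-\hat\gamma t} V(z) \|\psi\|_{C_V},
\end{equation}
with $\hat\gamma := -T_\ast^{-1}\log \rho_0 > 0$ and $C := C_0 M / \rho_0$, both independent of $\kappa$.

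\textbf{Main obstacle.} The substantive work is entirely contained in Proposition \ref{prop:unifMinorCondProjective} (the $\kappa$-uniform strong Feller and topological irreducibility), together with the quantitative dependence of Harris's theorem that was recorded in Section \ref{sec:QHarris}. Given these inputs, the present proposition is a bookkeeping application: the only thing to watch carefully is that the radius $R$ of the sublevel set used for minorization is fixed based on the $\kappa$-independent drift constants $\gamma, K$ first, before the $\kappa$-dependent smallness condition on $\kappa_0$ from Proposition \ref{prop:unifMinorCondProjective}(b) is imposed (as in Remark \ref{rmk:kapR}), so that no circularity arises in the choice of parameters.
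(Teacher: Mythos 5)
Your proposal is correct and follows essentially the same route as the paper, which proves this proposition exactly by feeding the drift condition from Lemma \ref{lem:Lyapu} and the minorization from Proposition \ref{prop:unifMinorCondProjective} (via Proposition \ref{prop:minorize}) into the quantitative Harris theorem (Theorem \ref{thm:Harris}); the paper leaves the bookkeeping you carry out in Steps 1--3 implicit. Your parenthetical care about fixing $R$ from the $\kappa$-independent drift constants before imposing the smallness of $\kappa_0$ matches the paper's Remark \ref{rmk:kapR}.
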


\begin{corollary}\label{cor:specCVuntwisted} There exists $c_0 \in (0,1)$ (independent of $\kappa$) such that, regarding $\hat P_t^\kappa$ as a $C_0$-semigroup of operators on $\mathring{C}_V$, we have that for all $t > 0$, the eigenvalue $1$ is simple, dominant and isolated, and  for all $t \geq 0$  and $\kappa$ sufficiently small 
\[
\sigma(\hat P_t^\kappa) \setminus \{ 1 \} \subseteq B_{c_0^t} (0). 
\]
\end{corollary}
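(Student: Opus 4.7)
The proof is a standard conversion of the uniform geometric ergodicity in Proposition \ref{prop:CVspecGapProj} into spectral-gap language, restricted to the separable subspace $\mathring C_V$. The plan is to exhibit a $\hat P_t^\kappa$-invariant direct sum decomposition of $\mathring C_V$ into the span of the constant function $\mathbf{1}$ and its $\nu^\kappa$-mean-zero complement, then read off the spectral picture from the two diagonal blocks.

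First I would verify that $\mathbf{1} \in \mathring C_V$ (it is a trivial cylinder function) and introduce
\[
\mathring C_V^0 := \Bigl\{ \psi \in \mathring C_V : \int \psi \, d\nu^\kappa = 0 \Bigr\},
\]
a closed, codimension-one subspace of $\mathring C_V$ (being the kernel of the bounded linear functional $\psi \mapsto \int \psi \, d\nu^\kappa$ on $C_V$). The decomposition $\psi = \bigl(\int \psi \, d\nu^\kappa\bigr)\mathbf{1} + \psi_0$ with $\psi_0 \in \mathring C_V^0$ gives $\mathring C_V = \mathbb{R}\,\mathbf{1} \oplus \mathring C_V^0$. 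Both summands are $\hat P_t^\kappa$-invariant: $\hat P_t^\kappa \mathbf{1} = \mathbf{1}$, while invariance of $\nu^\kappa$ yields $\int \hat P_t^\kappa \psi_0 \, d\nu^\kappa = \int \psi_0 \, d\nu^\kappa = 0$ for $\psi_0 \in \mathring C_V^0$.

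Next I would invoke Proposition \ref{prop:CVspecGapProj}: for $\psi_0 \in \mathring C_V^0$ one has $\|\hat P_t^\kappa \psi_0\|_{C_V} \leq C e^{-\hat\gamma t}\|\psi_0\|_{C_V}$, so the spectral radius formula yields
\[
r\bigl(\hat P_t^\kappa|_{\mathring C_V^0}\bigr) = \lim_{n \to \infty} \bigl\|\hat P_{nt}^\kappa|_{\mathring C_V^0}\bigr\|_{C_V}^{1/n} \leq \lim_{n \to \infty} \bigl(Ce^{-\hat\gamma nt}\bigr)^{1/n} = e^{-\hat\gamma t}.
\]
Fix any $\hat\gamma' \in (0,\hat\gamma)$ and set $c_0 := e^{-\hat\gamma'} \in (0,1)$; this is independent of $\kappa$ because $\hat\gamma$ is. For every $t > 0$, $\sigma(\hat P_t^\kappa|_{\mathring C_V^0}) \subseteq \overline{B_{e^{-\hat\gamma t}}(0)} \subset B_{c_0^t}(0)$, while the block structure gives $\sigma(\hat P_t^\kappa) = \{1\} \cup \sigma(\hat P_t^\kappa|_{\mathring C_V^0})$. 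The $t=0$ case is trivial.

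Finally, $1$ is isolated and dominant since it lies outside $B_{c_0^t}(0)$. To conclude algebraic simplicity, suppose $(\hat P_t^\kappa - I)^2 \psi = 0$ in $\mathring C_V$ and set $\phi := (\hat P_t^\kappa - I)\psi$. Invariance of $\nu^\kappa$ forces $\int \phi \, d\nu^\kappa = 0$, so $\phi \in \mathring C_V^0$, yet $\phi$ is a fixed point of $\hat P_t^\kappa|_{\mathring C_V^0}$; since $1 \notin \sigma(\hat P_t^\kappa|_{\mathring C_V^0})$, we conclude $\phi = 0$, i.e.\ the generalized eigenspace at $1$ is one-dimensional. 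I do not anticipate any genuine obstacle here; the only minor point to check is that $\mathbf{1}$ and the mean-zero projection preserve the subspace $\mathring C_V$, which is immediate from the cylinder-function definition \eqref{def:CylSpc} and the continuity of $\psi \mapsto \int \psi \, d\nu^\kappa$ on $C_V$.
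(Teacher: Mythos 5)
Your proposal is correct and is essentially the argument the paper intends: the corollary is stated as an immediate consequence of Proposition \ref{prop:CVspecGapProj}, read off from the invariant splitting $\mathring C_V = \R\,\mathbf{1} \oplus \{\int \psi\, d\nu^\kappa = 0\}$ together with the spectral radius formula on the mean-zero block, exactly as you do. The only (harmless) implicit ingredient is $\int V\, d\nu^\kappa < \infty$, which makes the mean-zero projection bounded and is already built into the paper's resolvent estimates (Lemma \ref{lem:resolventEst}(a)).
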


\subsection{Spectral picture for $\hat P_{T_0}^\kappa$ in $\mathring{C}_V^1$}\label{subsec:CV1proj}

Following \cite{BBPS19}, a spectral gap for $\hat P^\kappa_{T_0}$ in $\mathring{C}_V^1$ will be
deduced from the uniform spectral gap in $C_V$ and the following Lasota-Yorke type gradient bound.
The proof requires checking the $\kappa$-uniformity of the analogous argument in [Proposition 4.6; \cite{BBPS19}] (which in turn follows \cite{HM06,HM11} closely with some minor variations). 
\begin{proposition}[Lasota-Yorke estimate] \label{prop:LY}
$\forall \beta' \geq 2$ sufficiently large and $\forall \eta' \in (0, \eta^\ast)$, $\exists C_1, \varkappa >0$ such that the following holds $\forall t > 0$, and $\hat{z}=(u,x,v)\in \Hbf\times P\T^d$:
\begin{align}
\|D \hat{P}^\kappa_t \psi(\hat{z})\|_{\Hbf^*} \leq C_1 V_{\beta',\eta'}(u) \left( \sqrt{\hat{P}^\kappa_t\abs{\psi}^2(\hat{z})} + e^{-\varkappa t} \sqrt{\hat{P}^\kappa_t \|D\psi\|_{\Hbf^*}^2(\hat{z})}\right).
\end{align}
\end{proposition}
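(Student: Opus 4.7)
Following the strategy of \cite{HM06,HM11} as adapted in [Proposition 4.6; \cite{BBPS19}], the plan is to establish this bound via Malliavin integration by parts, with the main additional work being a careful accounting of $\kappa$-dependence of every estimate. Fix $\hat{z} = (u,x,v) \in \Hbf \times P\T^d$ and a perturbation direction $h \in \Hbf \times T_v P\T^d$. Differentiating under the expectation (justified by Lemma \ref{lem:JacExps} and a density argument in $\psi$),
\[
D\hat{P}_t^\kappa \psi(\hat{z}) h = \E_{\hat{z}}\bigl[D\psi(\hat{z}_t^\kappa) J_{0,t}^\kappa h\bigr].
\]
The central idea is to construct an adapted control $g = (g_s)_{s\in[0,t]}$ valued in $\Lbf^2$ so that the Jacobian admits the decomposition
\[
J_{0,t}^\kappa h \;=\; \MalD_g \hat{z}_t^\kappa + \rho_t,
\]
where $\MalD_g\hat{z}_t^\kappa$ is the Malliavin derivative of $\hat{z}_t^\kappa$ along $g$ and the remainder $\rho_t$ decays exponentially in $t$.

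Given such a $g$, the first summand is handled by Malliavin IBP (Proposition \ref{lem:MalIBP}), yielding
\[
\E_{\hat{z}}\bigl[D\psi(\hat{z}_t^\kappa) \MalD_g \hat{z}_t^\kappa\bigr] = \E_{\hat{z}}\bigl[\MalD_g \psi(\hat{z}_t^\kappa)\bigr] = \E_{\hat{z}}\Bigl[\psi(\hat{z}_t^\kappa)\int_0^t \langle g_s, \delta W_s\rangle_{\Lbf^2}\Bigr],
\]
and a Cauchy--Schwarz application together with the Skorohod It\^o isometry \eqref{ineq:SkorIto} bounds this term by
$\sqrt{\hat{P}_t^\kappa|\psi|^2(\hat{z})} \cdot \mathcal{S}(g)$, where $\mathcal{S}(g)$ collects the $L^2(\Omega)$ norm of $g$ and its Malliavin derivative. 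The remainder term is controlled by a straight Cauchy--Schwarz:
\[
\bigl|\E_{\hat{z}}[D\psi(\hat{z}_t^\kappa) \rho_t]\bigr| \leq \sqrt{\hat{P}_t^\kappa\|D\psi\|_{\Hbf^*}^2(\hat{z})} \cdot \bigl(\E \|\rho_t\|_{\Hbf}^2\bigr)^{1/2}.
\]
The required estimate then follows if $g$ can be built so that $\mathcal{S}(g) \lesssim V_{\beta',\eta'}(u) \|h\|_{\Hbf}$ and $(\E\|\rho_t\|_\Hbf^2)^{1/2} \lesssim V_{\beta',\eta'}(u) e^{-\varkappa t}\|h\|_\Hbf$, both with constants independent of $\kappa \in [0,\kappa_0]$.

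The control $g$ is constructed iteratively on a sequence of dyadic time intervals $[t_{n+1},t_n]$ shrinking to $0$, exactly as in [Section 4; \cite{BBPS19}]. On each interval, $g$ is chosen as an approximate pseudo-inverse of the partial Malliavin covariance matrix restricted to a fixed set of low Fourier modes, with the aim of absorbing a definite fraction of the residual ``distance'' between $J_{0,t}^\kappa h$ and $\MalD_g \hat{z}_t^\kappa$ in each step. The exponential contraction $e^{-\varkappa t}$ of the remainder comes from the geometric decay of residuals across dyadic scales, while the $V_{\beta',\eta'}(u)$ prefactor arises from time-integrated $\Hbf^r$-norms of $u_s$ that appear in the Jacobian bounds (Lemma \ref{lem:PathJacobian}) via the super-Lyapunov property (Lemma \ref{lem:TwistBd}, using $e^{\gamma t}\eta' < \eta_*$ and $\beta'$ large).

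The hard part, and the main obstacle, is verifying $\kappa$-uniformity throughout. The Jacobian and adjoint estimates in Lemmas \ref{lem:PathJacobian}--\ref{lem:PathAdjoint} are already $\kappa$-uniform because the additive diffusion $\sqrt{2\kappa}\dot{\widetilde W}_t$ acts only on the Lagrangian component and these bounds use only $L^\infty$ quantities in $u_s$. The critical new ingredient is $\kappa$-uniform non-degeneracy of the partial Malliavin matrix: for the $\Hbf$-component, the matrix is insensitive to $\widetilde W_t$ because $\widetilde W_t$ does not force the velocity, so the classical arguments of \cite{HM06,HM11} transfer verbatim with $\kappa$-independent constants; for the finite-dimensional $(x,v)$-directions, the required non-degeneracy is precisely the $\kappa$-uniform version of [Lemma 6.9; \cite{BBPS18}] whose proof was already sketched in our discussion of Lemma \ref{lem:RTbd+SkrdBd} above. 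Granting these uniform inputs, the iterative construction of $g$ and the accompanying moment estimates proceed as in [Proposition 4.6; \cite{BBPS19}] with no further changes, yielding the claimed bound with constants independent of $\kappa \in [0,\kappa_0]$.
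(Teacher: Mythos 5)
Your overall strategy matches the paper's: differentiate under the expectation, decompose $J_{0,t}^\kappa h$ as $\MalD_g \hat z_t^\kappa + \rho_t$, apply Malliavin integration by parts, handle the Skorohod integral term via Cauchy--Schwarz and the Skorohod It\^o isometry, handle the remainder via Cauchy--Schwarz, and reduce to $\kappa$-uniform estimates on Jacobians and on non-degeneracy of the Malliavin matrix. However, the description of the control construction is wrong in two ways that matter.

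First, the control $g$ is \emph{not} adapted. The Hairer--Mattingly construction (and [Proposition 4.6; \cite{BBPS19}]) produces a control that depends on the backward adjoint Jacobian $K_{r,t}$ and the (regularized) Malliavin covariance over intervals that look into the future, and this non-adaptedness is precisely why the Skorohod integral and the isometry \eqref{ineq:SkorIto} must be used rather than the ordinary It\^o isometry. Your proposal invokes the Skorohod isometry and simultaneously calls $g$ adapted, which is internally inconsistent.

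Second, and more substantively, the control is \emph{not} built on dyadic time intervals shrinking to $0$. That is the Eckmann--Hairer construction used for the strong Feller bound in Lemma \ref{lem:GradBound} (and in [Section 6; \cite{BBPS18}]), where the issue is a singular $t^{-a}$ bound as $t \to 0$. There, the number of dyadic scales between $t$ and $0$ is $O(\log(1/t))$, and a geometric factor per scale produces a \emph{polynomial} rate in $t$, not an exponential one. The Lasota--Yorke estimate here requires $e^{-\varkappa t}$ decay of the remainder as $t \to \infty$, which is obtained in the Hairer--Mattingly scheme by iterating over \emph{forward} time intervals of fixed length (e.g.\ $[2n, 2n+2]$), absorbing a definite fraction of the residual on each; after $\sim t$ such steps the residual is $\lesssim (1-\alpha)^{t}$. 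Your construction, as described, cannot produce the claimed $e^{-\varkappa t}$ factor. Finally, a smaller point: the Malliavin matrix \eqref{eq:MalNonDeg} appearing here, built from the adjoint Jacobian $K_{r,t}$ on $\Wbf \times T_{v_t} P \T^d$, is not the same object as the partial Malliavin matrix in [Lemma 6.9; \cite{BBPS18}] used in Lemma \ref{lem:GradBound}; the $\kappa$-uniform non-degeneracy needed here is [Proposition 4.11; \cite{BBPS19}], and while the reasons the $\kappa$-terms are benign are similar, the objects are distinct and it is misleading to say the required statement is ``precisely'' the one already sketched.
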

\begin{proof}
The proof shares a few connections with that of Lemma \ref{lem:GradBound} above.
The proof is again based on Malliavin calculus and requires (A) uniform-in-$\kappa$ estimates on Jacobians and Malliavin derivatives; and (B) uniform-in-$\kappa$ estimates on the low-mode non-degeneracy of the Malliavin matrix (in this case, a different Malliavin matrix however).
The Jacobian and Malliavin derivative estimates carry over in a $\kappa$-uniform manner as in Section \ref{sec:Jacobian}. 

For an arbitrary control $(g_t): [0,T] \to \Lbf^2 \times \mathbb T^d $, denote the residual
\begin{align}
\rho_t = J_t \xi - \MalD_g \hat{z}_t. 
\end{align}
Then, Proposition \ref{lem:MalIBP} yields
\begin{align}
D\hat{P}_t \psi(\hat z)\xi = \EE D\psi(\hat z_t)J_t\xi = \EE D\psi(\hat z_t) \rho_t +  \EE\psi(\hat z_t) \int_0^t \brak{g_s,\delta W_s}_{\Lbf^2}. \label{ineq:DerivSwap}
\end{align}
Following the basic idea of \cite{HM06,HM11} and [Proposition 4.6; \cite{BBPS19}], the goal is to find a control $(g_t)$ such the latter Skorohod integral is uniformly bounded (for our case, in both $t$ and $\kappa$) and the former term is decaying exponentially (uniformly in $\kappa$). 

In this notation, the Malliavin matrix $\mathcal{M}$ of interest here takes the following form for $\xi\in \Wbf\times T_{v_t}P\T^d$: 
\begin{align}
\brak{\mathcal{M}_{s,t} \xi, \xi}_{\Wbf} = \sum_{k \in \mathbb K} \int_s^t q_k^2 \brak{e_k,K_{r,t}\xi}^2_{\Wbf} \dr + \sum_{k \in \set{1,..,d}} \int_s^t 2 \kappa \brak{\hat{e}_k,K_{r,t}\xi}^2_{\Wbf} \dr \, ,   \label{eq:MalNonDeg}
\end{align}
where $\set{\hat{e}_k}_{k \in \set{1,..,d}}$ denotes the canonical orthonormal basis on $\Real^d$.
One of the main steps of the proof is to verify the non-degeneracy estimate [Proposition 4.11; \cite{BBPS19}] uniformly in $\kappa$. 
The reasons why this non-degeneracy extends to \eqref{eq:MalNonDeg} in a $\kappa$-uniform way are similar to those given in the proof of Lemma \ref{lem:GradBound}.
First, the inclusion of new noise directions does not change the spanning of the brackets [Lemma 4.15; \cite{BBPS19}] (it neither helps nor hinders in a $\kappa$-independent way).
Second, the additional terms $O(\kappa)$ terms in \eqref{eq:MalNonDeg} and the additional $\sqrt{2\kappa} \dot{\widetilde{W}}_t$ in $d x_t^\kappa$ do not significantly change the latter arguments either: neither the time-regularity nor the space-regularity from the additional derivatives pose a significant new challenge in the analogues of [Lemma 4.18, Lemma 4.19; \cite{BBPS19}].
Hence, the proof of [Proposition 4.6; \cite{BBPS19}] carries over in a $\kappa$-uniform manner and we deduce Proposition \ref{prop:LY}. 
\end{proof} 

A straightforward argument (see [Proposition 4.7; \cite{BBPS19}]) combines Proposition \ref{prop:CVspecGapProj} with Proposition \ref{prop:LY} and the super-Lyapunov property (Remark \ref{rmk:SuperL}) to obtain the desired geometric ergodicity in $C_V^1$.
\begin{proposition} \label{prop:C1VSpec}
For all $V = V_{\beta,\eta}$ with $\beta$ sufficiently large and $\eta \in (0,\eta^\ast)$, we have that $\hat{P}^\kappa_{T_0}$ satisfies the following for $T_0$ sufficiently large (with $T_0$ and the implicit constant independent of $\kappa$): for $\psi \in C_V^1$, $\int \psi d\nu^\kappa = 0$, we have
\begin{align}
\norm{\hat{P}^\kappa_{n T_0} \psi}_{C_V^1} \lesssim e^{-\alpha n T_0 } \norm{\psi}_{C_V^1} \, .
\end{align}
\end{proposition}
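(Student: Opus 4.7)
The plan is to follow the Hairer--Mattingly-style argument of [Proposition 4.7; \cite{BBPS19}] and verify that each step preserves $\kappa$-uniformity. The three ingredients are all available with $\kappa$-independent constants: the $\mathring{C}_V$-spectral gap for $\hat{P}_t^\kappa$ on mean-zero observables (Proposition \ref{prop:CVspecGapProj}), the Lasota--Yorke gradient estimate (Proposition \ref{prop:LY}), and the super-Lyapunov property in the iterated form \eqref{ineq:SuperLyapIter} applied to $V^2$. The latter transfers trivially to the projective semigroup since $V_{\beta,\eta}(u,x,v) = V_{\beta,\eta}(u)$ depends only on $u$, so that $\hat{P}^\kappa_t V_{\beta,\eta} = P_t V_{\beta,\eta}$.

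First I would fix $V = V_{\beta,\eta}$ with $\beta$ above the threshold in Proposition \ref{prop:LY} and $0 < \eta < \eta^*/4$, so that $V^2 = V_{2\beta, 2\eta}$ remains admissible for Lemma \ref{lem:TwistBd}, and apply Proposition \ref{prop:LY} with auxiliary function $V' = V_{\beta',\eta'}$ satisfying $\beta' \leq \beta/2$ and $\eta' \leq \eta/2$, which forces $V'\sqrt{V} \leq V$ pointwise. Iterating \eqref{ineq:SuperLyapIter} applied to $V^2$ gives, for $T_0$ sufficiently large (independent of $\kappa$), a $\kappa$-uniform bound $\hat{P}^\kappa_{T_0} V^2 \leq C_* V$. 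Plugging the pointwise bounds $\abs{\psi}^2 \leq \norm{\psi}_{C_V}^2 V^2$ and $\norm{D\psi}_{\Hbf^*}^2 \leq \norm{D\psi/V}_{L^\infty}^2 V^2$ into Proposition \ref{prop:LY} and dividing by $V$ yields a $\kappa$-independent constant $C_3$ such that
\begin{align}
\norm{D\hat{P}^\kappa_{T_0}\psi/V}_{L^\infty} \leq C_3 \bigl(\norm{\psi}_{C_V} + e^{-\varkappa T_0}\norm{D\psi/V}_{L^\infty}\bigr).
\end{align}

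To close the argument I would introduce the equivalent norm $|||\psi||| := \norm{\psi}_{C_V} + \delta \norm{D\psi/V}_{L^\infty}$, which is comparable to $\norm{\psi}_{C_V^1}$ for any fixed $\delta > 0$. For $\psi$ mean-zero with respect to $\nu^\kappa$, combining the display above with the $C_V$-spectral gap $\norm{\hat{P}^\kappa_{T_0}\psi}_{C_V} \leq C e^{-\hat\gamma T_0}\norm{\psi}_{C_V}$ from Proposition \ref{prop:CVspecGapProj} gives
\begin{align}
|||\hat{P}^\kappa_{T_0}\psi||| \leq \bigl(C e^{-\hat\gamma T_0} + \delta C_3\bigr)\norm{\psi}_{C_V} + \bigl(C_3 e^{-\varkappa T_0}\bigr)\,\delta \norm{D\psi/V}_{L^\infty}.
\end{align}
Choosing $T_0$ large in a $\kappa$-independent manner so that $Ce^{-\hat\gamma T_0} \leq \tfrac14$ and $C_3 e^{-\varkappa T_0} \leq \tfrac12$, and then $\delta$ small so that $\delta C_3 \leq \tfrac14$, gives $|||\hat{P}^\kappa_{T_0}\psi||| \leq \tfrac12 |||\psi|||$. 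Iterating in $n$ (mean-zero is preserved by invariance of $\nu^\kappa$) and converting back via norm equivalence produces the claimed geometric decay at rate $\alpha := (\log 2)/T_0 > 0$, with all constants $\kappa$-independent.

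No substantial obstacle presents itself, as this is essentially a bookkeeping exercise based on the $\kappa$-uniform ingredients already established. The only delicate point is that $T_0$ must be chosen large enough to simultaneously control three separate decay constants --- the super-Lyapunov contraction from $V^2$ to $V$, the $C_V$-spectral gap rate $\hat\gamma$, and the Lasota--Yorke factor $\varkappa$ --- but all three bounds are $\kappa$-uniform, so a single $\kappa$-independent choice of $T_0$ and implicit constant suffices.
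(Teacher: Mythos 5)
Your proposal is correct and carries out precisely the argument the paper is invoking when it cites [Proposition 4.7; \cite{BBPS19}]: it combines the $\kappa$-uniform $C_V$ spectral gap (Proposition \ref{prop:CVspecGapProj}), the $\kappa$-uniform Lasota--Yorke gradient bound (Proposition \ref{prop:LY}), and the super-Lyapunov iterate \eqref{ineq:SuperLyapIter} applied to $V^2$, closing via the standard Hairer--Mattingly equivalent norm $|||\cdot||| = \|\cdot\|_{C_V} + \delta\|D\cdot/V\|_{L^\infty}$ and a single $\kappa$-independent choice of $T_0$ and $\delta$. One minor caveat worth flagging: your derivation requires $2\eta < \eta^*$ (so that $V^2 = V_{2\beta,2\eta}$ remains admissible for Lemma \ref{lem:TwistBd}), which is slightly narrower than the stated range $\eta\in(0,\eta^*)$; this restriction is inherited from the Cauchy--Schwarz step in the cited argument and is harmless for the paper's use of the proposition (which takes $\eta$ small), but it is worth recording.
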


With $T_0$ fixed once and for all, we immediately deduce the following.

\begin{corollary} \label{cor:SpecGapC1V}
There exists $c_0' \in (0,1)$ and $\kappa_0 > 0$ such that for all $\kappa \in [0,\kappa_0]$, the eigenvalue $1$ is simple, dominant, and isolated for the operator $\hat P^\kappa_{T_0}$
on $\mathring{C}_V^1$, and satisfies
\[
\sigma(\hat P^\kappa_{T_0}) \setminus \{ 1\} \subset B_{c_0'}(0). 
\]
\end{corollary}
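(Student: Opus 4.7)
The plan is to deduce the spectral picture for $\hat P^\kappa_{T_0}$ directly from the uniform geometric ergodicity in Proposition~\ref{prop:C1VSpec}. First I would identify the fixed eigenvalue: the constant function ${\bf 1}$ lies in $\mathring C_V^1$ and satisfies $\hat P^\kappa_{T_0}{\bf 1} = {\bf 1}$. Next, introduce the ``mean-zero'' projection $\pi^\kappa \psi := \bigl(\int \psi \, d\nu^\kappa\bigr){\bf 1}$, which is a bounded operator on $\mathring C_V^1$ because $\int V \, d\nu^\kappa < \infty$; this latter finiteness, together with a $\kappa$-uniform bound, follows by iterating the drift condition of Lemma~\ref{lem:Lyapu} against the stationary measure $\nu^\kappa$. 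The kernel $\mathcal{E}^\kappa := \{ \psi \in \mathring C_V^1 : \int \psi \, d\nu^\kappa = 0 \}$ is $\hat P^\kappa_{T_0}$-invariant by stationarity, yielding a topological direct sum $\mathring C_V^1 = \mathrm{span}\{{\bf 1}\} \oplus \mathcal{E}^\kappa$ in which both summands are closed and $\hat P^\kappa_{T_0}$-invariant. Consequently $\sigma(\hat P^\kappa_{T_0}) = \{ 1\} \cup \sigma\bigl(\hat P^\kappa_{T_0}|_{\mathcal{E}^\kappa}\bigr)$.

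Second, I would read off the spectral radius of the restriction from Proposition~\ref{prop:C1VSpec}: for $\psi \in \mathcal{E}^\kappa$ one has $\|\hat P^\kappa_{nT_0}\psi\|_{C_V^1} \lesssim e^{-\alpha n T_0}\|\psi\|_{C_V^1}$ with implicit constant uniform in $\kappa$, so Gelfand's formula gives $r\bigl(\hat P^\kappa_{T_0}|_{\mathcal{E}^\kappa}\bigr) \leq e^{-\alpha T_0}$. Choosing any fixed $c_0' \in (e^{-\alpha T_0}, 1)$ — independent of $\kappa$ — yields $\sigma\bigl(\hat P^\kappa_{T_0}|_{\mathcal{E}^\kappa}\bigr) \subset B_{c_0'}(0)$ for all $\kappa \in [0, \kappa_0]$, which is precisely the spectral inclusion claimed.

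Finally, I would verify that $1$ is simple. Geometric simplicity is immediate: any $\hat P^\kappa_{T_0}$-fixed element decomposes as $\phi = c{\bf 1} + \phi_0$ with $\phi_0 \in \mathcal{E}^\kappa$, and $\phi_0 = \hat P^\kappa_{nT_0}\phi_0 \to 0$ in $C_V^1$ forces $\phi = c{\bf 1}$. Algebraic simplicity (absence of a Jordan block at $1$) follows by a telescoping argument: if $(\hat P^\kappa_{T_0} - I)\psi = c{\bf 1}$, then iteration gives $\hat P^\kappa_{nT_0}\psi = \psi + n c\, {\bf 1}$, and integrating against $\nu^\kappa$ forces $c = 0$. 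I do not anticipate any genuine obstacle — the corollary is essentially a clean repackaging of Proposition~\ref{prop:C1VSpec} through the invariant-measure decomposition. The only bookkeeping point worth care is confirming uniformity in $\kappa \in [0,\kappa_0]$ of both $c_0'$ and the projection $\pi^\kappa$: uniformity of $c_0'$ is built into Proposition~\ref{prop:C1VSpec}, while uniformity of $\|\pi^\kappa\|_{\mathring C_V^1 \to \mathring C_V^1}$ reduces to a $\kappa$-uniform bound on $\int V \, d\nu^\kappa$ obtained from iterated drift, a fact already implicit in this section.
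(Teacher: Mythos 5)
Your proposal is correct and follows exactly the route the paper intends: the paper treats the corollary as an immediate consequence of Proposition \ref{prop:C1VSpec}, and your decomposition $\mathring C_V^1 = \mathrm{span}\{{\bf 1}\}\oplus\mathcal E^\kappa$ together with Gelfand's formula on the mean-zero subspace is precisely the standard argument being invoked. The bookkeeping points you flag (finiteness and $\kappa$-uniformity of $\int V\,d\nu^\kappa$ via the drift condition, and simplicity via the rank-one spectral projector) are handled correctly.
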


\subsection{Spectral picture for $\hat P^{\kappa, p}_t$ in $\mathring{C}_V$ and $\mathring{C}_V^1$}\label{subsec:miscProj}

We now proceed to prove the spectral pictures for $\hat P^{\kappa, p}_t$ in $C_V$ and $C_V^1$ as in 
Proposition \ref{prop:outlineSpecPic}.

\subsubsection{Proof of Proposition \ref{prop:outlineSpecPic}(a): Spectral picture in $C_V$}\label{subsubsec:specPicCV}

Throughout, $p_0, \kappa_0 > 0$ are fixed small constants, taken smaller as need be 
in the following arguments. Let $p \in [-p_0, p_0], \kappa \in [0,\kappa_0]$.



We next establish the $\kappa$-uniform spectral gap in \eqref{eq:specPicCVOutline}. 
We first establish some preliminary resolvent estimates. Below, $\pi^\kappa$ denotes the projection
$\phi \mapsto \int \phi d \nu^\kappa$ (the latter interpreted as a constant-valued function) on $C_V$.
Recall that $\pi^\kappa$ is a spectral projection for $\hat P^{\kappa}_t$ corresponding to the dominant
eigenvalue $1$. Below, we write $\hat P^\kappa_t = \pi^\kappa + R^\kappa_t$, where
$R^\kappa_t := \hat P^\kappa_t \circ (I - \pi^\kappa)$.

\begin{lemma}\label{lem:resolventEst} \
\begin{itemize}
\item[(a)] We have $\| \pi^\kappa\|_{C_V} = \int V d \mu$.
\item[(b)] For any $z \in \mathbb C \setminus \{ 0,1\}$, we have
\begin{align}
(z - \pi^\kappa)^{-1} = z^{-1} \left( I - \frac{1}{1-z} \pi^\kappa \right) \, . \label{eq:Neumann}
\end{align}
In particular, $\forall \delta > 0,\, \exists C_\delta > 0$ such that
$\| (z - \pi^\kappa)^{-1}\|_{C_V} \leq C_\delta$ on the set
$\{ |z - 1| \geq \delta\} \cap \{ |z| \geq 3/4\}$.
\item[(c)] Fix $t > 0$ sufficiently large so that $\|R^\kappa_t \|_{C_V} \leq 1/(2 C_\delta)$ 
(independently of $\kappa$; see Proposition \ref{prop:CVspecGapProj}). 
Then, $ \| (z - \hat P^\kappa_t)^{-1}\|_{C_V} \lesssim_\delta 1$ for all
$z \in \{ |z - 1| \geq \delta\} \cap \{ |z| \geq 3/4\}$.
\end{itemize}
\end{lemma}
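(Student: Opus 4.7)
\textbf{Proof plan for Lemma \ref{lem:resolventEst}.}

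For part (a), the plan is to compute $\|\pi^\kappa\|_{C_V}$ directly from the definition. Since $\pi^\kappa \phi = (\int \phi \, d\nu^\kappa) \cdot \mathbf{1}$ is a constant-valued function and $V \geq 1$ on $\Hbf \times P \T^d$, for any $\phi \in C_V$ one has
\[
\|\pi^\kappa \phi\|_{C_V} = \sup_{(u,x,v)} \frac{\bigl|\int \phi \, d\nu^\kappa\bigr|}{V(u,x,v)} = \Bigl|\int \phi \, d\nu^\kappa\Bigr| \leq \int |\phi| \, d\nu^\kappa \leq \|\phi\|_{C_V} \int V \, d\nu^\kappa \, ,
\]
giving the upper bound. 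For the matching lower bound, test against $\phi = V$ (noting $\|V\|_{C_V} = 1$) to saturate the inequality.

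For part (b), I will verify the explicit formula by direct algebraic computation, exploiting the fact that $\pi^\kappa$ is a projection, i.e. $(\pi^\kappa)^2 = \pi^\kappa$. Using the ansatz $(z - \pi^\kappa)^{-1} = aI + b\pi^\kappa$, computing $(z-\pi^\kappa)(aI + b\pi^\kappa) = azI + (bz - a - b)\pi^\kappa$ and matching with $I$ forces $a = 1/z$ and $b = 1/(z(z-1))$, which rearranges to \eqref{eq:Neumann}. The resolvent bound follows from the triangle inequality applied to \eqref{eq:Neumann} together with part (a):
\[
\|(z-\pi^\kappa)^{-1}\|_{C_V} \leq \frac{1}{|z|}\Bigl(1 + \frac{\|\pi^\kappa\|_{C_V}}{|1-z|}\Bigr) \leq \frac{4}{3}\Bigl(1 + \frac{1}{\delta}\int V d\nu^\kappa\Bigr) =: C_\delta
\]
on $\{|z-1|\geq \delta\} \cap \{|z|\geq 3/4\}$. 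The uniformity of $C_\delta$ in $\kappa$ will require the observation that $\int V \, d\nu^\kappa$ is bounded uniformly in $\kappa$; this is standard from the drift condition for $V$ (Lemma \ref{lem:Lyapu}), since integrating $\mathcal{P} V \leq \gamma V + K$ against the invariant measure yields $\int V \, d\nu^\kappa \leq K/(1-\gamma)$.

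For part (c), the strategy is to factor the resolvent using the spectral decomposition $\hat P^\kappa_t = \pi^\kappa + R^\kappa_t$ and treat $R^\kappa_t$ as a Neumann perturbation of $z - \pi^\kappa$. Writing
\[
z - \hat P^\kappa_t = (z - \pi^\kappa)\bigl(I - (z-\pi^\kappa)^{-1} R^\kappa_t\bigr) \, ,
\]
the hypothesis $\|R^\kappa_t\|_{C_V} \leq 1/(2C_\delta)$ together with the bound from part (b) gives $\|(z-\pi^\kappa)^{-1} R^\kappa_t\|_{C_V} \leq 1/2$, so the Neumann series converges with $\|(I - (z-\pi^\kappa)^{-1}R^\kappa_t)^{-1}\|_{C_V} \leq 2$, and consequently $\|(z - \hat P^\kappa_t)^{-1}\|_{C_V} \leq 2 C_\delta$. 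The only small point worth flagging, though entirely routine, is that the Neumann argument is cleanest if one first records that $\pi^\kappa$ and $R^\kappa_t$ annihilate one another, i.e.\ $\pi^\kappa R^\kappa_t = R^\kappa_t \pi^\kappa = 0$, which follows from invariance of $\nu^\kappa$ under $\hat P^\kappa_t$ and the fact that $\hat P^\kappa_t$ fixes constants; this also makes clear that $(z-\pi^\kappa)^{-1} R^\kappa_t = z^{-1} R^\kappa_t$. There is no real obstacle here---the content of the lemma is essentially bookkeeping, with the only substantive input being the $\kappa$-uniform bound $\int V \, d\nu^\kappa \lesssim 1$ inherited from the drift condition.
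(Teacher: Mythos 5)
Your proposal is correct and follows essentially the same route as the paper: the same decomposition $\hat P^\kappa_t = \pi^\kappa + R^\kappa_t$, the same factorization $(z - \hat P^\kappa_t)^{-1} = (I - (z-\pi^\kappa)^{-1}R^\kappa_t)^{-1}(z-\pi^\kappa)^{-1}$ for (c), and the same resolvent identity for (b), which you verify by a direct projection-ansatz computation rather than the paper's Neumann series plus analytic continuation — a purely stylistic difference. Two small points in your favor: your saturating test function $\phi = V$ in (a) is the right one (the paper's suggestion of $\phi \equiv 1$ only yields $\|\pi^\kappa\|_{C_V} \geq 1$), and your derivation of the $\kappa$-uniform bound on $\int V\, d\nu^\kappa$ from the drift condition is a valid alternative to the paper's implicit observation that $\int V\, d\nu^\kappa = \int V\, d\mu$ because $V$ depends only on $u$ and the $u$-marginal of $\nu^\kappa$ is the unique stationary measure $\mu$.
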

\begin{proof}[Proof of Lemma \ref{lem:resolventEst}]
For (a) one checks $\| \pi^\kappa \phi\|_{C_V} = \abs{\int \phi d \nu^\kappa} \leq \| \phi  \|_{C_V}  \int V d \mu$.
Equality is achieved at the function $\phi \equiv 1$. For (b), \eqref{eq:Neumann} can be deduced using a Neumann series
for $|z| > 2 \int V d \mu$ and follows for $z \in \mathbb C \setminus \{ 0,1\}$ by analytic continuation. The estimate in (c)
follows from Proposition \ref{prop:CVspecGapProj} and the relation
\[
(z - \hat P^\kappa_t)^{-1} = (I - (z - \pi^\kappa)^{-1} R_t^\kappa)^{-1} (z - \pi^\kappa)^{-1} \, .
\]
\end{proof}

We now complete the proof of Proposition \ref{prop:outlineSpecPic} (a).
Fix $\delta > 0, \, \delta < 1/16$ and fix $t > 0$ sufficiently large so $\| R^\kappa_t\|_{C_V} \leq 1/2$ for all $\kappa \in [0,\kappa_0]$.
We first show $\sigma(\hat P^{\kappa, p}_t) \subset \{ |z| < 3/4\} \cup \{ |z - 1| < \delta\}$. 
Fix $z \in \{ |z| \geq 3/4\} \cap \{ |z - 1| \geq \delta\}$. Then 
\[
z - \hat P^{\kappa, p}_t = (z - \hat P^{\kappa}_t)(I - (z - \hat P^\kappa_t)^{-1} (\hat P^{\kappa, p}_t - \hat P^\kappa_t)). 
\]
Lemma \ref{lem:unifConvCVPto0} indicates that taking $p$ small, we can make
$\| \hat P^{\kappa, p}_t - \hat P^{\kappa}_t\|_{C_V}$ arbitrarily small. On the other hand,
by Lemma \ref{lem:resolventEst} (c), $\| (z - \hat P^\kappa_t)^{-1} \|^{-1}$ is bounded uniformly from below in terms of $\delta > 0$ above. Therefore, for any $\delta' > 0$, there exists
$p_0 > 0$ so that for all $p \in [- p_0, p_0]$, we have
$\| \hat P^{\kappa, p}_t - \hat P^{\kappa}_t\|_{C_V} < \delta'\| (z - \hat P^\kappa_t)^{-1} \|^{-1}$.

For such $p, \kappa$ and $z$, it now follows that $(z - \hat P^{\kappa, p}_t)^{-1}$ exists
and is bounded as a $C_V$ operator, hence 
\[
\sigma(\hat P^{\kappa, p}_t) \subset \{ |z| < 3/4\} \cup \{ |z - 1| < \delta\} \, .
\]
At this point, the spectral projector
\begin{align}
\pi^{p, \kappa} = \frac{1}{2 \pi i} \int_{|z - 1| = \delta} (z - \hat  P^{\kappa, p}_t)^{-1} dz  \label{eq:pipk}
\end{align}
is now defined. Repeating familiar estimates, $\pi^{p,\kappa}$ is $C_V$ close to $\pi^\kappa = \frac{1}{2 \pi i} \int_{|z - 1| = \delta} (z - \hat  P^{\kappa}_t)^{-1} dz$, and hence must be 
rank 1. We conclude that there is a unique real, positive eigenvalue $e^{- t \Lambda(p, \kappa)}$ in $\{ |z - 1 | < \delta\}$.

At this point, we have shown that for some fixed $t$ the desired spectral picture holds. Passing from continuous to discrete
time can now be carried out by repeating verbatim the arguments in the proof [Proposition 2.16 in Section 5.2 of \cite{BBPS19}].

\subsubsection{Proof of Proposition \ref{prop:outlineSpecPic}(b): Spectral picture in $C_V^1$} \label{sec:CV1twist}
Completing the  proof of Proposition \ref{prop:outlineSpecPic}(b) is by now straightforward.
From the mapping and boundedness in Lemma \ref{lem:C1VbdEtc} and the convergence in Lemma \ref{lem:unifConvC1VPto0}, coupled with the $C^1_V$ uniform spectral gap in Corollary \ref{cor:SpecGapC1V}, Lemma \ref{lem:resolventEst} holds with $C_V^1$ replacing $C_V$ on taking  $t \geq T_0$.
The desired spectral picture at any time $T$ sufficiently large now follows from the arguments given for $C_V$ in Section \ref{subsubsec:specPicCV}.

\section{Uniform spectral perturbation of twisted Markov semigroups}\label{sec:prfUniform-spec}

%
%

Our goal in this section is to complete the proof of Proposition \ref{prop:psiPUniform}.
Given Proposition \ref{prop:outlineSpecPic}, this is mainly a matter of proving the convergence of the dominant eigenvalues/functions as $\kappa \to 0$, i.e. $\Lambda(p, \kappa) \to \Lambda(p,0)$ and $\psi_{p, \kappa} \to \psi_{p,0}$ as in Proposition \ref{prop:convSpecObjectsOutline}. 

\subsection{Preliminary estimates in the limit $\kappa \to 0$}
 
Below, $\pi^{\kappa, p}$ denotes the spectral projector for $\hat P_T^{\kappa, p}$, regarded
either on $C_V$ or $C_V^1$.
The following lemma provides uniform estimates and convergence on the spectral projectors. It is a straightforward consequence of the resolvent arguments Lemma \ref{lem:resolventEst} and Section \ref{subsubsec:specPicCV} above.

\begin{lemma}\label{cor:specProjUnifCTRL}
We have
\begin{align}
\lim_{p \to 0} \sup_{\kappa \in [0,\kappa_0]} \| \pi^{\kappa, p} - \pi^\kappa\|_{C_V^1 \to C^1_V} = 0 \, .\label{eq:PikpPikConv}
\end{align}
In particular, Proposition \ref{prop:psiPUniform} (b) (ii) holds: for all $p_0,\kappa_0$ sufficiently small we have
\[
\sup_{p \in [0,p_0]} \sup_{\kappa \in [0,\kappa_0]} \| \psi_{p, \kappa}\|_{C_V^1} \lesssim 1 \, .
\]
\end{lemma}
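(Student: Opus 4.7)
The plan is to deduce the convergence \eqref{eq:PikpPikConv} from the analogue of the resolvent estimates in Lemma \ref{lem:resolventEst} but on $C_V^1$ rather than $C_V$, together with the uniform convergence Lemma \ref{lem:unifConvC1VPto0}. I would first record the $C_V^1$ analogue of Lemma \ref{lem:resolventEst}: using the uniform $C_V^1$ spectral gap for $\hat P^\kappa_{T_0}$ from Corollary \ref{cor:SpecGapC1V} in place of the $C_V$ gap from Proposition \ref{prop:CVspecGapProj}, and running the same Neumann-series / projection decomposition argument, one obtains that for any $\delta \in (0, 1/16)$ there exists $C_\delta > 0$ (independent of $\kappa$) such that
\[
\sup_{\kappa \in [0,\kappa_0]} \; \sup_{z \in \Gamma_\delta} \bigl\| (z - \hat P^\kappa_{T_0})^{-1} \bigr\|_{C_V^1 \to C_V^1} \leq C_\delta,
\]
where $\Gamma_\delta := \{|z-1|=\delta\}$ is the contour encircling the dominant eigenvalue $1$ of $\hat P^\kappa_{T_0}$.

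Next I would fix such a $\delta$ and, by shrinking $p_0$, leverage Lemma \ref{lem:unifConvC1VPto0} to guarantee
\[
\sup_{\kappa \in [0,\kappa_0]} \| \hat P^{\kappa,p}_{T_0} - \hat P^\kappa_{T_0} \|_{C_V^1 \to C_V^1} \leq \frac{1}{2 C_\delta} \qquad \text{for all } p \in [-p_0, p_0].
\]
Applying the second resolvent identity in the form
\[
(z - \hat P^{\kappa,p}_{T_0})^{-1} = \bigl[ I - (z - \hat P^{\kappa}_{T_0})^{-1} (\hat P^{\kappa,p}_{T_0} - \hat P^{\kappa}_{T_0}) \bigr]^{-1} (z - \hat P^{\kappa}_{T_0})^{-1},
\]
the Neumann series converges in $C_V^1$ and yields the same type of uniform resolvent bound for $(z - \hat P^{\kappa,p}_{T_0})^{-1}$ on $\Gamma_\delta$. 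In particular, the spectral projector
\[
\pi^{\kappa,p} = \frac{1}{2\pi i} \int_{\Gamma_\delta} (z - \hat P^{\kappa,p}_{T_0})^{-1}\, dz
\]
is well-defined in $C_V^1$ and (as in the analogous argument of Section \ref{subsubsec:specPicCV}) is a rank-one spectral projector onto the dominant eigenspace of $\hat P^{\kappa,p}_{T_0}$.

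With these preparations, the resolvent identity
\[
(z - \hat P^{\kappa,p}_{T_0})^{-1} - (z - \hat P^{\kappa}_{T_0})^{-1} = (z - \hat P^{\kappa,p}_{T_0})^{-1}\, (\hat P^{\kappa,p}_{T_0} - \hat P^{\kappa}_{T_0})\, (z - \hat P^{\kappa}_{T_0})^{-1}
\]
and integration over $\Gamma_\delta$ yield
\[
\| \pi^{\kappa,p} - \pi^\kappa \|_{C_V^1 \to C_V^1} \lesssim_\delta \| \hat P^{\kappa,p}_{T_0} - \hat P^\kappa_{T_0} \|_{C_V^1 \to C_V^1},
\]
and the right-hand side tends to zero as $p \to 0$ uniformly in $\kappa \in [0,\kappa_0]$ by Lemma \ref{lem:unifConvC1VPto0}. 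This proves \eqref{eq:PikpPikConv}. For the second claim, $\psi_{p,\kappa} = \pi^{\kappa,p}(\mathbf{1})$ and $\| \mathbf{1} \|_{C_V^1} = 1$, so the just-established convergence plus the uniform bound $\| \pi^\kappa \|_{C_V^1 \to C_V^1} = \int V \, d\nu^\kappa = \int V \, d\mu$ (here using that the $u$-marginal of $\nu^\kappa$ is $\mu$, which is $\kappa$-independent) gives $\sup_{p,\kappa}\| \pi^{\kappa,p}\|_{C_V^1 \to C_V^1} \lesssim 1$ and therefore $\sup_{p,\kappa}\| \psi_{p,\kappa}\|_{C_V^1} \lesssim 1$.

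The only nontrivial step is the uniform resolvent bound for $(z - \hat P^\kappa_{T_0})^{-1}$ in $C_V^1$; this is where the work done in Section \ref{subsec:CV1proj} (Lasota--Yorke bounds and the resulting $C_V^1$ spectral gap with $\kappa$-independent constants) is essential. Once this bound is in hand, the rest of the argument is a standard perturbative contour-integral computation and the main obstacle is already absorbed into Corollary \ref{cor:SpecGapC1V}.
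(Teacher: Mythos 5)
Your proposal matches the paper's argument: the paper's one-line proof ("by repeating the arguments used to bound $\pi^{p,\kappa}$ in the proof of Proposition \ref{prop:outlineSpecPic}, the convergence follows from Lemma \ref{lem:unifConvC1VPto0}") is exactly the $C_V^1$ resolvent-bound / contour-integral / second-resolvent-identity computation you carry out in detail, with Corollary \ref{cor:SpecGapC1V} supplying the $\kappa$-uniform $C_V^1$ gap. Your deduction of the uniform bound on $\|\psi_{p,\kappa}\|_{C_V^1}$ from $\psi_{p,\kappa}=\pi^{\kappa,p}(\mathbf 1)$ and the bound on $\|\pi^\kappa\|_{C_V^1\to C_V^1}$ is also the intended argument.
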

\begin{proof}
Recall from \eqref{eq:pipk} the formula for $\pi^{\kappa,p}$. By repeating the arguments used to bound $\pi^{p,\kappa}$ in the proof of Proposition \ref{prop:outlineSpecPic} above, the convergence \eqref{eq:PikpPikConv} follows from Lemma \ref{lem:unifConvC1VPto0}.
\end{proof}

Obviously, a critical part of our proof has to do with the precise sense in which the semigroups 
 $\hat{P}^p_t$ and $\hat{P}^{\kappa,p}_t$ are close. For this, we start by understanding how the $\kappa$ projective process $(x^\kappa_t,v^\kappa_t)$ and the $\kappa =0$ process $(x_t,v_t)$ converge to each other in a suitable sense.

\begin{lemma}\label{lem:proj-kap-convergence}
The following estimate holds for each $t>0$:
\[
	\widetilde{\E}\sup_{s\in[0,t]}d(x^\kappa_s,v^\kappa_s;x_s,v_s) \leqc \sqrt{\kappa t} \exp\left({\int_0^t\|\nabla u_s\|_{\infty}\ds}\right).
\]
\end{lemma}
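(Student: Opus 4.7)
The plan is to couple the two processes on the same probability space by driving them with the same velocity realization $(u_t)$, with the diffusive Lagrangian flow additionally driven by $\widetilde W_t$. Since the $\sqrt{2\kappa}\widetilde W_t$ noise is additive in $x$, both $(x^\kappa_t, v^\kappa_t)$ and $(x_t, v_t)$ admit pathwise integral representations, and the proof will proceed by a Gronwall-type argument on the pathwise difference, followed by taking $\widetilde\E$ and using the standard bound $\widetilde\E\sup_{s\in[0,t]}|\widetilde W_s|\lesssim\sqrt t$.

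For the position component, set $\xi^\kappa_t := x^\kappa_t - x_t$, which satisfies
\[
\xi^\kappa_t = \int_0^t \bigl(u_s(x^\kappa_s) - u_s(x_s)\bigr)\ds + \sqrt{2\kappa}\,\widetilde W_t,
\]
hence the pathwise estimate $|\xi^\kappa_t| \leq \int_0^t\|\nabla u_s\|_\infty |\xi^\kappa_s|\,\ds + \sqrt{2\kappa}\sup_{r\leq t}|\widetilde W_r|$. Classical Gronwall then gives
\[
\sup_{s\in[0,t]}|\xi^\kappa_s| \leq \sqrt{2\kappa}\,\sup_{s\in[0,t]}|\widetilde W_s|\,\exp\!\left(\int_0^t\|\nabla u_s\|_\infty\ds\right),
\]
and taking $\widetilde\E$ yields the claimed bound for the $x$-factor. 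For the projective component, I would subtract the governing ODEs $\dot v^\kappa_t = \Pi_{v^\kappa_t}Du_t(x^\kappa_t) v^\kappa_t$ and $\dot v_t = \Pi_{v_t}Du_t(x_t) v_t$, and split via the telescoping $\pm\Pi_{v^\kappa_t}Du_t(x_t)v^\kappa_t$: the first piece is bounded pointwise by $C\|D^2 u_s\|_\infty |\xi^\kappa_s|$ (sensitivity of $Du$ to the base point), while the second, being a Lipschitz difference of $v\mapsto \Pi_v Du_t(x_t)v$ on the sphere, is bounded by $C\|Du_s\|_\infty |\eta^\kappa_s|$, where $\eta^\kappa_t$ is a local coordinate representative of $d_{P^{d-1}}(v^\kappa_t,v_t)$. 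Applying Gronwall to $Y^\kappa_t := |\xi^\kappa_t| + |\eta^\kappa_t|$ then closes the estimate with the same $\sqrt{\kappa t}$ scaling.

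The main subtlety is the $v$-component: the linear Gronwall requires more than Lipschitz control on $u_t$, as the base-point sensitivity of $\Pi_v Du_t(x) v$ naturally involves $\|D^2 u_s\|_\infty$. This regularity is available via the standing embedding $\Hbf \hookrightarrow C^3$, and the higher-derivative contribution is absorbed into the implicit constant $\lesssim$ (whose dependence on additional velocity norms is harmless for the downstream limit-passing arguments, since only the $\sqrt{\kappa t}$ scaling matters there). No genuinely new technical obstacle arises beyond keeping track of this absorption; the structure of the proof is entirely a coupling plus Gronwall.
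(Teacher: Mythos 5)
Your proof takes essentially the same route as the paper: pathwise coupling driven by the same $W$ and $\widetilde W$, a Gr\"onwall estimate on the distance process, and then $\widetilde\E\sup_{s\leq t}|\widetilde W_s|\lesssim\sqrt t$. You have also correctly spotted a subtlety that the paper's one-line proof glosses over: closing the Gr\"onwall inequality for the projective component genuinely requires $\|D^2 u_s\|_\infty$ (base-point sensitivity of $x \mapsto Du_s(x)$), not merely $\|\nabla u_s\|_\infty$, so the paper's stated pre-Gr\"onwall inequality is a slight abuse of notation.

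However, your resolution --- ``the higher-derivative contribution is absorbed into the implicit constant $\lesssim$'' --- is not correct as written. By the conventions laid out in Section \ref{sec:Note}, implicit constants are \emph{never} permitted to depend on $\omega$ or on $(u_t)$; the quantity $\|D^2 u_s\|_\infty$ is random and time-dependent, so it cannot be buried in a deterministic constant. What Gr\"onwall actually yields, applying it to $Y^\kappa_t := |\xi^\kappa_t| + |\eta^\kappa_t|$ with coefficient $a(s) = C(\|\nabla u_s\|_\infty + \|D^2 u_s\|_\infty)$ and monotone forcing $B_t = \sqrt{2\kappa}\sup_{s\leq t}|\widetilde W_s|$, is
\[
\sup_{s\in[0,t]} Y^\kappa_s \leq B_t \exp\left( C\int_0^t \big(\|\nabla u_s\|_\infty + \|D^2 u_s\|_\infty\big)\,\ds \right),
\]
so the extra regularity shows up in the \emph{exponent}, not the prefactor. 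This gives a slightly different exponent than the lemma states (as does the paper's own proof, strictly read), but it is harmless: the only downstream use is in Lemma \ref{lem:SOT-convergence}, where the exponential is ultimately integrated against Lemma \ref{lem:TwistBd}, which accommodates $\int_0^t\|u_s\|_{\Hbf^r}\,\ds$ for any $r\in(\tfrac d2+1,3)$ and hence $\|u_s\|_{W^{2,\infty}}$. So your argument is sound modulo this bookkeeping; just move the $\|D^2 u_s\|_\infty$ dependence into the Gr\"onwall exponent rather than claiming it hides in the implicit constant.
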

\begin{proof}
This follows from the fact that 
\[
\sup_{s\in[0,t]}d(x^\kappa_s,v^\kappa_s;x_s,v_s) \leqc \int_0^t \|\nabla u_s\|_{\infty}\sup_{r\in[0,s]}d(x^\kappa_r,v^\kappa_r;x_r,v_r)\ds + \sqrt{2\kappa}\sup_{s\in[0,t]}|\widetilde{W}_s|
\]
Taking expectation with $\widetilde{\E}$, using $\widetilde{\E}\sup_{s\in[0,t]}|\widetilde{W}_s| \leqc t^{1/2}$, and applying Gr\"onwall's lemma gives the result.
\end{proof}

Next, we show the continuity in the strong operator topology of $P^{p,\kappa}_t\phi$ in $C_V$ as $\kappa \to 0$. Below, $V = V_{\beta, \eta}$ as in Lemma \ref{lem:Lyapu}.

\begin{lemma}\label{lem:SOT-convergence}
Assume $\beta$ is sufficiently large. Then, there exists $\kappa_0, p_0 > 0$ so that 
for each $\psi \in \mathring{C}_V$, the following holds for any $t > 0$ fixed:
\[
	\lim_{\kappa \to 0}\sup_{p\in[-p_0,p_0]} \|P^{p,\kappa}_t\psi - P^p_t\psi\|_{C_V \to C_V} = 0.
\]
\end{lemma}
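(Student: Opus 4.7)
The plan is to reduce to smooth cylinder functions via density, then use the Feynman--Kac representation \eqref{FKRep} to compare $\hat P^{\kappa,p}_t \psi_\eps$ with $\hat P^{p}_t \psi_\eps$ using Lemma \ref{lem:proj-kap-convergence}. Given $\eps > 0$, I would use that $\mathring C_V$ is the $C_V$-closure of $\mathring C_0^\infty$ (see \eqref{def:CylSpc}) to find $\psi_\eps \in \mathring C_0^\infty(\Hbf \times P\T^d)$ with $\|\psi - \psi_\eps\|_{C_V} < \eps$. By the uniform-in-$(\kappa, p)$ boundedness of $\hat P^{\kappa,p}_t$ on $C_V$ established in Section \ref{sec:BasicProj}, both $\|\hat P^{\kappa,p}_t(\psi - \psi_\eps)\|_{C_V}$ and $\|\hat P^{p}_t(\psi - \psi_\eps)\|_{C_V}$ are bounded by $C\eps$ for a $\kappa, p$-independent constant $C$. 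It therefore suffices to prove
\[
\lim_{\kappa \to 0} \sup_{p \in [-p_0, p_0]} \|\hat P^{\kappa,p}_t \psi_\eps - \hat P^p_t \psi_\eps\|_{C_V} = 0
\]
for smooth cylinder $\psi_\eps$ that is bounded and Lipschitz in $(x, v)$ with Lipschitz constant $L_\eps$.

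For such $\psi_\eps$ I would couple the two semigroups on a common probability space driven by the same realizations of $W_t$ and $\widetilde W_t$, exploiting the fact that only the Lagrangian/projective factor differs between them. Writing $H^\kappa_s := \langle v^\kappa_s, Du_s(x^\kappa_s) v^\kappa_s \rangle$ and $H^0_s := \langle v_s, Du_s(x_s) v_s \rangle$, decompose
\begin{align}
& \hat P^{\kappa,p}_t \psi_\eps(u,x,v) - \hat P^p_t \psi_\eps(u,x,v) \\
&\quad = \E\!\left[\bigl(e^{-p\int_0^t H^\kappa_s \ds} - e^{-p\int_0^t H^0_s \ds}\bigr)\psi_\eps(u_t, x^\kappa_t, v^\kappa_t)\right] \\
&\qquad + \E\!\left[e^{-p\int_0^t H^0_s \ds}\bigl(\psi_\eps(u_t, x^\kappa_t, v^\kappa_t) - \psi_\eps(u_t, x_t, v_t)\bigr)\right].
\end{align}
The second term is controlled pathwise by $L_\eps d(x^\kappa_t, v^\kappa_t; x_t, v_t)$ and then treated by Lemma \ref{lem:proj-kap-convergence}. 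For the first term I would use $|e^a - e^b| \leq (e^a \vee e^b)|a-b|$ together with the pointwise bound $|H^\kappa_s - H^0_s| \lesssim \|u_s\|_{\Hbf^r} d(x^\kappa_s, v^\kappa_s; x_s, v_s)$, valid for $r > d/2 + 2$ so that $\Hbf^r \hookrightarrow C^2$, producing a factor of $|p|$ times $\int_0^t \|u_s\|_{\Hbf^r} d(x^\kappa_s, v^\kappa_s; x_s, v_s)\,\ds$ along with an exponential prefactor.

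The main obstacle — and the reason $p_0$ must be taken small — is closing these estimates uniformly in $(u,x,v) \in \Hbf \times P\T^d$ and in $p \in [-p_0, p_0]$: three distinct sources of exponential growth in $u$ appear simultaneously, namely the Feynman--Kac prefactor $e^{|p|\int |H|\,\ds}$ above, the Gronwall factor $\exp(\int_0^t \|\nabla u_s\|_\infty\,\ds)$ from Lemma \ref{lem:proj-kap-convergence}, and the Lyapunov growth of $V(u_t)$ itself. I would handle this via Cauchy--Schwarz followed by Lemma \ref{lem:TwistBd}, choosing $\beta$ sufficiently large and $p_0$ sufficiently small so that the total exponential weight stays in the admissible range $\eta < \eta^*$. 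This yields estimates of the form
\[
\E_u\!\left[\exp\!\Bigl(C_0 \int_0^t \|u_s\|_{\Hbf^r}\,\ds\Bigr) \sup_{s \in [0,t]}V(u_s)\right] \lesssim V(u),
\]
with $C_0 = C_0(p_0,t)$, absorbing all exponential prefactors into $V(u)$ uniformly in $\kappa$ and $p$. Combining this with $\|\psi_\eps\|_\infty \leq C_\eps$ and Lemma \ref{lem:proj-kap-convergence} bounds the total difference pointwise by $C_\eps V(u) \sqrt{\kappa}\, \Phi(t)$ for a $\kappa, p$-independent $\Phi(t)$; dividing by $V(u)$, taking the supremum in $(u,x,v)$, then sending $\kappa \to 0$ followed by $\eps \to 0$, completes the proof.
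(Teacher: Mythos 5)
Your proposal is correct and follows essentially the same route as the paper's proof: reduction to smooth cylinder functions by density and uniform boundedness, the same two-term decomposition of $\hat P^{\kappa,p}_t\psi_\eps - \hat P^p_t\psi_\eps$ (difference of Feynman--Kac weights plus weight times difference of observables), control of both terms via Lemma \ref{lem:proj-kap-convergence}, and absorption of the exponential prefactors using Lemma \ref{lem:TwistBd}. The only cosmetic difference is that you invoke $\|u_s\|_{\Hbf^r}$ with $\Hbf^r\hookrightarrow C^2$ where the paper writes $\|\nabla^2 u_s\|_\infty$ directly; these are equivalent given the regularity of $\Hbf$.
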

\begin{proof}
Recall that similar to our proof of strong continuity in \cite{BBPS19}, in light of the boundedness of $P^{p,\kappa}_t$ as $\kappa \to 0$, it is sufficient to show strong continuity on smooth cylinder functions $\psi \in C^\infty$. First note that for such $\psi$
\[
	\widetilde{\E}\left|\exp\left(\int_0^t H(z_s)\ds\right)(\psi(z^\kappa_t) - \psi(z_t))\right| \leqc_\psi \exp\left(p\int_0^t \|\nabla u_s\|_{\infty}\right)\widetilde{\E}d(x^\kappa_t,v^\kappa_t;x_t,v_t)
\]
and in addition 
\begin{equation}
\begin{aligned}
	&\widetilde{\E}\left | \exp\left(p\int_0^t H(z^\kappa_s)\ds\right) - \exp\left(p\int_0^t H(z_s)\ds\right)\right|\\
	&\hspace{1in}\leqc p\exp\left(\int_0^t p\|\nabla u_s\|_\infty\ds\right)\int_0^t \|\nabla^2 u_s\|_{\infty}\widetilde{\E}d(x^\kappa_s,v^\kappa_s;x_s,v_s)\ds.
\end{aligned}
\end{equation}
Applying Lemma \ref{lem:proj-kap-convergence} gives
\[
	|P_t^{p,\kappa}\psi - P_t^{p}\psi| \leqc_{\psi} \sqrt{\kappa t}(1+p) \E_u \exp\left((1+p)\int_0^t \|\nabla u_s\|_{\infty}\ds\right) \sup_{s\in [0,t]} \|u_s\|_{H^\sigma}.
\]
The proof is complete upon using Lemma \ref{lem:TwistBd} and sending $\kappa \to 0$. Note that, in fact, the above estimates are
uniform over compact time intervals $t \in [0,T]$.
\end{proof}

\subsection{Proof of Proposition \ref{prop:convSpecObjectsOutline}:  Convergence of $\set{\psi_{p,\kappa}}$ and $\Lambda(p,\kappa)$} \label{sec:ProofConv}



We are now ready for what is in some sense the crucial step in extending the work of \cite{BBPS19} to prove Theorem \ref{thm:unifTwoPtMixing}: passing to the limit in the eigenfunction/value relation for $\psi_{p,\kappa}$ as stated in Proposition \ref{prop:convSpecObjectsOutline}.

\begin{remark}\label{rmk:diffDomainCpct}
First, note that all the arguments we have made hold for arbitrary $\sigma \in (\alpha-2(d-1), \alpha  - \tfrac{d}{2})$.
Moreover, the corresponding $\Lambda(p, \kappa)$ are the same and $\psi_{p, \kappa} \in C_V (\Hbf^{\sigma'} \times P \T^d)$ agree on $\Hbf^\sigma \times P \T^d$
for $\sigma' < \sigma$ with $\sigma',\sigma \in (\alpha-2(d-1), \alpha  - \tfrac{d}{2})$. 
See [Remark 5.6; \cite{BBPS19}] for related discussions. 
\end{remark}

The first step is to use the uniform bound $\norm{\psi_{p,\kappa}}_{C^1_V} \lesssim 1$ to apply the Arzela-Ascoli theorem in classes of observables to extract limit points of $\set{\psi_{p,\kappa}}_{\kappa \in (0,\kappa_0]}$. This is a little subtle due to the interplay between regularity in $H^\sigma$ vs $H^{\sigma'}$ and regularity in the space of observables, $C_V$ vs $C_V^1$.

\begin{lemma} \label{lem:compact}
There exists $p_0 , \kappa_0 > 0$ such that the following holds. For any $p \in [0,p_0]$ and 
any sequence $\{ \kappa_n\}_{n=1}^\infty \subset (0,\kappa_0]$, $\kappa_n \to 0$, 
there exists an subsequence $\{ \kappa_{n'}\}_{n'=1}^\infty \subseteq \{ \kappa_n\}_{n=1}^\infty$
and a nonnegative, continuous function $\psi_{p, \ast} : \Hbf \times P \T^d \to \R_{\geq 0}$ such that
for any $R > 0$, we have
\begin{align}
\lim_{n' \to \infty} \sup_{\substack{z = (u,x,v) \in \Hbf \times P \T^d \\ \norm{u}_\Hbf \leq R}} \abs{\psi_{p, \kappa_{n'}}(z) - \psi_{p,\ast}(z)} = 0. 
\end{align}
\end{lemma}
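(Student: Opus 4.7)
The plan is to prove Lemma \ref{lem:compact} by a diagonal Arzel\`a--Ascoli argument, combining the uniform $C_V^1$ control on $\{\psi_{p,\kappa}\}$ from Lemma \ref{cor:specProjUnifCTRL} with a Rellich-type compactness obtained by descending to a weaker Sobolev exponent. The main obstacle is that bounded subsets of $\Hbf = H^\sigma$ are not themselves compact in $\Hbf$, so the $\Hbf^\ast$-Lipschitz bound implied by $\norm{\psi_{p,\kappa}}_{C_V^1} \lesssim 1$ is, on its own, insufficient to extract a convergent subsequence in any reasonable topology. The key idea is to use Remark \ref{rmk:diffDomainCpct}: the entire construction may be carried out with a weaker Sobolev exponent $\sigma_1 < \sigma$, so that the resulting $\psi_{p,\kappa}$ can be regarded as continuous functions on the \emph{larger} space $\Hbf^{\sigma_1} \supset \Hbf$, in which bounded subsets of $\Hbf$ are pre-compact.

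Concretely, I will fix $\sigma_1 \in (\alpha - 2(d-1), \sigma)$ and, by rerunning the arguments of Section \ref{sec:prfSpecPic} with $\sigma_1$ in place of $\sigma$, obtain
\[
\sup_{\kappa \in [0,\kappa_0]}\sup_{p \in [0,p_0]} \norm{\psi_{p,\kappa}}_{C_V^1(\Hbf^{\sigma_1} \times P\T^d)} \lesssim 1,
\]
with the new $\psi_{p,\kappa}$ agreeing with the original on $\Hbf \times P\T^d$. The resulting Fr\'echet derivative bound in $(\Hbf^{\sigma_1})^\ast$ yields uniform Lipschitz continuity in the (weaker) $\Hbf^{\sigma_1}$-norm on any $\Hbf^{\sigma_1}$-bounded set, with Lipschitz constant controlled by $\sup V$ on that set.

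For each fixed $R > 0$, I will invoke the compactness of the embedding $\Hbf \embeds \Hbf^{\sigma_1}$ (Rellich--Kondrachov) to conclude that $B_R := \{u \in \Hbf : \norm{u}_\Hbf \leq R\}$ is compact in $\Hbf^{\sigma_1}$, hence $K_R := B_R \times P\T^d$ is compact in $\Hbf^{\sigma_1} \times P\T^d$. On $K_R$, the family $\{\psi_{p,\kappa}\}_{\kappa \in (0,\kappa_0]}$ is uniformly bounded and uniformly equicontinuous with respect to the $\Hbf^{\sigma_1}$-topology, so Arzel\`a--Ascoli yields a subsequence converging uniformly on $K_R$. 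A standard diagonal extraction across $R = 1,2,3,\ldots$ then produces a single subsequence $\kappa_{n'}$ and a continuous limit $\psi_{p,\ast} : \Hbf \times P\T^d \to \R$ (continuous in the $\Hbf^{\sigma_1}$-topology, hence also in the stronger $\Hbf$-topology) for which the desired locally uniform convergence on each $K_R$ holds.

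Finally, nonnegativity of $\psi_{p,\ast}$ follows from that of the approximants: by Lemma \ref{cor:specProjUnifCTRL}, $\pi_{p,\kappa} \to \pi_\kappa$ in the $C_V^1 \to C_V^1$ operator norm as $p \to 0$, uniformly in $\kappa \in [0,\kappa_0]$; combined with $\pi_\kappa(\mathbf{1}) \equiv \mathbf{1}$, shrinking $p_0$ if necessary yields $\psi_{p,\kappa} = \pi_{p,\kappa}(\mathbf{1}) \geq \tfrac{1}{2}$ uniformly in $\kappa$, and this lower bound passes to the pointwise limit.
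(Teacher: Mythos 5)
Your compactness argument is essentially the paper's own proof: descend to a weaker admissible exponent $\sigma'<\sigma$ as in Remark \ref{rmk:diffDomainCpct}, use the uniform $C_V^1(\Hbf^{\sigma'}\times P\T^d)$ bound from Lemma \ref{cor:specProjUnifCTRL} to get equicontinuity, exploit the compactness of $\{\|u\|_{\Hbf}\leq R\}\times P\T^d$ in $\Hbf^{\sigma'}\times P\T^d$, and conclude by Arzel\`a--Ascoli plus a diagonal extraction over $R$. That part is correct and matches the paper step for step.

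The one place your argument does not quite close is the nonnegativity of $\psi_{p,\ast}$. The bound $\sup_{\kappa}\|\psi_{p,\kappa}-\mathbf{1}\|_{C_V}\leq\epsilon$ obtained from $\|\pi^{\kappa,p}-\pi^{\kappa}\|_{C_V^1\to C_V^1}\to 0$ only yields $\psi_{p,\kappa}(u,x,v)\geq 1-\epsilon V(u)$, which is a useful lower bound precisely on the sublevel set $\{V(u)\leq 1/(2\epsilon)\}$ and says nothing about the sign of $\psi_{p,\kappa}$ (hence of $\psi_{p,\ast}$) where $V$ is large; shrinking $p_0$ shrinks $\epsilon$ but for any fixed $p_0$ the uncontrolled region is nonempty. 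The global nonnegativity instead comes from positivity of the Feynman--Kac semigroup: by \eqref{FKRep}, $\hat P^{\kappa,p}_t$ maps nonnegative observables to nonnegative observables, and the uniform limit formula $\psi_{p,\kappa}=\lim_{t\to\infty}e^{\Lambda(p,\kappa)t}\hat P^{\kappa,p}_t\mathbf{1}$ (Lemma \ref{lem:uniformCV-limit}, following [Lemma 5.7; BBPS19]) exhibits $\psi_{p,\kappa}$ as a limit of nonnegative functions, so $\psi_{p,\kappa}\geq 0$ everywhere and this passes to the pointwise limit $\psi_{p,\ast}$. Your projector argument is still valuable — it is exactly how one later shows $\psi_{p,\ast}\not\equiv 0$ and gets the uniform lower bounds on bounded sets — but it should not be used as the source of global nonnegativity.
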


\begin{proof} 
To start, fix $R > 0$. Let $\sigma' < \sigma$ and regard $\psi_{p, \kappa} \in C_V(\Hbf^{\sigma'} \times P \T^d)$
for all $p \in [0,p_0], \kappa \in [0,\kappa_0]$ as in Remark \ref{rmk:diffDomainCpct}. 
By Corollary \ref{cor:specProjUnifCTRL}, there exists $C_{\sigma'} > 0$ so that
\[
\norm{\psi_{p,\kappa}}_{C^1_V(\Hbf^{\sigma'} \times P \mathbb T^d)} \leq C_{\sigma'} \, .
\]
Note that the set $\mathscr D_R := \{ (u,x,v) : u \in \Hbf^{\sigma'}, \| u \|_{\Hbf^\sigma} \leq R, (x,v) \in P \T^d\}$ 
is compact in $\Hbf^{\sigma'} \times P \T^d$. 
By the uniform $C^1_V(H^{\sigma'}\times P\mathbb T^d)$ bound, it follows that
 the set $\set{\psi_{p,\kappa_n}|_{\mathscr D_R}}$ is uniformly bounded and $\Hbf^{\sigma'}$-equi-continuous on the $\Hbf^{\sigma'}$-compact set $\mathscr D_R$. 
Therefore, by Arzela-Ascoli, there is a subsequence $\kappa_{n'} \to 0$ and a ($\Hbf^{\sigma'}$-uniformly continuous) function $\psi_{p;R} : \mathscr D_R \to \R_{\geq 0}$ such that
\begin{align}
\lim_{n \to \infty} \sup_{\norm{z}_{\Hbf} \leq R}\abs{\psi_{p,\kappa_n}(z) - \psi_{p;R}(z)}. 
\end{align}
By diagonalization, we may refine the subsequence $\{ \kappa_{n'}\}$ to find
 a limiting function $\psi_{p;\ast}$ defined over the entire $H^\sigma \times P\mathbb T^d$ and 
 continuous in this same topology 
 (note that continuity in $\Hbf^{\sigma'}\times P\mathbb T^d$ is stronger than continuity in $\Hbf^\sigma \times P\mathbb T^d$ if $\sigma'<\sigma$) such that $\psi_{p,\kappa_n}$ converges uniformly to $\psi_{p;\ast}$ on bounded sets.
The fact that $\abs{\psi_{p;\ast}(z)} \lesssim V(u)$ follows from this convergence and the $\kappa$-uniform estimates on  $\|\psi_{p,\kappa}\|_{C_V} $. 
\end{proof}

With Lemma \ref{lem:compact}, we can now pass to the limit in the eigenvalue. 

\begin{lemma}\label{lem:stabilityPsikappap}
We have $\lim_{\kappa \to 0} \Lambda(p, \kappa) = \Lambda(p,0)$.
\end{lemma}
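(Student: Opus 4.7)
The plan is to take any sequence $\kappa_n \to 0$, extract a subsequence along which $\Lambda(p,\kappa_n) \to \Lambda_*$ and $\psi_{p,\kappa_n} \to \psi_{p,*}$ in an appropriate sense, pass to the limit in the eigenvalue equation pointwise to deduce $\hat{P}^p_{T_0}\psi_{p,*} = e^{-T_0\Lambda_*}\psi_{p,*}$, and identify $\Lambda_*$ with $\Lambda(p,0)$ using the simplicity and isolation of the dominant eigenvalue from Proposition \ref{prop:outlineSpecPic}(a). Since every subsequence then admits a further subsequence converging to $\Lambda(p,0)$, the full limit exists and equals $\Lambda(p,0)$.

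First I establish boundedness of $\{\Lambda(p,\kappa)\}_{\kappa \in [0,\kappa_0]}$. The Feynman-Kac representation \eqref{FKRep} combined with Lemma \ref{lem:TwistBd} gives $|\hat{P}^{\kappa,p}_{T_0}\psi_{p,\kappa}(z_0)| \lesssim V(u_0)$ uniformly in $\kappa$, while Corollary \ref{cor:specProjUnifCTRL} yields $\|\psi_{p,\kappa} - \mathbf{1}\|_{C_V^1} \to 0$ as $p\to 0$ uniformly in $\kappa$ (using $\pi^\kappa \mathbf{1} = \int \mathbf{1}\,d\nu^\kappa = 1$). In particular, for $p \in (0,p_0]$ sufficiently small, $\psi_{p,\kappa}(z_0)$ is uniformly bounded above and below on any $\Hbf$-bounded set, yielding a $\kappa$-uniform bound on $\Lambda(p,\kappa)$; moreover $p_0$ can be chosen so that $e^{-T_0\Lambda(p,\kappa)} > c_0^{T_0}$ uniformly in $\kappa \in [0,\kappa_0]$, where $c_0 \in (0,1)$ is as in Proposition \ref{prop:outlineSpecPic}(a). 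Extract a subsequence so $\Lambda(p,\kappa_n) \to \Lambda_*$ with $e^{-T_0 \Lambda_*} \geq c_0^{T_0}$, and refine via Lemma \ref{lem:compact} so $\psi_{p,\kappa_n} \to \psi_{p,*}$ uniformly on $\Hbf$-bounded sets.

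The technical heart is passing to the limit in
\[
\widetilde{\E}_{z_0}\!\left[e^{-p\int_0^{T_0} H(z^{\kappa_n}_s)\,ds}\psi_{p,\kappa_n}(z^{\kappa_n}_{T_0})\right] = e^{-T_0\Lambda(p,\kappa_n)}\psi_{p,\kappa_n}(z_0)
\]
pointwise in $z_0$. The plan is a dominated convergence argument on the augmented probability space: Lemma \ref{lem:proj-kap-convergence} yields $(x^{\kappa_n}_{T_0},v^{\kappa_n}_{T_0}) \to (x_{T_0},v_{T_0})$ almost surely, which, combined with the uniform $C_V^1$-control on $\psi_{p,\kappa_n}$ (giving local Lipschitz estimates in $(x,v)$) and the uniform-on-bounded-sets convergence $\psi_{p,\kappa_n} \to \psi_{p,*}$, gives almost-sure pointwise convergence of $\psi_{p,\kappa_n}(z^{\kappa_n}_{T_0}) \to \psi_{p,*}(z_{T_0})$ on events $\{\|u_{T_0}\|_\Hbf \leq R\}$. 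The $\kappa$-uniform bound $|\psi_{p,\kappa_n}| \leq CV$ together with Lemma \ref{lem:TwistBd} (controlling both the Feynman-Kac weight $e^{-p\int_0^{T_0} H(z^{\kappa_n}_s)\,ds}$ and the moments of $V(u_{T_0})$) supplies $L^1$-domination; a tail truncation over $\{\|u_{T_0}\|_\Hbf > R\}$, with $R \to \infty$ and tail integrability again provided by Lemma \ref{lem:TwistBd}, absorbs the region where uniform convergence of $\psi_{p,\kappa_n}$ is unavailable. This yields $\hat{P}^p_{T_0}\psi_{p,*}(z_0) = e^{-T_0\Lambda_*}\psi_{p,*}(z_0)$ for every $z_0$.

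To identify $\Lambda_* = \Lambda(p,0)$, note that the uniform proximity of $\psi_{p,\kappa}$ to $\mathbf{1}$ for small $p$ passes to the limit and ensures $\psi_{p,*}$ is bounded below by a positive constant on any $\Hbf$-bounded set, hence nontrivial; the $C_V$-bound passes to the limit too, so $\psi_{p,*} \in C_V$, and the identity $\psi_{p,*} = e^{T_0\Lambda_*}\hat{P}^p_{T_0}\psi_{p,*}$ together with the $\mathring C_V$ mapping property (cf.\ Lemma \ref{lem:C1VbdEtc} and the arguments recalled in Section \ref{sec:BasicProj}) places $\psi_{p,*} \in \mathring C_V$. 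Since $e^{-T_0\Lambda_*} \geq c_0^{T_0}$ and is an eigenvalue of $\hat{P}^p_{T_0}$ on $\mathring C_V$ witnessed by the nontrivial eigenfunction $\psi_{p,*}$, the spectral picture of Proposition \ref{prop:outlineSpecPic}(a) forces $e^{-T_0\Lambda_*} = e^{-T_0\Lambda(p,0)}$, completing the proof. The main obstacle will be the dominated convergence step, where the uniform-on-bounded-sets convergence of the eigenfunctions must be carefully balanced against the unbounded $V$-weight and the time-integrated Feynman-Kac factor, all controlled via the super-Lyapunov estimate of Lemma \ref{lem:TwistBd}.
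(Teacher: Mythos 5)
Your proposal follows the same strategy as the paper: extract a subsequential limit $\psi_{p,*}$ of the eigenfunctions via the Arzela--Ascoli-type compactness (Lemma \ref{lem:compact}), show the limit is nontrivial using the uniform closeness of $\psi_{p,\kappa}$ to $\mathbf{1}$ from Lemma \ref{cor:specProjUnifCTRL}, pass to the limit in the eigenvalue equation, and identify the limiting eigenvalue via the $\kappa=0$ spectral picture of Proposition \ref{prop:outlineSpecPic}(a). The main stylistic difference is that you re-derive in line the strong-operator-topology convergence via dominated convergence on the Feynman--Kac representation with a tail truncation; the paper simply invokes Lemma \ref{lem:SOT-convergence} together with the uniform $C_V$-bound on $\psi_{p,\kappa_n}$. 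Your additional preliminary step establishing boundedness of $\Lambda(p,\kappa)$ away from $\log(1/c_0)/T_0$ is already built into Proposition \ref{prop:outlineSpecPic}(a), so it is harmless but not needed.

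One step in your argument is, however, circular as stated: you claim that ``the identity $\psi_{p,*} = e^{T_0\Lambda_*}\hat{P}^p_{T_0}\psi_{p,*}$ together with the $\mathring C_V$ mapping property places $\psi_{p,*} \in \mathring C_V$.'' The mapping property is $\hat P_{T_0}^p(\mathring C_V) \subset \mathring C_V$ --- it requires the input to already be in $\mathring C_V$, which is exactly what you are trying to establish, and $\mathring C_V$ is a proper closed subspace of $C_V$ that a mere pointwise limit of $\mathring C_V$ functions need not belong to. The paper avoids this by not invoking $\mathring C_V$ membership at all: since $\psi_{p,*} \geq 0$ is a nontrivial $C_V$-eigenfunction of the positive operator $\hat P_{T_0}^p$ with eigenvalue $e^{-T_0\Lambda_*} > c_0^{T_0}$, Perron--Frobenius-type reasoning (the dominant eigenvalue of a positive Markov-type operator bounds the spectral radius, and a positive eigenfunction is unique up to scaling) forces $\Lambda_* = \Lambda(p,0)$ directly, without needing to first place $\psi_{p,*}$ in the smaller space. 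You should replace the circular $\mathring C_V$ argument with this positivity argument; with that repair the proof is sound.
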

\begin{proof}
Let $\psi_{\ast} = \lim_{n \to \infty} \psi_{p, \kappa_n}$ 
be a cluster point of $\{ \psi_{p, \kappa} \}_{\kappa > 0}$ as in Lemma \ref{lem:compact}. 

First we show that $\psi_{\ast}$ cannot be identically zero. By Corollary \ref{cor:specProjUnifCTRL},
for $p$ small enough the the spectral projectors $\pi^{p, \kappa}$ are $\kappa$-uniformly close to $\pi^\kappa$ in $C_V^1$.
 Since $\psi_{p, \kappa} = \pi^{p, \kappa}({\bf 1})$ and $\pi^\kappa(\mathbf{1}) = \mathbf{1}$, we conclude that
 $\sup_{\kappa \in [0,\kappa_0]}\| \psi_{p, \kappa} - {\bf 1}\|_{C_V} \ll 1$ for $p$ small enough. Therefore,
 for $p_0$ fixed and sufficiently small, we have that there exists $\delta_0,R_0 > 0$ so that
  $\psi_{p, \kappa} > \delta_0$ on $\{ \| z \|_{\Hbf} \leq R_0\}$. This lower estimate passes to $\psi_{\ast}$, hence
  it cannot vanish identically.
 
Next, we show that $\psi_{*} = c\psi_p$ for some $c > 0$. For this, notice that the uniform boundedness in Lemma \ref{lem:unifConvCVPto0} (with the uniform bound $\norm{\psi_{p,\kappa_n}}_{C_V} \lesssim 1$) and the convergence in Lemma \ref{lem:SOT-convergence} imply that 
\[
	\lim_{n\to\infty} \norm{\hat{P}_{t}^{p,\kappa_n}\psi_{p,\kappa_n} - \hat{P}_t^{p}\psi_{*}}_{C_V} = 0
\]
for fixed $t > 0$. Therefore 
\[
\hat P_t^p \psi_{*} = \lim_{\kappa_n \to 0} \hat P^{p, \kappa_n}_t \psi_{p, \kappa_n} = \lim_{\kappa_n \to 0} e^{- \Lambda(p, \kappa_n)t} \psi_{p, \kappa_n} = \left( \lim_{\kappa_n \to 0} e^{- \Lambda(p, \kappa_n) t}  \right) \psi_{\ast} \, .
\]
In the last equality, we have used the fact that $\psi_\ast > 0$ to deduce that the limit 
$e^{- t \Lambda_*} :=  \lim_n e^{- \Lambda(p, \kappa_n) t}$ exists. Therefore $\psi_{\ast}$ is an eigenfunction
of $\hat P^p_t$ with eigenvalue $e^{- \Lambda_* t}$. By Corollary \ref{cor:specProjUnifCTRL}, the limit $-\Lambda_* = -\lim_n \Lambda(p, \kappa_n)$ is strictly larger than $\log c_0$ (where $c_0$ is as in Proposition \ref{prop:outlineSpecPic} (a) for $\kappa = 0$, proved in \cite{BBPS19}) for $\forall p$ sufficiently small, by Proposition \ref{prop:outlineSpecPic} (a) in the $\kappa = 0$ case, we conclude that in fact $\Lambda_* = \Lambda(p, 0)$ and $\psi_{\ast} = c \psi_{p, 0}$ for some $c > 0$.
Moreover, the convergence $\Lambda(p, 0) = \lim_n \Lambda(p, \kappa_n)$ holds independently of the subsequence $(\kappa_n)$, and so we deduce
$\lim_{\kappa \to 0} \Lambda(p, \kappa) = \Lambda(p,0)$ as desired.
\end{proof}

It remains to show $\psi_{p, \kappa} \to \psi_p$ in the $C_V$ norm. We start
by checking $\kappa$-uniform convergence of the following limit formula for $\psi_{p, \kappa}$. 

\begin{lemma}\label{lem:uniformCV-limit}
The $C_V$ limit
\[
\psi_{p, \kappa}(u,x,v) = \lim_{t \to \infty} e^{\Lambda(p,\kappa)t} \hat P^{\kappa,p}_t{\bf 1}
\]
is uniform over $\kappa \in [0,\kappa_0]$.
\end{lemma}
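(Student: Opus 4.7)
The plan is to exploit the spectral decomposition provided by Proposition \ref{prop:outlineSpecPic}(a), together with a contour integral representation and the uniform resolvent bounds developed in Section \ref{subsubsec:specPicCV}, to obtain an explicit $\kappa$-uniform geometric rate of convergence.

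First I would fix $T_0$ large enough that $c_0^{T_0} < 3/4$, let $\pi^{\kappa,p}$ denote the rank-one spectral projector for the dominant eigenvalue $e^{-\Lambda(p,\kappa)T_0}$ (so $\pi^{\kappa,p}{\bf 1} = \psi_{p,\kappa}$), and set $Q^{\kappa,p} := \hat{P}^{\kappa,p}_{T_0}(I - \pi^{\kappa,p})$. Since $\pi^{\kappa,p}$ commutes with $\hat{P}^{\kappa,p}_{T_0}$, iteration yields
\[
\hat{P}^{\kappa,p}_{nT_0} {\bf 1} = e^{-n\Lambda(p,\kappa)T_0}\,\psi_{p,\kappa} + (Q^{\kappa,p})^n {\bf 1},
\]
so at $t = nT_0$ the lemma reduces to showing $\|e^{n\Lambda(p,\kappa)T_0}(Q^{\kappa,p})^n{\bf 1}\|_{C_V} \to 0$ as $n \to \infty$, uniformly in $\kappa$.

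Next I would bound $(Q^{\kappa,p})^n$ using the uniform spectral picture $\sigma(\hat{P}^{\kappa,p}_{T_0}) \setminus \{e^{-\Lambda(p,\kappa)T_0}\} \subset B_{c_0^{T_0}}(0)$ via the contour integral representation
\[
(Q^{\kappa,p})^n = \frac{1}{2\pi i} \oint_{|z| = r} z^n\,(z - \hat{P}^{\kappa,p}_{T_0})^{-1}(I - \pi^{\kappa,p})\,dz,
\]
taken over a circle $|z|=r$ with $r \in (3/4, 1-\delta)$. For such $r$, the non-dominant spectrum is enclosed (since $c_0^{T_0} < 3/4 < r$) and the dominant eigenvalue is excluded (since $e^{-\Lambda(p,\kappa)T_0} > 1 - \delta > r$ uniformly in $\kappa$, for $p$ small, as a consequence of $\Lambda(0,\kappa) = 0$ and the uniform projector convergence from Corollary \ref{cor:specProjUnifCTRL}). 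Two $\kappa$-uniform inputs are then needed: $\|(I - \pi^{\kappa,p}){\bf 1}\|_{C_V} \lesssim 1$, immediate from Corollary \ref{cor:specProjUnifCTRL}; and $\sup_{|z|=r}\|(z - \hat{P}^{\kappa,p}_{T_0})^{-1}\|_{C_V} \lesssim 1$, which follows from Lemma \ref{lem:resolventEst}(c) applied to $\hat{P}^\kappa_{T_0}$ together with Lemma \ref{lem:unifConvCVPto0} to transfer the resolvent bound to the twisted semigroup. Together these give $\|(Q^{\kappa,p})^n{\bf 1}\|_{C_V} \leq C r^n$ uniformly in $\kappa$.

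Combining with $e^{\Lambda(p,\kappa)T_0} \leq 1/(1-\delta)$ yields
\[
\|e^{n\Lambda(p,\kappa)T_0}(Q^{\kappa,p})^n{\bf 1}\|_{C_V} \leq C\bigl(r/(1-\delta)\bigr)^n,
\]
with $r/(1-\delta) < 1$, giving geometric decay uniformly in $\kappa$ along $t = nT_0$. For general $t = nT_0 + s$, $s \in [0,T_0)$, the semigroup identity $\hat{P}^{\kappa,p}_t{\bf 1} = \hat{P}^{\kappa,p}_{nT_0}(\hat{P}^{\kappa,p}_s{\bf 1})$ together with $\pi^{\kappa,p}(\hat{P}^{\kappa,p}_s{\bf 1}) = e^{-\Lambda(p,\kappa)s}\psi_{p,\kappa}$ and the uniform boundedness of $\hat{P}^{\kappa,p}_s$ on $[0,T_0]$ from Lemma \ref{lem:C1VbdEtc} handles the interpolation. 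The chief technical step is securing the uniform resolvent estimate on $|z|=r$ and simultaneously choosing $r$ to enclose the non-dominant spectrum while remaining bounded away from $e^{-\Lambda(p,\kappa)T_0}$ uniformly in $\kappa$; the hard work underlying both is already in Section \ref{subsubsec:specPicCV}, so the present proof is essentially a careful contour-integral assembly of existing estimates.
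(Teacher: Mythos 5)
Your argument is correct and rests on the same foundation as the paper's proof: write $\hat P^{\kappa,p}_{nT_0}\mathbf{1} = e^{-n\Lambda(p,\kappa)T_0}\psi_{p,\kappa} + (Q^{\kappa,p})^n\mathbf{1}$ and show the complementary part decays geometrically, uniformly in $\kappa$, at a rate strictly beating the growth of $e^{\Lambda(p,\kappa)t}$ (both arguments using that the dominant eigenvalue lies in $\{|z-1|<\delta\}$). The only real difference is packaging: the paper obtains $\|\hat P^{\kappa,p}_t(I-\pi^{\kappa,p})\|_{C_V}\leq 2/3$ at a single large time by splitting it into three perturbatively small pieces (Lemma \ref{lem:unifConvCVPto0}, the projector convergence, and the untwisted spectral gap), whereas you reach the equivalent bound on powers via the Dunford integral and the uniform resolvent estimates of Lemma \ref{lem:resolventEst} --- the same inputs, assembled through the functional calculus instead of a direct operator-norm estimate.
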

\begin{proof}
Consider the operator
\[
R_t^{\kappa, p} := \hat P^{\kappa, p}_t \circ (I - \pi^{\kappa, p}) = (\hat P^{\kappa, p}_t - \hat P^{\kappa}_t)  \circ
(I - \pi^{\kappa, p}) + \hat P^\kappa_t \circ (\pi^{\kappa} - \pi^{\kappa, p}) + \hat P^\kappa_t \circ (I - \pi^\kappa) \, .
\]
Fix $t > 0$ so that $R_t^\kappa := \hat P_t^\kappa$ has $C_V$ norm $\leq 1/3$. 
Take $p$ sufficiently small (independently of $\kappa \in [0,\kappa_0]$
such that the above first and second terms are each $< 1/6$ (the first term
estimated as in Lemma \ref{lem:unifConvCVPto0} and the second as in
Section \ref{subsubsec:specPicCV}). Therefore $\| R_t^{\kappa, p}\|_{C_V} \leq 2/3$
uniformly in $\kappa$.  This implies the desired estimate.
\end{proof}

\begin{remark}
Note that by the same arguments as those applied to $\psi_p$ in [Lemma 5.7; \cite{BBPS19}], we deduce that $\psi_{p,\kappa} \geq 0$ for all $p,\kappa$ sufficiently small. 
\end{remark}

We now use this to show that the limits $\psi_{p, \kappa_n} \to \psi_{p, \ast}$ actually coincide
with $\psi_{p}$ (independent of the subsequence $\kappa_n \to 0$).
\begin{lemma}\label{lem:psitopsi-kappa}
For each $p\in[0,p_0]$,
\[
\lim_{\kappa\to 0} \|\psi_{p,\kappa} - \psi_{p}\|_{C_V} = 0.
\]
\end{lemma}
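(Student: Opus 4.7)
The plan is to obtain the convergence directly from a triangle inequality, using as input the three preceding results: the uniform-in-$\kappa$ truncation formula of Lemma \ref{lem:uniformCV-limit}, the eigenvalue convergence $\Lambda(p,\kappa) \to \Lambda(p,0)$ of Lemma \ref{lem:stabilityPsikappap}, and the strong operator convergence of Lemma \ref{lem:SOT-convergence} applied to the constant function $\mathbf{1}$ (which lies in $\mathring C_V$, since it is a smooth cylinder function and $V \geq 1$).

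The main step is as follows. Fix $\varepsilon > 0$. By Lemma \ref{lem:uniformCV-limit}, we may choose $T = T(\varepsilon) > 0$, independent of $\kappa$, such that
\[
\sup_{\kappa \in [0,\kappa_0]}\bigl\| \psi_{p,\kappa} - e^{\Lambda(p,\kappa)T}\,\hat P_T^{\kappa,p}\mathbf{1}\bigr\|_{C_V} < \varepsilon.
\]
In particular, taking $\kappa = 0$ in this bound controls $\|e^{\Lambda(p)T}\hat P_T^p\mathbf{1} - \psi_p\|_{C_V}$ as well. With this choice of $T$ frozen, I would estimate
\[
\|\psi_{p,\kappa} - \psi_p\|_{C_V} \;\leq\; 2\varepsilon \;+\; \bigl\| e^{\Lambda(p,\kappa)T}\hat P_T^{\kappa,p}\mathbf{1} - e^{\Lambda(p,0)T}\hat P_T^p\mathbf{1}\bigr\|_{C_V}
\]
by the triangle inequality. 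It remains to show that the remaining term vanishes as $\kappa \to 0$; this is handled by splitting
\[
e^{\Lambda(p,\kappa)T}\hat P_T^{\kappa,p}\mathbf{1} - e^{\Lambda(p,0)T}\hat P_T^p\mathbf{1} = \bigl(e^{\Lambda(p,\kappa)T} - e^{\Lambda(p,0)T}\bigr)\hat P_T^{\kappa,p}\mathbf{1} + e^{\Lambda(p,0)T}\bigl(\hat P_T^{\kappa,p}\mathbf{1} - \hat P_T^p\mathbf{1}\bigr).
\]
The first piece tends to $0$ in $C_V$ by Lemma \ref{lem:stabilityPsikappap}, combined with the uniform-in-$\kappa$ bound $\|\hat P_T^{\kappa,p}\mathbf{1}\|_{C_V} \lesssim 1$ furnished by Lemma \ref{lem:C1VbdEtc} (or Section \ref{sec:BasicProj}). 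The second piece tends to $0$ in $C_V$ by Lemma \ref{lem:SOT-convergence} applied to $\psi = \mathbf{1}$. Taking $\kappa \to 0$ therefore yields $\limsup_{\kappa \to 0}\|\psi_{p,\kappa} - \psi_p\|_{C_V} \leq 2\varepsilon$, and since $\varepsilon > 0$ was arbitrary, the desired $C_V$ convergence follows.

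I do not expect a real obstacle: every ingredient is already in place, and the uniformity in $\kappa$ of the tail estimate in Lemma \ref{lem:uniformCV-limit} is precisely what makes the triangle-inequality argument close without needing compactness (Lemma \ref{lem:compact}) or the subsequential identification $\psi_{p,\ast} = c\psi_p$ from the proof of Lemma \ref{lem:stabilityPsikappap}. The only mild point to double-check is that $\mathbf{1}$ belongs to $\mathring C_V$ so that Lemma \ref{lem:SOT-convergence} applies; this is immediate from the definition \eqref{def:CylSpc} since the constant function is a smooth cylinder function.
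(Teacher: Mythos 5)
Your proposal is correct and follows essentially the same route as the paper: a triangle inequality through the truncations $e^{\Lambda(p,\kappa)t}\hat P^{\kappa,p}_t\mathbf{1}$, with Lemma \ref{lem:uniformCV-limit} supplying the $\kappa$-uniform tail bound and Lemmata \ref{lem:stabilityPsikappap} and \ref{lem:SOT-convergence} handling the fixed-$t$ middle term. The only differences are cosmetic: you spell out the splitting of the middle term and fix $T$ up front in terms of $\varepsilon$, whereas the paper takes $\limsup_{\kappa\to 0}$ first and then sends $t\to\infty$.
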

\begin{proof}
For each $t > 0$, we have
\begin{align*}
	 \|\psi_{p,\kappa} - \psi_p\|_{C_V} & \leq  \|\psi_{p,\kappa} - e^{\Lambda(p,\kappa)t} \hat P^{p,\kappa}_t{\bf 1}\|_{C_V} \\ 
	& + \|\psi_p - e^{\Lambda(p)t}\hat P^{p}_t{\bf 1}\|_{C_V} + \|e^{\Lambda(p,\kappa)t}\hat P^{p,\kappa}_t{\bf 1} - e^{\Lambda(p)t} \hat P^{p}_t{\bf 1}\|_{C_V} \, .
\end{align*}
Combining Lemma \ref{lem:stabilityPsikappap} and \ref{lem:SOT-convergence}, we see that
\[
\lim_{\kappa \to 0}\|e^{\Lambda(p,\kappa)t}P^{p,\kappa}_t{\bf 1} - e^{\Lambda(p)t}\hat P^{p}_t{\bf 1}\|_{C_V} = 0
\]
for each $t$ fixed, hence
\[
\limsup_{\kappa \to 0}  \|\psi_{p,\kappa} - \psi_p\|_{C_V} \leq \sup_{\kappa \in [0,\kappa_0]} ( 
 \|\psi_{\kappa,p} - e^{\Lambda(p,\kappa)t} \hat P^{p,\kappa}_t{\bf 1}\|_{C_V} + \|\psi_p - e^{\Lambda(p)t}\hat P^{p}_t{\bf 1}\|_{C_V}
)
\]
Sending $t \to \infty$ and applying Lemma \ref{lem:uniformCV-limit} completes the proof.
\end{proof}







The proof of Proposition \ref{prop:psiPUniform} is largely complete, save for
the uniform positive lower bounds on $\psi_{p, \kappa}$ on bounded sets as in item (b)(iii). 

\begin{lemma}\label{lem:unifLowerBdPsiPKappa}
For each $R > 0$, and $p\in [0,p_0]$ there exists $\kappa_0$ small enough such that 
\[
\inf_{\kappa \in [0,\kappa_0]} \inf_{\substack{(u,x,v) \in \Hbf \times P \T^d \\ \| u \| \leq R} } 
\psi_{p,\kappa} (u,x,v) > 0 \, .
\]
\end{lemma}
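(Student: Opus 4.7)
The plan is to propagate a uniform-in-$\kappa$ lower bound from a small ball $\{\|u\|_\Hbf \leq R_0\}$ to the target set $\{\|u\|_\Hbf \leq R\}$ by combining the eigenfunction relation $\psi_{p,\kappa} = e^{t\Lambda(p,\kappa)} \hat P^{\kappa,p}_t \psi_{p,\kappa}$, positivity of the Feynman-Kac semigroup, Jensen's inequality, and the uniform-in-$\kappa$ topological irreducibility of the projective process.

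First I would record the small-ball lower bound. By Corollary \ref{cor:specProjUnifCTRL}, the spectral projectors $\pi^{\kappa,p}$ converge to $\pi^\kappa$ in $\mathcal{L}(C_V^1)$ as $p \to 0$, uniformly in $\kappa \in [0,\kappa_0]$. Since $\pi^\kappa(\mathbf{1}) = \mathbf{1}$ and $\psi_{p,\kappa} = \pi^{\kappa,p}(\mathbf{1})$, taking $p_0$ sufficiently small (independently of $\kappa$) yields $\|\psi_{p,\kappa} - \mathbf{1}\|_{C_V} < 1/2$, whence there exist constants $R_0, \delta_0 > 0$ (both independent of $\kappa$ and $p$) with $\psi_{p,\kappa}(z) \geq \delta_0$ for every $z=(u,x,v)$ with $\|u\|_\Hbf \leq R_0$ --- this is exactly the observation already used in the proof of Lemma \ref{lem:stabilityPsikappap}. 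Writing $A := \{u \in \Hbf : \|u\|_\Hbf \leq R_0\} \times P\T^d$, the eigenfunction identity of Proposition \ref{prop:psiPUniform}(b)(i) together with positivity of $\hat P^{\kappa,p}_t$ immediately yields
\[
\psi_{p,\kappa}(z) \;\geq\; \delta_0\, e^{t\Lambda(p,\kappa)}\, \hat P^{\kappa,p}_t \mathbf{1}_A(z)
\]
for every $t > 0$ and every $z \in \Hbf \times P\T^d$.

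Next, to bound $\hat P^{\kappa,p}_t \mathbf{1}_A(z)$ from below uniformly in $\kappa$ for $\|u\|_\Hbf \leq R$, I would use the Feynman-Kac representation \eqref{FKRep}, condition on $\{\hat z^\kappa_t \in A\}$, and apply Jensen's inequality to the negative exponential to produce
\[
\hat P^{\kappa,p}_t \mathbf{1}_A(z) \;\geq\; \hat P^{\kappa}_t \mathbf{1}_A(z)\, \exp\!\left( - \frac{ p\, \EE_z\!\left[ \int_0^t |H(\hat z^\kappa_s)|\,\ds\, \mathbf{1}_A(\hat z^\kappa_t) \right]}{\hat P^{\kappa}_t \mathbf{1}_A(z)} \right).
\]
Since $|H(u,x,v)| = |\langle v, Du(x)v\rangle| \lesssim \|u\|_{H^{r_0}}$ by Sobolev embedding, Lemma \ref{lem:TwistBd} bounds the numerator in the exponent by some $C(t,R)$ uniformly in $\kappa$ whenever $\|u\|_\Hbf \leq R$. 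For the denominator, I would choose $t$ large and $\epsilon < R_0$, and then apply Proposition \ref{prop:unifMinorCondProjective}(b) at the reference point $z_\ast = (0,x_\ast,v_\ast) \in A$ to obtain $\kappa_0'' = \kappa_0''(\epsilon,R) > 0$ and $\eta = \eta(\epsilon,R) > 0$ with $\hat P^\kappa_t \mathbf{1}_A(z) \geq \hat P^\kappa_t(z,B_\epsilon(z_\ast)) \geq \eta$ for all $\|u\|_\Hbf \leq R$ and all $\kappa \in [0,\kappa_0'']$. Combining these bounds with $e^{t\Lambda(p,\kappa)} \geq 1$ (Proposition \ref{prop:outlineSpecPic}(a)) gives
\[
\psi_{p,\kappa}(z) \;\geq\; \delta_0\,\eta\, e^{-C(t,R)p/\eta} \;>\; 0
\]
uniformly in $\kappa \in [0,\min(\kappa_0,\kappa_0''(\epsilon,R))]$ and in $z$ with $\|u\|_\Hbf \leq R$, which is the desired conclusion.

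The main technical obstacle is controlling the Feynman-Kac weight uniformly in $\kappa$: it requires simultaneously the $\kappa$-uniform moment estimate on $|H(\hat z^\kappa_s)|$ via Lemma \ref{lem:TwistBd} and the $\kappa$-uniform topological irreducibility of the projective process on $\Hbf \times P\T^d$. Both are already available from earlier sections. The quantifier order --- $R$ fixed first, then $\kappa_0$ chosen small based on $R$ and $t$ --- matches the statement of the lemma and is consistent with Remark \ref{rmk:kapR}.
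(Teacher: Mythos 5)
Your argument is correct, but it is a genuinely different route from the one the paper takes. The paper's proof is two lines: it invokes the lower bound $\psi_p \geq c_R$ on $\{V(u)\leq R\}$ for the $\kappa=0$ eigenfunction, already established in [Lemma 5.7; BBPS19], and then transfers it to $\psi_{p,\kappa}$ via the $C_V$-convergence $\psi_{p,\kappa}\to\psi_p$ of Lemma \ref{lem:psitopsi-kappa}, writing $\psi_{p,\kappa}\geq \psi_p - \|\psi_{p,\kappa}-\psi_p\|_{C_V}V \geq c - \|\psi_{p,\kappa}-\psi_p\|_{C_V}R$ and shrinking $\kappa_0$. You instead re-derive the lower bound directly and uniformly in $\kappa$ by a Harnack-type propagation: the small-ball bound $\psi_{p,\kappa}\geq\delta_0$ near $u=0$ from the spectral projector estimates, the eigenrelation and positivity of the Feynman--Kac semigroup, Jensen's inequality to control the multiplicative weight via Lemma \ref{lem:TwistBd}, and the uniform irreducibility of Lemma \ref{lem:projIR} to bound $\hat P^\kappa_t\mathbf 1_A$ from below. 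Your approach is more self-contained -- it does not use the $\kappa=0$ lower bound from the earlier paper, nor the full strength of the convergence $\psi_{p,\kappa}\to\psi_p$ -- and it is essentially the argument one would use to prove [Lemma 5.7; BBPS19] itself; the paper's approach is shorter given the machinery already in place. Two small points: (i) your appeal to Proposition \ref{prop:outlineSpecPic}(a) for $e^{t\Lambda(p,\kappa)}\geq 1$ is not quite what that proposition gives (the perturbation argument only places $e^{-t\Lambda(p,\kappa)}$ in $\{|z-1|<\delta\}$), but this is harmless since $e^{t\Lambda(p,\kappa)}\geq (1+\delta)^{-1}$ suffices, or one can quote Lemma \ref{lem:stabilityPsikappap}; (ii) you need $\psi_{p,\kappa}\geq 0$ globally to discard the contribution off $A$, which is supplied by the remark immediately preceding the lemma in the paper.
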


\begin{proof}
For $p_0$ sufficiently small, by [Lemma 5.7; \cite{BBPS19}], $\forall R > 0$, there exists $c = c_{R} > 0$ so that for all $p \in [0,p_0]$ 
on $\{V(u)\leq R\}$, we have $\psi_p \geq c$. Therefore, on $\{ V(u) \leq R\}$ we have
\[
	\psi_{p,\kappa} \geq \psi_p - \|\psi_{p,\kappa} - \psi_p\|_{C_V} V \geq c - \|\psi_{p,\kappa} - \psi_p\|_{C_V} R \, .
\]
Applying Lemma \ref{lem:psitopsi-kappa} and choosing $\kappa_0$ small enough depending on $R$ and $c$ gives $\psi_{\kappa,p} \geq \frac{1}{2}c$.

\end{proof}

\section{Geometric ergodicity for the two-point process}

The goal of this section is to apply Theorem \ref{thm:Harris} to deduce Theorem \ref{thm:unifTwoPtMixing}, namely the geometric ergodicity of $P_t^{(2,\kappa)}$.
The main difficulty is the construction of an appropriate drift condition with suitable $\kappa$ independent constants. This is done in Section \ref{sec:Udrift} below with the help of the uniform spectral theory deduced in Sections \ref{sec:prfSpecPic} and \ref{sec:prfUniform-spec}. First, in Section \ref{sec:P2kMap} we record basic properties of the semigroup $P_t^{(2),\kappa}$ of the two-point $\kappa$-regularized Lagrangian motion, namely that it is a $C_0$ semi-group on an appropriate separable Banach space. In Section \ref{sec:USFetc} we prove the uniform strong Feller and topological irreducibility needed to apply Proposition \ref{prop:minorize} to deduce the minorization condition \eqref{eq:minorize}.
Both Sections \ref{sec:P2kMap} and \ref{sec:USFetc} follow very similarly to analogous arguments in \cite{BBPS19} and Section \ref{sec:prfSpecPic}, hence some of proofs are only sketched with the reader encouraged to consult \cite{BBPS19} for more details.

\subsection{$C_0$-semigroup property} \label{sec:P2kMap}
Define the function 
\[
	\hat{V}(u,x,y) := d(x,y) ^{-p}V(u),
\]
where $p>0$ is small and fixed. 
Let $\mathring{C}_{\hat{V}}$ be the the $C_{\hat{V}}$-norm closure of smooth cylinder functions
\begin{align}
\mathring C_0^\infty (\Hbf \times \cD^c) := \{ \psi | \psi(u,x,y) = \phi(\Pi_{\mathcal K} u, x, v), \mathcal K \subset \mathbb K, 
  \phi \in C_0^\infty\}. 
\end{align}
The first step is to check that $P_t^{(2),\kappa}$ is uniformly bounded on $C_{\hat V}$ and maps the subspace $\mathring{C}_{\hat{V}}$ to itself.
\begin{lemma} \label{lem:C02pt}
For all $p \in (0,p_0)$, $\beta \geq 0$, $\eta \in (0,\eta^\ast)$, $P_t^{(2),\kappa}$ extends to a bounded linear operator on $C_{\hat{V}}$ and there exists a $C > 0$ such that 
for all $t> 0$ and $\kappa \in (0,1)$, 
\begin{align}
\|P_t^{(2),\kappa} \varphi\|_{C_{\hat{V}}} \leq e^{Ct} \norm{\varphi}_{C_{\hat{V}}}. 
\end{align}
Moreover, for all $t > 0$ and $\kappa \in (0,1)$, $P^{(2),\kappa}_t(\mathring{C}_{\hat{V}})\subseteq \mathring{C}_{\hat{V}}$.
\end{lemma}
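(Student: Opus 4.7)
The plan has two parts: establishing the operator norm bound on $C_{\hat V}$, and verifying the $\mathring C_{\hat V}$ mapping property. The key insight for the operator bound is that $\hat V(u_t, x_t^\kappa, y_t^\kappa) = d(x_t^\kappa, y_t^\kappa)^{-p} V(u_t)$ can be controlled pathwise by an exponential of $\int_0^t \|u_s\|_{H^{r_0}}\,ds$, which combines well with the super-Lyapunov property from Lemma \ref{lem:TwistBd}.

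First I would produce a pathwise lower bound on $d(x_t^\kappa, y_t^\kappa)$. For a.e.\ noise realization, $\phi_\kappa^t : \T^d \to \T^d$ is a smooth volume-preserving diffeomorphism, and the standard torus Lipschitz inequality applied to the inverse $\phi_\kappa^{-t}$ (via minimizing geodesics) gives
\begin{align*}
d(x_t^\kappa, y_t^\kappa) \geq \|D\phi_\kappa^{-t}\|_{L^\infty(\T^d)}^{-1}\, d(x,y).
\end{align*}
Incompressibility forces $\det D_z\phi_\kappa^t = 1$, so by the cofactor formula $\|(D_z\phi_\kappa^t)^{-1}\|_{op} \lesssim \|D_z\phi_\kappa^t\|_{op}^{d-1}$, and standard Gr\"onwall on the Jacobian ODE $\partial_t(D_z\phi_\kappa^t) = Du_t(\phi_\kappa^t(z))\, D_z\phi_\kappa^t$ combined with the Sobolev embedding $\|\nabla u_s\|_{L^\infty} \lesssim \|u_s\|_{H^{r_0}}$ for $r_0 \in (d/2+1, 3)$ yields the pathwise bound
\begin{align*}
d(x_t^\kappa, y_t^\kappa)^{-p} \leq d(x,y)^{-p}\exp\Big(C_0\, p \int_0^t \|u_s\|_{H^{r_0}}\, ds\Big),
\end{align*}
for a $\kappa$-independent constant $C_0$. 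Taking expectations against $V(u_t)$ and invoking Lemma \ref{lem:TwistBd} with the constant $C_0 p$ playing the role of its $C_0$ there (admissible for $p$ small, uniformly in $\kappa$) gives
\begin{align*}
\E_{(u,x,y)}[\hat V(u_t, x_t^\kappa, y_t^\kappa)] \leq d(x,y)^{-p}\, \E_u\Big[\exp\Big(C_0 p\int_0^t \|u_s\|_{H^{r_0}}\, ds\Big) V(u_t)\Big] \lesssim e^{Ct}\, \hat V(u,x,y),
\end{align*}
which yields $\|P_t^{(2),\kappa} \varphi\|_{C_{\hat V}} \leq e^{Ct} \|\varphi\|_{C_{\hat V}}$.

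For the $\mathring C_{\hat V}$ mapping property, I would follow the approach of [Lemma 5.3(a); \cite{BBPS19}]. Density of smooth cylinder functions $\mathring C_0^\infty$ in $\mathring C_{\hat V}$ together with the operator norm bound just established reduce the problem to showing $P_t^{(2),\kappa}\varphi \in \mathring C_{\hat V}$ for each $\varphi \in \mathring C_0^\infty$. For this I would use a Galerkin truncation of the Navier-Stokes driver: let $u^{(N)}_t$ solve the $\Pi_{\leq N}$-projected equation with initial data $\Pi_{\leq N} u$, and let $(x_t^{N,\kappa}, y_t^{N,\kappa})$ denote the corresponding two-point process. Then $\E[\varphi(u^{(N)}_t, x_t^{N,\kappa}, y_t^{N,\kappa})]$ depends on $u$ only through $\Pi_{\leq N} u$ and is smooth in these finitely many variables, hence is a smooth cylinder function in $\mathring C_{\hat V}$. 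Convergence to $P_t^{(2),\kappa}\varphi$ in $C_{\hat V}$ as $N \to \infty$ follows from standard $\Hbf$-continuity of the Galerkin scheme combined with uniform-in-$N$ Jacobian and super-Lyapunov bounds, which carry over unchanged.

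The main obstacle is purely bookkeeping: $p$ must be chosen small enough that $C_0 p$ remains admissible for Lemma \ref{lem:TwistBd} (constrained by $\eta^\ast$ and $\gamma_\ast$), while all constants in the operator bound are required to be $\kappa$-independent. Since $C_0$, $\gamma_\ast$, and $\eta^\ast$ are all $\kappa$-independent, this poses no genuine difficulty.
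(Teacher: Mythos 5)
Your argument is correct and is essentially the one the paper invokes by citation to [Lemma 6.11; Proposition 6.12, BBPS19]: the pathwise bound $d(x_t^\kappa,y_t^\kappa)^{-p}\leq d(x,y)^{-p}\exp(C_0p\int_0^t\|u_s\|_{H^{r_0}}ds)$ (valid uniformly in $\kappa$ because the additive noise cancels in the two-point difference) combined with Lemma \ref{lem:TwistBd} and the Markov/semigroup iteration for the $e^{Ct}$ bound, and a Galerkin/cylinder-function approximation for the $\mathring C_{\hat V}$ mapping property. Note only that Lemma \ref{lem:TwistBd} holds for arbitrary $C_0\geq 0$, so no smallness of $p$ is actually needed for admissibility there.
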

\begin{proof}
Uniform boundedness follows as in [Lemma 6.11; \cite{BBPS19}] and the $\mathring{C}_{\hat{V}}$ mapping property follows as in [Proposition 6.12; \cite{BBPS19}] (which itself is analogous to [Proposition 5.5; \cite{BBPS19}]). 
\end{proof}
We will also find the following uniform-in-$\kappa$ strong continuity property for $P^{(2),\kappa}_t$ useful.
\begin{lemma}\label{lem:SOT-convergence-2pt}
Assume $\beta \geq 1$ is sufficiently large universal constant.
Then, there exists $\kappa_0 > 0$ so that for each $\varphi \in \mathring{C}_{\hat V}$, the following holds
\[
	\lim_{t \to 0}\sup_{\kappa\in [0,\kappa_0]}\|P^{(2),\kappa}_t\varphi - P^{(2)}_t\varphi\|_{C_{\hat V}} = 0.
\]
In particular, $\set{P^{(2),\kappa}_t}_{t \geq 0}$ defines a $C_0$-semigroup on $C_{\hat{V}}$. 
\end{lemma}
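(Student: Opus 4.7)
The plan is to reduce the proof to smooth cylinder functions and then use a pathwise comparison between the $\kappa$-regularized Lagrangian flow $(x_t^\kappa, y_t^\kappa)$ and the $\kappa = 0$ flow $(x_t, y_t)$, adapting the strategy of Lemma~\ref{lem:SOT-convergence} to the two-point setting.

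First, Lemma~\ref{lem:C02pt} gives the uniform bound $\sup_{\kappa \in [0,\kappa_0]} \|P^{(2),\kappa}_t\|_{C_{\hat V} \to C_{\hat V}} \leq e^{Ct}$, so a standard $3\varepsilon$-argument reduces the verification to the dense class $\mathring C_0^\infty(\Hbf \times \cD^c)$. For $\varphi$ in this class, $\varphi$ depends only on finitely many Fourier modes of $u$ and is compactly supported in $\cD^c$, hence is globally bounded and uniformly Lipschitz in $(x,y)$ with some constant $L_\varphi$. I would then write
\begin{align*}
|P^{(2),\kappa}_t\varphi(u,x,y) - P^{(2)}_t\varphi(u,x,y)| \leq L_\varphi \, \widetilde\E\bigl[d(x_t^\kappa, x_t) + d(y_t^\kappa, y_t)\bigr],
\end{align*}
noting that the $u_t$-argument of $\varphi$ is identical in both semigroups since $\widetilde W_t$ does not feed back into the Navier--Stokes process.

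Since $x_t^\kappa - x_t$ solves an ODE driven by $u_t(x_t^\kappa) - u_t(x_t)$ plus the additive perturbation $\sqrt{2\kappa}\widetilde W_t$, Gr\"onwall as in Lemma~\ref{lem:proj-kap-convergence} gives $\widetilde\E d(x_t^\kappa, x_t) \lesssim \sqrt{\kappa t}\,\exp\bigl(\int_0^t \|\nabla u_s\|_\infty \, ds\bigr)$ (and similarly for $y$), so taking $\E$ in the Navier--Stokes noise and absorbing the exponential moment via Lemma~\ref{lem:TwistBd} (with $\beta$ sufficiently large, using the Sobolev embedding $\Hbf^{r_0} \hookrightarrow W^{1,\infty}$) yields
\begin{align*}
|P^{(2),\kappa}_t\varphi(u,x,y) - P^{(2)}_t\varphi(u,x,y)| \lesssim_\varphi \sqrt{\kappa t}\, V(u) \lesssim \sqrt{\kappa t}\, \hat V(u,x,y),
\end{align*}
where the last step uses $d(x,y)^{-p} \geq \operatorname{diam}(\T^d)^{-p} > 0$ to replace $V$ by $\hat V$. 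Hence $\|P^{(2),\kappa}_t\varphi - P^{(2)}_t\varphi\|_{C_{\hat V}} \lesssim_\varphi \sqrt{\kappa t}$, which tends to $0$ as $t \to 0$ uniformly in $\kappa \in [0,\kappa_0]$.

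The ``in particular'' statement then follows by the triangle inequality: $\|P^{(2),\kappa}_t\varphi - \varphi\|_{C_{\hat V}} \leq \|P^{(2),\kappa}_t\varphi - P^{(2)}_t\varphi\|_{C_{\hat V}} + \|P^{(2)}_t\varphi - \varphi\|_{C_{\hat V}}$, and the second term tends to $0$ as $t \to 0$ by the strong continuity of $P^{(2)}_t = P^{(2),0}_t$ on $\mathring C_{\hat V}$ established in \cite{BBPS19}, giving the desired $C_0$ property of $\{P^{(2),\kappa}_t\}_{t \geq 0}$ on $C_{\hat V}$. The main technical subtlety I anticipate is verifying that $\beta$ can be chosen sufficiently large (universally in $\varphi$ and $\kappa$) so that Lemma~\ref{lem:TwistBd} absorbs the $\exp\bigl(C\int_0^t \|\nabla u_s\|_\infty \, ds\bigr)$ factor into $V_{\beta,\eta}(u)$; this is handled exactly as in the analogous argument for the projective process in Lemma~\ref{lem:SOT-convergence}, using Jensen's inequality and Lemma~\ref{lem:TwistBd} on short time intervals where $e^{\gamma t}\eta < \eta^*$.
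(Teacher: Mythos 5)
Your proposal is correct and follows essentially the same route as the paper, which omits the proof precisely because it is the argument of Lemma \ref{lem:SOT-convergence} transplanted to the two-point setting: reduction to smooth cylinder functions via the uniform bound of Lemma \ref{lem:C02pt}, a Gr\"onwall comparison of the $\kappa$ and $\kappa=0$ flows giving the $\sqrt{\kappa t}\exp(\int_0^t\|\nabla u_s\|_\infty\,ds)$ bound, and absorption of the exponential moment by Lemma \ref{lem:TwistBd}. The resulting $\lesssim_\varphi \sqrt{\kappa t}$ bound correctly delivers both the stated uniform-in-$\kappa$ limit as $t\to 0$ and, via the known strong continuity at $\kappa=0$, the $C_0$ property.
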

\begin{proof}
The argument is essentially the same as that applied for Lemma \ref{lem:SOT-convergence} above, hence the proof is omitted for brevity. 
\end{proof}

\subsection{Uniform strong Feller and irreducibility} \label{sec:USFetc}

The first lemma we need to verify is a uniform strong Feller property as in Lemma \ref{lem:ProjQSF} above. 
As in [Section 6.1.2; \cite{BBPS19}] it is convenient to define the following metric: for $z^1,z^2 \in \Hbf \times \cD^c$, define
\begin{align}
d_b(z^1,z^2) := \inf_{\gamma: z^1 \to z^2} \int_0^1 d(x_s,y_s)^{-b} (1 + \norm{u_s}_{\Hbf})^b \norm{\dot{\gamma}_s}_{\Hbf \times \Real^{2d}} ds, 
\end{align}
where the infimum is taken over all differentiable curves $[0,1]\ni t \mapsto \gamma_t = (u_t,x_t,y_t)$ in $\Hbf\times\mathcal{D}^c$ connecting $z^1$ and $z^2$. It is not hard to see that the metric $d_b(\cdot,\cdot)$ generates the $\Hbf\times \mathcal{D}^c$ topology since the extremal trajectories avoid the diagonal $\mathcal{D}$.

Using this metric, we obtain the following uniform strong Feller result; as the proof is essentially a combination of the arguments therein and those found in [Proposition 6.5; \cite{BBPS19}], we omit the proof for the sake of brevity. 
\begin{lemma}\label{lem:2ptQSF}
There exists $a,b > 0$ such that, there exists a continuous, monotone increasing, concave function $X:[0,\infty) \to [0,1]$ with $X(r) = 1$ for $r > 1$ and $X(0) = 0$ such that the following holds uniformly in $\kappa < 1$, $d_{b}(z^1,z^2) < 1$, $t \in (0,1)$, 
\begin{align}
\abs{P^{(2),\kappa}_t \varphi(z^1) - P^{(2),\kappa}_t \varphi(z^2)} \leq X\left( \frac{d_{b}(z^1,z^2)}{t^a} \right) (1 + \norm{z^1}_{\Hbf}^b) \norm{\varphi}_{L^\infty}. 
\end{align}
\end{lemma}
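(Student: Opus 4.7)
The plan is to adapt the proof of Lemma \ref{lem:ProjQSF} above, replacing the projective variable $v_t^\kappa$ with the second Lagrangian variable $y_t^\kappa$ and replacing the Euclidean distance on $\Hbf \times P\T^d$ with the weighted metric $d_b$ on $\Hbf \times \Dc^c$, as in the $\kappa = 0$ analysis of [Proposition 6.5; \cite{BBPS19}]. The key new features relative to Lemma \ref{lem:ProjQSF} are that (i) the two-point process degenerates near the diagonal $\Dc$, handled by working in $d_b$ whose geodesics avoid $\Dc$, and (ii) $\kappa$-uniform control is required, which forces a careful re-examination of the underlying Malliavin analysis.

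First, I would introduce an augmented process $w_t^\kappa = (u_t, x_t^\kappa, y_t^\kappa, m_t)$ on $\Hbf \times \Dc^c \times \R^{4d}$, where $m_t$ is an auxiliary $4d$-dimensional Wiener process independent of $W_t, \widetilde W_t$, and cut off the drift outside $\{\| u\|_\Hbf \leq \rho\} \cap \{d(x,y) \geq 1/\rho\}$ by inserting a smooth bounded control vector field on the $(x,y)$ factors driven by $m_t$ (exactly as in [Proposition 6.5; \cite{BBPS19}], with a diagonal-avoiding cutoff). Denote by $P_t^{(2),\kappa;\rho}$ the corresponding Markov semigroup. The goal is then to prove the uniform gradient bound
\begin{equation*}
\bigl|D P_t^{(2),\kappa;\rho} \varphi(z)\, h\bigr| \;\lesssim_\rho\; t^{-a}\,\bigl(1 + \norm{z}_\Hbf^b\bigr)\,\norm{\varphi}_{L^\infty}\,\norm{h}_{\Hbf \times \R^{2d}}
\end{equation*}
uniformly in $\kappa \in [0,\kappa_0]$, for all $h$ and $z \in \{V \leq \rho\} \cap \{d(x,y) \geq 1/\rho\}$. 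As in the proof of Lemma \ref{lem:GradBound}, this is obtained from the Malliavin integration by parts identity of Proposition \ref{lem:MalIBP}: construct a control $g = (g_t)_{t\in[0,T]}$ (possibly $\kappa$-dependent) by inverting the partial Malliavin covariance matrix of the augmented system, such that the remainder $r_T = \MalD_g w_T^\kappa - D w_T^\kappa\, h$ is small in $L^2$ and such that $g$ satisfies the Skorohod estimate \eqref{eq:Skoro-est}, with all constants independent of $\kappa$.

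The main obstacle is the $\kappa$-uniform non-degeneracy of the two-point Malliavin matrix, i.e.\ the analogue of [Lemma 6.9; \cite{BBPS18}] for the two-point process. The noise $\sqrt{2\kappa}\,\dot{\widetilde W}_t$ drives $x_t^\kappa$ and $y_t^\kappa$ with the \emph{same} Brownian path and therefore gives no help at the diagonal; the spanning must come from the $QW_t$ forcing of the velocity together with separation of the Lagrangian trajectories under $Du_t$, just as at $\kappa = 0$, while the extra $O(\kappa)$ and $O(\sqrt{\kappa})$ contributions to the Malliavin matrix and to time derivatives of Jacobians must be shown not to degrade the bracket-spanning argument. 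Following the same strategy as in the proof of Lemma \ref{lem:GradBound}, the Jacobian and Malliavin derivative estimates from Section \ref{sec:Jacobian} and Section \ref{subsec:MalliavinPrelims} are $\kappa$-independent (since $(x_t^\kappa, y_t^\kappa)$ are estimated via $L^\infty$ bounds on $u$ insensitive to $\widetilde W_t$), and the additional noise terms in the time regularity of the Jacobians are handled by BDG using the available $\Hbf$-regularity; this permits a verbatim adaptation of [Propositions 6.10--6.13; \cite{BBPS18}] with constants uniform in $\kappa$. The resulting bounds will pick up the $d(x,y)^{-b}$ factors characteristic of the two-point process, hence the appearance of $d_b$ on the right-hand side.

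Finally, I would convert the gradient bound into the claimed modulus of continuity by the standard decomposition
\begin{equation*}
\abs{P_t^{(2),\kappa}\varphi(z^1) - P_t^{(2),\kappa}\varphi(z^2)} \leq \sum_{j=1,2} \abs{P_t^{(2),\kappa;\rho}\varphi(z^j) - P_t^{(2),\kappa}\varphi(z^j)} + \abs{P_t^{(2),\kappa;\rho}\varphi(z^1) - P_t^{(2),\kappa;\rho}\varphi(z^2)}.
\end{equation*}
The first two terms are bounded by $\norm{\varphi}_{L^\infty}\bigl(\P(\sup_{s\leq t}\norm{u_s}_\Hbf > \rho) + \P(\inf_{s\leq t} d(x_s^\kappa,y_s^\kappa) < 1/\rho)\bigr)$, which by Proposition \ref{prop:WPapp} and a $\kappa$-uniform estimate on the probability of the two-point process entering a neighborhood of the diagonal (c.f. [Section 6.1; \cite{BBPS19}]; $\kappa$-uniformity follows since $\phi_\kappa^t$ is a.s.\ volume preserving) is $\lesssim (1 + \norm{z^1}_\Hbf^b)\rho^{-b}$. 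The last term is bounded by integrating the gradient estimate along a $d_b$-geodesic joining $z^1$ to $z^2$, yielding $\lesssim C_\rho t^{-a} d_b(z^1,z^2)\norm{\varphi}_{L^\infty}(1 + \norm{z^1}_\Hbf^b)$. Optimizing over $\rho$ and then taking the concave, monotone envelope truncated at $1$ produces the desired function $X$.
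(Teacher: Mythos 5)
Your proposal follows the same route the paper takes (which it states but does not write out): adapt the augmented-process/cutoff/Malliavin scheme of Lemma \ref{lem:ProjQSF} to the two-point setting of [Proposition 6.5; \cite{BBPS19}], keeping track of $\kappa$-uniformity. The structure is sound: augment with an auxiliary Wiener process, cut off where $\|u\|_\Hbf$ is large or $d(x,y)$ is small, prove a $\kappa$-uniform gradient bound by Malliavin integration by parts, and recover the modulus of continuity by integrating along $d_b$-geodesics plus an escape-probability estimate. You also correctly identify the key new issue -- that $\sqrt{2\kappa}\,\widetilde W_t$ drives $x_t^\kappa$ and $y_t^\kappa$ with the same path and therefore contributes nothing to the Malliavin matrix near the diagonal, so the spanning must come entirely from $QW_t$ through $Du_t$, as at $\kappa = 0$.

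Two small corrections. First, the auxiliary Wiener process only needs dimension $2d$, not $4d$: the tangent space of $\T^d\times\T^d$ at $(x,y)$ is $\R^{2d}$, the same count as the projective case where the paper already uses $m_t\in\R^{2d}$; this is harmless but worth getting right. Second, and more substantively, the reason the escape probability $\P(\inf_{s\le t} d(x_s^\kappa,y_s^\kappa)<1/\rho)$ is $\kappa$-uniform is \emph{not} that $\phi_\kappa^t$ is volume preserving. The mechanism is that the additive noise cancels in the difference: $\partial_t(x_t^\kappa - y_t^\kappa) = u_t(x_t^\kappa) - u_t(y_t^\kappa)$, so one has the pathwise, $\kappa$-independent lower bound $d(x_t^\kappa,y_t^\kappa)\ge d(x,y)\exp\bigl(-\int_0^t\|\nabla u_s\|_{L^\infty}\,\mathrm{d}s\bigr)$, and the escape probability is then controlled by Lemma \ref{lem:TwistBd}. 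Volume preservation controls long-time averages but gives no finite-time pointwise estimate of this sort. Neither of these affects the correctness of the overall strategy.
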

Next, we verify the uniform topological irreducibility away from the diagonal. Specifically, combining the methods used to prove Lemma \ref{lem:projIR} above with those of [Proposition 2.7; \cite{BBPS19}] we prove the following. The details are again omitted for brevity.

\begin{lemma} \label{lem:2ptIR}
Fix an arbitrary $z_\ast \in \Hbf \times \cD^c$.
For all $R > 0$ sufficiently large, $\forall \eps > 0$, $\forall T > 0$, $\exists \kappa_0' = \kappa_0'(\eps,T,R)$ and $\exists \eta > 0$ such that for all $\kappa \in [0,\kappa_0']$ and $z \in \Hbf \times \cD^c$ with $\max(\norm{u}_{\Hbf} + d(x,y)^{-1}, \norm{u_\ast}_{\Hbf} + d(x_\ast,y_\ast)^{-1}) < R$ (denoting $z = (u,x,y), z_\ast = (u_\ast,x_\ast,y_\ast)$
\begin{align}
\hat P^{(2),\kappa}_T \left(z , B_\eps(z_\ast) \right) > \eta, 
\end{align}
where we denote $B_\eps(z_\ast)$ the $\eps$-ball in $\Hbf \times \cD^c$. 
\end{lemma}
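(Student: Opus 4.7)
\medskip

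\textbf{Proof plan for Lemma \ref{lem:2ptIR}.} The approach mirrors Lemma \ref{lem:projIR}: approximate controllability of the deterministic $\kappa=0$ two-point system, combined with a support-type argument for the Wiener process $W_t$ driving the velocity field, and a separate small-noise estimate for the diffusive Brownian noise $\sqrt{2\kappa}\dot{\widetilde W}_t$ driving the Lagrangian trajectories. The $\kappa=0$ controllability portion is essentially that of [Proposition 2.7; \cite{BBPS19}]; the new ingredient is the treatment of the $\widetilde W$ noise.

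First, I would set up the deterministic control problem on $\Hbf \times \mathcal D^c$:
\begin{align*}
\partial_t u_t + B(u_t,u_t) + A u_t = Q g_t, \qquad \partial_t x_t = u_t(x_t), \qquad \partial_t y_t = u_t(y_t),
\end{align*}
with $(u_0, x_0, y_0)=(u,x,y)$ and target $(u_\ast,x_\ast,y_\ast)$. Both initial and target satisfy $\| \cdot \|_\Hbf + d(\cdot, \cdot)^{-1} < R$. Following the pattern of Lemmas \ref{lem:scl1}, \ref{lem:gctrl}, \ref{lem:scl2}, I would build $g \in L^2([0,T];\Lbf^2)$ in three stages: (i) on a short interval $[0,\delta_1]$, drive $u_t$ down to $\|u_{\delta_1}\|_\Hbf \leq \varepsilon/8$ while $(x_t, y_t)$ deviates only slightly, so that $d(x_t, y_t) \geq 1/(2R)$; (ii) on $[\delta_1, T-\delta_2]$, keep $u_t$ uniformly small in $\Hbf$ and steer $(x_t, y_t)$ to $(x_\ast, y_\ast)$ along a smooth path $(\xi_t, \eta_t)$ chosen inside the arcwise-connected set $\{(x',y'): d(x',y') \geq 1/(4R)\} \subset \T^d \times \T^d$; (iii) on $[T-\delta_2, T]$ raise $u_t$ to within $\varepsilon/4$ of $u_\ast$ in $\Hbf$. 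For step (ii), one realizes the prescribed $(\dot\xi_t, \dot\eta_t)$ by a smooth, divergence-free, mean-zero interpolating velocity field (possible since $\xi_t \neq \eta_t$), then inverts to define $g_t := Q^{-1}(\partial_t u_t + B(u_t,u_t) + A u_t)$; Assumption \ref{a:Highs} gives $Q^{-1}$ on all Fourier modes and the required Sobolev regularity of $g_t$.

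Next, I would run the stability argument in two parts. Let $\Gamma_t$ denote the stochastic convolution as in \eqref{eq:Mild}. On the event
\begin{align*}
E_1 := \Bigl\{\sup_{t\in[0,T]} \bigl\|\Gamma_t - \textstyle\int_0^t e^{-(t-s)A} Q g_s \, \dee s\bigr\|_{\Hbf} < \varepsilon'\Bigr\},
\end{align*}
a Gr\"onwall bound on the mild formulation yields $\sup_{t\in[0,T]}\|u_t^{(W)} - u_t^{\mathrm{ctrl}}\|_\Hbf < \varepsilon''$, where $u_t^{(W)}$ denotes the actual velocity process driven by $W_t$. By standard Cameron--Martin support arguments (as in [Lemma 7.3; \cite{BBPS18}]), $\P(E_1) \geq \eta_1 > 0$ with $\eta_1 = \eta_1(R, T, \varepsilon)$. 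Independently, define
\begin{align*}
E_2 := \bigl\{ \sqrt{2\kappa}\sup_{t\in[0,T]} |\widetilde W_t| < \varepsilon''\bigr\};
\end{align*}
the reflection principle applied componentwise gives $\widetilde \P(E_2^c) \lesssim d\exp\bigl(-\frac{(\varepsilon'')^2}{8 \kappa T}\bigr)$, so choosing $\kappa_0' = \kappa_0'(\varepsilon'', T)$ small enough ensures $\widetilde\P(E_2) \geq 1/2$ for all $\kappa \in [0,\kappa_0']$. On $E_1 \cap E_2$, a Gr\"onwall comparison with the controlled trajectory — using $\sup_{t\in[0,T]}\|\nabla u_t^{\mathrm{ctrl}}\|_{L^\infty} < \infty$ — yields $\sup_{t\in [0,T]}(|x_t^\kappa - \xi_t| + |y_t^\kappa - \eta_t|) < \varepsilon/2$ (in particular $(x_t^\kappa, y_t^\kappa)\in \mathcal D^c$ throughout), and combined with the $\Hbf$ estimate on $u_T - u_\ast$ gives $z_T^\kappa \in B_\varepsilon(z_\ast)$. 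Taking $\eta = \eta_1/2$ completes the proof.

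The principal obstacle is stage (ii) of the controllability construction: producing a smooth deterministic control that transports the pair $(x,y)$ to $(x_\ast,y_\ast)$ \emph{while maintaining a uniform lower bound on $d(x_t,y_t)$ throughout} and keeping $u_t^{\mathrm{ctrl}}$ regular enough (and mean-zero, divergence-free) so that $g_t = Q^{-1}(\partial_t u_t + B(u_t, u_t) + A u_t)$ lies in $L^2_t \Lbf^2$ with $\kappa$-independent bounds. This is precisely the content of [Proposition 2.7; \cite{BBPS19}], which we invoke; the remaining diffusive-noise estimate for $\widetilde W$ is routine thanks to the smallness of $\sqrt{\kappa}$.
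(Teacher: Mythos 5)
Your proposal matches what the paper indicates: it omits the detailed proof, remarking only that the lemma follows by combining the methods of Lemma \ref{lem:projIR} (uniform irreducibility of the projective process, proved via a deterministic control problem, Cameron--Martin support, and a reflection-principle bound on $\sqrt{2\kappa}\,\widetilde W_t$) with the two-point controllability of [Proposition 2.7; \cite{BBPS19}]. Your plan implements precisely that combination, including the three-stage control construction, the separate small-noise estimate for $\widetilde W$, and the care taken to keep the controlled $(x_t,y_t)$ uniformly away from the diagonal so the comparison stays in $\mathcal D^c$.
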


Lemmas \ref{lem:2ptQSF} and \ref{lem:2ptIR} are sufficient to apply Proposition \ref{prop:minorize} to deduce the minorization condition \eqref{eq:minorize}.

\subsection{Uniform drift conditions} \label{sec:Udrift}
As mentioned, the main effort of this section is to deduce a drift condition on the semi-group $P^{(2),\kappa}_{t}$ associated with the $\kappa$-two point motion $(u_t,x^\kappa_t,y_t^\kappa)$. As discussed in Section \ref{sec:outline}, it is natural to consider a Lyapunov function of the form
\[
	\Vc_\kappa(u,x,y) = h_{p,\kappa}(u,x,y) + V_{\beta+1,\eta}(u)
\]
where
\begin{equation}\label{eq:h_pk-def}
	h_{p,\kappa}(u,x,y) = \chi(|w|)|w|^{-p}\psi_{p,\kappa}\left(u,x,\frac{w}{|w|}\right),
\end{equation}
and $w = w(x,y)$ is the minimum displacement vector from $x$ to $y$, $\psi_{p,\kappa}$ are the positive eigenfunctions obtained in Proposition \ref{prop:psiPUniform} for a particular choice of $p\in(0,1)$ (sufficiently small) and $\chi(r)$ is a smooth cut-off equal to $1$ for $0\leq r<1/10$ and $0$ for $r>1/5$. The choice is $\beta >0$ above is fixed arbitrary, sufficiently large by the steps used to construct $\psi_{p,\kappa}$. 

Our goal is to prove the following drift condition for $\Vc_\kappa$.
\begin{proposition} \label{prop:drift}
There exists a $K \geq 1$ independent of $\kappa$ such that for all $\kappa > 0$ and $p\in(0,1)$ small enough
\[
	P^{(2),\kappa}_t \Vc_\kappa \leq e^{-\Lambda(p,\kappa)t}\Vc_{\kappa} + K.
\]
\end{proposition}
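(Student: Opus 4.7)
The plan is to adapt the infinitesimal generator argument of [Proposition 2.13; \cite{BBPS19}] to the $\kappa > 0$ setting, carefully tracking that every constant is independent of $\kappa \in [0,\kappa_0]$. Write $\Vc_\kappa = h_{p,\kappa} + V_{\beta+1,\eta}$ and treat the two pieces separately. For the velocity-only part, $P^{(2),\kappa}_t V_{\beta+1,\eta} = P_t V_{\beta+1,\eta}$ since the extra noise $\sqrt{2\kappa}\widetilde W_t$ does not affect the $u$-marginal, so the super-Lyapunov bound of Lemma \ref{lem:TwistBd} together with Remark \ref{rmk:SuperL} supplies, for any prescribed $\delta > 0$, a $\kappa$-independent $K_\delta$ with $P_t V_{\beta+1,\eta} \leq \delta V_{\beta+1,\eta} + K_\delta$. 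Since $\Lambda(p,\kappa)$ is $\kappa$-uniformly bounded above by Proposition \ref{prop:psiPUniform}, this handles the $V_{\beta+1,\eta}$ contribution to the desired drift inequality with a $\kappa$-independent additive constant.

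The heart of the proof is the bound on $P^{(2),\kappa}_t h_{p,\kappa}$, which I will obtain via the generator of the two-point process. Using that $\psi_{p,\kappa} \in \mathring C_V^1$ with uniform $C_V^1$ bounds (Proposition \ref{prop:psiPUniform}(b)(ii)), together with Lemmas \ref{lem:C02pt} and \ref{lem:SOT-convergence-2pt}, I first verify that $h_{p,\kappa}$ lies in the (extended) domain of the generator $\mathcal L_{(2),\kappa}$ and compute, away from the diagonal,
\begin{equation}
\mathcal L_{(2),\kappa} h_{p,\kappa}(u,x,y) = \chi(|w|)\,|w|^{-p}\,(\mathcal L_{\hat P^{\kappa,p}} \psi_{p,\kappa})\!\left(u,x,\frac{w}{|w|}\right) + \mathcal E(u,x,y),
\end{equation}
where $\mathcal L_{\hat P^{\kappa,p}}$ is the generator of the twisted projective semigroup $\hat P^{\kappa,p}_t$ and $\mathcal E$ collects the error terms arising from (i) the Taylor remainder when the nonlinear increment $u(y)-u(x)$ is replaced by its linearization $Du(x)w$, and (ii) derivatives falling on the smooth cutoff $\chi(|w|)$. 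The eigenvalue identity of Proposition \ref{prop:psiPUniform}(b)(i) gives $\mathcal L_{\hat P^{\kappa,p}} \psi_{p,\kappa} = -\Lambda(p,\kappa)\psi_{p,\kappa}$, so the main term equals $-\Lambda(p,\kappa) h_{p,\kappa}$. For $\mathcal E$, the Taylor bound $|u(y)-u(x)-Du(x)(y-x)| \lesssim \|u\|_{C^2}|w|^2$ combined with the Sobolev embedding $\Hbf \hookrightarrow C^3$ and the uniform $C_V^1$ control on $\psi_{p,\kappa}$ yields $|\mathcal E| \leq C_0 V_{\beta+1,\eta}(u)$ with $C_0$ independent of $\kappa$, provided $p < 2$ and $\beta$ is taken sufficiently large; the cutoff contribution is harmless since $\chi'$ is supported on $\{1/10 \leq |w|\leq 1/5\}$, a region bounded away from the diagonal on which $h_{p,\kappa}$ and its relevant derivatives are trivially dominated by a multiple of $V(u)$.

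Plugging this pointwise generator bound into an integrated Dynkin formula and applying Gr\"onwall's inequality then yields
\begin{equation}
P^{(2),\kappa}_t h_{p,\kappa} \leq e^{-\Lambda(p,\kappa) t} h_{p,\kappa} + C_0 \int_0^t e^{-\Lambda(p,\kappa)(t-s)} P_s V_{\beta+1,\eta}\, ds,
\end{equation}
and combining with the super-Lyapunov bound on $P_s V_{\beta+1,\eta}$ absorbs the linearization error into the $V_{\beta+1,\eta}$ contribution to $\Vc_\kappa$, producing the desired estimate $P^{(2),\kappa}_t \Vc_\kappa \leq e^{-\Lambda(p,\kappa)t}\Vc_\kappa + K$ with a $\kappa$-independent constant $K$. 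The main obstacle I anticipate is making the generator computation rigorous when $\psi_{p,\kappa}$ is only known to lie in $\mathring C_V^1$: one cannot simply apply $\mathcal L_{(2),\kappa}$ pointwise, and instead must pass through a regularization (e.g.\ finite-dimensional Galerkin projections on $\Hbf$) and exploit $\mathring C_V^1$ membership together with the semigroup identity $\hat P^{\kappa,p}_t \psi_{p,\kappa} = e^{-\Lambda(p,\kappa) t}\psi_{p,\kappa}$ to transfer the eigenvalue relation from the projective to the nonlinear two-point setting, all while ensuring that every constant is $\kappa$-uniform via Proposition \ref{prop:psiPUniform}.
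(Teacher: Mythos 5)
Your proposal follows essentially the same route as the paper: an infinitesimal generator computation for $h_{p,\kappa}$ in which the eigenvalue relation $\mathcal L_{\hat P^{\kappa,p}}\psi_{p,\kappa}=-\Lambda(p,\kappa)\psi_{p,\kappa}$ produces the main term, the linearization and cutoff errors are bounded by $C_0 V_{\beta+1,\eta}$ using the uniform $C_V^1$ control on $\psi_{p,\kappa}$, and the error is absorbed via the super-Lyapunov property after integrating in time — and you correctly identify the main technical point (justifying the generator computation for $\psi_{p,\kappa}\in\mathring C_V^1$ via cylinder-function approximation), which is exactly what the paper's Lemmas \ref{lem:Gen1} and \ref{lem:Gen2} carry out. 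The only quibble is your threshold ``$p<2$'': the error term scales like $|w|^{1-p}V_{\beta+1,\eta}$, so one needs $p\leq 1$ for it to be bounded near the diagonal, but this is harmless since the proposition assumes $p\in(0,1)$.
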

\begin{remark}
In light of the fact that $\Lambda(p,\kappa) \to \Lambda(p)$ as $\kappa \to 0$ we see that for $\kappa$ small enough, $P^{(2),\kappa}_t$ satisfies a uniform drift condition in the sense of Definition \ref{defn:drift}, with constants $\gamma$ and $K$ that independent of $\kappa$.
\end{remark}

Let $\mathcal{L}_{(2),\kappa}$ denote the generator of $P^{(2),\kappa}_t$ as a $C_0$ semi-group on $\mathring{C}_{\hat V}$. For convenience we will work with the coordinates $(u,x,w)$ where $w=w(x,y)$ is the minimum displacement vector from $x$ to $y$. The two point motion can then equivalently be written in these coordinates $(u_t,x_t^\kappa, w_t^\kappa)$, where
\[
	w_t^\kappa = w(x_t^\kappa,y_t^\kappa).
\]
Note that $w_t^\kappa$ is not directly subject to white-in-time forcing since $x_t^\kappa$ and $y_t^\kappa$ are driven by the same Brownian motion. Formally, in this new $(u,x,w)$ coordinate system, one expects the generator $\mathcal{L}_{(2),\kappa}$ to take the form
\[
\mathcal{L}_{(2),\kappa}\varphi = \mathcal{L}_{(1),\kappa}\varphi + (u(x+w)-u(x))\cdot \nabla_w\varphi.
\]
where $\mathcal{L}_{(1),\kappa}$ is the generator for the Lagragian process $(u_t,x_t^\kappa)$. Note that $\kappa>0$ is a singular perturbation at the level of the generator $\mathcal{L}_{(1),\kappa}$ since it corresponds to the addition of a $\kappa \Delta$. Naturally, the strategy is to relate $\mathcal{L}_{(2),\kappa}$ to the generator $\mathcal{L}_{p,\kappa}$ of the twisted Markov semi-group $P^{p,\kappa}_t$, which we know has a good uniform in $\kappa$ spectral gap, implying
\[
	\mathcal{L}_{p,\kappa}\psi_{p,\kappa} = -\Lambda(p,\kappa)\psi_{p,\kappa}.
\]
In order to do this, we must approximate the displacement process $w_t^\kappa$ with the linearized process
\[
	w_t^{*,\kappa} := D\phi^tw,\quad w = w(x,y).
\]
This can only be made sense of when $x$ and $y$ are suitably close, so the cut-off $\chi$ is necessary. Using that $\psi_{p,\kappa}$ is the dominant eigenfunction for $\mathcal{L}_{p,\kappa}$ we can show that $h_{p,\kappa}$ is an {\em approximate} eigenfunction of $P^{(2),\kappa}$ with error contributions coming from the cut-off $\chi$ and the approximation error made by approximating $w_t$ with $w_t^*$. This is made precise in the following key Lemma.
\begin{lemma} \label{lem:hpkEst}
For all $p\in(0,p_0)$, $\kappa \in [0,\kappa_0]$, $\eta \in (0,\eta^*)$ and $\beta\geq 1$ taken large enough, $h_{p,\kappa}$ belongs to $\mathrm{Dom}(\mathcal{L}_{(2),\kappa})$ on $\mathring{C}_{\widehat{V}_{p,\beta,\eta}}$ the following formula holds
\begin{align}
	\mathcal{L}_{(2),\kappa}h_{p,\kappa} = - \Lambda(p,\kappa)h_{p,\kappa} + \mathcal{E}_{p,\kappa} + \Sigma\cdot \nabla_w h_{p,\kappa} \label{eq:hpkform}
\end{align}
where
\[
\mathcal{E}_{p,\kappa}(u,x,y) = H\left(u,x,\frac{w}{|w|}\right)|w|^{1-p}\psi_{p,\kappa}\left(u,x,\frac{w}{|w|}\right)\chi^\prime(|w|),
\]
with $H(u,x,v) = \langle v,\nabla u(x)v\rangle$ and $\Sigma(u,x,w)  = u(x+w) - u(x) - Du(x)w$.
\end{lemma}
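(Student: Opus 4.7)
The plan is to exploit the polar decomposition $w = rv$ with $r = |w|$, $v = w/|w|$, under which the linearized two-point dynamics splits into the projective process $(u_t, x_t^\kappa, v_t^\kappa)$ plus a decoupled scalar radial equation $\dot r_t = H(u_t, x_t^\kappa, v_t^\kappa)\, r_t$. The crucial structural observation is that since both $x_t^\kappa$ and $y_t^\kappa$ are driven by the \emph{same} noise $\sqrt{2\kappa}\widetilde W_t$, the displacement $w_t^\kappa$ evolves noise-free, $\dot w_t = u_t(x_t^\kappa + w_t) - u_t(x_t^\kappa)$. Writing $u_t(x+w) - u_t(x) = Du_t(x) w + \Sigma(u_t, x, w)$ then splits the generator as
\begin{equation}
\mathcal L_{(2), \kappa} = \mathcal L_{\mathrm{lin}} + \Sigma \cdot \nabla_w \, ,
\end{equation}
where $\mathcal L_{\mathrm{lin}}$ generates the \emph{linearized} two-point process $(u_t, x_t^\kappa, \widetilde w_t)$ with $\widetilde w_t = D_x \phi^t_\kappa w$. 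Since $\Sigma$ is smooth and vanishes to second order at $w = 0$, the correction $\Sigma \cdot \nabla_w h_{p,\kappa}$ is continuous on $\Hbf \times \mathcal D^c$, reducing the task to computing $\mathcal L_{\mathrm{lin}} h_{p,\kappa}$ and verifying domain membership.

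For smooth $\psi(u,x,v)$ and $f(u,x,r,v) := \chi(r) r^{-p}\psi$, since the projective generator $\widehat{\mathcal L}_\kappa$ (of the semigroup $\hat P_t^\kappa$) does not see the $r$ coordinate and the radial drift is $H r \partial_r$, a direct computation yields
\begin{equation}
\mathcal L_{\mathrm{lin}} f = \chi(r) r^{-p} \widehat{\mathcal L}_\kappa \psi + H(u,x,v) \bigl[ r \chi'(r) - p \chi(r)\bigr] r^{-p} \psi \, .
\end{equation}
The $-p H \chi(r) r^{-p} \psi$ piece combines with $\chi(r) r^{-p} \widehat{\mathcal L}_\kappa \psi$ to produce $\chi(r) r^{-p}(\widehat{\mathcal L}_\kappa - pH) \psi$, where $\widehat{\mathcal L}_\kappa - pH$ is the generator of the twisted Feynman--Kac semigroup $\hat P_t^{\kappa,p}$ in view of \eqref{FKRep}; the remaining $r^{1-p} H \chi'(r) \psi$ is supported on $\{ r \in \supp \chi'\}$ and matches $\mathcal E_{p,\kappa}$ exactly when $\psi = \psi_{p,\kappa}$. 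Proposition \ref{prop:psiPUniform}(b)(i), applied via standard $C_0$-semigroup theory on $\mathring C_V^1$ (using the boundedness in Lemma \ref{lem:C1VbdEtc}), gives $\psi_{p,\kappa} \in \mathrm{Dom}(\widehat{\mathcal L}_\kappa - pH)$ with $(\widehat{\mathcal L}_\kappa - pH)\psi_{p,\kappa} = - \Lambda(p,\kappa) \psi_{p,\kappa}$. Substituting and adding back $\Sigma \cdot \nabla_w h_{p,\kappa}$ yields precisely \eqref{eq:hpkform}.

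The main obstacle is justifying that $h_{p,\kappa} \in \mathrm{Dom}(\mathcal L_{(2),\kappa})$ in the genuine $C_0$-sense on $\mathring C_{\widehat V}$ given only the $\mathring C_V^1$ regularity of $\psi_{p,\kappa}$ from Proposition \ref{prop:psiPUniform}(b)(ii) --- in particular, $\psi_{p,\kappa}$ is not known to be twice differentiable. The cleanest way around this is to work directly at the semigroup level rather than differentiating $\psi_{p,\kappa}$ by hand. Using the explicit Feynman--Kac identity for the linearized radial flow,
\begin{equation}
|\widetilde w_t|^{-p} = |w|^{-p} \exp \Bigl( - p \int_0^t H(u_s, x_s^\kappa, v_s^\kappa)\, ds \Bigr) \, ,
\end{equation}
we obtain the exact relation $\widetilde{\EE}\bigl[ |\widetilde w_t|^{-p} \psi_{p,\kappa}(u_t, x_t^\kappa, v_t^\kappa)\bigr] = e^{-\Lambda(p,\kappa) t} |w|^{-p} \psi_{p,\kappa}(u,x,v)$, so the required second-order regularity is effectively built into the eigenvalue identity itself. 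Differentiating in $t$ at $t = 0$ and expanding $\chi(|\widetilde w_t|) = \chi(r) + t\, r H(u,x,v) \chi'(r) + o(t)$ recovers $\mathcal L_{\mathrm{lin}} h_{p,\kappa}$ in the strong $C_{\widehat V}$ sense, and a Duhamel perturbation from the linearized to the nonlinear flow reproduces the $\Sigma \cdot \nabla_w h_{p,\kappa}$ term; the $\kappa$-uniform bound
\begin{equation}
|\Sigma \cdot \nabla_w h_{p,\kappa}| \lesssim (1 + \| u \|_{C^3})\, |w|^{1 - p}\, V(u)
\end{equation}
on $\supp \chi$ follows from $|\Sigma(u,x,w)| \lesssim \|u\|_{C^2} |w|^2$, Proposition \ref{prop:psiPUniform}(b)(ii), and the embedding $\Hbf \hookrightarrow C^3$. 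With all constants uniform in $\kappa \in [0,\kappa_0]$, the identity \eqref{eq:hpkform} follows exactly as in [Section 6.3; \cite{BBPS19}] for the $\kappa = 0$ case.
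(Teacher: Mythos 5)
Your proposal follows essentially the same route as the paper: decompose $P^{(2),\kappa}_t$ into the linearized semigroup $TP^\kappa_t \varphi(u,x,w) = \E\,\varphi(u_t,x_t^\kappa, D\phi^t w)$ plus a correction, exploit the exact eigenrelation $TP^\kappa_t\bigl(|w|^{-p}\psi_{p,\kappa}\bigr) = e^{-\Lambda(p,\kappa)t}|w|^{-p}\psi_{p,\kappa}$ at the semigroup level to avoid needing second derivatives of $\psi_{p,\kappa}$, and identify the residual $\Sigma\cdot\nabla_w h_{p,\kappa}$ from the difference of the nonlinear and linearized displacement flows using only the $C_V^1$ control of Proposition \ref{prop:psiPUniform}(b)(ii). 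This is correct (modulo the slip that the Feynman--Kac identity holds under the full expectation $\EE$, not $\widetilde\EE$ alone), though the paper's treatment of the nonlinear-vs-linearized comparison is considerably more delicate, requiring cutoff events to control $|w_t^\kappa|/|w_t^{*,\kappa}|$, cylinder-function approximation of $D_v\psi_{p,\kappa}$, and an It\^o-formula computation producing the new $O(\kappa)$ terms.
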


As in \cite{BBPS19}, the strategy to justifying \eqref{eq:hpkform} (and $h_{p,\kappa} \in \mathrm{Dom}(\mathcal{L}_{(2),\kappa})$) is to approximate $P^{(2),\kappa}_th_{p,\kappa}$ by the semi-group 
\[
TP_t^{\kappa}h_{p,\kappa}(u,x,w) = \E_{u,x,w}h_{p,\kappa}(u_t,x_t^\kappa,w^{*,\kappa}_t)
\]
for the linearized dynamics and write
\[
	\frac{P^{(2),\kappa}_th_{p,\kappa} - h_{p,\kappa}}{t} = \frac{TP_t^{\kappa}h_{p,\kappa} - h_{p,\kappa}}{t} + \frac{P^{(2),\kappa}_th_{p,\kappa} - TP^\kappa_th_{p,\kappa}}{t}.
\]
Showing that each term on the right-hand side has a limit as $t\to 0$ in $C_{\hat{V}_{p,\beta,\eta}}$. First, let us obtain the analogue of [Lemma 6.14; \cite{BBPS19}], which shows that the generator of the linearized semi-group $TP_t^{\kappa}$  behaves well applied to $h_{p,\kappa}$. 
\begin{lemma} \label{lem:Gen1}
For $p\in (0, p)$,  $\kappa \in [0,\kappa_0]$ and $\beta>0$ large enough, the following limit holds in $C_{\hat V_{p,\beta,\eta}}$
\[
	\lim_{t\to 0} \frac{TP_t^\kappa h_{p,\kappa} - h_{p,\kappa}}{t} = -\Lambda(p,\kappa) h_{p,\kappa} + \mathcal{E}_{p,\kappa}.
\]
\end{lemma}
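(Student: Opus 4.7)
The plan is to exploit the eigenvalue relation $\hat P_t^{\kappa,p}\psi_{p,\kappa} = e^{-\Lambda(p,\kappa) t}\psi_{p,\kappa}$ from Proposition \ref{prop:psiPUniform}(b)(i) by recognizing that the linearized two-point flow, when written in the polar decomposition $w = |w|v$ with $v = w/|w|$, is exactly the projective process weighted by the Feynman--Kac factor $|D\phi^t v|^{-p}$.  Concretely, since $w_t^{*,\kappa} = D\phi^t w = |w| \cdot D\phi^t v$, we have $|w_t^{*,\kappa}| = |w|\,|D\phi^t v|$ and $w_t^{*,\kappa}/|w_t^{*,\kappa}| = v_t^\kappa$, giving
\[
TP_t^\kappa h_{p,\kappa}(u,x,w) = |w|^{-p}\,\widetilde{\E}\left[\chi\bigl(|w|\,|D\phi^t v|\bigr)\,|D\phi^t v|^{-p}\,\psi_{p,\kappa}(u_t,x_t^\kappa,v_t^\kappa)\right].
\]

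The main step is then to decompose the cutoff as $\chi(|w||D\phi^t v|) = \chi(|w|) + \bigl[\chi(|w||D\phi^t v|) - \chi(|w|)\bigr]$.  The first piece produces the principal term
\[
\chi(|w|)\,|w|^{-p}\,\hat P_t^{\kappa,p}\psi_{p,\kappa}(u,x,v) \;=\; e^{-\Lambda(p,\kappa) t}\,h_{p,\kappa}(u,x,w),
\]
whose time derivative at $t = 0$ is $-\Lambda(p,\kappa)\, h_{p,\kappa}$, converging in $C_{\hat V_{p,\beta,\eta}}$ by the $C_V^1$ bounds on $\psi_{p,\kappa}$ from Proposition \ref{prop:psiPUniform}(b)(ii).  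The second piece I will Taylor expand using $\chi(|w|\rho) - \chi(|w|) = \chi'(|w|)|w|(\rho-1) + O((\rho-1)^2)$ together with the pathwise expansion $|D\phi^t v| = 1 + t\,H(u,x,v) + O(t^2)$ coming from the linearization ODE $\partial_t \tilde v_t = (I - v_t\otimes v_t)Du(x_t)v_t$; to leading order this recovers
\[
|w|^{-p}\,\chi'(|w|)\,|w|\,H(u,x,v)\,\psi_{p,\kappa}(u,x,v) \;=\; \mathcal{E}_{p,\kappa}.
\]

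The hard part will be upgrading the above formal limits to convergence in the weighted space $C_{\hat V_{p,\beta,\eta}}$, which is what is actually claimed and what is needed to conclude that $h_{p,\kappa}$ belongs to $\mathrm{Dom}(\mathcal L_{(2),\kappa})$.  For this I will control the $O(t^2)$ and quadratic remainders uniformly against $\hat V_{p,\beta,\eta}$ by combining: (i) the $\kappa$-uniform pathwise and moment Jacobian bounds from Section \ref{sec:Jacobian} (Lemmas \ref{lem:PathJacobian}--\ref{lem:JacExps}), which control $\|D\phi^t v\|^{\pm p}$ and their $t$-differences with prefactors of the form $V_{q',\eta}(u)$; (ii) the super-Lyapunov moment bound of Lemma \ref{lem:TwistBd}, absorbing exponential-in-$\int\|u_s\|_{\Hbf^r}$ growth into $V_{\beta,\eta}$ for $\beta$ sufficiently large; and (iii) the $\kappa$-uniform $C_V^1$ regularity of $\psi_{p,\kappa}$, which allows us to Taylor-expand $\psi_{p,\kappa}(u_t,x_t^\kappa,v_t^\kappa)$ around $(u,x,v)$ with remainder controlled in $C_V$.

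The cutoff $\chi'$ is supported in $|w| \in [1/10,1/5]$, so the $|w|^{1-p}\chi'(|w|)$ factor appearing in $\mathcal E_{p,\kappa}$ is bounded and $\mathcal E_{p,\kappa} \in C_{\hat V}$ trivially; the only genuine singularity in $\hat V$ sits inside the principal term, where it is exactly cancelled by the eigenvalue relation.  Once each of these estimates is in place, the two limits identified above combine to give $\mathcal L_{(2),\kappa} h_{p,\kappa} = -\Lambda(p,\kappa) h_{p,\kappa} + \mathcal E_{p,\kappa}$ in $C_{\hat V_{p,\beta,\eta}}$, as required, with $\beta$ taken large enough (in a $\kappa$-independent way) to absorb all of the Jacobian Lyapunov losses.
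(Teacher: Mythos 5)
Your proposal follows essentially the same route as the paper: both exploit that $|w|^{-p}\psi_{p,\kappa}$ is an exact eigenfunction of the linearized semigroup (equivalently, $\hat P_t^{\kappa,p}\psi_{p,\kappa}=e^{-\Lambda(p,\kappa)t}\psi_{p,\kappa}$), isolate the cutoff difference $\chi(|w_t^{*,\kappa}|)-\chi(|w|)$ whose first-order-in-$t$ expansion produces $\mathcal{E}_{p,\kappa}$, and control the remainder in $C_{\hat V}$ via the Jacobian/super-Lyapunov bounds. The only technical divergence is in the last step: where you Taylor-expand $\psi_{p,\kappa}(u_t,x_t^\kappa,v_t^\kappa)$ directly using the $C_V^1$ bound (which forces you to measure $u_t-u$ in the strong $\Hbf$ topology), the paper instead approximates $\psi_{p,\kappa}$ by smooth cylinder functions $\psi_{p,\kappa}^{(n)}$ and measures the path increment in the weaker $H^r$ norm, $r\in(1+d/2,3)$, sending $t\to0$ before $n\to\infty$ --- a more robust way to obtain the same $C_{\hat V_{p,\beta,\eta}}$ convergence.
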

\begin{proof}
Fix $\beta_0>0$ so that $\psi_{p,\kappa} \in \mathring{C}_{V_{\beta_0,\eta}}$.  The proof is almost the same as that of [Lemma 6.14; \cite{BBPS19}], with some small differences. Indeed, using here the fact that $\abs{w}^{-p} \psi_{p,\kappa}$ is an eigenfunction for $TP_t^\kappa$ with eigenvalue $e^{-\Lambda(p,\kappa)t}$, we find
\[
	\frac{TP_t^{\kappa}h_{p,\kappa} - h_{p,\kappa}}{t} = \frac{e^{-\Lambda(p,\kappa)t} - 1}{t}h_{p,\kappa} + \mathcal{E}_{p,\kappa} + \E R_t,
\]
where the remainder $R_t$ takes the form
\[
	R_t = |w_t^{*,\kappa}|^{-p}\psi_p(u_t,x_t^\kappa,v_t^\kappa)\frac{1}{t}\int_0^t|w^{*,\kappa}_s|H(u_s,x_s^\kappa,v_s^\kappa)\chi^\prime(|w^{*,\kappa}_s|)\ds - \mathcal{E}_{p,\kappa}.
\]
The goal is therefore to show that $R_t \to 0$ in $C_{\hat{V}_{p,\beta,\eta}}$ for some $\beta$ large enough as $t \to 0$. Note that, even though $|w_t^{*,\kappa}|$ depends on $\kappa$ it has the following formula
\begin{equation}\label{eq:w*-eq}
|w_t^{*,\kappa}| = \exp\left(\int_0^t H(u_s,x^\kappa_s,v^\kappa_s)\ds\right)|w|,
\end{equation}
and therefore is bounded independently of $\kappa$. Just as in [Lemma 6.14 , \cite{BBPS19}], using the fact that $\psi_{p,\kappa}$ is in $\mathring{C}_{V_{\beta_0,\eta}}$ and using a density argument to approximate it by cylinder functions $\psi^{(n)}_{p,\kappa}$, we can bound the remainder by
\[
	|R_t| \leqc |w|^{1-p}\exp\left(C_p\int_0^t \|u_s\|_{H^{r}}\ds\right)\sup_{s\in(0,t)} V_{\beta_0+1,\eta}(u_s)\left(C_{n,\kappa} \rho_t + \|\psi_{p,\kappa} - \psi_{p,\kappa}^{(n)}\|_{C_{V_{\beta_0,\eta}}}\right),
\]
for $r \in (1+d/2,3)$, where $C_n$ depends badly on $n$ and $D\psi_{p,\kappa}^{(n)}$ and
\[
	\rho_t = \sup_{s\in(0,t)}\left(\|u_s-u\|_{H^r} + d_{\T^d}(x_s^\kappa,x) + d_{P^{d-1}}(v_s^\kappa,v)\right).
\]
At this stage, the only significant difference from the proof in \cite{BBPS19} is that $d(x^\kappa_s,x)$ is influenced by the Brownian motion $\sqrt{\kappa}\widetilde{W}_t$ and is therefore given by
\[
	d_{\T^d}(x_t^\kappa,x) \leq \int_0^t \|u_s\|_{L^\infty}\ds + \sqrt{\kappa}|\widetilde{W}_t|, 
\] 
so that by the Burkholder-Davis-Gundy inequality and the fact that $\E\sup_{s\in(0,t)} \|u_s\|_{L^\infty}^2 \leqc e^{Ct} \|u\|_{L^\infty}^2$, we obtain for $t\leq 1$
\[
	\E\sup_{s\in (0,t)} d_{\T^d}(x_t^\kappa,x)^{2} \leqc_{\kappa} (1+\|u\|_{\Hbf})^2t
\]
Both $\|u_s-u\|_{H^r}$ and $d_{P^{d-1}}(v_s^\kappa,v)$ are dealt with exactly as in \cite{BBPS19}. Consequently , we obtain a bound on $R_t$ of the form
\[
	\E|R_t| \leqc |w|^{1-p}V_{\beta_1,\eta}(u)(C_{n,\kappa}t^{1/2} + \|\psi_{p,\kappa} - \psi_{p,\kappa}^{(n)}\|_{C_V}).
\]
for some constant depending on $n$ and $\kappa$ and $\beta_1 > \beta_0 +1$ large enough. Sending $t\to 0$ first and then sending $n\to\infty$ still gives the result.
\end{proof}
We similarly have the analogue of [Lemma 6.15; \cite{BBPS19}], which shows the error made in approximating $P^{(2),\kappa}_t$ by the linearized dynamics $TP^\kappa_t$.
\begin{lemma}\label{lem:Gen2}
For $p\in(0,p_0)$, $\kappa \in [0,\kappa_0]$ and $\beta >0$ large enough, the following limit holds in $C_{\widehat V_{p,\beta,\eta}}$
\[
	\lim_{t\to 0} \frac{P^{(2),\kappa}_t h_{p,\kappa} - TP_t^\kappa h_{p,\kappa}}{t} = \Sigma \cdot \nabla_w h_{p,\kappa}.
\]
\end{lemma}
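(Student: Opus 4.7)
The plan is to closely follow the approach of [Lemma 6.15; \cite{BBPS19}] for the $\kappa = 0$ case, adapting it to $\kappa > 0$ while carefully tracking $\kappa$-uniformity of the error estimates. The key observation is that although $x_t^\kappa$ and $y_t^\kappa$ each carry the auxiliary noise $\sqrt{2\kappa}\widetilde W_t$, the displacement variable $w_t^\kappa$ does not: since $x_t^\kappa$ and $y_t^\kappa$ share the same Brownian realization, $w_t^\kappa$ satisfies the purely pathwise ODE $\partial_t w_t^\kappa = u_t(x_t^\kappa + w_t^\kappa) - u_t(x_t^\kappa)$, while the linearized flow obeys $\partial_t w_t^{*,\kappa} = Du_t(x_t^\kappa) w_t^{*,\kappa}$. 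Subtracting gives the pathwise relation
\begin{equation}
\partial_t (w_t^\kappa - w_t^{*,\kappa}) = \Sigma(u_t, x_t^\kappa, w_t^\kappa) + Du_t(x_t^\kappa)\,(w_t^\kappa - w_t^{*,\kappa}),
\end{equation}
from which a Duhamel/Gr\"onwall argument yields $w_t^\kappa - w_t^{*,\kappa} = t\,\Sigma(u,x,w) + o(t)$ along each sample path, with the $o(t)$ remainder controlled by quantities like $\sup_{s\in[0,t]}\|u_s\|_{H^{r_0}}$ and $\sup_{s\in[0,t]}|w_s^\kappa - w|$.

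Since $u_t$ and $x_t^\kappa$ are common to both semigroups, the difference reduces, via the fundamental theorem of calculus in $w$, to
\begin{equation}
P_t^{(2),\kappa} h_{p,\kappa} - T P_t^\kappa h_{p,\kappa} = \widetilde{\E}\int_0^1 \nabla_w h_{p,\kappa}(u_t, x_t^\kappa, w_t^{\theta,\kappa}) \cdot (w_t^\kappa - w_t^{*,\kappa})\,d\theta,
\end{equation}
with $w_t^{\theta,\kappa} := (1-\theta) w_t^{*,\kappa} + \theta w_t^\kappa$. Dividing by $t$, using the above pathwise expansion together with continuity of $\nabla_w h_{p,\kappa}$ at $(u,x,w)$, formally produces the limit $\Sigma(u,x,w) \cdot \nabla_w h_{p,\kappa}(u,x,w)$. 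To justify this convergence in the $C_{\widehat V_{p,\beta,\eta}}$ norm, I would follow the density argument of Lemma \ref{lem:Gen1}: approximate $\psi_{p,\kappa}$ by smooth cylinder functions $\psi_{p,\kappa}^{(n)}$, bound the resulting approximation error using $\|\psi_{p,\kappa} - \psi_{p,\kappa}^{(n)}\|_{C^1_V}$ (small after first sending $t \to 0$ and then $n \to \infty$), and control the path-dependent errors $\|u_s - u\|_{H^{r_0}}$, $|w_s^\kappa - w|$, $|w_s^{*,\kappa} - w|$ via Lemma \ref{lem:TwistBd} and the Jacobian estimates of Section \ref{sec:Jacobian}.

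The cutoff $\chi(|w|)$ restricts attention to $|w| \leq 1/5$, on which $|w_s^{\theta,\kappa}| \geq \tfrac{1}{2}|w|$ for $s \in [0,t]$ with $t$ sufficiently small, pathwise. Hence the singular factor $|w_s^{\theta,\kappa}|^{-p-1}$ appearing in $\nabla_w h_{p,\kappa}$ (from differentiating $|w|^{-p}\psi_{p,\kappa}(u,x,w/|w|)$ in $w$) is dominated by $|w|^{-p-1}$; combined with the quadratic bound $|\Sigma(u_s,x_s^\kappa,w_s^\kappa)| \lesssim |w_s^\kappa|^2\,\|u_s\|_{H^{r_0}}$, the final pointwise estimate is of order $|w|^{1-p}$ times moments of $u_s$, which is absorbed by the weight $\widehat V_{p,\beta,\eta}$ once $\beta$ is taken sufficiently large. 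The principal obstacle is that every remainder estimate must be uniform in $\kappa$; this reduces to verifying that the ingredients invoked---the $C_V^1$-bound on $\psi_{p,\kappa}$ from Proposition \ref{prop:psiPUniform}(b)(ii), and the Jacobian estimates of Lemmata \ref{lem:PathJacobian}--\ref{lem:JacExps}---already hold $\kappa$-uniformly, which they do. The extra $O(\sqrt{\kappa t})$ deviation of $x_t^\kappa$ from $x$ caused by $\sqrt{2\kappa}\widetilde W_t$, estimated via the reflection principle, is harmless in the $t \to 0^+$ limit, so the overall argument transfers with only minor modifications from [Lemma 6.15; \cite{BBPS19}].
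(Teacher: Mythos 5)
Your outline matches the paper's proof (which itself tracks [Lemma 6.15; \cite{BBPS19}]): the fundamental-theorem-of-calculus decomposition in $w$ with the interpolant $w_t^{\theta,\kappa}$, the pathwise ODE for $w_t^\kappa - w_t^{*,\kappa}$ (your displayed identity is exactly right, as is the observation that $w_t^\kappa$ carries no direct noise), the quadratic bound on $\Sigma$, the cutoff to tame the $|w|^{-p-1}$ singularity, and the cylinder-function density argument closed with the $\kappa$-uniform $C_V^1$ bound on $\psi_{p,\kappa}$. However, two steps are asserted rather than proved, and one of them is precisely what the paper singles out as the main new difficulty over the $\kappa=0$ case. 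First, the lower bound $|w_s^{\theta,\kappa}| \geq \tfrac12 |w|$ ``pathwise for $t$ sufficiently small'' is not available uniformly in $\omega$: how small $t$ must be depends on $\sup_{s\leq t}\|\nabla u_s\|_{L^\infty}$, which is unbounded over the probability space, and the convex combination $w_s^{\theta,\kappa}$ can genuinely degenerate when $w_s^\kappa$ and $w_s^{*,\kappa}$ decorrelate. The paper fixes this by inserting indicators of events $A_t,B_t$ on which these suprema are controlled, and showing $\1_{A_t^c\cup B_t^c} \leqc t^{1+\delta}\exp\bigl(2(1+\delta)\int_0^t\|u_s\|_{H^r}\,ds\bigr)\sup_s\|u_s\|_{H^r}^{1+\delta}$, so the bad set contributes $o(t)$ after taking expectations with Lemma \ref{lem:TwistBd}. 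Without this decomposition the singular factor $|w_s^{\theta,\kappa}|^{-p-1}$ in your FTC formula is uncontrolled on a set of positive probability for every fixed $t>0$.

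Second, and more substantively: to get $\E\bigl|(w_t^\kappa - w_t^{*,\kappa})/t - \Sigma(u,x,w)\bigr|$ small in the weighted norm, the key quantity is $\sup_{s\leq t}|u_s(y_s^\kappa)-u_s(x_s^\kappa)-u(y)+u(x)|$, and it must be bounded by $O(\sqrt t)\,|w|$ times moments of $u$ --- the factor $|w|$ is essential to cancel the $|w|^{-p-1}$ coming from $\nabla_w h_{p,\kappa}$. Because $u_s$ solves an SPDE with white-in-time forcing and is evaluated along the moving stochastic points $x_s^\kappa,y_s^\kappa$, this difference cannot be differentiated in time classically; the paper expands it with It\^o's formula, which produces new $\kappa$-dependent terms of the form $\kappa\int_0^s(\Delta u_\tau(y_\tau^\kappa)-\Delta u_\tau(x_\tau^\kappa))\,d\tau$ and $\sqrt{\kappa}\int_0^s(Du_\tau(y_\tau^\kappa)-Du_\tau(x_\tau^\kappa))\,d\widetilde W_\tau$, each then bounded by $(1+\|u_\tau\|_{\Hbf}^2)|w_\tau^\kappa|$ using $\sigma>\tfrac d2+3$ and BDG. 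Your proposal mentions only the $O(\sqrt{\kappa t})$ drift of $x_t^\kappa$ (and the reflection principle, which is not the relevant tool here --- BDG is), and lists $\sup_s\|u_s-u\|_{H^{r_0}}$ among the controlled errors without noting that establishing $\E\sup_s\|u_s-u\|_{H^{r_0}}^2 \leqc t$ itself requires a stochastic-calculus argument. These are repairable within your framework, but they are the content of the proof rather than routine bookkeeping.
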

\begin{proof}
Again, the proof is almost identical to the proof in [Lemma 6.15; \cite{BBPS19}] due to the fact that the approximation is happening on the process $w_t$, which does not have noise directly driving it (the Brownian motion on $x_t$ and $y_t$ cancel). The main difference is the appearance of some terms due to It\^{o}`s formula, which can easily be dealt with. We recall a sketch of the proof here. 
As in [Lemma 6.15, \cite{BBPS19}], we introduce the events (see \cite{BBPS19} for a motivation for the definition of these sets)
\[
	A_t := \left\{t \sup_{s\in(0,t)}\|\nabla u_s\|_{\infty} \leq \frac{1}{100}\right\}, \quad B_t := \left\{t \sup_{s\in(0,t)}(\|\nabla u_s\|_{\infty}(|w_s^\kappa| +|w_s^{*,\kappa}|))\leq \frac{|w_t^{*,\kappa}|}{2}\right\}.
\]
Note that for each $\delta >0$
\begin{equation}\label{eq:complement-set-est}
	\1_{A^c_t\cup B^c_t} \leqc t^{1+\delta}\exp\left(2(1+\delta)\int_0^t\|u_s\|_{H^r}\ds\right)\sup_{s\in(0,t)}\|u_s\|_{H^r}^{1+\delta},
\end{equation}
for $r \in (1+d/2,3)$, so that by Lemma \ref{lem:TwistBd} we have $\lim_{t\to 0}\P(A_t\cap B_t) = 1$. The first step is to write
\[
\begin{aligned}
	\frac{P^{(2),\kappa}_th_{p,\kappa} - TP^\kappa_th_{p,\kappa}}{t} &= \P(A_t\cap B_t)\,\Sigma\cdot \nabla_w h_{p,\kappa}+ \E (R_t^1 + R^2_t + R^3_t),
\end{aligned}
\]
where the remainders $R_t^1$,$R^2_t$ and $R_t^3$ are given by
\begin{align}
    R_t^1&= \frac{1}{t}\1_{A_t^c\cup B_t^c}(h_{p,\kappa}(u_t,x_t^\kappa,w_t^\kappa) - h_{p,\kappa}(u_t,x_t^\kappa,w_t^{*,\kappa}))\\
	R_t^2 &= \1_{A_t\cap B_t}\int_0^1\nabla_w h_{p,\kappa} (u_t,x_{t}^\kappa,w^{\theta,\kappa}_t)\dee \theta \cdot \left(\frac{w_t^\kappa - w_{t}^{*,\kappa}}{t} - \Sigma\right)\\
	R^3_t &= \1_{A_t\cap B_t}\left(\int_0^1 \nabla_w h_{p,\kappa}(u_t,x^\kappa_t,w^{\theta,\kappa}_t)\dee \theta- \nabla_w h_{p,\kappa}\right)\cdot \Sigma
\end{align}
and $w^{\theta,\kappa} := \theta w_t + (1-\theta)w_t^{*,\kappa}$.

In light of the fact that $\P(A_t\cap B_t) \to 1$, it suffices to show that $\E R^1_t,\E R_t^2$ and $\E R_t^3$ converge to $0$ in $C_{\hat{V}_{p,\beta,\eta}}$ for suitable choices of $\beta$ and $p$. Indeed, an easy application of \eqref{eq:complement-set-est} and Lemma \ref{lem:TwistBd} gives
\[
	\E |R_t^1| \leqc t^{\delta} |w|^{-p}\E \exp\left(C_{p,\delta}\int_0^t \|u_s\|_{H^r}\ds\right)\sup_{s\in (0,t)}V_{\beta_0+1,\eta}(u_s) \leqc t^\delta \hat{V}_{p,\beta_0+1,\eta}
\]
which implies $\E R_t^1 \to 0 $ in $C_{\hat{V}_{p,\beta_0+1,\eta}}$. Also, a similar argument to that in [Lemma 6.15 \cite{BBPS19}] using properties of the sets $A_t$ and $B_t$ gives
\[
	|R_t^2| \leqc \|\psi_{p,\kappa}\|_{C^1_V} V_{\beta,\eta}(u_t) |w_t^{*,\kappa}|^{-p-1} \rho^1_t,
\]
where
\begin{equation}\label{eq:rho-1-est}
	\rho_t^1 = \sup_{s\in(0,t)}\left(|u(y_s^\kappa) - u(x_s^\kappa) - u(y) + u(x)| + \|u_s-u\|_{H^r}|w| + \|u_s\|_{H^r}|w^{*,\kappa}_s - w|\right).
\end{equation}
In order to estimate $\rho_t^1$, the main difference this proof and the one in \cite{BBPS19} is that the quantity
\[
	|u_s(y_s^{\kappa}) - u_s(x_s^\kappa) - u(y) + u(x)|
\]
now has to be estimated using It\^{o}'s formula, which gives rise to a new terms of the form $|\kappa \Delta u_t(x_t)- \kappa \Delta u_t(y_t)|$, specifically using It\^{o}`s formula and that fact that $u_s$ is evaluated along Lagrangian trajectories gives
\begin{equation}
\begin{aligned}
u_s(y_s^\kappa) - u_s(x_s^\kappa) - u(y) + u(x) &= \int_0^s B(u_\tau,u_\tau) (x^\kappa_\tau) - B(u_\tau,u_\tau)(y^\kappa_\tau)\, \dee\tau\\
&+\sum_{m\in \mathbb{K}}q_m \int_0^s (e_m(y_\tau^\kappa) - e_m(x_\tau^\kappa))\dee W^m_\tau\\  
&+\int_0^s (u_\tau\cdot \nabla u_\tau)(y^\kappa_\tau) - (u_\tau\cdot\nabla u_\tau)(x^\kappa_\tau)\, \dee\tau\\
&+ \frac{1}{2}\kappa\int_0^s \Delta u_\tau (y^\kappa_\tau) - \Delta u_{\tau}(x^\kappa_\tau)\,\dee \tau\\
&+ \sqrt{\kappa}\int_0^s (Du_\tau(y_\tau^\kappa) - Du_\tau(x_\tau^\kappa))\dee \widetilde{W}_\tau. 
\end{aligned}
\end{equation}
However, since $\sigma$ is large enough, all the velocity fields are regular enough to bound the differences on the right-hand-side above by $(1 + \|u_s\|_{\Hbf}^2)|w_s^\kappa|$. Applying the BDG inequality and that fact that
\[
	|w_s^\kappa| \leq |w|\exp\left(\int_0^s \|u_\tau\|_{H^r}\dee \tau \right)
\] 
for $r \in (1+d/2,3)$, implies that for $t\leq 1$
\begin{equation}
\begin{aligned}
	\left(\E \sup_{s\in(0,t)} |u_s(y_s^{\kappa}) - u_s(x_s^\kappa) - u(y) + u(x)|^2\right)^{1/2} &\leqc t^{1/2}|w|\E \sup_{s\in(0,t)} \exp\left(\int_0^s \|u_\tau\|_{H^r}\dee \tau\right)(1+\|u_s\|_{\Hbf}^2)\\
	& \leqc t^{1/2}|w|V_{1,\eta}(u). 
\end{aligned}
\end{equation}
The terms $\|u_s-u\|_{H^r}|w|$ and $\|u_s\|_{H^r}|w^{*,\kappa}_s - w|$ in \eqref{eq:rho-1-est} are treated similarly with the help of the cut-off $\mathbb 1_{A_t}$ giving (using also Lemma \ref{lem:TwistBd}),
\[
	\left(\E (\rho^1_t)^2\right)^{1/2} \leqc t^{1/2} |w| V_{1,\eta}(u).
\] Combining this (along with the formula \eqref{eq:w*-eq} for $w^{*,\kappa}_t$) gives by Cauchy-Schwartz that
\[
	\E |R_t^2| \leqc t^{1/2} \|\psi_{p,\kappa}\|_{C^1_V} |w|^{-p} V_{\beta_1,\eta}(u),
\]
implying that $\E|R_t^2| \to 0$ in $C_{\hat{V}_{p,\beta_1,\eta}}$ as $t\to 0$ for some $\beta_1$ big enough.

Finally, to estimate $R_t^3$, as in \cite{BBPS19} we approximate $\psi_{p,\kappa}$ by smooth cylinder functions $\psi_{p,\kappa}^{(n)}$ in $C^1_V$, a straight-forward computation using the cut-off $\1_{B_t}$ shows that
\[
	\abs{R_t^3} \leqc |w|^{-p}\exp\left(C\int_0^t \|u_s\|_{H^r}\ds\right)\left(\sup_{s\in(0,t)} V_{\beta_1,\eta}(u_s)\right)\left(C_{n,\kappa}\rho^2_t + \|D_v\psi_{p,\kappa} - D_v \psi_{p,\kappa}^{(n)}\|_{C_{V}}\right),
\]
for $r \in (1+d/2,3)$ and some $\beta_2$ large enough, where $C_{n,\kappa}$ depends badly on $n$ and $D^2_v\psi_{p,\kappa}^{(n)}$ and
\[
	\rho_t^2 = \sup_{s\in(0,t)} \left(\|u_s -u\|_{H^{r}} + d_{\T^d}(x_s^\kappa,x) + \1_{A_s}|w^\kappa_s - w| + \1_{A_s}|w_s^{*,\kappa} - w|\right).
\]
Again, very similarly to the proof of Lemma \ref{lem:Gen1} $\rho_t^2$ can be estimated by BDG to conclude that
\[
	\E|R_t^3| \leqc |w|^{-p}V_{\beta_3,\eta}(u)(C_{n,\kappa}t^{1/2} + \|D_v\psi_{p,\kappa} - D_v\psi_{p,\kappa}^{(n)}\|_{C_V}).
\]
for some large enough $\beta_3$. Sending $t\to 0$ and then $n \to \infty$ implies that $\E|R_t^3|\to 0$ as $t\to \infty$ in $C_{\hat{V}_{p,\beta_3,\eta}}$. 
 \end{proof}

As explained above, Lemmas \ref{lem:Gen1} and \ref{lem:Gen2} are sufficient to complete the proof of Lemma \ref{lem:hpkEst}.

\begin{proof}[\textbf{Proof of Proposition \ref{prop:drift}}]

Given a $V_{\beta,\eta}$ and $p$ from Lemma \ref{lem:hpkEst} using Taylor expansion allows us to bound ( c.f. [Lemma 6.13; \cite{BBPS19}])
\[
	\abs{\mathcal{E}_{p,\kappa} + \Sigma\cdot \nabla_w h_{p,\kappa}} \leqc |w|^{1-p}V_{\beta+1,\eta} \|\psi_{p,\kappa}\|_{C^1_{V_{\beta,\eta}}}.
\]
Since we can take $p< 1$ and have uniform-in-$\kappa$ bounds on $\psi_{p,\kappa}$ in $C^1_{V_{\beta,\eta}}$ we obtain the estimate
\begin{equation}\label{eq:2pt-drift-error}
	\mathcal{L}_{(2),\kappa}h_{p,\kappa} \leq -\Lambda(p,\kappa)h_{p,\kappa} + C^\prime V_{\beta + 1,\eta},
\end{equation}
for some $\kappa$ independent constant $C^\prime$. The rest of the argument proceeds as in [Proposition 2.13; \cite{BBPS19}]. We briefly recall the sketch of the argument for the readers' convenience. 
Using the super Lyapunov property it was shown in [(6.13), \cite{BBPS19}] that the following holds for all $\zeta > 0$, (denoting $P_t$ the semi-group of the Navier-Stokes equations),
\begin{equation}\label{eq:EEV}
	e^{\Lambda(p,\kappa) t}P_t V_{\beta+1,\eta} - V_{\beta+1,\eta} \leq \int_0^t e^{\Lambda(p,\kappa)s}P_s\left((\Lambda(p,\kappa)- \zeta) V_{\beta+1,\eta}(u_s) + C_\zeta\right)\ds. 
\end{equation}
Then the estimate \eqref{eq:2pt-drift-error} on $\mathcal{L}_{(2),\kappa}h_{p,\kappa}$ implies the following
\begin{align}\label{2pt-hpk-drift}
e^{\Lambda(p,\kappa)t} P_t^{(2),\kappa} h_{p,\kappa} - h_{p,\kappa}\leq C^\prime \int_0^t e^{\Lambda(p,\kappa) s} P_s V_{\beta+1,\eta} ds. 
\end{align}
By choosing $\zeta - \Lambda(p,\kappa)$ sufficiently large and adding \eqref{eq:EEV} to \eqref{2pt-hpk-drift}, the desired drift condition follows. This same argument is carried out in more detail in [Proposition 2.13; \cite{BBPS19}]. 
\end{proof}

\section{Enhanced dissipation} \label{sec:ED}

We now turn to the proof of enhanced dissipation Theorem \ref{thm:ED}. We begin by proving an enhanced dissipation result for initial data $g\in H^1$.


\begin{lemma}
Let $\gamma$ and $D_\kappa$ be as in Theorem \ref{thm:UniMix} for $p \geq 2$ and $s =1$.
Then, for any mean-zero scalar $g \in H^1$, and associated $(g_t)$ solving \eqref{def:AD}, there holds 
\begin{align}\label{ineq:ED1}
\norm{g_t}_{L^2}^2 \leq \min\left( \norm{g}_{L^2}^2, \gamma D^{2}_\kappa(u,\omega) \kappa^{-1} \left(e^{2\gamma t}-1 \right)^{-1} \norm{g}_{H^1}^2 \right).
\end{align}
\end{lemma}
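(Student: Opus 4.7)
The plan is to establish the two bounds separately, both as pathwise statements on the probability-one event where Theorem \ref{thm:UniMix} applies. The first bound $\|g_t\|_{L^2}^2 \le \|g\|_{L^2}^2$ is immediate from the standard $L^2$ energy identity
\[
\tfrac{d}{dt}\|g_t\|_{L^2}^2 = -2\kappa \|\grad g_t\|_{L^2}^2 \le 0,
\]
which uses only the incompressibility of $u_t$.

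For the second, more interesting bound, I would combine the Cauchy--Schwarz-type Sobolev interpolation
\[
\|f\|_{L^2}^4 \le \|f\|_{\dot H^1}^2 \|f\|_{\dot H^{-1}}^2 \le \|f\|_{H^1}^2 \|f\|_{H^{-1}}^2,
\]
valid for mean-zero $f$ on $\mathbb T^d$, with the $\kappa$-uniform exponential mixing from Theorem \ref{thm:UniMix}. Rearranging the interpolation yields the lower bound
\[
\|\grad g_t\|_{L^2}^2 \geq \frac{\|g_t\|_{L^2}^4}{\|g_t\|_{H^{-1}}^2},
\]
while Theorem \ref{thm:UniMix} applied with $s = 1$ and the prescribed $p \geq 2$ produces the random constant $D_\kappa(u,\omega)$ and deterministic rate $\gamma = \gamma(1,p)$ with $\|g_t\|_{H^{-1}} \leq D_\kappa e^{-\gamma t}\|g\|_{H^1}$. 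Inserting these into the energy identity gives the Bernoulli-type differential inequality
\[
\tfrac{d}{dt}\|g_t\|_{L^2}^2 \leq -2\kappa \frac{\|g_t\|_{L^2}^4}{\|g_t\|_{H^{-1}}^2} \leq -\frac{2\kappa\, e^{2\gamma t}}{D_\kappa^2 \|g\|_{H^1}^2}\,\|g_t\|_{L^2}^4.
\]

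Setting $\sigma(t) := \|g_t\|_{L^2}^{-2}$ (which is well-defined and finite since the $L^2$ norm is non-increasing and positive for $g \not\equiv 0$), the inequality above becomes the elementary linear ODE bound
\[
\sigma'(t) \geq \frac{2\kappa\, e^{2\gamma t}}{D_\kappa^2 \|g\|_{H^1}^2},
\]
which integrates directly to
\[
\frac{1}{\|g_t\|_{L^2}^2} \geq \frac{1}{\|g\|_{L^2}^2} + \frac{\kappa(e^{2\gamma t}-1)}{\gamma D_\kappa^2 \|g\|_{H^1}^2}.
\]
Dropping the initial-data term on the right-hand side and inverting gives the second bound in \eqref{ineq:ED1}; taking the minimum with the first bound concludes the proof.

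There is no serious analytical obstacle here: the lemma amounts to combining a standard interpolation-based enhanced-dissipation mechanism (of the type used in \cite{CKRZ08,CZDE18,IF19}) with the $\kappa$-uniform exponential $H^{-1}$ decay already established in Theorem \ref{thm:UniMix}. The only point requiring mild care is bookkeeping of the absolute constants in the Sobolev interpolation and in $\frac{d}{dt}\|g_t\|_{L^2}^2 = -2\|\grad g_t\|_{L^2}^2$, which can be absorbed harmlessly into the random constant $D_\kappa$ (whose moment bound \eqref{ineq:Dkap1} is stable under multiplication by universal constants). The entire argument is pathwise on the full-measure set where Theorem \ref{thm:UniMix} holds, so no additional probabilistic input is needed at this stage.
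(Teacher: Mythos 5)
Your proof is correct and follows essentially the same route as the paper's: the energy identity combined with the interpolation $\|g\|_{L^2}^2 \le \|g\|_{H^1}\|g\|_{H^{-1}}$ and the uniform $H^{-1}$ decay from Theorem \ref{thm:UniMix} yields the same Bernoulli inequality, which you integrate by the same substitution $\sigma(t) = \|g_t\|_{L^2}^{-2}$. The only cosmetic difference is that you drop the $\|g\|_{L^2}^{-2}$ term from the lower bound on $\sigma$ before inverting, while the paper keeps it and then discards the $1$ in the denominator of the resulting quotient; these are trivially equivalent, and there are in fact no absolute constants that need absorbing into $D_\kappa$, since the Fourier-side interpolation holds with constant $1$ and the paper tracks the factor $2\kappa$ exactly.
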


\begin{proof}
Note that because $\norm{g}_{L^2} \leq \norm{g}_{H^{-1}}^{1/2} \norm{g}_{H^1}^{1/2}$, by Theorem \ref{thm:UniMix} we have
\begin{align}
\frac{d}{dt}\norm{g_t}_{L^2}^2= -2\kappa \norm{\grad g_t}_{L^2}^2 \leq -2\kappa \frac{\norm{g_t}_{L^2}^4}{\norm{g_t}_{H^{-1}}^2}\leq -2\kappa \frac{\norm{g_t}_{L^2}^4}{D^2_\kappa(u,\omega) \norm{g}_{H^{1}}^2}  e^{2\gamma t}. 
\end{align}
Re-arranging gives
\begin{align}
-\frac{d}{dt}\left(\frac{1}{\norm{g_t}^2}\right) = \frac{1}{\norm{g_t}_{L^2}^4}\frac{d}{dt}\norm{g_t}_{L^2}^2 \leq -\kappa \frac{2}{D^2_\kappa(u,\omega) \norm{g}_{H^{1}}^2}  e^{2\gamma t},
\end{align}
and hence 
\begin{align}
\frac{1}{\norm{g}_{L^2}^2} - \frac{1}{\norm{g_t}_{L^2}^2} \leq -\kappa \frac{1}{\gamma D^2_\kappa(u,\omega) \norm{g}_{H^{1}}^2}  \left(e^{2\gamma t} - 1 \right). 
\end{align}
Rearranging again gives
\begin{align}
\norm{g_t}_{L^2}^2   \leq \frac{\norm{g}_{L^2}^2}{1 + \kappa \frac{\norm{g}_{L^2}^2}{\gamma D^2_\kappa(u,\omega) \norm{g}_{H^{1}}^2}  \left(e^{2\gamma t} - 1 \right)} \leq \gamma \kappa^{-1} D_\kappa^2(u,\omega) (e^{2 \gamma t} - 1)^{-1} \| g \|_{H^1}^2 \, .
\end{align}
\end{proof}

\begin{remark}\label{rmk:HsInstead}
Note that in the above proof, we could replace the $H^1$ norm of $g$ with any $H^s$ norm, $s \in (0,1)$, 
using instead $H^{-s}$-decay in Theorem \ref{thm:UniMix} and the interpolation for mean-zero $f$ $\norm{f}_{L^2} \leq \norm{\grad f}_{H^1}^{1-\theta} \norm{f}_{H^{-s}}^\theta$ for suitable $\theta = \theta(s)$. 
\end{remark}

We can complete the proof of Theorem \ref{thm:ED} and extend to any $L^2$ initial data using parabolic regularity. Indeed, for any mean-zero scalar $g \in H^1$, and associated $(g_t)$ solving \eqref{def:AD}, there holds by standard parabolic regularity arguments, for $r \in (\frac{d}{2} +1,3)$
\begin{align}
\norm{g_t}_{H^1} \leq  \underbrace{C \exp\left(Ct + \int_0^t \norm{u_s}_{H^r} ds \right) \sup_{0 < \tau < t} \norm{u_\tau}_{H^1}}_{(*)} \frac{\norm{g}_{L^2}}{\sqrt{\kappa}} \, ,  \label{ineq:paraReg} 
\end{align}
where $C > 0$ is a constant. 
For initial $u \in \Hbf$ and random noise paths $\omega \in \Omega$, 
define $\widetilde D(\omega, u)$ to be the quantity $(*)$ above with $t = 1$. 
By Lemma \ref{lem:TwistBd}, we have that $(\EE (\widetilde D(u, \omega))^p)^{1/p} \lesssim_{p,\eta} V_{\beta,\eta}(u)$ for all $\beta$ sufficiently large and all $\eta \in (0,\eta^\ast)$. 

By \eqref{ineq:ED1} for $t \geq 1$, there then holds
\begin{align}
  \norm{g_t}_{L^2} & \leq \min(\norm{g}_{L^2},  \sqrt{2\gamma} \kappa^{-1/2} D_\kappa(u_1,\theta_1\omega) e^{-\gamma t} \norm{g_1}_{H^1}) \\
  & \leq \kappa^{-1} \underbrace{  \widetilde D(u,\omega) D_\kappa(u_1,\theta_1\omega) e^{\gamma } }_{=:D_\kappa^\prime(u, \omega) } e^{- \gamma t} \norm{g}_{L^2}. \label{ineq:gtL2}
\end{align}
Above, $\theta_1 \omega(t) = \omega(t+1) - \omega(1)$ refers to the standard Wiener shift on paths in $C(\R_+;\Lbf^2)$.
This is precisely the inequality \eqref{ineq:ED}. It remains to 
estimate the $p$-th moment of $D_\kappa^\prime$.  

Let $V = V_{\beta, \eta}$ as in Lemma \ref{lem:Lyapu} for $\eta \in (0,\eta_*)$ arbitrary. When $\beta$ is taken sufficiently large, we have that
\begin{align}
\EE (D_\kappa'(u, \omega))^p & \lesssim \left(\EE (\widetilde D(u,\omega))^{2p}\right)^{1/2} \EE \left((D_\kappa)^{2p}(u_1,\theta_1\omega)\right)^{1/2} \\
& = \left(\EE (\widetilde D(u,\omega))^{2p}\right)^{1/2} \EE \big( \EE \left((D_\kappa)^{2p}(u_1,\theta_1\omega)| \mathscr{F}_{1}\right) \big)^{1/2}  \\
& \lesssim V^{p/2}(u) \left(\EE V^{p}(u_1) \right)^{1/2} \\
& \lesssim V^p(u) \,
\end{align}
where we used that fact that $u_1$ is $\mathscr{F}_1$ measurable and $\theta_1\omega$ is independent of $\mathscr{F}_1$.

\subsection{Optimality of the $O(\abs{\log \kappa})$ dissipation time-scale} \label{sec:EDopt}
We complete this section with the proof of Theorem \ref{thm:optTimeScale}, the optimality of the 
timescale $t = O(| \log \kappa|)$ for enhanced $L^2$ dissipation. To start, by the standard $H^1$ norm growth bound on \eqref{def:AD}, any solution satisfies the following lower bound on the time derivative of $\norm{g_t}_{L^2}$: 
\begin{align}
\frac{\dee}{\dt} \norm{g_t}_{L^2}^2 = -\kappa \norm{\grad g_t}_{L^2}^2 \geq -\kappa\exp\left( \int_0^t \norm{\grad u_\tau}_{L^\infty} \dee\tau \right)\norm{g}_{H^1}^2. \label{ineq:ExLwBd}
\end{align}
By a straightforward application of Lemma \ref{lem:TwistBd} and Borel-Cantelli (or, alternatively, the Birkhoff ergodic theorem), we observe the following almost sure growth bound. 
\begin{lemma} \label{lem:BVH1explo}
There exists a $\lambda >0$ and a random constant $\overline{D} : \Hbf \times \Omega \to [1,\infty)$, independent of $\kappa$,  such that
\[
	\exp\left( \int_0^t \norm{\grad u_\tau}_{L^\infty} d\tau \right) \leq \overline{D} e^{\lambda t} \, .
\]
Moreover, for any $\eta > 0$ with $p\eta \in (0,\eta^\ast)$ and $\beta \geq 1$, we have
$\E \overline{D}^p \leqc_p V^p(u)$ for $V = V_{\beta, \eta}$.
\end{lemma}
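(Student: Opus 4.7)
The plan is to combine the Sobolev embedding $H^r \hookrightarrow W^{1,\infty}$ (for $r \in (d/2+1,3)$) with an iterated version of the super-Lyapunov estimate in Lemma~\ref{lem:TwistBd}, closing the argument with a Chernoff estimate and layer-cake representation of the $p$-th moment. Fix a Sobolev constant $C_1$ such that $\norm{\nabla u}_{L^\infty} \leq C_1 \norm{u}_{H^r}$; then it suffices to produce a deterministic $\lambda > 0$ and a random $\overline{D}(\omega, u) \geq 1$ with
\[
\exp\!\left(C_1 \int_0^t \norm{u_s}_{H^r}\, ds\right) \leq \overline{D}\, e^{\lambda t}\, .
\]

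Under the hypothesis $p\eta \in (0,\eta^\ast)$, the function $V' := V^p = V_{p\beta, p\eta}$ is itself an admissible Lyapunov function for Lemma~\ref{lem:TwistBd}, provided the exponent $\gamma$ there is taken small enough to ensure $p\eta\, e^{\gamma} < \eta^\ast$. Fix any Chernoff parameter $q > 1$. Applying Lemma~\ref{lem:TwistBd} at $T = 1$ with constant $C_0 = q p C_1$ and Lyapunov function $V'$ gives
\[
\EE_u \exp\!\left(q p C_1 \int_0^1 \norm{u_s}_{H^r}\, ds\right) (V')^{e^{\gamma}}(u_1) \leq C_{p,q}\, V'(u)\, .
\]
Iterating this on the consecutive unit intervals $[n,n+1]$ via the Markov property of $(u_t)$, together with the trivial inequality $V' \leq (V')^{e^{\gamma}}$ (since $V' \geq 1$), yields for every $n \geq 0$
\[
\EE_u \exp\!\left(q p C_1 \int_0^n \norm{u_s}_{H^r}\, ds\right) \leq C_{p,q}^n\, V'(u)\, .
\]

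Now choose $\lambda$ large enough that $C_{p,q}\, e^{-q p \lambda} \leq e^{-1}$. Setting $S_n := p C_1 \int_0^n \norm{u_s}_{H^r}\, ds - p \lambda n$, a Chernoff bound together with the previous display gives, for every $K \geq 0$ and $n \geq 0$,
\[
\PP(S_n > K) \leq e^{-q K}\, V'(u)\, \big(C_{p,q}\, e^{-q p \lambda}\big)^n \leq V'(u)\, e^{-q K}\, e^{-n}\, ,
\]
so summing the geometric series in $n$ yields $\PP(\sup_n S_n > K) \lesssim V'(u)\, e^{-qK}$. The layer-cake identity $\EE e^{Z} = 1 + \int_0^\infty e^K \PP(Z > K)\, dK$, valid for the non-negative random variable $Z = \sup_n S_n$ (non-negative since $S_0 = 0$), then produces
\[
\EE \exp\!\big(\sup_n S_n\big) \leq 1 + V'(u) \int_0^\infty e^{(1 - q) K}\, dK \lesssim_{p,q} V^p(u)\, ,
\]
the integral being finite precisely because $q > 1$. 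Setting $\overline{D}(\omega,u) := \sup_{t \geq 0} \exp\!\big(\int_0^t \norm{\nabla u_s}_{L^\infty}\, ds - \lambda t\big)$, monotonicity of $t \mapsto \int_0^t \norm{u_s}_{H^r}\, ds$ on each $[n, n+1]$ bounds the continuous-time supremum by $\sup_n S_n + p\lambda$, so $\EE \overline{D}^p \leq e^{p \lambda}\, \EE \exp(\sup_n S_n) \lesssim_p V^p(u)$, as required. The main technical delicacy is coordinating $q$ and $\gamma$ with the admissibility constraint $p \eta\, e^{\gamma} < \eta^\ast$ on the Lyapunov function $V^p$; fortunately $\gamma$ may be taken arbitrarily small independently of $q$, so no circular dependence arises.
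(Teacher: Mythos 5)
Your proof is correct and fleshes out exactly the one-sentence argument the paper gives ("a straightforward application of Lemma~\ref{lem:TwistBd} and Borel--Cantelli"): the Sobolev embedding reduces $\|\nabla u\|_{L^\infty}$ to $\|u\|_{H^r}$, the super-Lyapunov estimate gives exponential moments of $\int_0^n \|u_s\|_{H^r}\,ds$, the iteration over unit intervals via the Markov property and $V' \geq 1$ is valid, and the Chernoff--plus--layer-cake argument correctly converts the tail bound into a moment bound on the running supremum. One caveat worth flagging: your $\lambda$ is chosen to satisfy $C_{p,q}e^{-qp\lambda} \leq e^{-1}$, so it (and hence $\overline{D}$) depends on $p$ through $C_{p,q}$; since $C_{p,q}$ arises from the Gaussian-type moment estimate in Lemma~\ref{lem:TwistBd} with $C_0 = qpC_1$, the required $\lambda$ in general grows with $p$. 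The lemma statement reads as if a single $(\lambda,\overline{D})$ works for every admissible $p$, which your construction does not quite deliver; this is consistent with the $p$-dependent $\mu=\mu(p)$ in Remark~\ref{rmk:ExpOpt}, but if a $p$-uniform $\lambda$ is truly needed (as the statement of Theorem~\ref{thm:optTimeScale}, where $\delta$ is fixed before $p$ is quantified, would suggest), one would instead fix $\lambda$ once via the Birkhoff route the paper also mentions and then argue the moment bounds separately for each $p$. Otherwise the argument is sound.
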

\noindent
Lemma \ref{lem:BVH1explo} and \eqref{ineq:ExLwBd} together imply the lower bound
\[
\norm{g_t}_{L^2}^2 \geq \|g\|_{L^2}^2 - \kappa \|g\|_{H^1}^2 \overline{D} \lambda^{-1} ( e^{\lambda t} - 1) 
\geq \|g\|_{L^2}^2 - \kappa \|g\|_{H^1}^2 \overline{D} t e^{\lambda t} 
\]
It follows that for each $\delta \in (0,1)$
\[
	\norm{g_{\delta |\log{\kappa}|}}_{L^2}^2 \geq \|g\|_{L^2}^2 \left( 1 - \delta | \log \kappa| \kappa^{1-\lambda \delta} \frac{\overline{D} \|g\|_{H^1}^2}{\| g \|_{L^2}^2} \right) . 
\]
Choosing 
\[
	\delta(g,u,\omega) := 
	\min\left\{\frac{\|g\|_{L^2}^2}{\|g\|_{H^1}^2 \overline{D}(u,\omega)},\frac{1}{2\lambda}\right\}
\]
gives
\[
	\norm{g_{\delta |\log{\kappa}|}}_{L^2}^2 \geq (1-|\log  \kappa_0| \kappa_0^{1/2})\|g\|_{L^2}^2 \, .
\]
Choosing $\kappa_0$ small enough so that $|\log \kappa_0| \kappa_0^{1/2} \leq 3/4$ implies 
$\tau_* \geq \delta | \log \kappa|$, where $\tau_*$ is the enhanced dissipation time
$\tau_* = \inf\{ t \geq 0 : \| g_t\|_{L^2} < \frac12 \| g \|_{L^2} \}$.  This completes the proof of
Theorem \ref{thm:optTimeScale}. 

\bibliographystyle{abbrv}
\bibliography{bibliography}

\end{document}